\newtheorem{theorem}{Theorem}[section]
\newtheorem{lemma}[theorem]{Lemma}
\newtheorem{corollary}[theorem]{Corollary}
\newtheorem{proposition}[theorem]{Proposition}
\theoremstyle{definition}
\newtheorem{definition}[theorem]{Definition}
\newtheorem{example}[theorem]{Example}
\newtheorem{notation}[theorem]{Notation}
\newtheorem{assumption}[theorem]{Assumption}
\newtheorem{remark}[theorem]{Remark}
\numberwithin{equation}{section}
\newcommand{\cA}{\mathcal{A}}
\DeclareMathOperator{\Aut}{Aut}
\DeclareMathOperator{\ch}{ch}
\newcommand{\orbich}{\widetilde{\ch}}
\newcommand{\Cstar}{\CC^\times}
\newcommand{\cC}{\mathcal{C}}
\newcommand{\CC}{\mathbb{C}}
\newcommand{\ev}{\mathrm{ev}}
\renewcommand{\cH}{\mathcal{H}}
\DeclareMathOperator{\Hom}{Hom}
\newcommand{\tti}{\mathtt{i}}
\DeclareMathOperator{\id}{id}
\DeclareMathOperator{\im}{im}
\newcommand{\KK}{\mathbb{K}}
\newcommand{\KKeff}{\mathbb{K}^{\text{\rm eff}}}
\renewcommand{\cL}{\mathcal{L}}
\newcommand{\LL}{\mathbb{L}}
\DeclareMathOperator{\Lie}{Lie}
\newcommand{\cM}{\mathcal{M}}
\newcommand{\cO}{\mathcal{O}}
\newcommand{\PP}{\mathbb{P}}
\newcommand{\QQ}{\mathbb{Q}}
\newcommand{\RR}{\mathbb{R}}
\newcommand{\FM}{\mathbb{FM}}
\DeclareMathOperator{\Res}{Res}
\DeclareMathOperator{\Sym}{Sym}
\newcommand{\cU}{\mathcal{U}}
\newcommand{\ZZ}{\mathbb{Z}}
\newcommand{\UU}{\mathbb{U}}
\newcommand{\CR}{{\operatorname{CR}}}
\newcommand{\amb}{{\operatorname{amb}}}
\DeclareMathOperator{\cbox}{Box}
\newcommand{\correlator}[1]{\left \langle #1 \right \rangle}
\newcommand{\fun}{\mathbf{1}}
\newcommand{\<}{\left\langle}
\renewcommand{\>}{\right\rangle}
\def\hsymb#1{\mbox{\strut\rlap{\smash{\Huge$#1$}}\quad}}
\newcommand{\II}{\mathbb{I}} 
\newcommand{\bbS}{\mathbb{S}}
\newcommand{\hGamma}{\widehat{\Gamma}} 
\newcommand{\hS}{\widehat{S}}
\newcommand{\hcM}{\widehat{\cM}}
\newcommand{\hU}{\widehat{U}} 
\newcommand{\hvarrho}{\hat{\varrho}}
\newcommand{\hR}{\widehat{R}}
\newcommand{\tLL}{\widetilde{\LL}}
\newcommand{\tcM}{\widetilde{\cM}} 
\newcommand{\ttau}{\tilde{\tau}} 
\newcommand{\tUpsilon}{\widetilde{\Upsilon}}
\newcommand{\tcH}{\widetilde{\cH}} 
\newcommand{\tPsi}{\widetilde{\Psi}} 
\newcommand{\tD}{\widetilde{D}} 
\newcommand{\tomega}{{\tilde{\omega}}} 
\newcommand{\tX}{\widetilde{X}} 
\newcommand{\tdelta}{{\tilde{\delta}}} 
\newcommand{\tR}{\widetilde{R}} 
\newcommand{\tS}{\widetilde{S}} 
\newcommand{\tK}{\widetilde{K}} 
\newcommand{\N}{\mathbb{N}}
\newcommand{\Ker}{\operatorname{Ker}} 
\newcommand{\Frac}{\operatorname{Frac}}
\newcommand{\End}{\operatorname{End}} 
\newcommand{\NE}{\operatorname{NE}} 
\newcommand{\inv}{\operatorname{inv}} 
\newcommand{\Spec}{\operatorname{Spec}} 
\renewcommand{\ev}{\operatorname{ev}} 
\newcommand{\Gr}{\operatorname{Gr}}
\newcommand{\tch}{\operatorname{\widetilde{\ch}}}
\newcommand{\rank}{\operatorname{rank}} 
\newcommand{\Td}{\operatorname{Td}} 
\newcommand{\tTd}{\widetilde{\Td}}
\newcommand{\Ext}{\operatorname{Ext}} 
\newcommand{\age}{\operatorname{age}}
\newcommand{\ad}{\operatorname{ad}}
\def\parfrac#1#2{{\frac{\partial #1}{\partial #2}}}
\newcommand{\cF}{\mathcal{F}} 
\newcommand{\cE}{\mathcal{E}} 
\newcommand{\calD}{\mathcal{D}} 
\newcommand{\cT}{\mathcal{T}} 
\newcommand{\bt}{\mathbf{t}}
\newcommand{\bN}{\mathbf{N}}
\newcommand{\bL}{\mathbf{L}}
\newcommand{\bE}{\mathbf{E}} 
\newcommand{\bA}{\mathbf{A}} 
\newcommand{\bmu}{\boldsymbol{\mu}}
\newcommand{\bF}{\mathbf{F}} 
\newcommand{\bnabla}{\boldsymbol{\nabla}} 
\newcommand{\bGr}{\operatorname{\mathbf{Gr}}} 
\newcommand{\fbar}{\overline{f}}
\newcommand{\sfB}{\mathsf{B}}
\newcommand{\cQ}{\mathcal{Q}} 
\newcommand{\frs}{\mathfrak{s}} 
\newcommand{\frc}{\mathfrak{c}} 
\newcommand{\ty}{\tilde{y}} 
\newcommand{\tx}{\tilde{x}} 
\newcommand{\sfy}{\mathsf{y}}
\newcommand{\tsfy}{\tilde{\sfy}}  
\newcommand{\sx}{\mathsf{x}} 
\newcommand{\sfp}{\mathsf{p}}
\begin{document}

\title[The Crepant Transformation Conjecture]{The Crepant Transformation Conjecture\\ for Toric Complete Intersections}

\author[Coates]{Tom Coates}
\address{Department of Mathematics\\
Imperial College London\\
180 Queen's Gate\\
London SW7 2AZ 
\\UK}
\email{t.coates@imperial.ac.uk}

\author[Iritani]{Hiroshi Iritani}
\address{Department of Mathematics\\
Graduate School of Science\\
Kyoto University\\
Oiwake-cho\\
Kitashirakawa\\
Sakyo-ku\\
Kyoto, 606-8502\\
Japan}
\email{iritani@math.kyoto-u.ac.jp}

\author[Jiang]{Yunfeng Jiang}
\address{Department of Mathematics\\
University of Kansas\\
1460 Jayhawk Boulevard\\
Lawrence, Kansas 66045-7594\\
USA}
\email{y.jiang@ku.edu}

\subjclass[2010]{14N35 (Primary); 14A20, 14E16, 14F05, 53D45 (Secondary)}

\keywords{Gromov--Witten theory, Crepant Resolution Conjecture, toric Deligne--Mumford stacks, orbifolds, quantum cohomology, mirror symmetry, Fourier--Mukai transformation, flop, $K$-equivalence, Gamma class, integral structure, variation of GIT quotient, Givental's symplectic formalism, GKZ system, Mellin--Barnes method}

\date{}

\begin{abstract}
Let $X$ and $Y$ be $K$-equivalent toric Deligne--Mumford stacks related by a single toric wall-crossing.  We prove the Crepant Transformation Conjecture in this case, fully-equivariantly and in genus zero.  That is, we show that the equivariant quantum connections for $X$ and $Y$ become gauge-equivalent after analytic continuation in quantum parameters.  Furthermore we identify the gauge transformation involved, which can be thought of as a linear symplectomorphism between the Givental spaces for $X$ and $Y$, with a Fourier--Mukai transformation between the $K$-groups of $X$ and $Y$, via an equivariant version of the Gamma-integral structure on quantum cohomology.  We prove similar results for toric complete intersections.  We impose only very weak geometric hypotheses on $X$ and $Y$: they can be non-compact, for example, and need not be weak Fano or have Gorenstein coarse moduli space.  Our main tools are the Mirror Theorems for toric Deligne--Mumford stacks and toric complete intersections, and the Mellin--Barnes method for analytic continuation of hypergeometric functions.
\end{abstract}

\maketitle

\section{Introduction}

A birational map $\varphi \colon X_+ \dashrightarrow X_-$ between smooth varieties, orbifolds, or Deligne--Mumford stacks is called a \emph{$K$-equivalence}
if there exists a smooth variety, orbifold, or Deligne--Mumford stack $\tX$ and projective birational morphisms 
$f_\pm \colon \tX \to X_\pm$ such that $f_- = \varphi \circ f_+$ 
and $f_+^\star K_{X_+} = f_-^\star K_{X_-}$: 
\begin{equation} 
  \label{eq:common_blowup} 
  \begin{aligned}
    \xymatrix{
      & \tX \ar[dr]^{f_-} \ar[dl]_{f_+} & \\ 
      X_+ \ar@{-->}[rr]^{\varphi} & & X_-
    }
  \end{aligned}
\end{equation}
In this case, the celebrated Crepant Transformation Conjecture of Y.~Ruan predicts that the quantum (orbifold) cohomology algebras of $X_+$ and $X_-$ should be related by analytic continuation in the quantum parameters.  This conjecture has stimulated a great deal of interest in the connections between quantum cohomology (or Gromov--Witten theory) and birational geometry: see, for example, \cite{Ruan:crepant,Perroni,Boissiere--Mann--Perroni:1,Boissiere--Mann--Perroni:2,Bryan--Graber--Pandharipande,Bryan--Gholampour:1,Bryan--Gholampour:2,Gillam,Wise,Coates,Lee--Lin--Wang:1,Lee--Lin--Wang:2,Lee--Lin--Wang:ICCM,Iwao--Lee--Lin--Wang,Lee--Lin--Qu--Wang,Chen--Li--Li--Zhao,Chen--Li--Zhang--Zhao,Zhou,Li--Ruan,Gonzalez--Woodward}.  Ruan's original conjecture was subsequently refined, revised, and extended to higher genus Gromov--Witten invariants, first by Bryan--Graber~\cite{Bryan--Graber} under some additional hypotheses, and then by Coates--Iritani--Tseng, Iritani, and Ruan in general~\cite{CIT,Iritani,Coates--Ruan}.  Recall that a toric Deligne--Mumford stack $X$ can be constructed as a GIT quotient $\big[\CC^m /\!\!/_\omega K\big]$ of $\CC^m$ by an action of a complex torus $K$, where $\omega$ is an appropriate \emph{stability condition}, and that wall-crossing in the space of stability conditions induces birational transformations between GIT quotients~\cite{Dolgachev--Hu,Thaddeus}.  Our main result implies the CIT/Ruan version of the Crepant Transformation Conjecture in genus zero, in the case where $X_+$ and $X_-$ are complete intersections in toric Deligne--Mumford stacks and $\varphi \colon X_+ \dashrightarrow X_-$ arises from a toric wall-crossing.  We concentrate initially on the case where $X_+$ and $X_-$ are toric, deferring the discussion of toric complete intersections to~\S\ref{sec:introduction_ci}.

\subsection{The Toric Case}
\label{sec:toric_case}
We consider toric Deligne--Mumford stacks $X_\pm$ of the form $\big[\CC^m /\!\!/_\omega K \big]$, where $K$ is a complex torus, and consider a $K$-equivalence $\varphi \colon X_+ \dashrightarrow X_-$ determined by a wall-crossing in the space of stability conditions~$\omega$.  The action of $T=(\Cstar)^m$ on $\CC^m$ descends to give (ineffective) actions of $T$ on $X_\pm$, and we consider the $T$-equivariant Chen--Ruan cohomology groups $H_{\CR,T}^\bullet(X_\pm)$~\cite{Chen--Ruan:orbifold_GW}.  There is a $T$-equivariant big quantum product $\star_\tau$ on $H_{\CR,T}^\bullet(X_\pm)$, parametrized by $\tau \in H_{\CR,T}^\bullet(X_\pm)$ and defined in terms of $T$-equivariant Gromov--Witten invariants of $X_\pm$.  The \emph{$T$-equivariant quantum connection} is a pencil of flat connections:
\begin{equation} 
\label{eq:qconn} 
\nabla = d + z^{-1} \sum_{i=0}^N (\phi_i \star_\tau) d\tau^i
\end{equation}
on the trivial $H_{\CR,T}^\bullet(X_\pm)$-bundle over an open set in $H_{\CR,T}^\bullet(X_\pm)$; here $z \in \Cstar$ is the pencil variable, $\tau \in H_{\CR,T}^\bullet(X_\pm)$ is the co-ordinate on the base of the bundle, $\phi_0,\dots,\phi_N$ are a basis for $H_{\CR,T}^\bullet(X_\pm)$, and $\tau^0,\dots, \tau^N$ are the corresponding co-ordinates of $\tau \in H_{\CR,T}^\bullet(X_\pm)$, so that $\tau = \sum_{i=0}^N \tau^i \phi_i$.

\begin{theorem}
\label{thm:main_theorem} 

Let $X_+$ and $X_-$ be toric Deligne--Mumford stacks, and let $\varphi \colon X_+ \dashrightarrow X_-$ be a $K$-equivalence that arises from a wall-crossing of GIT stability conditions.  Then:
\begin{enumerate}
\item the equivariant quantum connections of $X_\pm$ become gauge-equivalent after analytic continuation in $\tau$, via a gauge transformation $\Theta(\tau,z) \colon H^\bullet_{\CR,T}(X_-) \to H^\bullet_{\CR,T}(X_+)$ which is homogeneous of degree zero, regular at $z=0$, and preserves the equivariant orbifold Poincar\'{e} pairing.
\item there exists a common toric blowup $\tX$ of $X_\pm$ as in \eqref{eq:common_blowup} such that gauge transformation $\Theta$ coincides with the Fourier--Mukai transformation
\begin{align*}
  \FM \colon K^0_T(X_-) \to K^0_T(X_+) && E\mapsto (f_+)_\star (f_-)^\star (E)  
\end{align*}
via the equivariant Gamma-integral structure introduced in \S\ref{sec:integral_structure} below.
\end{enumerate}
\end{theorem}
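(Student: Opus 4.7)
The plan is to prove both parts of Theorem \ref{thm:main_theorem} simultaneously, using the toric Mirror Theorem together with the Mellin--Barnes method for analytic continuation of hypergeometric integrals. In Givental's symplectic formalism, gauge-equivalence of the equivariant quantum connections of $X_+$ and $X_-$ is equivalent to the existence of a $z$-linear symplectomorphism $\UU \colon \cH_{X_-} \to \cH_{X_+}$ between their Givental spaces which intertwines the Lagrangian cones $\cL_{X_\pm}$ after analytic continuation in the K\"ahler parameters. The toric mirror theorem provides an explicit cohomology-valued $\Gamma$-hypergeometric $I$-function $I_\pm(y,z)$ lying on $\cL_{X_\pm}$; since each Lagrangian cone is generated as a family by its $I$-function, it suffices to produce a $y$- and $z$-independent map $\UU$ with $I_+(y,z) = \UU\, I_-(y,z)$ after analytic continuation across the wall, and then to identify $\UU$ with $\FM$ under the $\hGamma$-integral structure of \S\ref{sec:integral_structure}.

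For the analytic continuation step I would realize $I_\pm(y,z)$ as a single Mellin--Barnes contour integral over a real subspace of $\Lie(K)^\vee_\RR$, with integrand consisting of products of $\Gamma$-functions encoding the toric data and an exponential term $y^\lambda$ in the K\"ahler parameters. The two chamber expansions arise by closing the contour in opposite half-spaces and summing residues along the corresponding cones of poles; a single toric $K$-equivalence wall-crossing corresponds to a single hyperplane in $\Lie(K)^\vee_\RR$ being crossed, and the matrix elements of $\UU$ emerge as explicit ratios of Gamma and sine functions coming from the residues picked up there. The properties of $\Theta(\tau,z)$ demanded in part (1) --- degree zero homogeneity, regularity at $z=0$, and preservation of the equivariant orbifold Poincar\'e pairing --- then follow by rewriting every Gamma factor via the reflection identity $\Gamma(s)\Gamma(1-s)=\pi/\sin(\pi s)$ and comparing term-by-term with the $\hGamma(TX_\pm)$ prefactors already built into $I_\pm$.

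To identify $\UU$ with $\FM$ as in part (2), I would construct the common toric blowup $\tX$ from a fan refining both stacky fans of $X_\pm$, so that $f_\pm \colon \tX \to X_\pm$ are both toric birational morphisms. On $\tX$ the Fourier--Mukai kernel is computable from $K$-theoretic push--pull of line bundles and of structure sheaves of toric strata, and applying equivariant Hirzebruch--Riemann--Roch together with the $\hGamma$-integral structure expresses its matrix as another explicit product of equivariant Gamma factors. Matching the two expressions amounts to a box-by-box comparison between twisted sectors: each pair $(v_+,v_-)\in\cbox(X_+)\times\cbox(X_-)$ contributing to a given Mellin--Barnes residue should correspond to a twisted sector $\tilde{v}\in\cbox(\tX)$ mapping to both, with age shifts and equivariant Chern roots matching on the nose.

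The main technical obstacle will be this last matching in full generality. Because we do not assume weak Fano, Gorenstein coarse moduli, or compactness, the $I$-functions are not in general convergent series but rather formal germs of the Lagrangian cone; the Mellin--Barnes argument must therefore be carried out at the level of formal series in the K\"ahler parameters with coefficients that are equivariant classes, with careful attention to the convergence of the residue sums inside the equivariant Chen--Ruan cohomology. Likewise the combinatorics of twisted sectors across $X_\pm$ and $\tX$ --- encoded by the boxes and their ages --- must be controlled uniformly, so that the Gamma and sine factors produced by Mellin--Barnes agree with those coming from equivariant $K$-theory on $\tX$ chamber-by-chamber and sector-by-sector. Once this combinatorial dictionary is established, the remaining assertions (degree, regularity at $z=0$, symplecticity, and the Fourier--Mukai identification) reduce to identities among Gamma functions.
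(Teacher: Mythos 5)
Your high-level plan --- toric Mirror Theorem to express the quantum connection via the $I$-function, Mellin--Barnes for analytic continuation across the wall, and identification of the resulting symplectic transformation with a Fourier--Mukai kernel through the $\hGamma$-integral structure --- matches the paper's strategy. But the proposal misses the two technical devices that actually make the argument go through in this generality, and as written your Mellin--Barnes step would stall.

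First, you propose realizing $I_\pm$ as \emph{a single Mellin--Barnes contour integral over a real subspace of $\Lie(K)^\vee_\RR$}, i.e.\ a multidimensional Barnes integral in all $r$ K\"ahler variables at once. Without weak Fano or compactness hypotheses the big $I$-function need not converge in most of these variables, so a global multidimensional integral representation is not available; and even formally, controlling a higher-dimensional system of $\Gamma$-poles is very delicate. The paper avoids this by singling out the one variable $\sfy_r=\sfy^e$ transverse to the wall, working analytically in $\sfy_r$ alone while treating $\sfy_1,\dots,\sfy_{r-1}$ (and the equivariant parameters) as formal; the continuation then happens over a one-dimensional analytic space $\cC_{\rm reg}\setminus\{0,\frc,\infty\}$ with a formal thickening. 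You do flag the convergence problem at the end, but you don't propose the mechanism that resolves it.

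Second, and more critically, you do not use fixed-point localization. The paper restricts the $H$-function to each $T$-fixed point $(\delta,f)$ of the inertia stack and continues \emph{those scalar hypergeometric series} by a one-dimensional Barnes integral in a single auxiliary variable $s$. Because the $\Gamma$-factor exponents $u_j(\delta)+D_j\cdot d$ are generic (the equivariant parameters $\lambda_i$ are free), every pole encountered is simple; the residues then match, term by term, the restriction of the Chern character of the Fourier--Mukai image to the corresponding fixed point on $IX_-$, which the paper computes independently via push--pull on the common blow-up $\tX$. Your proposed ``box-by-box comparison between twisted sectors'' would force you to contend with higher-order poles and resonances that arise in the non-equivariant limit (as in \cite[\S7]{Coates}); localizing equivariantly sidesteps this entirely. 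Without this idea the ``matching'' you identify as the main obstacle does not just become technical --- it becomes the wrong comparison, because the residue data and the Fourier--Mukai data are each most naturally indexed by $T$-fixed points of $IX_\pm$ rather than by boxes. Once you add fixed-point localization and the single-variable Barnes continuation, the rest of your outline (construction of $\tX$ by adding a ray, pairing-preservation via the $\hGamma$-reflection identity and HRR) is in line with the paper.
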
 
\noindent Here:
\begin{itemize}
\item The Gamma-integral structure on equivariant quantum cohomology is an assignment, to each class $E \in K^0_T(X_\pm)$ of $T$-equivariant vector bundles on $X_\pm$, of a flat section $\frs(E)$ for the equivariant quantum connection on $X_\pm$.  This gives a 
 lattice in the space of flat sections which is isomorphic to the integral equivariant $K$-group $K^0_T(X_\pm)$.  The flat section $\frs(E)$ is, roughly speaking, given by the Chern character of $E$ multiplied by a characteristic class of $X_\pm$, called the $\hGamma$-class, that is defined in terms of the $\Gamma$-function.  Part~(2) of Theorem~\ref{thm:main_theorem} asserts that the flat section $\frs(E)$ analytically continues to $\frs(\FM(E))$.
\item The gauge transformation $\Theta(\tau,z)$ will in general be non-constant: it depends on the parameter $\tau$ for the equivariant quantum product, and also on the parameter $z$ appearing in the equivariant quantum connection.  When written in terms of the integral structure, however, it becomes a constant, integral linear transformation. 
\end{itemize}

\begin{remark}
  Throughout this paper, when we consider $K$-equivalence \eqref{eq:common_blowup} of Deligne--Mumford stacks $X_\pm$, $K_{X_\pm}$ 
  means the canonical class as a stack; in general this is different from the ($\QQ$-Cartier) 
  canonical divisor $K_{|X_\pm|}$ of the coarse moduli space $|X_\pm|$.  In particular, we do not require the coarse moduli spaces $|X_\pm|$ to be Gorenstein.
\end{remark}

\begin{remark} 
Gonzalez and Woodward \cite{Gonzalez--Woodward} have proved 
a very general wall-crossing formula for Gromov--Witten invariants under  variation of GIT quotient, using gauged Gromov--Witten theory.
Their result, which is a quantum version of Kalkman's wall-crossing formula, gives a complete description of how non-equivariant 
genus-zero Gromov--Witten invariants change under wall-crossing.  Thus their theorem must imply the non-equivariant version of the first part of Theorem~\ref{thm:main_theorem}, and the first part of Theorem~\ref{thm:main_theorem_ci}.
Our methods are significantly less general --- they apply only to toric stacks and toric complete intersections --- but give a much more explicit relationship between the genus-zero 
Gromov--Witten theories. 
\end{remark} 

\noindent Theorem~\ref{thm:main_theorem} is slightly imprecisely stated:  
we give precise statements, once the necessary notation and definitions are in place, 
as Theorems~\ref{thm:global_qconn},~\ref{thm:U}, 
and~\ref{thm:CTC_qconn} below.  
We now explain how Theorem~\ref{thm:main_theorem} implies 
the CIT/Ruan version of the Crepant Transformation Conjecture.

\bigskip

The CIT/Ruan version of the Crepant Transformation Conjecture is stated in terms of Givental's symplectic formalism for Gromov--Witten theory~\cite{Givental:symplectic}.  In our context, this associates to $X_\pm$ the vector spaces $\cH(X_\pm) := H^\bullet_{\CR,T}(X_\pm)(\!(z^{-1})\!)$ equipped with a certain symplectic form, and encodes $T$-equivariant genus-zero Gromov--Witten invariants via a Lagrangian cone $\cL_\pm \subset \cH(X_\pm)$.  The Givental cone $\cL_\pm$ for $X_\pm$ determines the big quantum product $\star_\tau$ on $H^\bullet_{\CR,T}(X_\pm)$, and \emph{vice versa}.  The CIT/Ruan Crepant Transformation Conjecture, made in the context of non-equivariant Gromov--Witten theory, asserts that there exists a $\CC(\!(z^{-1})\!)$-linear grading-preserving symplectic isomorphism $\UU \colon \cH(X_-) \to \cH(X_+)$, such that after analytic continuation of $\cL_\pm$ we have $\UU(\cL_-) = \cL_+$.  See~\cite{CIT,Coates--Ruan} for more details.

There are various subtle points in the notion of analytic continuation of the (infinite-dimensional) cones $\cL_\pm$, especially under the weak convergence hypotheses that we impose, and some necessary foundational material is missing.  Thus we choose to state Theorem~\ref{thm:main_theorem} in terms of the equivariant quantum connections for $X_\pm$ rather than in terms of the Givental cones $\cL_\pm$.  The two formulations are very closely related, however, as we now explain.  Let $L_\pm(\tau,z)$ denote a fundamental solution for the equivariant quantum connection $\nabla$, that is, a matrix with columns that give a basis of flat sections for $\nabla$.  The assignment
\begin{align*}
  \tau \mapsto L_\pm(\tau,z)^{-1} \cH_+ && \tau \in H^\bullet_{\CR,T}(X_\pm) && \text{where $\cH_+:= H^\bullet_{\CR,T}(X_\pm) \otimes \CC[z]$}
\end{align*}
gives the family of tangent spaces to the Givental cone $\cL_\pm$.  As emphasized in~\cite{CIT}, this defines a variation of semi-infinite Hodge structure in the sense of Barannikov~\cite{Barannikov:qperiods}.  The Givental cone $\cL_\pm$ can be reconstructed from the semi-infinite variation as:
\[
\cL_\pm = \bigcup_{\tau} z L_\pm(\tau,z)^{-1}  \cH_+
\]
Thus part~(1) of Theorem~\ref{thm:main_theorem} implies the CIT/Ruan-style Crepant Transformation Conjecture whenever it makes sense, with the symplectic transformation $\UU$ defined in terms of the gauge transformation $\Theta$ by $\UU = L_+^{-1} \Theta L_-$.  The fact that $\UU$ is independent of $\tau$ follows from the fact that $\Theta$ is a gauge equivalence.  The fact that $\UU$ is symplectic (or equivalently, the fact that $\Theta$ is pairing-preserving) follows from the identification, in part~(2) of Theorem~\ref{thm:main_theorem}, of $\Theta$ with the Fourier--Mukai transformation~$\FM$.  The Fourier--Mukai transformation is a derived equivalence and thus preserves the Mukai pairings on $K^0_T(X_\pm)$; this implies, via the equivariant Hirzebruch--Riemann--Roch theorem, that $\Theta$ is pairing-preserving.  The identification of $\Theta$ with $\FM$ also makes clear that the symplectic transformation $\UU$ has a well-defined non-equivariant limit, since the Fourier--Mukai transformation itself can be defined non-equivariantly.

In terms of the symplectic transformation $\UU$, part (2) of 
Theorem \ref{thm:main_theorem} can be rephrased 
as the commutativity of the diagram
\[
\xymatrix{
  K_T^0(X_-) \ar[r]^{\FM} \ar[d]_{\tPsi_-} &  K_T^0(X_+) \ar[d]^{\tPsi_+} \\
  \tcH(X_-) \ar[r]^{\UU} & \tcH(X_+) 
}
\]
where $\tcH(X_\pm)$ is a variant of Givental's symplectic space and $\tPsi_\pm$ are certain `framing maps' built from the Gamma-integral structure: see Theorem~\ref{thm:U}.  This identification of $\UU$ with a Fourier--Mukai transformation was proposed in \cite{Iritani}.  Our results also imply Ruan's original conjecture that the quantum cohomology rings of $X_\pm$ are (abstractly) isomorphic, and that the associated $F$-manifold structures are isomorphic. We refer the reader to \cite{CCIT:computing, CIT, Coates--Ruan, Coates, Iritani:Ruan} for discussions on the consequence of these conjectures and several concrete examples.

\subsection{The Mellin--Barnes Method and the Work of Borisov--Horja} 

The main ingredients in the proof of Theorem~\ref{thm:main_theorem} are the Mirror Theorem for toric stacks~\cite{CCIT,Cheong--Ciocan-Fontanine--Kim}, which determines the equivariant quantum connection $\nabla$ (or, equivalently, the Givental cone $\cL_\pm)$ in terms of a certain cohomology-valued hypergeometric function called the \emph{$I$-function}, and the Mellin--Barnes method~\cite{Barnes,CdOGP}, which allows us to analytically continue the $I$-functions for $X_\pm$.  From this point of view, the symplectic transformation $\UU$ arises as the matrix which intertwines the two $I$-functions (see Theorem~\ref{thm:U}):
\[ 
\UU I_-  = I_+. 
\]
On the other hand, components of the $I$-function 
give hypergeometric solutions to the Gelfand--Kapranov--Zelevinsky (GKZ) system of 
differential equations. The analytic continuation of 
solutions to the GKZ system has been studied by Borisov--Horja 
\cite{Borisov--Horja:FM}. 
They showed that, under an appropriate identification of the 
spaces of GKZ solutions with the $K$-groups of the corresponding 
toric Deligne--Mumford stacks, the analytic continuation of 
solutions to a GKZ system is induced by a Fourier--Mukai 
transformation between the $K$-groups. 
Our computation may be viewed as a straightforward 
generalization of theirs. The differences from their situation 
are:
\begin{itemize} 
\item[(a)] we work with a fully equivariant version, that is, 
the parameters $\beta_j$ in the GKZ system are arbitrary 
and we use the equivariant $K$-groups (here 
$\beta_j$ corresponds to the equivariant parameter); 

\item[(b)] we compute analytic continuation of the $I$-function 
corresponding to the \emph{big} quantum cohomology; 
in terms of the GKZ system, we do not assume that 
lattice vectors in the set\footnote
{Recall that Gelfand--Kapranov--Zelevinsky defined the GKZ system 
in terms of a finite set $A \subset \ZZ^d$. 
They called it the $A$-hypergeometric system.} 
$A$ lie on a hyperplane of height one. 
\end{itemize} 

\noindent Since we work equivariantly, we can use the fixed point basis in localized equivariant cohomology to calculate the analytic continuation of the $I$-functions.  It turns out that analytic continuation via the Mellin--Barnes method becomes much easier to handle in the fully equivariant setting, because we only need to evaluate residues at \emph{simple} poles\footnote{For an example of the complexities caused by non-simple poles, see the orbifold flop calculation in~\cite[\S7]{Coates}.}.  It is also straightforward to compute the Fourier--Mukai transformation in terms of the fixed point basis in the localized equivariant $K$-group, and hence to see that analytic continuation coincides with Fourier--Mukai.

Regarding part (b) above, we choose $A$ to be the set $\{b_1,\dots,b_m\} \subset \bN$ of ray vectors of an extended stacky fan~\cite{Borisov--Chen--Smith, Jiang}.  Since we do not restrict ourselves to the weak Fano case, and since we work with Jiang's extended stacky fans, the generic rank of the GKZ system can be bigger than the rank of $H^\bullet_{\CR,T}(X_\pm)$. To remedy this, we treat one special variable analytically and work formally in the other variables.  In fact, the big $I$-functions are not necessarily convergent in all of the variables, and we analytically continue the $I$-function with respect to one specific variable $\sfy_r$.  This amounts to considering an adic completion of the Borisov--Horja  better-behaved GKZ system~\cite{Borisov--Horja:GKZ} with respect to the other variables.  The analytic continuation in Theorem~\ref{thm:main_theorem} occurs across a ``global K\"{a}hler moduli space'' $\tcM^\circ$ which is treated as an analytic space in one direction and as a formal scheme in the other directions.

\subsection{The Toric Complete Intersection Case}
\label{sec:introduction_ci}

Let $\varphi \colon X_+ \dashrightarrow X_-$ be a $K$-equivalence between toric Deligne--Mumford stacks that arises from a toric wall-crossing, as in \S\ref{sec:toric_case}.  Let $\widetilde{X}$ be the common toric blow-up of $X_\pm$ and let $\overline{X}_0$ denote the common blow-down; $\overline{X}_0$ here is a (singular) toric variety, not a stack.
\[
\xymatrix{
  & \tX \ar[dr]^{f_-} \ar[dl]_{f_+} & \\ 
  X_+ \ar@{-->}[rr]^{\varphi} \ar[dr]_{g_+} & & X_- \ar[dl]^{g_-} \\
  & \overline{X}_0
}
\]
Consider a direct sum of semiample line bundles $E_0 \to \overline{X}_0$, and pull this back to give vector bundles $E_+ \to X_+$, $\widetilde{E} \to \widetilde{X}$, and $E_- \to X_-$.  Let $s_+$,~$\tilde{s}$, and~$s_-$ be sections of, respectively, $E_+$,~$\widetilde{E}$, and~$E_-$ that are compatible via $f_+$ and $f_-$ (so $f_+^\star s_+ = \widetilde{s} = f_-^\star s_-$) such that the zero loci of $s_\pm$ intersect the flopping locus of $\varphi$ transversely.  Let $Y_+$,~$\widetilde{Y}$, and $Y_-$ denote the substacks defined by the zero loci of, respectively, $s_+$,~$\tilde{s}$, and $s_-$.  In this situation there is a commutative diagram:
\begin{equation}
  \label{eq:Y_and_X}
  \begin{aligned}
    \xymatrix{
      & \widetilde{Y} \ar[ld]_-{F_-} \ar[rd]^-{F_+} \ar[d]_-{\tilde{\iota}} \\
      Y_-\ar[d]_{\iota_-}  &\widetilde{X} \ar[ld]^-{f_-} \ar[rd]_-{f_+} & Y_+ \ar[d]^{\iota_+}\\
      X_-  && X_+
    }
  \end{aligned}
\end{equation}
where the vertical maps are inclusions, the bottom triangle is \eqref{eq:common_blowup}, and the squares are Cartesian.  The $K$-equivalence $\varphi \colon X_+ \dashrightarrow X_-$ induces a $K$-equivalence $\varphi \colon Y_+ \dashrightarrow Y_-$.  We now consider the Crepant Transformation Conjecture for $\varphi \colon Y_+ \dashrightarrow Y_-$.

Since the complete intersections $Y_\pm$ will not in general be $T$-invariant we consider non-equivariant Gromov--Witten invariants and the non-equivariant quantum product.  (Our assumptions on $X_\pm$ ensure that the non-equivariant theory makes sense.)  Denote by $H^\bullet_\amb(Y_\pm)$ the image $\im \iota^\star_\pm \subset H_{\CR}^\bullet(Y_\pm)$, where $\iota_\pm \colon Y_\pm \to X_\pm$ is the inclusion map.  If $\tau \in H_{\amb}^\bullet(Y_\pm)$ then the big quantum product $\star_\tau$ preserves the ambient part $H_{\amb}^\bullet(Y_\pm) \subset H_{\CR}^\bullet(Y_\pm)$.  We can therefore define a quantum connection on the ambient part:
\[
\nabla = d + z^{-1} \sum_{i=0}^N (\phi_i \star_\tau) d\tau^i
\]
This is a pencil of flat connections on the trivial $H_{\amb}^\bullet(Y_\pm)$-bundle over an open set in $H_{\amb}^\bullet(Y_\pm)$ where, as in \eqref{eq:qconn}, $z \in \Cstar$ is the pencil variable, $\tau \in H_{\amb}^\bullet(Y_\pm)$ is the co-ordinate on the base of the bundle, $\phi_0,\dots,\phi_N$ are a basis for $H_{\amb}^\bullet(Y_\pm)$, and $\tau^0,\dots, \tau^N$ are the corresponding co-ordinates of~$\tau$.

In \S\ref{sec:ambient_part} below we construct an ambient version of the Gamma-integral structure, which is an assignment to each class $E$ in the ambient part of $K$-theory
\[
K^0_\amb(Y_\pm) = \im \iota_\pm^\star \subset K^0(Y_\pm)
\]
of a flat section $\frs(E)$ for the quantum connection on the ambient part $H_{\amb}^\bullet(Y_\pm)$.  This gives a lattice in the space of flat sections which is isomorphic to the ambient part of (integral) $K$-theory $K^0_\amb(Y_\pm)$.  
\begin{theorem}
\label{thm:main_theorem_ci} 

Let $\varphi \colon Y_+ \dashrightarrow Y_-$ be a $K$-equivalence between toric complete intersections as above.  Then:
\begin{enumerate}
\item the quantum connections on the ambient parts $H_{\amb}^\bullet(Y_\pm) \subset H_{\CR}^\bullet(Y_\pm)$ become gauge-equivalent after analytic continuation in $\tau$, via a gauge transformation
\[
\Theta_Y(\tau,z) \colon H^\bullet_{\amb}(Y_-) \to H^\bullet_{\amb}(Y_+)
\]
which is homogeneous of degree zero and regular at $z=0$.  If $Y$ is compact then $\Theta_Y$ preserves the orbifold Poincar\'{e} pairing.
\item when expressed in terms of the ambient integral structure, the gauge transformation $\Theta_Y$ coincides with the Fourier--Mukai transformation
\begin{align*}
  \FM \colon K^0_\amb(Y_-) \to K^0_\amb(Y_+) && E\mapsto (F_+)_\star (F_-)^\star (E)  
\end{align*}
given by the top triangle in \eqref{eq:Y_and_X}.
\end{enumerate}
\end{theorem}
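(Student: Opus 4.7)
The plan is to deduce Theorem~\ref{thm:main_theorem_ci} from Theorem~\ref{thm:main_theorem} by passing through the twisted Gromov--Witten theory of $X_\pm$ with respect to the bundles $E_\pm$. By Quantum Lefschetz and the toric mirror theorem of~\cite{CCIT,Cheong--Ciocan-Fontanine--Kim}, the big $I$-function $I_{Y,\pm}(\sfy,z)$ taking values in $H^\bullet_\amb(Y_\pm)$ is obtained from the toric $I$-function $I_{X,\pm}(\sfy,z)$ used in the proof of Theorem~\ref{thm:main_theorem} by inserting a hypergeometric modification factor built from the Chern roots of $E_\pm$ and the degrees $\langle c_1(\cL_j^\pm), d\rangle$ of the line-bundle summands $\cL_j^\pm$ of $E_\pm$. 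The ambient quantum connection of $Y_\pm$ is recovered from $I_{Y,\pm}$ in the standard way, so it suffices to analytically continue $I_{Y,-}$ into $I_{Y,+}$ across the global K\"ahler moduli space $\tcM^\circ$ and to identify the resulting linear transformation with the Fourier--Mukai transformation via the ambient Gamma-integral structure.

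Because the line-bundle summands of $E_\pm$ are both pulled back from $E_0 \to \overline{X}_0$, their Chern roots correspond under the canonical identification of divisor coordinates across the wall; in particular the hypergeometric modification factor on the $-$ side agrees, under this identification, with the modification factor on the $+$ side. Consequently, the Mellin--Barnes contour integral that computes the analytic continuation of $I_{X,-}$ in the distinguished variable $\sfy_r$, as carried out for Theorem~\ref{thm:main_theorem}, can be repeated almost verbatim for $I_{Y,-}$: the additional $\Gamma$-factors from $E_\pm$ are holomorphic in $\sfy_r$ and agree on both sides of the wall, so they pass unchanged through the residue calculation and simply multiply the resulting transformation. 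This yields the gauge equivalence $\Theta_Y(\tau,z)$, with homogeneity of degree zero and regularity at $z=0$ inherited from the corresponding properties of $\Theta_X$. When $Y$ is compact, pairing preservation follows because the ambient orbifold Poincar\'e pairing on $Y_\pm$ coincides, via $\iota_\pm^\star$, with the pairing on $H^\bullet_{\CR,T}(X_\pm)$ twisted by the Euler class of $E_\pm$, and Theorem~\ref{thm:main_theorem}(1) preserves this twisted pairing.

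For part~(2), the identification $\Theta_Y = \FM_Y$ reduces to the identification $\Theta_X = \FM_X$ from Theorem~\ref{thm:main_theorem}(2) by base change around the Cartesian squares in~\eqref{eq:Y_and_X}. The ambient Gamma-integral structure is set up so that $\iota_\pm^\star$ intertwines the $X$-side flat section $\frs_{X,\pm}(E)$, after a twist by $\hGamma(E_\pm)$ and the Euler class, with the $Y$-side ambient flat section $\frs_{Y,\pm}(\iota_\pm^\star E)$; dually, applying the projection formula to the top Cartesian square yields $\iota_+^\star \circ \FM_X = \FM_Y \circ \iota_-^\star$ on ambient $K$-theory. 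Threading these two compatibilities through Theorem~\ref{thm:main_theorem}(2) produces the required commutativity. The main obstacle I anticipate is careful control of the interplay between the twisting, the residue calculus, and the non-equivariant specialization: one must verify that the extra $\Gamma$-factors from $E_\pm$ do not introduce new poles that disturb the simple-pole structure on which the Mellin--Barnes argument of Theorem~\ref{thm:main_theorem} relies, and that the non-equivariant limit exists and commutes with restriction to the ambient subspace. The semiampleness of $E_0$ is precisely what makes Quantum Lefschetz applicable and bounds the location of the relevant poles, so it is what renders this bookkeeping tractable.
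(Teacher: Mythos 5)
Your proposal is essentially correct in spirit, but it takes a genuinely different route from the paper's, and a couple of the auxiliary steps are invoked with the wrong tools.

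The paper deduces Theorem~\ref{thm:main_theorem_ci} from Theorem~\ref{thm:main_theorem} by way of non-linear Serre duality: it passes to the total space $E_\pm^\vee$, which is again a toric Deligne--Mumford stack acted on by an enlarged torus $\tilde{T}=T\times(\Cstar)^k$, applies Theorem~\ref{thm:U} to $E_\pm^\vee$ verbatim, shows $\UU_{E^\vee}=\UU_X$ is independent of the extra equivariant parameters $\kappa_j$, and then recovers the complete-intersection statement by the shift $\kappa_j\mapsto -z$, $\lambda\mapsto 0$ applied to the $E_\pm^\vee$-side relation $I_{E_+^\vee}=\UU_{E^\vee}I_{E_-^\vee}$.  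You propose instead to run the Mellin--Barnes contour integral directly on the modified $I$-function $I_{X_\pm,Y_\pm}$.  Your key observation --- that the hypergeometric modification factor $\prod_j\prod_{a=1}^{E_j\cdot d}(v_j+az)$ is constant along the $e$-direction because $E_j\in W$ forces $E_j\cdot e=0$, so that this factor pulls out of the $k$-sum and is untouched by the residue calculus --- is correct and is precisely the reason the paper's $E_\pm^\vee$ argument also works (it is the reason $\UU_{E^\vee}$ commutes with the shift $\bbS$).  The paper's route buys modularity: it quotes Theorem~\ref{thm:U} as a black box for the toric stack $E_\pm^\vee$, handles the non-equivariant limit automatically, and packages the descent to the ambient parts in a single commutative cube.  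Your route is more direct but requires you to carry the extra factor through each stage of Theorem~\ref{thm:analytic_continuation} and then descend to the ambient subspace by hand.

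Two auxiliary steps in your argument need correction.  First, for the compatibility $\iota_+^\star\circ\FM_X=\FM_Y\circ\iota_-^\star$ you cite ``the projection formula'' applied to the top Cartesian square in \eqref{eq:Y_and_X}; but the relevant square is not flat, and the projection formula alone does not give the required base change.  The paper's Lemma~\ref{lem:FM_Y_X} uses Tor-independent base change, with Tor-independence established by the observation that the pullback of the Koszul resolution of $\cO_{Y_+}$ along $f_+$ is the Koszul resolution of $\cO_{\widetilde{Y}}$ (this is where transversality of the zero loci with the flopping locus is used).  Second, pairing-preservation for $\Theta_Y$ does not follow from Theorem~\ref{thm:main_theorem}(1) alone; you need the additional intertwining relation $\UU_X\,e(E_-)=e(E_+)\,\UU_X$, which comes from Theorem~\ref{thm:U}(2) and the fact that $E_\pm$ are pulled back from $\overline{X}_0$.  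The computation $(\iota_+^\star\alpha,\iota_+^\star\beta)_{Y_+}=(\alpha,e(E_+)\beta)_{X_+}=(\alpha,\UU_X^{-1}e(E_+)\UU_X\beta)_{X_+}$ etc.\ goes through only once that intertwining is in hand.  With those two repairs, your more direct proof should go through, and it has the pedagogical advantage of making the ``modification factor rides along'' mechanism explicit rather than implicit in the $E_\pm^\vee$ bookkeeping.
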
 

\noindent As before, Theorem~\ref{thm:main_theorem_ci} is slightly imprecisely stated:  precise statements can be found as Theorems~\ref{thm:global_qconn_ci},~\ref{thm:U_ci}, and~\ref{thm:CTC_qconn_ci} below.  Arguing as in~\S\ref{sec:toric_case} shows that Theorem~\ref{thm:main_theorem_ci} implies the CIT/Ruan version of the Crepant Transformation Conjecture for $\varphi \colon Y_+ \dashrightarrow Y_-$ whenever it makes sense, with the corresponding map 
\[
\UU_Y \colon \cH_\amb(Y_-) \to \cH_\amb(Y_+)
\]
between the ambient parts of the Givental spaces for $Y_\pm$ being given by:
\[
\UU_Y = (L_+^\amb)^{-1} \Theta_Y L_-^\amb
\]
where $L_\pm^\amb$ are the fundamental solutions for the quantum connections on the ambient parts $H_\amb^\bullet(Y_\pm)$.   

The proof of Theorem~\ref{thm:main_theorem_ci} relies on the Mirror Theorem for toric complete intersections~\cite{CCIT:applications}, and on \emph{non-linear Serre duality}~\cite{Givental:equivariant,Givental:elliptic,Coates--Givental,Tseng}, which relates the quantum cohomology of $Y_\pm$ to the quantum cohomology of the total space of the dual bundles $E_\pm^\vee$.  Since $E_\pm^\vee$ is toric, it can be analyzed using Theorem~\ref{thm:main_theorem}.  

\begin{remark}
  The idea of using non-linear Serre duality to analyze wall-crossing has been developed independently by Lee--Priddis--Shoemaker~\cite{Lee--Priddis--Shoemaker}, in the context of the Landau--Ginzburg/ Calabi--Yau correspondence.
\end{remark}

\begin{example}
  A mirror $Y$ to the quintic $3$-fold arises~\cite{Greene--Plesser,CdOGP,Batyrev} as a crepant resolution of an anticanonical hypersurface in $X = \big[ \PP^4 / (\ZZ/5\ZZ)^3 \big]$.  A mirror theorem for $Y$ has been proved by Lee--Shoemaker~\cite{Lee--Shoemaker}.  The variety $Y$ is a Calabi--Yau $3$-fold with $h^{1,1}(Y) = 101$.  There are many birational models of $Y$ as toric hypersurfaces, corresponding to the many different lattice triangulations of the boundary of the fan polytope for $X$. Theorem~\ref{thm:main_theorem_ci} implies that the quantum connections (and quantum cohomology algebras) of all of these birational models become isomorphic after analytic continuation over the K\"ahler moduli space (which is $101$-dimensional), and that the isomorphisms involved arise from Fourier--Mukai transformations.
\end{example}

\subsection{A Note on Hypotheses}

Since we work with $T$-equivariant Gromov--Witten invariants of the toric Deligne--Mumford stacks $X_\pm$, we  
do not need to assume that the coarse moduli spaces $|X_\pm|$ 
of $X_\pm$ are projective.   We insist instead that $|X_\pm|$ is semi-projective, 
i.e.~that $|X_\pm|$ is projective over the affinization 
$\Spec(H^0(|X_\pm|,\cO))$, and also that $X_\pm$ contains 
at least one torus fixed point.   These conditions are equivalent to demanding that $X_\pm$ is 
obtained as the GIT quotient $\big[\CC^m /\!\!/_\omega K\big]$ of a vector space by the linear action of a complex torus $K$; they ensure that the equivariant quantum cohomology of $X_\pm$
admits a non-equivariant limit.  In particular, therefore,  the non-equivariant version of the Crepant Transformation Conjecture
follows automatically from Theorem~\ref{thm:main_theorem}. 

We do not assume, either, that the stacks $X_\pm$ or $Y_\pm$ satisfy any sort of positivity or weak Fano condition; put differently, we do not impose any additional convergence hypotheses on the $I$-functions for $X_\pm$ and $Y_\pm$.  This extra generality is possible because of our hybrid formal/analytic approach, where we single out one variable $\sfy_r$ and analytically continue in that variable alone. The same technique allows us to describe the analytic continuation of \emph{big} quantum cohomology (or its ambient part), as opposed to small quantum cohomology.  In general, obtaining convergence results for big quantum cohomology is hard.

\subsection{The Hemisphere Partition Function} 

Recently there was some progress in physics in the exact computation of hemisphere partition functions for gauged linear sigma models.  Hori--Romo~\cite{Hori--Romo} explained why the Mellin--Barnes analytic continuation of hemisphere partition functions should be compatible with brane transportation~\cite{Herbst--Hori--Page} in the B-brane category.  In the language of this paper, the hemisphere partition function corresponds to a component of the $K$-theoretic flat section $\frs(E)$, and brane transportation corresponds to the Fourier--Mukai transformation.  Theorem~\ref{thm:main_theorem} thus confirms the result of Hori--Romo.  Note that the relevant equivalence between B-brane categories should depend on a choice of a path of analytic continuation, and that the Fourier--Mukai transformation in Theorem~\ref{thm:main_theorem} corresponds to a specific choice of path (see Figure~\ref{fig:path_ancont}).

\subsection{Plan of the Paper} 

We fix notation for equivariant Gromov--Witten invariants and equivariant quantum cohomology in~\S\ref{sec:equivariant_qc}, and introduce the equivariant Gamma-integral structure in~\S\ref{sec:integral_structure}.  We establish notation for toric Deligne--Mumford stacks in~\S\ref{sec:notation}.  In \S\ref{sec:wall-crossing} we study $K$-equivalences $\varphi \colon X_+ \dashrightarrow X_-$ of toric Deligne--Mumford stacks arising from wall-crossing, constructing global versions of the equivariant quantum connections for $X_\pm$.  We prove the Crepant Transformation Conjecture for toric Deligne--Mumford stacks (Theorem~\ref{thm:main_theorem}) in~\S\ref{sec:CRC}, and the Crepant Transformation Conjecture for toric complete intersections (Theorem~\ref{thm:main_theorem_ci}) in~\S\ref{sec:CRC_ci}. 

\subsection{Notation} \label{sec:standing_notation} 
We use the following notation throughout the paper. 

\begin{itemize}
\item $X$ denotes a general smooth Deligne--Mumford stack in~\S\ref{sec:equivariant_qc} and~\S\ref{sec:integral_structure}; it denotes a smooth toric Deligne--Mumford stack in~\S\ref{sec:notation} and later.  
\item $T = (\Cstar)^m$. 
\item $R_T = H^\bullet_T({\rm pt},\CC)$.
\item $\lambda_j \in H^2_T({\rm pt},\CC) = \Lie(T)^\star$ is the character of $T = (\Cstar)^m$ given by projection to the $j$th factor, so that $R_T = \CC[\lambda_1,\ldots,\lambda_m]$.
\item $S_T$ is the localization of $R_T$ with respect to the set of non-zero homogeneous elements.
\item $\ZZ[T] = K^\bullet_T({\rm pt})$, so that $\ZZ[T] = \ZZ[e^{\pm\lambda_1},\ldots,e^{\pm\lambda_m}]$.
\item $\bmu_l = \{ z \in \Cstar : z^l = 1\}$ is a cyclic group of order $l$.
\end{itemize} 

\section{Equivariant Quantum Cohomology} 
\label{sec:equivariant_qc} 

In this section we establish notation for various objects in equivariant Gromov--Witten theory.  We introduce equivariant Chen--Ruan cohomology in~\S\ref{sec:CR}, equivariant Gromov--Witten invariants in~\S\ref{sec:GW},  equivariant quantum cohomology in~\S\ref{sec:QC}, Givental's symplectic formalism in~\S\ref{sec:Givental_cone},  and the equivariant quantum connection in~\S\ref{sec:quantum_connection}. 

\subsection{Smooth Deligne--Mumford stacks with Torus Action}
\label{sec:conditions}

Let $X$ be a smooth Deligne--Mumford stack of finite type over $\CC$ 
equipped with an action of an algebraic torus $T \cong (\CC^\times)^m$. 
Let $|X|$ denote the coarse moduli space of $X$ and 
let $IX$ denote the inertia stack $X \times_{|X|} X$ of $X$: a 
point on $IX$ is given by a pair $(x,g)$ with $x\in X$ 
and $g\in \Aut(x)$. We write 
\[
IX = \bigsqcup_{v\in \sfB} X_v 
\]
for the decomposition of $IX$ into connected components.  We assume the following conditions: 
\begin{enumerate} 
\item the coarse moduli space $|X|$ is semi-projective, i.e.~is projective over the affinization $\Spec H^0(|X|,\cO) = \Spec H^0(X,\cO)$; \label{condition:1}
\item all the $T$-weights appearing in the $T$-representation $H^0(X,\cO)$ are contained in a strictly convex cone in $\Lie(T)^*$, and the $T$-invariant subspace $H^0(X,\cO)^T$ is $\CC$; \label{condition:2}
\item the inertia stack $IX$ is equivariantly formal, that is, the $T$-equivariant cohomology $H_T^\bullet(IX;\CC)$ is a free module over $R_T := H^\bullet_T({\rm pt};\CC)$ and one has a (non-canonical) isomorphism of $R_T$-modules
$H_{T}^\bullet(IX;\CC) \cong  H^\bullet(IX;\CC) \otimes_\CC R_T$. \label{condition:3}
\end{enumerate}
These conditions allow us to define Gromov--Witten invariants of $X$ and also the equivariant (Dolbeault) index of coherent sheaves on $X$.  The first and second conditions together imply that the fixed set $X^T$ is compact.  The third condition seems to be closely related to the first two, but it implies for example the localization of equivariant cohomology: the restriction $H^\bullet_T(IX;\CC) \to H^\bullet_T(IX^T;\CC)$ to the $T$-fixed locus is injective and becomes an isomorphism after localization (see \cite{GKM}).  Later we shall restrict to the case where $X$ is a toric Deligne--Mumford stack, where conditions (\ref{condition:1}--\ref{condition:3}) automatically hold, but the definitions in this section make sense for general $X$ satisfying these conditions.

\subsection{Equivariant Chen--Ruan Cohomology} 
\label{sec:CR}
Let $H_{\CR,T}^\bullet(X)$ denote the even part of the $T$-equivariant orbifold cohomology group of Chen and Ruan.   It is defined as the even degree part of the $T$-equivariant cohomology 
\[
H_{\CR,T}^k(X) = \bigoplus_{v \in \sfB : k- 2 \iota_v \in  2\ZZ} 
H_T^{k-2\iota_v}(X_v;\CC) 
\]
of the inertia stack $IX$.  The grading of $H_{\CR,T}^\bullet(X)$ is shifted from that of $H_{T}^\bullet(IX)$ by the so-called \emph{age} or \emph{degree shifting number} $\iota_v\in \QQ$ \cite{Chen--Ruan:orbifold}; note that we consider only the even degree classes in $H_T^\bullet(IX)$. (For toric stacks, all cohomology classes on $IX$ are of even degree.) Equivariant formality of $IX$ gives that $H^\bullet_{\CR,T}(X)$ is a free module over $R_T$.  We write 
\begin{align*}
  (\alpha,\beta)  = \int_{IX} \alpha \cup \inv^* \beta, && \alpha,\beta \in H_{\CR,T}^\bullet(X) 
\end{align*}
for the equivariant orbifold Poincar\'{e} pairing: here $\inv \colon IX \to IX$ denotes the involution on the inertia stack $IX$ that sends a point $(x,g)$ with $x\in X$, $g\in \Aut(x)$ to $(x,g^{-1})$.  Since $X$ is not necessarily proper, the equivariant integral on the right-hand side here is defined via the Atiyah--Bott localization formula~\cite{Atiyah--Bott} and takes values in the localization $S_T$ of $R_T$ with respect to the multiplicative set of non-zero homogeneous elements\footnote{Note that $R_T \subsetneq S_T \subsetneq \Frac(R_T)$; we use $S_T$ instead of $\Frac(R_T)$ since we need a grading on $S_T$ later.} in $R_T$.  

\subsection{Equivariant Gromov--Witten Invariants} 
\label{sec:GW}

Let $X_{g,n,d}$ denote the moduli space of degree-$d$ stable maps to $X$ from genus $g$ orbifold curves with $n$ marked points \cite{Abramovich--Graber--Vistoli:1,Abramovich--Graber--Vistoli:2}; here $d\in H_2(|X|;\ZZ)$.  The moduli space carries a $T$-action and a virtual fundamental cycle  $[X_{g,n,d}]^{\rm vir} \in A_{\bullet,T}(X_{g,n,d};\QQ)$.  There are $T$-equivariant evaluation maps $\ev_i \colon X_{g,n,d} \to \overline{IX}$, $1 \leq i \leq n$, to the rigidified inertia stack $\overline{IX}$ (see \cite{Abramovich--Graber--Vistoli:2}).  Let $\psi_i\in H^2_T(X_{g,n,d})$ denote the psi-class at the $i$th marked point, i.e.~the equivariant first Chern class of the $i$th universal cotangent line bundle $L_i \to X_{g,n,d}$. For $\alpha_1,\dots,\alpha_n \in H_{\CR, T}^\bullet(X)$ and non-negative integers $k_1,\dots,k_n$, 
the \emph{$T$-equivariant Gromov--Witten invariant}  is defined to be: 
\begin{equation}
  \label{eq:Gromov--Witten}
  \correlator{\alpha_1 \psi^{k_1},\dots,\alpha_n \psi^{k_n}}_{g,n,d}^X 
  = \int_{[X_{g,n,d}]^{\rm vir}} 
  \prod_{i=1}^n (\ev_i^* \alpha_i) \psi_i^{k_i}  
\end{equation}
where we regard $\alpha_i$ as a class in $H_T^\bullet(\overline{IX})$ via the canonical isomorphism $H_T^\bullet(\overline{IX})  \cong H_T^\bullet(IX)$.  The moduli space here is not necessarily proper: the right-hand side is again defined via the Atiyah--Bott localization formula and so belongs to $S_T$. Conditions~\eqref{condition:1} and~\eqref{condition:2} in~\S\ref{sec:conditions} ensure that the $T$-fixed locus $X_{g,n,d}^T$ in the moduli space is compact, and thus that the right-hand side of \eqref{eq:Gromov--Witten} is well-defined.

\subsection{Equivariant Quantum Cohomology} 
\label{sec:QC}
Consider the cone $\NE(X)\subset H_2(|X|,\RR)$ generated by classes of effective curves and set $\NE(X)_\ZZ := \{ d \in H_2(|X|,\ZZ) : d \in \NE(X)\}$.  For a ring $R$, define $R[\![Q]\!]$ to be the ring of formal power series with coefficients in $R$: 
\[
R[\![Q]\!] = \left\{ \sum_{d \in \NE(X)_\ZZ} a_d Q^d : a_d \in R \right\}
\]
so that $Q$ is a so-called \emph{Novikov variable}~\cite[III~5.2.1]{Manin}.  Let $\phi_0,\phi_1,\dots,\phi_N$ be a homogeneous basis for $H_{\CR,T}^\bullet(X)$ over $R_T$ and let $\tau^0,\tau^1,\dots,\tau^N$ be the corresponding linear co-ordinates.  We assume that $\phi_0 = 1$ and $\phi_1,\dots,\phi_r \in H^2_T(X)$ are degree-two untwisted classes that induce a $\CC$-basis of $H^2(X;\CC) \cong H^2_T(X)/H^2_T({\rm pt})$.  We write $\tau = \sum_{i=0}^N \tau^i \phi_i$ for a general element of $H_{\CR,T}^\bullet(X)$.  The \emph{equivariant quantum product} $\star_\tau$ at $\tau \in H_{\CR,T}^\bullet(X)$ is defined by the formula 
\[
(\phi_i \star_\tau \phi_j, \phi_k) = \sum_{d \in \NE(X)_\ZZ} \sum_{n=0}^\infty \frac{Q^d}{n!}  \correlator{\phi_i,\phi_j,\phi_k, \tau,\dots,\tau}_{0,n+3,d}^X 
\] 
or, equivalently, by 
\begin{equation} \label{eq:qprod_pushforward} 
\phi_i \star_\tau \phi_j = \sum_{d\in \NE(X)_\ZZ} \sum_{n=0}^\infty \frac{Q^d}{n!}  \inv^* \ev_{3,*} \left (\ev_1^*(\phi_i) \ev_2^*(\phi_j) \prod_{l=4}^{n+3} \ev_l^*(\tau) \cap [X_{0,n+3,d}]^{\rm vir} \right).  
\end{equation} 
Conditions~\eqref{condition:1} and \eqref{condition:2} in~\S\ref{sec:conditions} ensure that $\ev_3: X_{0,n+3,d} \to \overline{IX}$ is proper, and thus that the push-forward along $\ev_3$ is well-defined without inverting equivariant parameters.  It follows that: 
\[ 
\phi_i \star_\tau \phi_j \in H_{\CR,T}^\bullet(X) \otimes_{R_T} R_T[\![\tau,Q]\!]  
\] 
where $R_T[\![\tau,Q]\!] = R_T[\![\tau^0,\dots,\tau^N]\!][\![Q]\!]$.  The product $\star_\tau$ defines an associative and commutative ring structure on $H_{\CR,T}^\bullet(X)\otimes_{R_T} R_T[\![\tau,Q]\!]$.  The non-equivariant limit of $\star_\tau$ exists, and this limit defines the non-equivariant quantum cohomology $\big(H_{\CR}^\bullet(X) \otimes_\CC \CC[\![\tau,Q]\!],\star_\tau\big)$.  

\begin{remark} \label{rem:divisoreq_qprod} The divisor equation \cite[Theorem 8.3.1]{Abramovich--Graber--Vistoli:2} implies that exponentiated $H^2$-variables and the Novikov variable $Q$ play the same role: one has 
\[ 
(\phi_i \star_\tau \phi_j, \phi_k) = 
\sum_{d \in \NE(X)_\ZZ} \sum_{n=0}^\infty \frac{Q^de^{\<\sigma,d\>}}{n!}  \correlator{\phi_i,\phi_j,\phi_k, \tau',\dots,\tau'}_{0,n+3,d}^X 
\] 
where $\tau = \sigma + \tau'$ with $\sigma = \sum_{i=1}^r \tau^i \phi_i$ and $\tau' = \tau_0 \phi_0 + \sum_{i=r+1}^N \tau^i \phi_i$.  The String Equation (ibid.) implies that the right-hand side here is in fact independent of $\tau_0$.
\end{remark}

\subsection{Givental's Lagrangian Cone} 
\label{sec:Givental_cone} 
Let $S_T(\!(z^{-1})\!)$ denote the ring of formal Laurent series in $z^{-1}$ with coefficients in $S_T$.   Givental's symplectic vector space is the space
\[
\cH = H_{\CR,T}^\bullet(X)\otimes_{R_T} S_T(\!(z^{-1})\!)[\![Q]\!] 
\]
equipped with the non-degenerate $S_T[\![Q]\!]$-bilinear alternating form: 
\[
\Omega(f, g) = - \Res_{z=\infty} (f(-z), g(z)) dz 
\]
with $f,g \in \cH$. The space is equipped with a standard polarization 
\[
\cH = \cH_+ \oplus \cH_- 
\]
where 
\begin{align*}
\cH_+ := H^\bullet_{\CR,T}(X) \otimes_{R_T} S_T[z][\![Q]\!] 
&&\text{and} &&
\cH_- := z^{-1} H^\bullet_{\CR,T}(X) \otimes_{R_T} S_T[\![z^{-1}]\!] [\![Q]\!] 
\end{align*}
are isotropic subspaces for $\Omega$. The standard polarization identifies $\cH$ with the cotangent 
bundle of $\cH_+$.  The \emph{genus-zero descendant Gromov--Witten potential}  is a formal function $\cF^0_X \colon (\cH_+,{-z} 1) \to S_T[\![Q]\!]$ defined on the formal neighbourhood of $-z\cdot 1$ in $\cH_+$ and taking values in $S_T[\![Q]\!]$:
\[
\cF^0_X(-z 1 + \bt(z)) = \sum_{d\in \NE(X)_\ZZ} 
\sum_{n=0}^\infty 
\frac{Q^d}{n!} 
\correlator{\bt(\psi),\dots,\bt(\psi)}_{0,n,d}^X  
\]
Here $\bt(z) = \sum_{n=0}^\infty t_n z^n$ with $t_n \in H_{\CR,T}^\bullet(X)\otimes_{R_T} S_T[\![Q]\!]$.  Let $\{\phi^i\}\subset H^\bullet_{\CR,T}(X) \otimes_{R_T} S_T$ denote the basis Poincar\'{e} dual to $\{\phi_i\}$, so that $(\phi_i,\phi^j) = \delta_i^j$.

\begin{definition}[\cite{Givental:symplectic, CCIT}]
\label{def:Lag_cone}
\emph{Givental's Lagrangian cone} $\cL_X\subset (\cH,-z1)$ is the graph of the differential $d \cF^0_X \colon \cH_+ \to T^* \cH_+ \cong \cH$. It consists of points of $\cH$ of the form: 
\begin{equation} 
\label{eq:point_on_cone}
-z 1 + \bt(z) + \sum_{d\in \NE(X)_\ZZ} 
\sum_{n=0}^\infty \sum_{i=0}^N 
\frac{Q^d}{n!} 
\correlator{\frac{\phi_i}{-z-\psi}, \bt(\psi),\dots,\bt(\psi)}_{0,n+1,d} 
\phi^i 
\end{equation} 
where $1/(-z-\psi)$ in the correlator should be expanded as the power series $\sum_{k=0}^\infty \psi^k (-z)^{-k-1}$ in $z^{-1}$.  
In a more formal language, we define the notion of a `point on $\cL_X$' as follows. Let $x=(x_1,\dots,x_n)$ be formal parameters. 
An \emph{$S_T[\![Q,x]\!]$-valued point} on $\cL_X$ is an element of $\cH[\![x]\!]$ of the form \eqref{eq:point_on_cone} with $\bt(z) \in \cH_+[\![x]\!]$ satisfying 
\[
\bt(z)|_{Q=x =0} = 0. 
\]
It should be thought of as a formal family of elements on $\cL_X$ parametrized by $x$.
\end{definition} 

The submanifold $\cL_X$ encodes all genus-zero Gromov--Witten invariants \eqref{eq:Gromov--Witten}.  It has the following special geometric properties \cite{Givental:symplectic}: \emph{it is a cone, and a tangent space $T$ of $\cL_X$ is tangent to $\cL_X$ 
exactly along $zT$}.  Knowing Givental's Lagrangian cone $\cL_X$ is equivalent to knowing the data of the quantum product $\star_\tau$, i.e.~$\cL_X$ can be reconstructed from $\star_\tau$ and vice versa.  See Remark~\ref{rem:fundamentalsol_cone}. 

\subsection{The Equivariant Quantum Connection and its Fundamental Solution} 
\label{sec:quantum_connection}
Let $v\in H_{\CR,T}^\bullet(X)$.  The equivariant quantum connection
\[
\nabla_v \colon H_{\CR,T}^\bullet(X) \otimes_{R_T} 
R_T[z][\![\tau,Q]\!] \to z^{-1} H_{\CR,T}^\bullet(X) \otimes_{R_T} R_T[z][\![\tau,Q]\!]
\]
is defined by 
\[
\nabla_v f(\tau) = \partial_v f(\tau) + z^{-1} v \star_\tau f(\tau)  
\]
where $\partial_v f(\tau) = \frac{d}{ds}f(\tau+sv)|_{s=0}$ is the directional derivative.  We write  $\nabla_i$ for $\nabla_{\phi_i}$ and  $\nabla f$ for $\sum_{i=0}^N (\nabla_i f) d\tau^i$.   The associativity of $\star_\tau$ implies that the  connection $\nabla$ is flat, that is, $[\nabla_i,\nabla_j] = 0$ for all $i$,~$j$.  Let $\rho$ denote the equivariant first Chern class (in the untwisted sector): 
\[
\rho := c_1^T(TX) \in H^2_T(X) \subset H^2_{\CR,T}(X)
\] 
For $\phi \in H^\bullet_{\CR,T}(X)$, we write $\deg \phi$ for the age-shifted (real) degree of $\phi$, so that $\phi\in H^{\deg \phi}_{\CR,T}(X)$. The equivariant Euler vector field $\cE$ and the grading operator $\mu\in \End_\CC(H_{\CR,T}^\bullet(X))$ are defined by 
\begin{align}
\label{eq:Euler_mu}
\begin{split}   
\cE &:= \sum_{i=1}^m \lambda_i \parfrac{}{\lambda_i} 
+ \sum_{i=0}^N 
\left(1- \frac{\deg \phi_i}{2}\right) \tau^i \parfrac{}{\tau^i} 
+ \partial_\rho  \\
\mu(\phi) &:= \left( 
\frac{\deg\phi}{2} - \frac{\dim_\CC X}{2} \right) \phi  
\end{split} 
\end{align} 
where $\lambda_1,\dots,\lambda_m\in H^2_T({\rm pt})$ are generators of $R_T$ (see \S\ref{sec:standing_notation}). The grading operator on $H_{\CR,T}^\bullet(X)\otimes_{R_T}R_T[z][\![\tau,Q]\!]$ is defined by 
\[
\Gr(f(\tau,z) \phi) = \left(\Big( \textstyle z\parfrac{}{z} + \cE\Big) f(\tau,z)\right) 
\phi + f(\lambda,\tau,z) \mu (\phi)  
\]
where $\phi \in H_{\CR,T}^{\bullet}(X)$ and $f(\lambda,\tau,z) \in R_T[z][\![\tau,Q]\!]$.  The quantum connection is compatible with the grading operator in the sense that $[\Gr, \nabla_i]=\nabla_{[\cE,\partial_{\tau^i}]} = (\frac{1}{2} \deg \phi_i - 1) \nabla_i$, $i=0,\dots,N$.  This follows from the virtual dimension formula for the moduli space of stable maps.

\begin{notation} 
Let $v\in H^2_T(X)$ be a degree-two class in the 
untwisted sector. The action of $v$ on $H^\bullet_{\CR,T}(X)$ 
is defined by $v \cdot \alpha = q^*(v) \cup \alpha$, where 
$q\colon IX \to X$ is the natural projection. 
(This coincides with the action of $v$ via the 
Chen--Ruan cup product.) 
\end{notation} 

Consider the flat section equations for $\nabla$, and a fundamental solution 
\[
L(\tau,z) \in \End_{R_T}(H^\bullet_{\CR,T}(X)) \otimes_{R_T} R_T(\!(z^{-1})\!)[\![\tau,Q]\!]
\]
determined by the following conditions:
\begin{align}
\label{eq:flatness}
\nabla_i L(\tau,z) \phi & = 0 && \text{for $i=0,\dots, N$} 
& &\text{(flatness)}  \\ 
\label{eq:divisor} 
\left(  v Q \parfrac{}{Q} - \partial_v \right)L(\tau,z)\phi 
& = L(\tau,z) \frac{v}{z}\phi && \text{for $v\in H^2_T(X)$}
& & \text{(divisor equation)} \\ 
\label{eq:initial} 
L(\tau,z) |_{\tau=Q=0} & = \id &&
& & \text{(initial condition)} 
\end{align} 
Here $\phi\in H^\bullet_{\CR,T}(X)$ and $v Q \parfrac{}{Q}$ with $v\in H^2_T(X)$ acts 
on Novikov variables as $Q^d \mapsto \<v,d\> Q^d$ 
(it acts by zero when $v\in H^2_{T}({\rm pt})
\subset H^2_T(X)$). 
The flatness equation fixes  $L(\tau,z)$ up to 
right multiplication by an endomorphism-valued function 
$g(z;Q)$ in $z$ and $Q$; the divisor equation implies 
that the ambiguity $g(z;Q)$ is independent of $Q$ and 
commutes with $v \cup$, $v\in H^2_T(X)$; 
finally the initial condition fixes $L(\tau,z)$ uniquely. 
The fundamental solution satisfying these 
conditions can be written explicitly in terms of (descendant) Gromov--Witten invariants: 
\begin{equation} 
\label{eq:L_descendant} 
L(\tau,z) \phi_i = \phi_i + 
\sum_{j=0}^N \sum_{d\in \NE(X)_\ZZ} 
\sum_{\substack{n=0 \\ (n\ge 1\text{ if }d=0)}}^\infty 
\frac{Q^d}{n!} 
\correlator{\frac{\phi_i}{-z-\psi},\tau,\dots,\tau,\phi_j}_{0,n+2,d}^X 
\phi^j
\end{equation} 
This is defined over $R_T$ (without inverting equivariant parameters) because it can be rewritten in terms of the push-forward along the last evaluation map $\ev_{n+2}$ as in \eqref{eq:qprod_pushforward}.

\begin{proposition} 
\label{prop:fundsol} 
The fundamental solution $L(\tau,z)$ in \eqref{eq:L_descendant} 
satisfies the conditions (\ref{eq:flatness}--\ref{eq:initial}).
Furthermore it satisfies:  
\begin{align*}  
L(\tau,z) & = \id + O(z^{-1}) & & \text{\rm (regularity at $z=\infty$)}  \\
\Gr L(\tau,z) \phi  &= 
L(\tau,z) \left(\mu - \frac{\rho}{z} \right) \phi 
& & \text{\rm (homogeneity)} \\ 
(\alpha,\beta) & =(L(\tau,-z) \alpha, L(\tau,z)\beta)  
& & \text{\rm (unitarity)} 
\end{align*}
where $\phi,\alpha,\beta\in H^\bullet_{\CR,T}(X)$. 
\end{proposition}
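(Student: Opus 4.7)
The plan is to verify the six properties of $L(\tau,z)$ in turn.  The initial condition \eqref{eq:initial} is immediate: at $\tau=Q=0$ every summand in \eqref{eq:L_descendant} is absent (the range requires $n\ge 1$ when $d=0$), so $L(0,0,z) = \id$.  Regularity at $z=\infty$ is equally transparent, since the geometric expansion
\[
\frac{1}{-z-\psi} = -\sum_{k\ge 0}\psi^k(-z)^{-k-1}
\]
forces every non-trivial term of $L-\id$ to carry at least one negative power of~$z$.

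For flatness \eqref{eq:flatness}, I would apply $\partial_{\tau^i}$ to \eqref{eq:L_descendant}, which produces an extra $\phi_i$-insertion among the $\tau$'s, and then invoke the topological recursion relation on $\overline{\cM}_{0,4}$ together with the splitting axiom along the boundary stratum of $X_{0,n+3,d}$ separating the descendant marking from the markings carrying $\phi_i$ and~$\phi_j$.  The resulting boundary sum reorganizes exactly into $-z^{-1}(\phi_i\star_\tau)L(\tau,z)\phi$, giving $\nabla_i L = 0$.  The divisor equation \eqref{eq:divisor} then follows from the orbifold divisor axiom applied to \eqref{eq:L_descendant}, combined with a geometric-series manipulation of $1/(-z-\psi)$ around the descendant marking that converts the $\<v,d\>$ factor produced by the divisor axiom into the $v/z$ appearing on the right-hand side.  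Homogeneity is a degree-counting exercise: the virtual dimension formula $\vdim X_{0,n+2,d} = \dim_\CC X + \<\rho,d\> + n - 1$, combined with the action of $\cE$ on Novikov variables (sending $Q^d\mapsto \<\rho,d\>Q^d$) and on $\tau$-variables, balances the weights of $z\partial_z$, $\cE$, and $\mu$ against the $-\rho/z$ correction predicted by \eqref{eq:divisor} with $v=\rho$.

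The main obstacle is unitarity, which I would prove indirectly.  Set $M(\tau,Q,z) := L(\tau,-z)^* L(\tau,z)$, where $^*$ denotes the adjoint with respect to the equivariant orbifold pairing; the claim is equivalent to $M\equiv \id$.  Differentiating and invoking flatness for both factors gives
\[
\partial_{\tau^i} M = z^{-1} L(\tau,-z)^* (\phi_i\star_\tau) L(\tau,z) - z^{-1} L(\tau,-z)^* (\phi_i\star_\tau) L(\tau,z) = 0,
\]
where the cancellation uses self-adjointness of $\phi_i\star_\tau$ — itself an immediate consequence of the $S_3$-symmetry of three-point Gromov--Witten invariants.  A parallel manipulation with the divisor equation \eqref{eq:divisor} yields $vQ\partial_Q M = z^{-1}[M,v]$ for $v\in H^2_T(X)$, where $v$ acts by classical multiplication on cohomology.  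Passing to the localized fixed-point basis (which is injective by equivariant formality), where classical multiplication by~$v$ is diagonal with eigenvalue $v|_p$ at the fixed point~$p$, the $Q^d$-coefficient $M_d$ with $d\ne 0$ must satisfy
\[
z\<v,d\>(M_d)^q_p = (v|_p - v|_q)(M_d)^q_p
\]
for every $v\in H^2_T(X)$.  Since the left-hand side is a scalar multiple of~$z$ while the right is a polynomial in the equivariant parameters $\lambda_j$, both sides must vanish, forcing $(M_d)^q_p = 0$ for all $d > 0$.  Combined with $M|_{\tau=Q=0} = \id$, this yields $M\equiv\id$ and completes the proof.
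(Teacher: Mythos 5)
Your proof is correct, and for most of the statements (initial condition, regularity at $z=\infty$, flatness via topological recursion relations, divisor equation, homogeneity via the dimension axiom) it runs parallel to the paper's, which mostly cites the standard references for these facts.  Where you genuinely diverge is on unitarity.  The paper differentiates $(L(\tau,-z)^\dagger\alpha, L(\tau,z)^\dagger\beta)$ — note the adjoint — with respect to $vQ\partial_Q - \partial_v$: the divisor equation for $L^\dagger$ places the factor $v/z$ on the \emph{left} of $L^\dagger$, so the two terms cancel directly by self-adjointness of $v\,\cup$ and one obtains $Q$-independence without any localization.  You instead work with $M := L(\tau,-z)^* L(\tau,z)$ directly, where the cancellation does not happen; you get the commutator equation $vQ\partial_Q M = z^{-1}[M,v]$, and then you kill the $Q^d$-coefficients ($d\ne 0$) by restricting to the $T$-fixed-point basis and exploiting the incompatibility between the $z$-factor and the $\lambda$-polynomial.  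That argument is valid — for $d\ne 0$ pick $v$ with $\langle v,d\rangle \ne 0$; then $z\langle v,d\rangle - (v|_p - v|_q)$ is a nonzero polynomial and hence a non-zero-divisor in $S_T(\!(z^{-1})\!)$, forcing $(M_d)^q_p = 0$ (this is the precise form of your "both sides must vanish" remark, which as stated is a little loose because $(M_d)^q_p$ itself carries both $z$ and $\lambda$).  Your route buys explicitness at the cost of invoking equivariant formality and localization; the paper's adjoint trick is cleaner and works without assuming a fixed-point basis, which would matter if one wanted the argument in a setting where localization is unavailable.
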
 
\begin{proof} 
The fundamental solution \eqref{eq:L_descendant} 
is well-known: see \cite[Corollary 6.3]{Givental:equivariant}, 
\cite[Proposition 2]{Pandharipande:afterGivental}, 
\cite[Proposition 2.4]{Iritani}. 
The flatness equation follows from the topological recursion 
relations as explained in \cite[Proposition 2]{Pandharipande:afterGivental}; 
the divisor equation follows from the one 
\cite[Theorem 8.3.1]{Abramovich--Graber--Vistoli:2} for descendant 
Gromov--Witten invariants; 
the initial condition is obvious from formula~\eqref{eq:L_descendant}. 
Regularity at $z=\infty$ is also clear straight from the definition. 
Decompose $\tau = \sigma + \tau'$ where 
$\sigma = \sum_{i=0}^r \tau^i \phi_i$ 
and $\tau' = \sum_{i=r+1}^N \tau^i \phi_i$ 
(recall that $\phi_0 = 1$ and that $\phi_1,\dots,\phi_r$ induce a 
basis of $H^2_T(X)/H^2_T({\rm pt})$). 
The string and divisor equations \cite[Theorem 8.3.1]{Abramovich--Graber--Vistoli:2} 
imply that one has $L(\tau,z) \phi_i = 
S(\tau',z; Q e^{\sigma}) (e^{-\sigma/z}\phi_i)$ with:
\[
S(\tau',z; Q e^\sigma) \alpha = \alpha + 
\sum_{j=0}^N \sum_{d\in \NE(X)_\ZZ} \sum_{\substack{n=0 \\ 
(n\ge 1\text{ if }d=0)}}^\infty 
\frac{e^{\<\sigma,d\>}Q^d}{n!} 
\correlator{\frac{\alpha}{-z-\psi},
\tau',\dots,\tau',\phi_j}_{0,n+2,d} \phi^j  
\]
The dimension axiom shows that $S$ is homogeneous: 
$\Gr \circ S(\tau',z; Q e^\sigma)  = 
S(\tau',z;Q e^\sigma) \circ \Gr$. 
The homogeneity 
equation for $L(\tau,z)$ follows from this.  
The flatness equation, together with 
the Frobenius property of $\star_\tau$, shows that 
\[
\partial_i 
\big(L(\tau,-z)\alpha,L(\tau,z)\beta\big) 
= \big(\overline{\nabla}_i L(\tau,-z)\alpha, L(\tau,z) \beta\big)
+ \big(L(\tau,-z) \alpha, \nabla_i L(\tau,z)\beta\big)=0
\]
for $\overline{\nabla}_i =  \partial_i - z^{-1} \phi_i\star_\tau$. 
Thus the pairing $(L(\tau,-z)\alpha,L(\tau,z) \beta)$ does not depend on 
$\tau$. Consider the adjoint 
$L(\tau,z)^\dagger$ of $L(\tau,z)$ with respect to $(\cdot,\cdot)$. 
Then $(L(\tau,-z)^\dagger \alpha, 
L(\tau,z)^\dagger \beta)$ is also independent of $\tau$.  
The divisor equation implies that: 
\begin{multline*} 
\left( v Q \parfrac{}{Q} - \partial_v \right)
(L(\tau,-z)^\dagger \alpha, L(\tau,z)^\dagger \beta) \\ 
= (-\frac{v}{z} L(\tau,-z)^\dagger \alpha, L(\tau,z)^\dagger \beta) 
+ (L(\tau,-z)^\dagger \alpha, \frac{v}{z} L(\tau,z)^\dagger\beta) =0
\end{multline*} 
Thus it is also independent of $Q$. 
The initial condition at $\tau = Q=0$ now implies 
the unitarity $(L(\tau,-z)^\dagger \alpha, L(\tau,z)^\dagger \beta) 
= (\alpha,\beta)$. 
\end{proof} 

\begin{remark} 
By solving the differential equations as power series in $\tau$ and $Q$, 
we find that the coefficient $L_{d,k}(z)$ of $L(\tau,z)$ 
in front of $Q^d \tau_0^{k_0} 
\cdots \tau_N^{k_N}$ is a rational function in $z$ and $\lambda$, 
i.e.~that each matrix entry of $L_{d,k}$ 
lies in $\Frac(R_T[z]) \cong \CC(\lambda_1,\dots,\lambda_m,z)$. 
The Laurent expansion of each matrix element at $z=\infty$ gives an element 
of $R_T(\!(z^{-1})\!)$. As a rational function in $z$, $L_{d,k}$ 
has singularities not only at $z=0$ but also at other finite values 
of $z$. This at first sight seems to contradict the fact that the flatness equation 
and the divisor equation have singularities only at $z=0$. 
In fact there is no contradiction: the equivariant 
quantum differential equation is \emph{resonant} at certain values of $z$. 
The divisor equation for $L(\tau,z)$ together with the flatness 
equation gives 
\begin{align*}
  v Q \parfrac{}{Q} L(\tau,z) =z^{-1}
  \left( L(\tau,z) v  - v\star_\tau L(\tau,z)  \right) 
  &&
  v\in H^2_T(X)
\end{align*}
and this has a logarithmic singularity at the `large radius limit point' $Q=0$. 
The residue at $Q=\tau=0$ is given by the commutator 
$[v/z,-]$ and the resonance occurs when 
the cup product by $v/z$ has eigenvalues in $\ZZ\setminus\{0\}$. 
The coefficients $L_{d,k}(z)$ have poles at resonant $z$. 
In non-equivariant Gromov--Witten theory,  $v/z$ is always nilpotent and 
 resonance does not occur. 
More generally, if the equivariant parameters $\lambda_i$ are  
sufficiently small compared to $|z|$ ($|\lambda_i|\ll |z|$), 
resonance does not occur and the coefficients $L_{d,k}(z)$ are regular. 
\end{remark} 

\begin{remark}[\!\!\cite{Givental:symplectic}] 
\label{rem:fundamentalsol_cone}
The fundamental solution $L(\tau,z)$ is determined by the quantum 
product $\star_\tau$ via differential equations 
\eqref{eq:flatness}--\eqref{eq:initial}.  
Then $\tau \mapsto 
T_\tau = L(\tau,-z)^{-1} \cH_+$ gives a versal family of 
tangent spaces to Givental's cone $\cL_X$. The cone 
$\cL_X$ 
is reconstructed as $\cL_X = \bigcup_\tau z T_\tau$. 
\end{remark} 

We now study $\nabla$-flat sections $s(\tau,z)$ 
that are homogeneous of degree zero: $\Gr (s(\tau,z)) = 0$. 
By Proposition \ref{prop:fundsol}, if a flat section 
$L(\tau,z) f(z)$ is homogeneous of degree zero, 
then:
\[
\left(z\parfrac{}{z} + \mu -\frac{\rho}{z}\right) f(z)=0
\]
This differential equation has the fundamental solution: 
\[
z^{-\mu} z^\rho = z^{\rho/z} z^{-\mu} 
= \exp(\rho \log(z)/z) z^{-\mu} 
\]
that belongs to $\End_{R_T}(H_{\CR,T}^\bullet(X)) \otimes_{R_T} 
R_T[\log z](\!(z^{-1/k})\!)$ for some $k\in \N$; 
here $k$ is chosen so that all the eigenvalues of $k \mu$ are integers. 
Note that homogeneous flat sections can be multi-valued 
in $z$ (as they contain $\log z$). 
We have: 

\begin{corollary} 
\label{cor:homogeneous_flat_sections} 
The sections $s_i(\tau,z) = L(\tau,z) z^{-\mu} z^\rho \phi_i$, $i=0,\dots,N$ 
satisfy $\nabla s_i(\tau,z) = \Gr s_i(\tau,z) = 0$ and 
give a basis of homogeneous flat sections.  
 They belong to $H_{\CR,T}^\bullet(X) 
\otimes_{R_T} R_T[\log z](\!(z^{-1/k})\!)[\![\tau,Q]\!]$ 
for a sufficiently large $k \in \N$.
\end{corollary}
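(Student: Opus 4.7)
Flatness of $s_i$ is immediate: since $z^{-\mu} z^\rho \phi_i$ is independent of $\tau$ and $Q$, the flatness equation~\eqref{eq:flatness} for $L(\tau,z)$ extends by $R_T$-linearity to give $\nabla(L \cdot z^{-\mu} z^\rho \phi_i) = 0$.

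For homogeneity, I would first establish a Leibniz-type identity: for any $\tau$-independent, class-valued function $f$,
\[
\Gr(L f) \;=\; L \cdot \Gr(f) \;-\; L(\rho \cup f)/z.
\]
This follows by expanding $f = \sum_m f_m \phi_m$ in the chosen basis, applying the definition of $\Gr$ termwise to $L f = \sum_{j,m} f_m L_{m,j} \phi_j$, and invoking the homogeneity of $L$ from Proposition~\ref{prop:fundsol}. With this identity at hand, it suffices to show that $\Gr(F \phi_i) = (\rho/z)(F \phi_i)$, where $F(z) := z^{-\mu} z^\rho$. I would verify this from the expansion
\[
z^{-\mu} z^\rho \phi_i \;=\; \sum_{k \ge 0} \frac{(\log z)^k}{k!}\, z^{-\mu_i - k} \rho^k \phi_i,
\]
where $\mu_i$ denotes the $\mu$-eigenvalue of $\phi_i$, using that cup product with $\rho \in H^2$ raises the $\mu$-weight by~$1$. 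Substituting into the Leibniz identity then gives $\Gr s_i = 0$.

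The basis assertion is a direct consequence of the initial condition~\eqref{eq:initial}: $s_i|_{\tau = Q = 0} = z^{-\mu} z^\rho \phi_i$, and since $z^{-\mu} z^\rho$ is invertible with inverse $z^{-\rho} z^\mu$, the $s_i$ are linearly independent and span the space of $\Gr$-homogeneous $\nabla$-flat sections. Taking $k \in \N$ with $k \mu$ having integer eigenvalues makes $z^{-\mu}$ a series in $z^{1/k}$ with polynomial $\log z$ dependence, and composing with the regularity of $L$ at $z = \infty$ (Proposition~\ref{prop:fundsol}) places $s_i$ in the claimed space $H^\bullet_{\CR,T}(X) \otimes_{R_T} R_T[\log z](\!(z^{-1/k})\!)[\![\tau,Q]\!]$.

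The main delicate point is the $\cE$-contribution in the homogeneity verification: because $\rho \in H^2_T(X)$ depends on the equivariant parameters, expanding $\rho^k \phi_i = \sum_j c_{j,k}(\lambda) \phi_j$ in the basis yields $\lambda$-dependent coefficients on which $\cE = \sum_i \lambda_i \partial_{\lambda_i} + \cdots$ acts nontrivially. The key observation is that $c_{j,k}$ is homogeneous of polynomial degree $\mu_i + k - \mu_j$ in the $\lambda_i$, so $\cE c_{j,k} = (\mu_i + k - \mu_j) c_{j,k}$; when combined with the $\mu_j \phi_j$ terms, this reproduces the scalar eigenvalue $\mu_i + k$, and the full calculation collapses to the scalar ODE $z \partial_z a_k + (\mu_i + k) a_k = a_{k-1}/z$ satisfied by the coefficients $a_k(z) = (\log z)^k z^{-\mu_i - k}/k!$, giving $\Gr(F\phi_i) = (\rho/z)(F\phi_i)$ as required.
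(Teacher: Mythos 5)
Your proposal is correct and follows the same route the paper leaves implicit in the paragraph preceding the Corollary: use the homogeneity identity $\Gr L(\tau,z)\phi = L(\tau,z)(\mu - \rho/z)\phi$ from Proposition~\ref{prop:fundsol} to reduce $\Gr$-homogeneity of $L(\tau,z)f(z)$ to the ODE $(z\partial_z + \mu - \rho/z)f = 0$, of which $z^{-\mu}z^\rho$ is the fundamental solution. Your spelled-out Leibniz identity $\Gr(Lf) = L\,\Gr(f) - L(\rho\cup f)/z$ and the observation that the $\lambda$-degree of $c_{j,k}$ is $\mu_i + k - \mu_j$ (so that the $\cE$-contribution recombines with $\mu_j$ to give the scalar eigenvalue $\mu_i + k$) are exactly the details the paper glosses over when it asserts the ODE and its fundamental solution; your scalar recursion $z\partial_z a_k + (\mu_i + k)a_k = a_{k-1}/z$ is the correct elementary verification.
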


\section{Equivariant Gamma-Integral Structure}
\label{sec:integral_structure}

In this section we introduce one of the main ingredients of our result: an integral structure for equivariant quantum cohomology.  This is a $K^0_T({\rm pt})$-lattice in the space of flat sections for the equivariant quantum connection on $X$ which is isomorphic to the integral equivariant $K$-group $K^0_T(X)$: it generalizes the integral structure for non-equivariant quantum cohomology constructed by Iritani~\cite{Iritani} and Katzarkov--Kontsevich--Pantev~\cite{KKP}.  Similar structures have been studied by Okounkov--Pandharipande~\cite{Okounkov-Pandharipande:Hilbert} in the case where $X$ is a Hilbert scheme of points in $\CC^2$, and by Brini--Cavalieri--Ross~\cite{Brini--Cavalieri--Ross} in the case where $X$ is a 3-dimensional toric Calabi--Yau stack.   We define the integral structure in~\S\ref{sec:equiv_int_str}.  In \S\ref{sec:specialization} we observe that the quantum product, flat sections for the quantum connection, and integral structure continue to make sense when the Novikov variable $Q$ (see~\S\ref{sec:QC}) is specialized to $Q=1$.

The integral structure is defined in terms of a $T$-equivariant characteristic class of $X$ called the $\hGamma$-class.  One of the key points in this section is that the $\hGamma$-class behaves like a square root of the Todd class: see equation~\ref{eq:gammaproduct}.  When combined with the Hirzebruch--Riemann--Roch formula, this leads to one of the fundamental properties of the integral structure: that the so-called framing map is pairing-preserving (Proposition~\ref{prop:K_framing_pairing} below)

\subsection{The Equivariant Gamma Class and the Equivariant Gamma-Integral Structure}
\label{sec:equiv_int_str}

Let $K_T^0(X)$ denote the Grothendieck group of $T$-equivariant vector bundles on $X$.  We write $H^{\bullet\bullet}_{T}(IX) := \prod_{p} H^{2p}_{T}(IX)$.  We introduce an orbifold Chern character map $\tch \colon 
K^0_T(X) \to H_{T}^{\bullet\bullet}(IX)$ as follows.  Let $IX = \bigsqcup_{v \in \sfB} X_v$ be the decomposition of the inertia stack $IX$ into connected components, let $q_v \colon X_v \to X$ be the natural map, and let $E$ be a $T$-equivariant vector bundle on $X$.  The stabilizer $g_v$ along $X_v$ acts on the vector bundle $q_v^* E \to X_v$, giving an eigenbundle decomposition
\begin{equation} 
\label{eq:E_vf} 
q_v^* E = \bigoplus_{0\le f < 1} E_{v,f}
\end{equation} 
where $g_v$ acts on $E_{v,f}$ by $\exp(2\pi\tti f)$.  The equivariant Chern character is defined to be
\[
\tch(E) = \bigoplus_{v\in \sfB} \sum_{0\le f<1} e^{2\pi\tti f} 
\ch^T(E_{v,f}) 
\]
where $\ch^T(E_{v,f})\in H^{\bullet \bullet}_T(X_v)$ is the $T$-equivariant Chern character.   Let $\delta_{v,f,i}$, $1 \leq i \leq \rank(E_{v,f})$ be the $T$-equivariant Chern roots of $E_{v,f}$, so that $c^T(E_{v,f}) = \prod_i (1+ \delta_{v,f,i})$. 
These Chern roots are not actual cohomology classes, but  symmetric polynomials in the Chern roots make sense as equivariant cohomology classes on $X_v$.  The $T$-equivariant orbifold Todd class $\tTd(E) \in H^{\bullet\bullet}_{T}(IX)$ is defined to be: 
\[
\tTd(E) = \bigoplus_{v\in \sfB} 
\left( \prod_{0<f<1} \prod_{i=1}^{\rank(E_{v,f})} 
\frac{1}{1- e^{-2\pi\tti f} e^{-\delta_{v,f,i}}}\right) 
\prod_{i=1}^{\rank E_{v,0}} 
\frac{\delta_{v,0,i}}{1-e^{-\delta_{v,0,i}}}. 
\]
We write $\tTd_X = \tTd(TX)$ for the orbifold Todd class of the tangent bundle.

Recall that, because we are assuming condition \eqref{condition:2} 
from \S\ref{sec:conditions}, all of the $T$-weights of $H^0(X,\cO)$ 
lie in a strictly convex cone in $\Lie(T)^*$. 
After changing the identification of $T$ with $(\Cstar)^m$ 
if necessary, we may assume that this cone is contained within 
the cone spanned by the standard characters $\lambda_1,\dots,
\lambda_m$ of $H^2_T({\rm pt}) = \Lie(T)^*$ defined in 
\S\ref{sec:standing_notation}. 
As is explained in~\cite{Coates--Iritani--Jiang--Segal}, 
under conditions (\ref{condition:1}--\ref{condition:2}) in \S\ref{sec:conditions} 
there is a well-defined equivariant Euler characteristic
\begin{equation} 
\label{eq:Euler_char}
\chi(E) := \sum_{i=0}^{\dim X} 
(-1)^i \ch^T\big(H^i(X,E)\big).  
\end{equation} 
taking values in
\[
\ZZ[\![e^{\lambda}]\!][e^{-\lambda}]_{\rm rat}
:= \left\{ f \in \ZZ[\![e^{\lambda_1},\dots,e^{\lambda_m}]\!]
[e^{-\lambda_1},\dots,e^{-\lambda_m}] : 
\begin{array}{l}
\text{$f$ is the Laurent expansion} \\ 
\text{of a rational function in} \\ 
\text{$e^{\lambda_1},\dots,
e^{\lambda_m}$ at} \\ 
\text{$e^{\lambda_1} = \cdots = e^{\lambda_m}=0$}
\end{array}\right\}
\]
and we expect that the following equivariant Hirzebruch--Riemann--Roch (HRR) 
formula should hold:  
\begin{equation}
\label{eq:equiv_HRR} 
\chi(E) = 
\int_{IX} \tch(E) \cup \tTd_X
\end{equation} 
This formula should be interpreted with care.  The right-hand side is defined via the localization 
formula, and lies in a completion $\hS_T$ of $S_T$:  
\[
\hS_T := \left\{
\sum_{n\in \ZZ} a_n : \text{$a_n \in S_T$, $\deg a_n =n$, there exists $n_0\in \ZZ$ such that $a_n = 0$ for all $n< n_0$} \right\}  
\]
There is an inclusion of 
rings $\ZZ[\![e^{\lambda}]\!][e^{-\lambda}]_{\rm rat} \hookrightarrow \hS_T$ given by Laurent expansion at $\lambda_1 = \cdots = \lambda_m=0$ (see \cite{Coates--Iritani--Jiang--Segal}), and \eqref{eq:equiv_HRR} asserts that $\chi(E)$ coincides with the right-hand side after this inclusion.

We now introduce a lattice in the space of homogeneous flat sections for the quantum connection which is identified with the equivariant $K$-group of $X$.   The key ingredient in the definition is the characteristic class, called the \emph{Gamma class}, defined as follows.
Let $E$ be a vector bundle on $X$ and consider 
the bundles $E_{v,f}\to X_v$ and their equivariant 
Chern roots $\delta_{v,f,i}$, $i=1,\dots,\rank(E_{v,f})$ as 
above (see \eqref{eq:E_vf}). 
The equivariant Gamma class 
$\hGamma(E)\in H^{\bullet\bullet}_{T}(IX)$ is defined to be: 
\[
\hGamma(E) = \bigoplus_{v\in \sfB} 
\prod_{0\le f<1} \prod_{i=1}^{\rank(E_{v,f})} 
\Gamma(1-f + \delta_{v,f,i})  
\]
Here the $\Gamma$-function on the right-hand side should be expanded 
as a Taylor series at $1-f$, and then evaluated at $\delta_{v,f,i}$. The identity
$\Gamma(1-z) \Gamma(1+z) = 2\pi\tti z e^{-\pi \tti z}/
(1- e^{-2\pi \tti z})$ implies that
\begin{align}
\label{eq:gammaproduct} 
\begin{split}  
\left[\hGamma(E^*) \cup \hGamma(E)\right]_v  
& = \prod_{i,f} 
\Gamma(1-\fbar - \delta_{v,f,i}) \Gamma(1-f+\delta_{v,f,i}) \\ 
& = (2\pi\tti)^{\rank((q_v^*E)^{\rm mov})} 
\left[e^{-\pi\tti (\age(q^*E)+ c_1(q^* E))}
(2\pi\tti)^{\frac{\deg_0}{2}} \tTd(E)\right]_{\inv(v)}  
\end{split} 
\end{align} 
where $\cup$ is the cup product on $IX$, $[\cdots]_v$ denotes the component in $H^\bullet_T(X_v)$, $0\le \fbar<1$ is the fractional part of $-f$, $(q_v^*E)^{\rm mov} = \bigoplus_{f\neq 0} E_{v,f}$ is the moving part of $q_v^*E$, $q\colon IX \to X$ is the natural projection, $\age(q^*E) \colon IX \to \QQ$ is the locally constant function given by $\age(q^*E)|_{X_v} = \sum_{f} f \rank(E_{v,f})$, $\deg_0 \colon H^{\bullet\bullet}_T(IX) \to H^{\bullet\bullet}_T(IX)$ is the degree operator defined by $\deg_0(\phi)= 2p \phi$ for $\phi\in H^{2p}_T(IX)$, and $\inv(v)\in \sfB$ corresponds to the component $X_{\inv(v)}$ of $IX$ defined by $\inv(X_v) = X_{\inv(v)}$.  Note that $\deg_0$ means the degree as a class on $IX$, not the age-shifted degree as an element of $H^{\bullet\bullet}_{\CR,T}(X)$.

\begin{definition} 
\label{def:K_framing} 
Define the \emph{$K$-group framing} 
\[
\frs \colon K_T^0(X) \to 
H^\bullet_{\CR,T}(X)\otimes_{R_T} R_T[\log z](\!(z^{-1/k})\!)[\![Q,\tau]\!]
\]
by the formula: 
\[
\frs(E)(\tau,z) = \frac{1}{(2\pi)^{\dim X/2}} 
L(\tau,z) z^{-\mu} z^\rho \left( \hGamma_X \cup 
(2\pi\tti)^{\frac{\deg_0}{2}} \inv^* \tch(E) \right) 
\]
where $k\in \N$ is as in Corollary \ref{cor:homogeneous_flat_sections} 
and $\hGamma_X \cup$ is the cup product in $H^{\bullet\bullet}_T(IX)$. 
Corollary \ref{cor:homogeneous_flat_sections} shows that the image of $\frs$ is contained in the space of $\Gr$-degree zero 
flat sections. 
Note that $z^{-\mu}$ maps $H^{\bullet\bullet}_{\CR,T}(X)$ 
into $H^{\bullet}_{\CR,T}(X)\otimes_{R_T} R_T(\!(z^{-1/k})\!)$. 
\end{definition} 
For $T$-equivariant vector bundles $E$, $F$ on $X$, 
let $\chi(E,F) \in \ZZ[\![e^\lambda]\!][e^{-\lambda}]_{\rm rat}$ denote the equivariant Euler pairing 
defined by: 
\begin{equation} 
\label{eq:Euler_pairing} 
\chi(E,F) := \sum_{i=0}^{\dim X} (-1)^i 
\ch^T\big(\Ext^i(E,F)\big)  
\end{equation} 
We use a $z$-modified version $\chi_z(E,F)$ that is 
given by replacing equivariant parameters $\lambda_j$ in $\chi(E,F)$ 
with $2\pi\tti \lambda_j/z$: 
\begin{equation} 
\label{eq:modified_Euler}
\chi_z(E,F) := (2\pi\tti z^{-1})^{\sum_{i=1}^m \lambda_i \partial_{\lambda_i}} 
\chi(E,F) \in \ZZ[\![e^{2\pi\tti \lambda/z}]\!][e^{-2\pi \tti \lambda/z}]_{\rm rat} 
\end{equation}

\begin{proposition}[cf.~{\cite[Proposition 2.10]{Iritani}}] 
\label{prop:K_framing_pairing}
Suppose that the equivariant HRR formula \eqref{eq:equiv_HRR} 
holds. 
For $E, F \in K_T^0(X)$, we have 
\[
\left(\frs(E)(\tau,e^{-\pi \tti}z), \frs(F)(\tau,z) \right) 
= \chi_z(E,F). 
\]
\end{proposition}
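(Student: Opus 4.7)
The plan is to combine the unitarity of the fundamental solution $L(\tau,z)$ from Proposition~\ref{prop:fundsol}, the multiplicative identity \eqref{eq:gammaproduct} for the $\hGamma$-class, and the equivariant HRR formula \eqref{eq:equiv_HRR}.

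First, I would eliminate the $L$-factors using unitarity. Writing $\tilde E := \hGamma_X\cup(2\pi\tti)^{\deg_0/2}\inv^*\tch(E)$ and $\tilde F$ analogously, Definition~\ref{def:K_framing} gives
\[
\bigl(\frs(E)(\tau,e^{-\pi\tti}z),\frs(F)(\tau,z)\bigr) = \tfrac{1}{(2\pi)^{\dim X}}\bigl(L(\tau,-z)(e^{-\pi\tti}z)^{-\mu}(e^{-\pi\tti}z)^\rho\tilde E,\,L(\tau,z)z^{-\mu}z^\rho\tilde F\bigr),
\]
and the unitarity $(L(\tau,-z)\alpha,L(\tau,z)\beta)=(\alpha,\beta)$ cancels the two $L$'s, producing a $\tau$-independent pairing. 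Using $(e^{-\pi\tti}z)^{-\mu}=e^{\pi\tti\mu}z^{-\mu}$, $(e^{-\pi\tti}z)^\rho=e^{-\pi\tti\rho}z^\rho$, together with the commutation $[\mu,\rho]=\rho$ (which implies $z^{-\mu}e^{-\pi\tti\rho}=e^{-\pi\tti\rho/z}z^{-\mu}$), the operator on the first argument becomes $e^{\pi\tti\mu}e^{-\pi\tti\rho/z}z^{-\mu}z^\rho$.

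Next, I would transfer the phase operators to the second argument. Since $\mu$ is anti-self-adjoint for the orbifold Poincar\'e pairing (age-shifted degrees pair to $2\dim_\CC X$) and $\rho$ is self-adjoint (as an untwisted class with $\inv^*\rho=\rho$), the pairing becomes
\[
\tfrac{1}{(2\pi)^{\dim X}}\bigl(z^{-\mu}z^\rho\tilde E,\,e^{-\pi\tti\rho/z}e^{-\pi\tti\mu}z^{-\mu}z^\rho\tilde F\bigr).
\]
Now, expanding sector-by-sector over $IX=\bigsqcup_v X_v$, the two $\hGamma_X$-factors (one inside $\tilde E$, one inside $\tilde F$) combine via the key identity \eqref{eq:gammaproduct}. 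This replaces $\hGamma(TX^*)\cup\hGamma(TX)$ by $(2\pi\tti)^{\rank(TX^{\rm mov})}e^{-\pi\tti(\age+\rho)}(2\pi\tti)^{\deg_0/2}\tTd_X$, turning the integrand into a multiple of $\inv^*\tch(E)\cup\tch(F)\cup\tTd_X=\tch(E^\vee\otimes F)\cup\tTd_X$, up to scalar and phase factors.

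Finally, I would check that all $(2\pi)$-normalizations, the $(2\pi\tti)^{\deg_0/2}$-factors from $\tilde E$ and $\tilde F$, the $z$-powers generated by $z^{-\mu}$, and the phase factors from $e^{-\pi\tti\mu}$, $e^{-\pi\tti\rho/z}$, $e^{-\pi\tti\age}$, $e^{-\pi\tti\rho}$ conspire to produce the operator $(2\pi\tti z^{-1})^{\sum_j\lambda_j\partial_{\lambda_j}}$ that rescales equivariant parameters by $2\pi\tti/z$. The equivariant HRR formula \eqref{eq:equiv_HRR} then identifies the resulting integral with $\chi(E,F)$, and the rescaling converts it into $\chi_z(E,F)$ as in \eqref{eq:modified_Euler}. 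The main obstacle is precisely this last bookkeeping step: using the decomposition $\mu|_{X_v}=\deg_0/2+\iota_v-\dim X/2$, one must verify that the sector-dependent powers of $2\pi\tti$ (including the $(2\pi\tti)^{\rank(TX^{\rm mov})}=(2\pi\tti)^{\iota_v+\iota_{\inv(v)}}$ prefactor from \eqref{eq:gammaproduct}), the $e^{-\pi\tti(\cdot)}$-phases, and the $z$-shifts induced by $\mu$ and $\rho$ assemble exactly into the desired rescaling operator. The conceptual reason this succeeds is that \eqref{eq:gammaproduct} expresses $\hGamma(TX^*)\cup\hGamma(TX)$ as a `square root' decomposition of $(2\pi\tti)^{\deg_0/2}\tTd_X$, so the symmetric splitting of the Todd class between $\frs(E)$ and $\frs(F)$ reproduces precisely the HRR integrand.
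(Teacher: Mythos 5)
Your strategy is the same as the paper's: cancel the $L$'s via unitarity, move the phase operators across the pairing, invoke \eqref{eq:gammaproduct} to combine the two $\hGamma$-classes into a Todd class, and finish with HRR. The one step where you go wrong is the transfer step, and the error is not just bookkeeping: your claim that $\mu$ is anti-self-adjoint for the orbifold Poincar\'e pairing is false in the equivariant theory.

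Non-equivariantly, nonzero pairings of homogeneous classes are concentrated in complementary degrees, so $(\mu\alpha,\beta)=-(\alpha,\mu\beta)$. Equivariantly the pairing takes values in the graded ring $S_T$, and if $\alpha\in H^p_{\CR,T}(X)$, $\beta\in H^q_{\CR,T}(X)$ then $(\alpha,\beta)\in S_T$ has degree $p+q-2\dim_\CC X$, which need not vanish. The correct adjoint relation is
\[
(\mu\alpha,\beta)+(\alpha,\mu\beta)=\Big(\textstyle\sum_i\lambda_i\partial_{\lambda_i}\Big)(\alpha,\beta),
\]
and exponentiating gives $(z^{-\mu}\alpha,z^{-\mu}\beta)=z^{-\lambda\partial_\lambda}(\alpha,\beta)$ — the identity the paper actually uses — rather than $(e^{\pi\tti\mu}\alpha,\beta)=(\alpha,e^{-\pi\tti\mu}\beta)$. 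With your stated relation, the line ``the pairing becomes $\frac{1}{(2\pi)^{\dim X}}\bigl(z^{-\mu}z^\rho\tilde E,\,e^{-\pi\tti\rho/z}e^{-\pi\tti\mu}z^{-\mu}z^\rho\tilde F\bigr)$'' is missing an overall factor $e^{\pi\tti\lambda\partial_\lambda}=(-1)^{\lambda\partial_\lambda}$. This does not cancel in the remaining bookkeeping: it propagates to the end and replaces the rescaling $(2\pi\tti z^{-1})^{\lambda\partial_\lambda}$ by $(-2\pi\tti z^{-1})^{\lambda\partial_\lambda}$, i.e.\ you would prove $\chi_{-z}(E,F)$ instead of $\chi_z(E,F)$. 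Replacing the anti-self-adjointness of $\mu$ with the Euler-derivation corrected identity (or, equivalently, collapsing the two $z^{-\mu}$'s into $z^{-\lambda\partial_\lambda}$ as the paper does before touching $e^{\pi\tti\mu}$) repairs the argument; the rest of what you outline — including the correct observation that $\rank((q_v^\star TX)^{\rm mov})=\iota_v+\iota_{\inv(v)}$ — matches the paper.
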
 
\begin{proof} 
Set $\Psi(E) = \hGamma_X \cup (2\pi\tti)^{\frac{\deg_0}{2}} \inv^* 
\tch(E)$. 
Using the unitarity in Proposition \ref{prop:fundsol}, we have 
\begin{equation} 
\label{eq:frs_pairing} 
\left(\frs(E)(\tau,e^{-\pi \tti}z), \frs(F)(\tau,z) \right) 
= \frac{1}{(2\pi)^{\dim X}} \left(z^{-\mu} e^{\pi\tti \mu} 
z^\rho e^{-\pi\tti \rho} \Psi(E), z^{-\mu} z^\rho \Psi(F)\right). 
\end{equation} 
Write $\lambda \partial_\lambda = 
\sum_{i=1}^m \lambda_i \partial_{\lambda_i}$. 
Using $(z^{-\mu}\alpha,z^{-\mu} \beta) = z^{-\lambda\partial_\lambda} 
(\alpha,\beta)$, $e^{\pi\tti \mu} \rho = -\rho e^{\pi\tti\mu}$, 
$(z^{-\rho} \alpha,z^{\rho} \beta) = (\alpha,\beta)$, 
we have 
\begin{align*} 
\eqref{eq:frs_pairing} & = 
\frac{z^{-\lambda \partial_\lambda}}{(2\pi)^{\dim X}} 
\left(e^{\pi\tti \rho}e^{\pi\tti \mu} \Psi(E), \Psi(F)\right) \\ 
& = \frac{z^{-\lambda \partial_\lambda}}{(2\pi)^{\dim X}} 
\int_{IX} \left( e^{\pi\tti q^* \rho} e^{\pi\tti \mu} 
\hGamma_X (2\pi\tti)^{\frac{\deg_0}{2}}\inv^* \tch(E) \right) 
\cup \inv^*\left( \hGamma_X (2\pi\tti)^{\frac{\deg_0}{2}} 
\inv^* \tch(F) \right) 
\\
& =
\frac{z^{-\lambda \partial_\lambda}}{(2\pi)^{\dim X}} 
\sum_{v\in \sfB} 
\int_{X_v} 
e^{\pi\tti q_v^*\rho}
e^{\pi\tti (\iota_{v} - \frac{\dim X}{2})}
\left[\hGamma_X^* \hGamma_X \right]_{\inv(v)} 
(2\pi\tti)^{\frac{\deg_0}{2}} \left[\tch(E^*) \tch(F)\right]_v \\ 
& = z^{-\lambda \partial_\lambda}
\sum_{v\in \sfB}
\frac{1}{(2\pi \tti)^{\dim X_v}} 
\int_{X_v} 
(2\pi\tti)^{\frac{\deg_0}{2}} \left[
\tch(E^* \otimes F) \cup \tTd_X \right]_v 
\end{align*} 
where we set $\hGamma_X^* = \hGamma(T^*X)$ 
and used equation \eqref{eq:gammaproduct} in the last line. 
The last expression equals $\chi_z(E,F)$ by the 
HRR formula \eqref{eq:equiv_HRR}. 
\end{proof}

\begin{remark} 
  One could also consider the $K$-group framing for \emph{topological} equivariant $K$-theory.  For toric stacks, the topological and  algebraic $K$-groups coincide.  
\end{remark}

\begin{remark} 
Okounkov--Pandharipande \cite{Okounkov-Pandharipande:Hilbert} 
and Braverman--Maulik--Okounkov \cite{Braverman--Maulik--Okounkov} 
introduced shift operators $\bbS_i$ on quantum 
cohomology, which induce the shift $\lambda_i \to \lambda_i + z$ 
of equivariant parameters (see \cite[Chapter 8]{Maulik--Okounkov} for a detailed 
description). 
Our $K$-theoretic flat sections $\frs(E)$ are invariant under the 
shift operators, and our main result suggests that shift 
operators for toric stacks 
should be defined globally on the secondary toric variety. 
\end{remark} 

\subsection{Specialization of Novikov Variables} 
\label{sec:specialization} 

In this section we show that the quantum product, the flat sections for the quantum connection, and the $K$-group framing remain well-defined after the specialization $Q=1$ of the Novikov variable~$Q$.  Recall that $\tau^0,\dots,\tau^N$ are co-ordinates on $H_{\CR,T}^\bullet(X)$ dual to a homogeneous $R_T$-basis $\{\phi_0,\dots,\phi_N\}$ of $H_{\CR,T}^\bullet(X)$, and that:
\begin{itemize}
\item $\phi_0=1$;
\item $\phi_1,\dots,\phi_r\in H^2_T(X)$;
\item $\phi_1,\dots,\phi_r$ descend to a basis of $H^2(X) = H^2_T(X)/H^2_T({\rm pt})$. 
\end{itemize}
Without loss of generality we may assume that the images of $\phi_1,\dots,\phi_r$ in $H^2(X)$ are nef and integral.

It is clear from Remark~\ref{rem:divisoreq_qprod} that the specialization $Q=1$ 
of the quantum product is well-defined, and we have:
\[
\phi_i \star_\tau \phi_j \Big|_{Q=1} \in H_{\CR,T}^\bullet(X) 
\otimes_{R_T} R_T[\![e^{\tau^1},\dots,e^{\tau^r},
\tau^{r+1},\dots,\tau^N]\!]
\]
As discussed in Remark~\ref{rem:divisoreq_qprod}, the product $\phi_i\star_\tau\phi_j$ is independent of $\tau^0$.  We saw in the proof of Proposition~\ref{prop:fundsol} that the fundamental solution $L(\tau,z)$ factorizes as $L(\tau,z) = S(\tau',z; Qe^{\sigma}) e^{-\sigma/z}$. Thus the specialization $Q=1$ makes sense for $L(\tau,z)$ and:
\[
L(\tau,z)\Big |_{Q=1} \in \End\left(H_{\CR,T}^\bullet(X)\right) 
\otimes_{R_T} R_T[\tau^0,\tau^1,\dots,\tau^r][\![z^{-1}]\!][\![
e^{\tau^1},\dots,e^{\tau^r}, \tau^{r+1},\dots,\tau^{N}]\!]  
\]
The specialization $Q=1$ for homogeneous flat sections $\frs(E)$ 
in Definition \ref{def:K_framing} 
(as well as the homogeneous flat sections $s_i$ in Corollary \ref{cor:homogeneous_flat_sections}) 
also makes sense and we have 
\[
\frs(E)(\tau,z)\Big|_{Q=1} 
\in H_{\CR,T}^\bullet(X) \otimes_{R_T}  
R_T[\tau^0,\tau^1,\dots,\tau^r,\log z](\!(z^{-1/k})\!)[\![
e^{\tau^1},\dots,e^{\tau^r}, \tau^{r+1},\dots,\tau^{N}]\!]  
\]
where $k\in \N$ is such that all the eigenvalues of $k\mu$ are integral.

\section{Toric Deligne--Mumford Stacks as GIT Quotients}
\label{sec:notation}

In the rest of this paper we consider toric Deligne--Mumford stacks $X$ with semi-projective coarse moduli space such that the torus-fixed set $X^T$ is non-empty.  This is the class of stacks that arise as GIT quotients of a complex vector space by the action of a complex torus.  In this section we establish notation and describe basic properties of these quotients.  Good introductions to this material include~\cite[\S VII]{Audin}, \cite{Cox-Little-Schenck} and~\cite{Borisov--Chen--Smith}.

\subsection{GIT Data}  
\label{sec:GITdata} 
Consider the following data: 
\begin{itemize} 
\item $K \cong (\Cstar)^r$, a connected torus of rank $r$; 
\item $\LL = \Hom(\Cstar,K)$, the cocharacter lattice of $K$; 
\item $D_1,\ldots,D_m \in \LL^\vee = 
\Hom(K,\Cstar)$, characters of $K$.  
\end{itemize} 
The characters $D_1,\ldots,D_m$ define a map from $K$ 
to the torus $T = (\Cstar)^m$, and hence define an action of 
$K$ on $\CC^m$.  

\begin{notation}
  \label{not:cones}
  For a subset $I$ of $\{1, 2, \ldots, m\}$, write $\overline{I}$ 
  for the complement of $I$, and set 
  \begin{align*}
    \angle_I &= \big\{ \textstyle\sum_{i \in I} a_i D_i : \text{$a_i \in
      \RR$, $a_i > 0$} \big\} \subset
    \LL^\vee \otimes \RR, \\
    (\Cstar)^I \times \CC^{\overline{I}} 
    & = \big \{ (z_1,\dots,z_m) : z_i \neq 0 \text{ for } i\in I 
\big\} \subset \CC^m.  
   \end{align*} 
   We set $\angle_\emptyset := \{0\}$. 
\end{notation} 

\begin{definition} 
\label{def:toricstack} 
Consider now a \emph{stability condition} 
$\omega \in \LL^\vee \otimes \RR$, and set:
\begin{align*}  
    \cA_\omega &= 
    \big\{ I \subset \{1,2,\ldots,m\} : \omega \in \angle_I \big\} \\
    U_\omega &=\bigcup_{I\in \cA_\omega} 
(\Cstar)^I \times \CC^{\overline{I}} \\
    X_\omega &= \big[U_\omega \big/ K \big]
\end{align*}
The square brackets here indicate that $X_\omega$ is the stack quotient 
of $U_\omega$ (which is $K$-invariant) by $K$.  
We call $X_\omega$ the \emph{toric stack 
associated to the GIT data} $(K;\LL;D_1,\ldots,D_m; \omega)$. 
We refer to elements of $\cA_\omega$ as \emph{anticones}, 
for reasons which will become clear in \S \ref{sec:stacky_fan} 
below.  
\end{definition}

\begin{assumption} \label{assumption}
  We assume henceforth that:
  \begin{enumerate}
  \item $\{1,2,\ldots,m\} \in \cA_\omega$;
  \item for each $I \in \cA_\omega$, 
the set $\{D_i : i \in I\}$ spans $\LL^\vee \otimes \RR$ over $\RR$.
\end{enumerate}
These are assumptions on the stability condition $\omega$.  
The first ensures that $X_\omega$ is non-empty; 
the second ensures that $X_\omega$ is a Deligne--Mumford stack. 
Under these assumptions, $\cA_\omega$ is closed under 
enlargement of sets, i.e.~if $I\in \cA_\omega$ and 
$I\subset J$ then $J \in \cA_\omega$. 
\end{assumption}

Let $S\subset\{1,2,\dots,m\}$ 
denote the set of indices $i$ such that $\{1,\dots,m\} \setminus \{i\} 
\notin \cA_\omega$. It is easy to see that 
the characters $\{D_i : i \in S\}$ are linearly independent 
and that every element of $\cA_\omega$ contains $S$ 
as a subset. 
Therefore we can write 
\begin{align}
\label{eq:Uomega_factorization}
\begin{split}  
\cA_\omega & = \{ I \sqcup S : I \in \cA_\omega'\} \\
U_\omega &\cong U'_\omega \times (\CC^\times)^{|S|} 
\end{split} 
\end{align} 
for some $\cA_\omega' \subset 2^{\{1,\dots,m\} \setminus S}$ and 
an open subset $U_\omega'$ of $\CC^{m-|S|}$. 
The toric stack $X_\omega$ can be also written 
as the quotient $[U_\omega'/G]$ of $U_\omega'$ 
for $G = \Ker(K \to (\Cstar)^{|S|})$: this corresponds to 
the original construction of toric Deligne--Mumford stacks 
by Borisov--Chen--Smith \cite{Borisov--Chen--Smith}. 

The space of stability conditions $\omega\in \LL^\vee
\otimes \RR$ satisfying Assumption 
\ref{assumption} has a wall and chamber structure. 
The chamber $C_\omega$ to which $\omega$ belongs is given by 
\begin{equation} 
\label{eq:ext_amplecone} 
C_\omega = \bigcap_{I \in \cA_\omega} \angle_I.  
\end{equation} 
and $X_\omega \cong X_{\omega'}$ as long as $\omega' \in C_\omega$. 
The GIT quotient $X_{\omega'}$ changes when $\omega'$ crosses a codimension-one 
boundary of $C_\omega$. 
We call $C_\omega$ the \emph{extended ample cone}; 
as we will see in \S\ref{sec:amplecone} below, 
it is the product of the ample cone for $X_\omega$ with a simplicial cone. 
\begin{example}
Let $K = \Cstar$, so that $\LL = \Hom(\Cstar,K) \cong \ZZ$. 
Let $D_1 = D_2 = 2 \in \ZZ^\vee$, and set 
$\omega = 1\in \ZZ\otimes \RR=\RR$. 
Then $U_\omega=\CC^2\setminus \{(0,0)\}$, 
and $X_\omega$ is the weighted projective stack $\PP(2,2)$.  
\end{example}

\subsection{GIT Data and Stacky Fans}
\label{sec:stacky_fan}
In the foundational work of Borisov--Chen--Smith \cite{Borisov--Chen--Smith}, 
toric DM stacks are defined in terms of \emph{stacky fans}. 
Jiang \cite{Jiang} introduced the notion of an 
\emph{extended} stacky fan, which is a stacky fan with 
extra data. 
Our GIT data above are in one-to-one correspondence 
with extended stacky fans satisfying certain conditions, as we now explain.

An \emph{$S$-extended stacky fan} 
is a quadruple $\mathbf{\Sigma}= (\bN,\Sigma,\beta,S)$, where:
\begin{itemize} 
\item $\bN$ is a finitely generated abelian group\footnote{Note that $\bN$ may have torsion.}; 
\item $\Sigma$ is a rational simplicial fan in $\bN\otimes \RR$;   
\item $\beta \colon \ZZ^m \to \bN$ is a homomorphism; 
we write $b_i = \beta(e_i)\in \bN$ for the image of the $i$th standard 
basis vector $e_i\in\ZZ^m$,
and write $\overline{b}_i$ for the image of $b_i$ in $\bN\otimes \RR$; 
\item $S \subset \{1,\dots,m\}$ is a subset, 
\end{itemize} 
such that:
\begin{itemize} 
\item each one-dimensional cone of $\Sigma$
is spanned by $\overline{b}_i$ for a unique 
$i\in \{1,\dots,m\}\setminus S$, and
each $\overline{b}_i$ with 
$i\in \{1,\dots,m\} \setminus S$ spans a one-dimensional 
cone of $\Sigma$; 

\item for $i\in S$, $\overline{b}_i$ lies in the support $|\Sigma|$ 
of the fan. 
\end{itemize} 
The vectors $b_i$ for $i\in S$ are called \emph{extended vectors}. 
Stacky fans as considered by Borisov--Chen--Smith correspond to the cases where $S = \emptyset$. 
For an extended stacky fan $(\bN,\Sigma,\beta,S)$, 
the \emph{underlying stacky fan} is the triple $(\bN,\Sigma,\beta')$ 
where $\beta' \colon \ZZ^{m-|S|} \to \bN$ is 
obtained from $\beta$ by deleting the columns corresponding to $S
\subset \{1,\dots,m\}$.   The toric Deligne--Mumford stack associated to an extended stacky fan $(\bN,\Sigma,\beta,S)$ depends only on the underlying stacky fan.

To obtain an extended stacky fan from our GIT data, 
consider the exact sequence:
\begin{equation}\label{eq:exact}
  \xymatrix{
    0 \ar[r] &
    \LL \ar[r] &
    \ZZ^m \ar[r]^\beta & 
    \bN \ar[r] & 
    0 }
\end{equation}
where the map from $\LL$ to $\ZZ^m$ is given by $(D_1,\ldots,D_m)$ 
and $\beta \colon \ZZ^m \to \bN$ is the cokernel of the map $\LL \to \ZZ^m$. 
Let $b_i = \beta(e_i)\in \bN$ and $\overline{b}_i\in \bN\otimes \RR$ 
be as above and, given 
a subset $I$ of $\{1,\dots,m\}$, let $\sigma_I$ denote the cone in 
$\bN\otimes \RR$ generated by $\{\overline{b}_i : i\in I\}$. 
The extended stacky fan
$\mathbf{\Sigma}_\omega=(\bN, \Sigma_\omega, \beta,S)$
corresponding to our data 
consists of the group $\bN$ and the map $\beta$ defined 
above, together with a fan $\Sigma_\omega$ in $\bN\otimes\RR$ 
and $S$ given by\footnote{This is why we refer to the 
elements of $\cA_\omega$ as anticones.}: 
\begin{align*} 
\Sigma_\omega & = \{\sigma_{I} : \overline{I} \in \cA_\omega\}, \\ 
S & = \{ i \in \{1,\dots,m\} : \overline{\{i\}}  
\notin \cA_\omega\}. 
\end{align*} 
The quotient construction in \cite[\S2]{Jiang} 
coincides with that in Definition~\ref{def:toricstack}, 
and therefore $X_\omega$ is the toric Deligne-Mumford 
stack corresponding to $\mathbf{\Sigma}_\omega$. 
Extended stacky fans $(\bN, \Sigma_\omega, \beta,S)$ corresponding to GIT data 
satisfy the following conditions: 
\begin{itemize} 
\item[(1)] the support $|\Sigma_\omega|$ 
of the fan is convex and full-dimensional; 
\item[(2)] there is a strictly convex piecewise-linear function 
$f\colon |\Sigma_\omega| 
\to \RR$ that is linear on each cone of $\Sigma_\omega$; 
\item[(3)] the map $\beta \colon \ZZ^m \to \bN$ is surjective. 
\end{itemize} 
The first two conditions are geometric constraints on $X_\omega$: 
they are equivalent to saying 
that the corresponding toric stack $X_\omega$ 
is semi-projective and has a torus fixed point. 
The third condition can be always achieved by adding  
enough extended vectors.

Conversely, given an extended stacky fan 
$\mathbf{\Sigma}=(\bN, \Sigma,\beta,S)$ satisfying the  
conditions (1)--(3) just stated, we can obtain GIT data as follows.  
Define a free $\ZZ$-module $\LL$ by the 
exact sequence \eqref{eq:exact} 
and define $K := \LL\otimes \Cstar$. 
The dual of \eqref{eq:exact} is an exact sequence:
\begin{equation} 
\label{eq:divseq}
\xymatrix{
  0 \ar[r] &
  \bN^{\vee} \ar[r] &
  (\ZZ^m)^{\vee} \ar[r]& 
  \LL^{\vee}}
\end{equation} 
and we define the character $D_i \in \LL^\vee$ of $K$ to be the image 
of the $i$th standard basis vector in $(\ZZ^m)^\vee$ under 
the third arrow $(\ZZ^m)^\vee \to \LL^\vee$.  
Set:
\[
\cA_{\omega}=\left\{I\subset \{1,2,\cdots,m\}: 
S \subset I, \ \text{$\sigma_{\overline{I}}$ 
is a cone of $\Sigma$}\right\}
\]
and take the stability condition $\omega\in \LL^{\vee}\otimes\RR$ 
to lie in $\bigcap_{I \in \cA_{\omega}} \angle_{I}$; 
the condition (2) ensures that this intersection is non-empty.  
This specifies the data in Definition~\ref{def:toricstack}. 

\subsection{Torus-Equivariant Cohomology}
\label{sec:equiv_coh} 
The action of $T = (\Cstar)^m$ on $U_\omega$ descends 
to a $\cQ := T/K$-action on $X_\omega$. 
We also consider an ineffective $T$-action on $X_\omega$ 
induced by the projection $T \to \cQ$. 
The $\cQ$-equivariant and $T$-equivariant cohomology of $X_\omega$ 
are modules over $R_\cQ:= H^\bullet_\cQ({\rm pt};\CC)$ 
and $R_T := H^\bullet_T({\rm pt};\CC)$ respectively.  
By the exact sequence \eqref{eq:exact}, the Lie algebra of 
$\cQ$ is identified with $\bN\otimes \CC$ and 
$R_\cQ \cong \Sym^\bullet(\bN^\vee \otimes \CC)$. 
Let $\lambda_i\in R_T$ be the equivariant first Chern class 
of the irreducible $T$-representation given by the 
projection $T \cong (\Cstar)^m \to \Cstar$ to the $i$th factor.  
Then $R_T = \CC[\lambda_1,\dots,\lambda_m]$. 
It is well-known that: 
\begin{equation} 
\label{eq:Qequiv_cohomology} 
H_\cQ^\bullet(X_\omega;\CC) = R_\cQ
[u_1,\ldots,u_m]\big/(\mathfrak{I} + \mathfrak{J}) 
\end{equation} 
where $u_i$ is the $\cQ$-equivariant class Poincar\'e-dual to the
toric divisor:
\begin{equation}
  \label{eq:T-invariant_divisor}
  \big\{ (z_1,\ldots,z_m) \in U_\omega : z_i = 0 \big\} \big/ K
\end{equation}
and $\mathfrak{I}$ and $\mathfrak{J}$ are the ideals 
of additive and multiplicative relations: 
\begin{align*} 
\mathfrak{I} & = \big\langle \chi - \textstyle\sum_{i=1}^m 
\<\chi,b_i\> u_i : \chi\in \bN^\vee\otimes \CC \big\rangle, \\ 
\mathfrak{J} & = \big\langle \textstyle \prod_{i \not \in I} u_i  : 
I \not \in \cA_\omega \big \rangle. 
\end{align*} 
Note that $u_i=0$ for $i\in S$ because the corresponding 
divisor \eqref{eq:T-invariant_divisor} is empty (see equation~\ref{eq:Uomega_factorization}). Indeed, 
this relation is contained in the ideal $\mathfrak{J}$. 
The $T$-equivariant cohomology is given by the extension 
of scalars: 
\[
H_T^\bullet(X_\omega) \cong H_\cQ^\bullet(X_\omega) \otimes_{R_\cQ} 
R_T 
\]
where the algebra homomorphism $R_\cQ \to R_T$ is given by 
$\chi \mapsto \sum_{i=1}^m \<\chi,b_i\> \lambda_i$ 
for $\chi\in \bN^\vee \otimes \CC$. 

\begin{remark} 
We note that the assumptions at the beginning of 
\S \ref{sec:equivariant_qc} are satisfied for toric Deligne--Mumford 
stacks obtained from GIT data. 
First, all the $\cQ$-weights appearing in the $\cQ$-representation 
$H^0(X_\omega,\cO)$ are contained in the strictly convex 
cone $|\Sigma_\omega|^\vee = 
\{ \chi \in \bN^\vee\otimes \RR : \text{$\<\chi,v\> \ge 0$ for all $v\in |\Sigma_\omega|$}\}$. 
Second, $X_\omega$ is equivariantly formal 
since the cohomology group of $X_\omega$ is generated by 
$\cQ$-invariant cycles \cite{GKM}. 
Because each component of $IX_\omega$ is again a 
toric stack given by certain GIT data (see \S \ref{sec:inertia}), 
we have that $IX_\omega$ is also equivariantly formal. 
The same conclusions hold for the $T$-action. 
\end{remark} 

\subsection{Second Cohomology and Homology} 
\label{sec:second_cohomology} 
There is a commutative diagram:
\begin{equation} 
\label{eq:H2_diagram}
\begin{aligned}
  \xymatrix{
    & 0 \ar[d] & 0 \ar[d] & 0 \ar[d] \\
    0 \ar[r] & H^2_{\cQ}({\rm pt};\RR) \ar[r] \ar[d] & H^2_T({\rm pt};\RR) \ar[r] \ar[d] & H^2_K({\rm pt};\RR) \ar[r] \ar@{=}[d] & 0 \\
    0 \ar[r] & H^2_{\cQ}(X_\omega;\RR) \ar[r] \ar[d] & H^2_T(X_\omega;\RR) \ar[r] \ar[d] & H^2_K({\rm pt};\RR) \ar[r] \ar[d] & 0 \\
    0 \ar[r] & H^2(X_\omega;\RR) \ar@{=}[r] \ar[d] & H^2(X_\omega;\RR) \ar[r] \ar[d] & 0 \\
    & 0 & 0 } 
\end{aligned}
\end{equation}
with exact rows and columns. 
Note that we have $H^2_\cQ({\rm pt};\RR) \cong \bN^\vee \otimes \RR$, 
$H^2_T({\rm pt};\RR) \cong \RR^m$, $H^2_K({\rm pt};\RR) 
\cong \LL^\vee \otimes \RR$.  
The top row of \eqref{eq:H2_diagram} is identified with the exact sequence 
\eqref{eq:divseq} tensored with $\RR$. 
By \eqref{eq:Qequiv_cohomology}, $H^2_\cQ(X_\omega;\RR)$ 
is freely generated by the classes $u_i$, $i\in 
\{1,\dots,m\} \setminus S$ of toric divisors,
and hence $H^2_\cQ(X_\omega;\RR) \cong \RR^{m-|S|}$. 
The leftmost column is identified with the exact sequence
\[
\xymatrix{
0 \ar[r]& 
\bN^\vee \otimes \RR \ar[r]& 
\RR^{m-|S|} \ar[r]& 
\LL^\vee\otimes \RR\big/\sum_{i\in S} \RR D_i 
\ar[r] & 0
}
\]
induced by \eqref{eq:divseq}. 
In particular we have 
\[
H^2(X_\omega; \RR) \cong \LL^\vee \otimes \RR\big/
\textstyle \sum_{i\in S} \RR D_i  
\]
where the non-equivariant limit of $u_i$ is identified with 
the class of $D_i$. 
The homology group $H_2(X_\omega;\RR)$ is identified with 
$\bigcap_{i\in S} \Ker(D_i)$ in $\LL\otimes \RR$. 
The square at the upper left of \eqref{eq:H2_diagram} 
is a pushout and we have: 
\begin{align*} 
H^2_T(X_\omega;\RR) 
& \cong \bigoplus_{i\in \{1,\dots,m\} \setminus S} 
\RR u_i \oplus 
\bigoplus_{i=1}^m \RR \lambda_i \Big/ 
\big \langle \textstyle\sum_{i=1}^m \<\chi, b_i\> (u_i - \lambda_i)  
\colon \chi \in \bN^\vee \otimes \RR \big\rangle. 
\end{align*} 
It follows that the middle row of \eqref{eq:H2_diagram} 
splits canonically: we have a well-defined homomorphism\footnote
{More precisely $(-\theta)$ gives a splitting of the middle row 
of \eqref{eq:H2_diagram}.}  
\begin{equation} 
\label{eq:theta}
\theta \colon \LL^\vee \otimes \RR \cong H^2_K({\rm pt};\RR) 
\longrightarrow H^2_T(X_\omega;\RR)
\end{equation} 
such that $\theta(D_i) =u_i-\lambda_i$ and that 
\[
H^2_T(X_\omega;\RR) \cong H^2_\cQ(X_\omega;\RR) 
\oplus \theta(\LL^\vee\otimes \RR). 
\]
The class $\theta(p)$ can be written as the $T$-equivariant first 
Chern class of a certain line bundle $L(p)$ associated 
to $p$ (see \S \ref{sec:basis_K-theory}).  
One advantage of working with $T$-equivariant cohomology instead of $\cQ$-equivariant cohomology
is the existence of this canonical splitting. 

We also introduce a canonical splitting of the projection 
$\LL^\vee \otimes \RR \to \LL^\vee \otimes \RR/ 
\sum_{i\in S} \RR D_i \cong H^2(X_\omega,\RR)$. 
This is equivalent to choosing a complementary subspace 
of $H_2(X_\omega;\RR)$ in $\LL\otimes \RR$. 
Take $j\in S$. The corresponding extended vector $\overline{b_j} \in 
\bN\otimes \RR$ lies in the support of the fan. 
Let $\sigma_{I_j}\in \Sigma$, 
$I_j\subset \{1,\dots,m\} \setminus S$ be 
the minimal cone containing $\overline{b_j}$ and write 
$\overline{b_j} = \sum_{i\in I_j} c_{ij} \overline{b_i}$ 
for some $c_{ij} \in \RR_{>0}$. 
By the exact sequence \eqref{eq:exact}, there exists an 
element $\xi_j \in \LL\otimes \QQ$ such that 
\begin{equation} 
\label{eq:def_xi}
D_i \cdot \xi_j = \begin{cases} 
1 & \text{if }i=j; \\ 
-c_{ij} & \text{if } i \in I_j; \\ 
0 & \text{if } i\notin I_j \cup \{j\}. 
\end{cases} 
\end{equation} 
Note that one has $D_i \cdot \xi_j = \delta_{ij}$ for $i,j\in S$. 
Hence $\{\xi_i\}_{i\in S}$ spans a complementary subspace 
of $H_2(X_\omega;\RR) = 
\bigcap_{j\in S} \Ker(D_j) \subset \LL \otimes \RR$ 
and defines a splitting: 
\begin{equation} 
\label{eq:L_decomp} 
\LL\otimes \RR \cong H_2(X_\omega;\RR) \oplus \bigoplus_{j\in S} \RR 
\xi_j, 
\end{equation} 
or, for the dual space, 
\begin{equation} 
\label{eq:Lvee_decomp} 
\LL^\vee \otimes \RR \cong \bigcap_{j\in S} \Ker(\xi_j) \oplus 
\bigoplus_{j\in S} \RR D_j
\end{equation} 
with $\bigcap_{j\in S} \Ker(\xi_j) \cong H^2(X_\omega;\RR)$.  

The equivariant first Chern class of $TX_\omega$ is 
given by: 
\[
\rho = c_1^\cQ(T X_\omega) = c_1^T(T X_\omega) = 
\sum_{i\in \{1,\dots,m\} \setminus S} u_i. 
\]

\subsection{Ample Cone and Mori Cone} 
\label{sec:amplecone} 
Let $D'_i$ denote the image of $D_i$ in 
$\LL^\vee\otimes \RR/\sum_{i\in S} \RR D_i \cong H^2(X_\omega;\RR)$. 
This is the non-equivariant Poincar\'{e} dual of the toric 
divisor \eqref{eq:T-invariant_divisor}, that is, the non-equivariant limit of $u_i$. 
The cone of ample divisors of $X_\omega$ is given by 
\[
C'_{\omega}  = \bigcap_{I\in \cA_\omega'} \angle'_I 
\]
where $\cA_\omega'$ was introduced in equation 
\eqref{eq:Uomega_factorization} and 
$\angle'_I := \sum_{i\in I} \RR_{>0} D'_i$ is an 
open cone in $\LL^\vee \otimes \RR/\sum_{i\in S} \RR D_i$ 
(cf.~Notation \ref{not:cones}).  
Under the splitting \eqref{eq:Lvee_decomp} of $\LL^\vee \otimes \RR$, 
the extended ample cone $C_\omega$ defined in equation~\ref{eq:ext_amplecone} also splits 
\cite[Lemma 3.2]{Iritani}: 
\begin{equation} 
\label{eq:splitcone}
C_\omega \cong C'_\omega \times \left( \sum_{i\in S} \RR_{>0} D_i 
\right) \subset H^2(X_\omega;\RR) \times \bigoplus_{i\in S} \RR D_i.   
\end{equation} 
The Mori cone is the dual cone of $C'_\omega$: 
\[
\NE(X_\omega) = C'^\vee_\omega = 
\{ d\in H_2(X_\omega;\RR) : \text{$\eta \cdot d \ge 0$ for all $\eta \in C'_\omega$}\} 
\]

\subsection{Fixed Points and Isotropy Groups} 
\label{sec:fixed_points}

Fixed points of the $T$-action on $X_\omega$ are 
in one-to-one correspondence with minimal anticones, that is, with $\delta \in \cA_\omega$ such that $|\delta| = r$.  
A minimal anticone $\delta$ corresponds to the $T$-fixed point:
\[
\big\{(z_1,\ldots,z_n) \in U_\omega : \text{$z_i = 0$ if $i \not \in
  \delta$}\big\} \big/ K
\]
We now describe the isotropy of the Deligne--Mumford stack $X_\omega$, 
i.e.~those elements $g \in K$ such that the action of $g$ on $U_\omega$ 
has fixed points.  Recall that there are canonical isomorphisms 
$K \cong \LL \otimes \Cstar$ and $\Lie(K) \cong \LL \otimes \CC$, 
via which the exponential map $\Lie(K) \to K$ becomes 
$\id\otimes \exp(2 \pi \tti {-}) \colon \LL \otimes \CC 
\to \LL \otimes \Cstar$.  
The kernel of the exponential map is $\LL \subset \LL \otimes \CC$.  
Define $\KK \subset \LL \otimes \QQ$ to be the set 
of $f \in \LL \otimes \QQ$ such that:
\begin{equation}
  \label{eq:has_fixed_points}
 I_f :=  \Big\{ i \in \{1,2,\ldots,m\} : D_i \cdot f \in \ZZ \Big\}
  \in \cA_\omega
\end{equation}
The lattice $\LL$ acts on $\KK$ by translation, 
and elements $g \in K$ such that the action 
of $g$ on $U_\omega$ has fixed points correspond, 
via the exponential map, to elements of $\KK/\LL$.

\subsection{Floors, Ceilings, and Fractional Parts}

For a rational number $q$, we write:
\begin{align*}
& \text{$\lfloor q \rfloor$ for the largest integer $n$ such that $n \leq q$;} \\
& \text{$\lceil q \rceil$ for the smallest integer $n$ such that $q \leq n$; and} \\
& \text{$\< q \>$ for the fractional part $q - \lfloor q \rfloor$ of $q$.}
\end{align*}

\subsection{The Inertia Stack and Chen--Ruan Cohomology}
\label{sec:inertia} 

Recall the definition of the inertia stack $I X_\omega$ from~\S\ref{sec:conditions}. 
Components of $I X_\omega$ are indexed by elements of $\KK/\LL$: 
the component $X_\omega^f$ of $IX_\omega$ corresponding to 
$f \in \KK/\LL$ consists of the points $(x,g)$ in $I X_\omega$ 
such that $g = \exp(2\pi \tti f)$.  
Recall the set $I_f$ defined in \eqref{eq:has_fixed_points}.  
The component $X_\omega^{f}$ in the inertia stack 
$I X_\omega$ is the toric Deligne--Mumford stack 
with GIT data given by $K$, $\LL$, and $\omega$ 
exactly as for $X_\omega$, and characters 
$D_i \in \LL^\vee$ for $i \in I_f$. We have: 
\[
X_\omega^f = [\CC^{I_f} \cap U_\omega/K]. 
\]
The inclusion $\CC^{I_f} \subset \CC^m$ exhibits $X_\omega^f$ 
as a closed substack of the toric stack $X_\omega$. 
According to Borisov--Chen--Smith \cite{Borisov--Chen--Smith}, 
components of the inertia stack of $X_\omega$ are indexed 
by elements of the set $\cbox(X_\omega)$: 
\[
\cbox(X_\omega) = \left 
\{ v \in \bN : \overline{v} = 
\text{$\sum_{i\notin I} c_i \overline{b_i}$ in $\bN\otimes \RR$  for some $I\in \cA$ and $0\le c_i <1$} \right\}
\]
In fact, we have an isomorphism \cite[\S3.1.3]{Iritani}:
\begin{align} 
\label{eq:KL_Box}
\KK/\LL \cong \cbox(X_\omega) 
&& [f] \mapsto v_f = \sum_{i=1}^m 
\lceil - (D_i \cdot f) \rceil b_i \in \bN.  
\end{align}
When $j\in S$ and $b_j\in \cbox(X_\omega)$, the element 
$-\xi_j \in\LL\otimes \QQ$ defined in 
\eqref{eq:def_xi} belongs to $\KK$ and corresponds to $b_j$. 

The age $\iota_f$ of the component $X_\omega^f \subset I X_\omega$ 
is $\sum_{i\notin I_f}\langle D_i \cdot f\rangle$.   
The $T$-equivariant Chen--Ruan cohomology of $X_\omega$ is, 
as we saw in~\S\ref{sec:CR}, the $T$-equivariant cohomology of the inertia stack 
$IX_\omega$ with age-shifted grading:
\[
H_{\CR,T}^\bullet(X_\omega;\QQ) = \bigoplus_{f\in \KK/\LL} 
H_T^{\bullet-2\iota_f}\big(X_\omega^f;\QQ\big) 
\]
This contains the $T$-equivariant cohomology of $X_\omega$ 
as a summand, corresponding to the element $0 \in \KK/\LL$; 
furthermore the fact that each $X_\omega^f$ is a closed substack 
of $X_\omega$ implies that $H_{\CR,T}^\bullet(X_\omega;\QQ)$ 
is naturally a module over $H_{T}^\bullet(X_\omega;\QQ)$.  
We write $\fun_f$ for the unit class in $H_T^0\big(X_\omega^f;\QQ\big)$, 
regarded as an element of $H_{\CR,T}^{2\iota_f}(X_\omega;\QQ)$. 

Recall that the component $X_\omega^f$ of the inertia stack is 
the toric Deligne--Mumford stack with GIT data $(K;\LL;\omega;D_i : i \in I_f)$. 
In particular, therefore, the anticones for $X_\omega^f$ are given by 
$\{I \in \cA_\omega : I \subset I_f\}$.  
$T$-fixed points on the inertia stack $IX_\omega$ 
are indexed by pairs $(\delta,f)$ where $\delta$ 
is a minimal anticone in $\cA_\omega$, $f \in \KK/\LL$, 
and $D_i \cdot f \in \ZZ$ for all $i \in \delta$.  
The pair $(\delta,f)$ determines a $T$-fixed point 
on the component $X_\omega^f$ of the inertia stack: 
the $T$-fixed point that corresponds 
to the minimal anticone $\delta \subset I_f$.

\section{Wall-Crossing in Toric Gromov--Witten Theory}
\label{sec:wall-crossing}

In this section we consider crepant birational transformations $X_+ \dashrightarrow X_-$
between toric Deligne--Mumford stacks 
which arise from variation of GIT.  We use the Mirror Theorem for toric 
Deligne--Mumford stacks~\cite{CCIT,Cheong--Ciocan-Fontanine--Kim} to construct a global equivariant quantum connection over (a certain part of) the secondary toric variety for $X_\pm$; this gives an analytic continuation of the equivariant quantum connections for $X_+$ and $X_-$.

\subsection{Birational Transformations from Wall-Crossing} 
\label{sec:birational_transformations}

Recall that our GIT data in \S \ref{sec:GITdata} 
consist of a torus $K \cong (\Cstar)^r$, 
the lattice $\LL = \Hom(\Cstar,K)$ of 
$\Cstar$-subgroups of $K$, 
and characters $D_1,\ldots,D_m \in \LL^\vee$.  
Recall further that a choice of stability condition 
$\omega \in \LL^\vee \otimes \RR$ satisfying 
Assumption~\ref{assumption} determines a toric 
Deligne--Mumford stack $X_\omega = \big[U_\omega/K\big]$.  
The space $\LL^\vee \otimes \RR$ of stability conditions is divided into 
chambers by the closures of the sets $\angle_I$, $|I| = r-1$, 
and the Deligne--Mumford stack $X_\omega$ depends on $\omega$ 
only via the chamber containing $\omega$.  
For any stability condition $\omega$ satisfying Assumption~\ref{assumption}, 
the set $U_\omega$ contains the big torus $T=(\Cstar)^m$, 
and thus for any two such stability conditions $\omega_1$,~$\omega_2$ 
there is a canonical birational map 
$X_{\omega_1} \dashrightarrow X_{\omega_2}$, 
induced by the identity transformation between 
$T/K \subset X_{\omega_1}$ and 
$T/K \subset X_{\omega_2}$.  
Our setup is as follows.  Let $C_+$,~$C_-$ be chambers 
in $\LL^\vee \otimes \RR$ that are separated by a hyperplane wall $W$, 
so that $W \cap \overline{C_+}$ is a facet of $\overline{C_+}$, 
$W \cap \overline{C_-}$ is a facet of $\overline{C_-}$, 
and $W \cap \overline{C_+} = W \cap \overline{C_-}$.  
Choose stability conditions $\omega_+ \in C_+$, $\omega_- \in C_-$ 
satisfying Assumption~\ref{assumption} and set 
$X_+ := X_{\omega_+}$, $X_- := X_{\omega_-}$, and
\begin{align*}
  & \cA_\pm := \cA_{\omega_{\pm}} = 
\big\{ I \subset \{1,2,\ldots,m\} : 
\omega_\pm \in \angle_I \big\} 
\end{align*}
Then $C_\pm = \bigcap_{I\in \cA_\pm} \angle_I$. 
Let $\varphi \colon X_+ \dashrightarrow X_-$ be 
the birational transformation induced by the toric wall-crossing 
and suppose that 
\[
\sum_{i=1}^m D_i \in W
\]
As we will see below this amounts to requiring that $\varphi$ is crepant. 
Let $e \in \LL$ denote the primitive lattice vector in $W^\perp$ 
such that $e$ is positive on $C_+$ and negative on $C_-$. 

\begin{remark} 
The situation considered here is quite general.  We do not require $X_+$,~$X_-$ to have projective 
coarse moduli space (they are required to be semi-projective). 
We do not require that $X_+$,~$X_-$ are weak Fano, 
or that they satisfy the extended weak Fano condition in \cite[\S 3.1.4]{Iritani}. 
In other words, we do not require $\sum_{i=1}^m D_i\in W$ to lie in the 
boundary $W \cap \overline{C_+} = W \cap \overline{C_-}$ of 
the extended ample cones. 
\end{remark}

Choose $\omega_0$ from the relative interior of $W\cap \overline{C_+} 
= W \cap \overline{C_-}$. The stability condition $\omega_0$ 
does not satisfy our Assumption \ref{assumption}, but we can 
still consider: 
\begin{align*} 
\cA_0 & := \cA_{\omega_0} = 
\left\{ I \subset \{1,\dots,m\} : 
\omega_0 \in \angle_I \right\} 
\end{align*} 
and the corresponding toric (Artin) stack $X_0 := X_{\omega_0}
=[U_{\omega_0}/K]$ 
as given in Definition \ref{def:toricstack}. 
Here $X_0$ is not Deligne--Mumford, as 
the $\Cstar$-subgroup of $K$ 
corresponding to $e\in \LL$ (the defining equation 
of the wall $W$) has a fixed point in $U_{\omega_0}$. 
The stack $X_0$ contains both $X_+$ and $X_-$ as open substacks 
and the canonical line bundles of $X_{+}$ and $X_-$ 
are the restrictions of the same line bundle $L_0\to X_0$ 
given by the character $-\sum_{i=1}^m D_i$ of $K$. 
The condition $\sum_{i=1}^m D_i\in W$ 
ensures that $L_0$ comes from 
a $\QQ$-Cartier divisor on the underlying singular 
toric variety 
$\overline{X}_0 = \CC^m/\!\!/_{\omega_0} K$ 
associated to the fan $\Sigma_{\omega_0}$.  
On the other hand, in \S \ref{sec:FM}, we shall construct 
a toric Deligne-Mumford stack $\tX$ equipped 
with proper birational morphisms $f_\pm \colon 
\tX \to X_{\pm}$ 
such that the following diagram commutes: 
\begin{equation} 
\label{eq:crepant_diagram} 
\begin{aligned} 
\xymatrix{
 & \tX \ar[rd]^{f_-} \ar[ld]_{f_+} & \\ 
 X_+ \ar[rd]_{g_+} \ar@{-->}^{\varphi}[rr] &  & X_- \ar[ld]^{g_-} \\ 
 & \overline{X}_0 & 
}
\end{aligned} 
\end{equation} 
Then $f_+^\star(K_{X_+})$ and 
$f_-^\star(K_{X_-})$ coincide since they 
are the pull-backs of a $\QQ$-Cartier divisor on $\overline{X}_0$. 
This is what is meant by the birational map $\varphi$ being
\emph{crepant}\footnote
{This notion is also called $K$-equivalence: see the Introduction.}. 

Set: 
\begin{align*} 
M_{\pm} & = \{ i \in \{1,\dots,m\} : \pm D_i\cdot e >0 \}, \\ 
M_0  & = \{ i\in \{1,\dots,m\} : D_i \cdot e = 0\}. 
\end{align*} 
Our assumptions imply that both $M_+$ and $M_-$ are non-empty.  The following lemma is easy to check: 
\begin{lemma} 
\label{lem:Apm} 
Set: 
\begin{align*} 
\cA_0^{\rm thin} & := \{ I \in \cA_0 : I \subset M_0\} \\ 
\cA_0^{\rm thick} & := \{ I \in \cA_0: I \cap M_+ \neq \emptyset, 
I \cap M_- \neq \emptyset\}. 
\end{align*} 
Then one has $M_0\in \cA_0^{\rm thin}$ and 
\begin{align*} 
\cA_0 & = \cA_0^{\rm thin} \sqcup \cA_0^{\rm thick}, \\ 
\cA_\pm & = \cA_0^{\rm thick} \sqcup \left\{ I \sqcup J  : 
\emptyset \neq J \subset M_\pm, I \in \cA_0^{\rm thin}\right\}. 
\end{align*} 
\end{lemma}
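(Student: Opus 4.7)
The plan is to derive all three statements from the single constraint $\omega_0 \cdot e = 0$ (since $\omega_0 \in W$) combined with the sign pattern $D_i \cdot e > 0$ on $M_+$, $< 0$ on $M_-$, $=0$ on $M_0$, together with the fact that $\omega_0$ lies in the relative interior of the facet $W \cap \overline{C_+}$ of $\overline{C_+}$.

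First I would prove the decomposition $\cA_0 = \cA_0^{\rm thin} \sqcup \cA_0^{\rm thick}$. For $I \in \cA_0$ write $\omega_0 = \sum_{i \in I} a_i D_i$ with $a_i > 0$ and pair with $e$: the identity $\sum_{i \in I \cap M_+} a_i (D_i \cdot e) + \sum_{i \in I \cap M_-} a_i (D_i \cdot e) = 0$ forces either $I \cap (M_+ \cup M_-) = \emptyset$ (so $I \subset M_0$, thin) or both $I \cap M_\pm$ non-empty (thick), and these cases are mutually exclusive. For $M_0 \in \cA_0^{\rm thin}$ the inclusion $M_0 \subset M_0$ is trivial, so one needs $\omega_0 \in \angle_{M_0}$. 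Since $\omega_0$ lies in the relative interior of $W \cap \overline{C_+}$, a polyhedral cone contained in $W$, its extreme rays are among $\{\RR_{>0}D_i : i \in M_0\}$, and a relative-interior point uses all such extreme rays with strict positive weight. Any remaining index in $M_0$ is then incorporated with a small positive coefficient by redistributing weights using the linear relations among $\{D_i : i \in M_0\}$ that arise from the extended stacky fan.

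For the formula $\cA_\pm = \cA_0^{\rm thick} \sqcup \{I \sqcup J : \emptyset \neq J \subset M_\pm,\, I \in \cA_0^{\rm thin}\}$, partition any $I$ as $I = I_0 \sqcup J_+ \sqcup J_-$ along $M_0, M_+, M_-$. For the inclusion $(\supset)$: if $I \in \cA_0^{\rm thick}$ with $i_\pm \in I \cap M_\pm$, the perturbation $\omega_+ := \omega_0 + t(D_{i_+} - D_{i_-})$ for small $t > 0$ lies in $\angle_I \cap C_+$ (its $e$-component becomes positive and the coefficients $a_{i_\pm} \pm t$ stay positive), so $I \in \cA_+$; if $I \in \cA_0^{\rm thin}$ and $\emptyset \neq J \subset M_+$, the perturbation $\omega_+ := \omega_0 + \sum_{j \in J} t_j D_j$ for small $t_j > 0$ witnesses $I \sqcup J \in \cA_+$. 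For $(\subset)$: choose $\omega_+^{(t)} = \omega_0 + t\eta \in C_+$ with $\eta \cdot e > 0$ and a presentation $\omega_+^{(t)} = \sum_{i \in I} a_i^{(t)} D_i$ with positive coefficients; pairing with $e$ and passing to the limit $t \to 0$ forces all $J_+$-coefficients to vanish when $J_- = \emptyset$, so the residual $\omega_+^{(t)} - \sum_{j \in J_+} a_j^{(t)} D_j \in W \cap \angle_{I_0}$ converges to $\omega_0$ and lies in the same chamber of $W \cap \overline{C_+}$ as $\omega_0$, giving $I_0 \in \cA_0^{\rm thin}$. The thick case follows directly from Claim~2 applied to the limiting presentation.

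The main obstacle is Claim~1: verifying that \emph{all} indices in $M_0$, rather than just the spanning subset indexing extreme rays of the facet, can be used simultaneously with strictly positive weight. This depends on the linear redundancy among $\{D_i : i \in M_0\}$, which is ensured by the surjectivity of $\beta$ in the extended stacky fan; once this is in hand, the perturbation and limiting steps in Claim~3 go through without difficulty.
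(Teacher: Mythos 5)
Your pairing argument for $\cA_0 = \cA_0^{\rm thin} \sqcup \cA_0^{\rm thick}$ is correct, and the perturbation argument for the inclusion $\cA_0^{\rm thick} \sqcup \{\ldots\} \subset \cA_\pm$ is sound, once you make explicit the local half-space structure: since $\omega_0$ lies in the relative interior of the facet $W \cap \overline{C_+}$, a sufficiently small neighbourhood of $\omega_0$ meets $C_+$ precisely in its $\{\cdot\,e>0\}$ half, so small perturbations of $\omega_0$ with positive $e$-pairing do land in $C_+$. Your limiting-presentation argument for $(\subset)$ in the ``$J_-=\emptyset$'' case is also essentially right.

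The genuine gap is the one you flag: $M_0 \in \cA_0^{\rm thin}$, and your proposed argument does not repair it. The assertion that the extreme rays of $W \cap \overline{C_+}$ lie along $\RR_{\geq 0}D_i$ for $i \in M_0$ is unjustified---$\overline{C_+}$ is an intersection of several cones $\overline{\angle_I}$, and the extreme rays of such an intersection are generically \emph{not} among the generators of the $\angle_I$'s, especially once $r \geq 3$. Surjectivity of $\beta$ is not the relevant hypothesis either; what you actually need is a subset of $M_0$ that is a basis of $W$, which comes from Assumption~\ref{assumption} via the minimal anticones. A correct route: since $C_+ \neq C_-$ there is a minimal anticone $\delta \in \cA_+ \setminus \cA_-$ (else $\cA_+ \subset \cA_-$ by enlargement-closure, forcing $C_- \subset C_+$). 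Pairing with $e$ at $\omega_+$ gives $\delta \cap M_+ \neq \emptyset$. If $\delta$ also met $M_-$, then $\angle_\delta$ is a full-dimensional simplicial cone meeting both sides of $W$, so $\mathrm{relint}(W \cap \overline{\angle_\delta}) = W \cap \angle_\delta$; since $W \cap \overline{C_+} \subset W \cap \overline{\angle_\delta}$ are both $(r-1)$-dimensional, $\omega_0 \in W \cap \angle_\delta$ and hence $\delta \in \cA_-$, a contradiction. Thus $\delta \cap M_- = \emptyset$, and then $W \cap \overline{\angle_\delta} = \mathrm{Cone}\{D_i : i \in \delta \cap M_0\}$; since it contains the $(r-1)$-dimensional $W \cap \overline{C_+}$, one gets $|\delta \cap M_0| = r-1$ (so $\{D_i : i \in \delta \cap M_0\}$ is a basis of $W$, and incidentally $\delta \cap M_+$ is a singleton) and, by the relative-interior comparison again, $\omega_0 \in \angle_{\delta \cap M_0}$. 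The standard enlargement trick (as in the remark after Assumption~\ref{assumption}, using that $\delta \cap M_0$ spans $W$) then yields $\omega_0 \in \angle_{M_0}$. The same relative-interior comparison also cleanly handles the ``thick'' half of your $(\subset)$ step, which as stated does not quite work: the coefficients in the ``limiting presentation'' may vanish, so the pairing dichotomy---which needs strict positivity---does not directly apply.
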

\begin{remark} 
\label{rem:circuit} 
Let $\Sigma_\pm$ be the fans of $X_\pm$. 
In terms of fans, a toric wall-crossing can be described 
as a \emph{modification along a circuit} \cite{GKZ:book,Borisov--Horja:FM}, where `circuit' means a minimal linearly dependent set of vectors.  
In our wall-crossing, the relevant circuit is $\{b_i: i\in M_+ \cup M_-\}$: we have $\sum_{i\in M_+ \cup M_-} (D_i\cdot e) b_i=0$,
and every proper subset of $\{b_i : i\in M_+ \cup M_-\}$ is linearly 
independent. 
The partition of the circuit $M_+ \cup M_-$ into $M_+$ and $M_-$ 
is determined by the sign of the coefficients in 
a relation among $\{b_i : i\in M_+\cup M_-\}$. 
The modification along the circuit $M_+ \cup M_-$ 
turns the fan $\Sigma_+$ into $\Sigma_-$: it 
removes every cone $\sigma_I$ of $\Sigma_+$ such that 
$I$ contains $M_-$ but not $M_+$ and introduces 
cones of the form $\sigma_K$ where $K=(I\cup M_+) \setminus J$ for 
any non-empty subset $J \subset M_-$.  
This description matches with Lemma~\ref{lem:Apm},~\S\ref{sec:stacky_fan}, and~\S\ref{sec:GITdata}. 
\end{remark}

There are three types of possible crepant toric wall-crossings: 
(I) $X_+$ and $X_-$ are isomorphic in codimension one 
(``flop''), 
(II) $\varphi$ induces a morphism $X_{+} \to |X_{-}|$ 
or $X_- \to |X_+|$
contracting a divisor to a toric subvariety (``crepant resolution'') 
and 
(III) the rigidifications\footnote{See e.g.~\cite{FMN}.} $X_+^{\rm rig}$, $X_-^{\rm rig}$ 
are isomorphic 
(only the gerbe structures change; we call it a ``gerbe flop'').    
Define: 
\[
S_\pm = \{ i\in \{1,\dots,m\} : \overline{\{i\}}  
\not \in \cA_\pm\}. 
\]
\begin{proposition}
\label{prop:Spm_classif}
The intersection $S_0 := S_+ \cap S_-$ is contained in $M_0$. 
Moreover, one and only one of the following holds: 
\begin{itemize} 
\item[(I)] $S_+ = S_-$, $\sharp(M_+)\ge 2$ and $\sharp(M_-) \ge 2$;  
\item[(II-i)] there exists $i\in \{1,\dots,m\}$ such that 
$S_- = S_+ \sqcup \{i\}$, $M_- = \{i\}$ and $\sharp(M_+) \ge 2$; 
\item[(II-ii)] there exists $i\in \{1,\dots,m\}$ such that 
$S_+ = S_- \sqcup \{i\}$, $M_+ = \{i\}$ and $\sharp(M_-)\ge 2$; 
\item[(III)] there exist $i_+,i_- \in \{1,\dots,m\}$ such that 
$S_+ = S_0 \sqcup \{i_+\}$, 
$S_- = S_0 \sqcup \{i_-\}$, $M_+ = \{i_+\}$ 
and $M_- = \{i_-\}$.   
\end{itemize} 
\end{proposition}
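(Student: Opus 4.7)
The plan is to read off $S_\pm = \{j : K_j \notin \cA_\pm\}$, with $K_j := \{1,\dots,m\}\setminus\{j\}$, directly from Lemma~\ref{lem:Apm}. A bookkeeping check gives: $K_j$ lies in the thick part $\cA_0^{\rm thick}$ of $\cA_\pm$ iff $K_j$ meets both $M_+$ and $M_-$ and $K_j \in \cA_0$ (a condition symmetric in $\pm$); $K_j$ lies in the thin part of $\cA_\pm$ iff $M_\mp \subset \{j\}$, which forces $|M_\mp|=1$ with $j$ its unique element, in which case $K_j = M_0 \sqcup M_\pm$ and membership is automatic because $M_0 \in \cA_0^{\rm thin}$ by Lemma~\ref{lem:Apm}.

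For part (1), I would suppose $j \in S_+ \cap S_-$ and, by symmetry, $j \in M_+$. If $|M_+|\ge 2$, then $K_j$ meets both $M_+$ and $M_-$, so the thick route places $K_j \in \cA_+$ (using the auxiliary claim $K_j \in \cA_0$ discussed below), contradicting $j \in S_+$. If $|M_+|=1$ so $M_+=\{j\}$, then the thin route places $K_j \in \cA_-$ automatically, contradicting $j \in S_-$. Hence $j \in M_0$.

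For part (2), I would run a four-way case analysis on $(|M_+|,|M_-|) \in \{\ge 2,=1\}^2$. When $|M_\pm|\ge 2$, the thin route never fires, so $S_+=S_-$ by symmetry of the thick condition, giving (I). When $|M_+|\ge 2, M_-=\{i\}$: the thin route of $\cA_+$ fires only at $j=i$, removing $i$ from $S_+$; the thin route of $\cA_-$ never applies, and the thick route for $\cA_-$ at $j=i$ fails because $K_i \cap M_-=\emptyset$, so $i \in S_-$; for $j \ne i$ both sides agree as in (I). This gives (II-i); (II-ii) is symmetric. When $M_\pm=\{i_\pm\}$: at $j=i_\pm$ the thin route of $\cA_\pm$ fails (it would demand $M_\mp \subset \{i_\pm\}$, false since $i_+ \ne i_-$) and the thick route fails too (as $K_{i_\pm}$ misses $M_\pm$), so $i_\pm \in S_\pm$; conversely the thin route of $\cA_\mp$ places $K_{i_\pm} \in \cA_\mp$, giving $i_\pm \notin S_\mp$ and (III). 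Mutual exclusivity and exhaustiveness of (I)--(III) follow from the partition of $(|M_+|,|M_-|)$.

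The main technical step is the auxiliary claim that $K_j \in \cA_0$ for $j \in M_+ \cup M_-$, invoked in part (1) and case (I). Starting from the strictly positive expression $\omega_0 = \sum_{k \in M_0} b_k D_k$ (available because $M_0 \in \cA_0^{\rm thin}$), one adds a small multiple of a relation $\sum_k (\chi \cdot b_k) D_k = 0$ for $\chi \in \bN^\vee$ chosen with $\chi \cdot b_j = 0$ and $\chi \cdot b_k > 0$ for $k \in (M_+ \cup M_-)\setminus\{j\}$. The existence of such $\chi$ follows from Gordan's theorem together with the fact that the circuit relation is the unique (up to scalar) dependence on $\{b_k : k \in M_+ \cup M_-\}$: any hypothetical nonnegative combination $\sum_{k \ne j}\alpha_k b_k = \lambda b_j$ would have to be a scalar multiple of the circuit relation, which is excluded by the mixed signs of $\{D_k \cdot e : k \in (M_+ \cup M_-)\setminus\{j\}\}$.
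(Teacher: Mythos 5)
Your proof is correct. Part (2) reproduces essentially the same bookkeeping as the paper's, organized as a four-way case split on $\bigl(\sharp(M_+),\sharp(M_-)\bigr)$ rather than via two symmetric claims about $S_-\setminus S_+$; the two routes are equivalent. The genuine divergence is in part (1). To rule out $i\in S_0\cap M_+$, you split on $\sharp(M_+)$ and, when $\sharp(M_+)\ge 2$, take the thick route in $\cA_+$, which forces you to establish the auxiliary claim $\overline{\{i\}}\in\cA_0$. Your Gordan/Stiemke argument for that claim, combined with the uniqueness up to scalar of the linear relation on $\{b_k : k\in M_+\cup M_-\}$, is sound, but with two caveats: the claim as you state it (for \emph{all} $j\in M_+\cup M_-$) is false --- it fails when $M_\pm=\{j\}$, precisely because the mixed-sign step collapses there --- so it must be restricted to $j\in M_\pm$ with $\sharp(M_\pm)\ge 2$ (which is the only range where you actually invoke it, and it is never needed in your case (I) either); and $\chi$ need only lie in $\bN^\vee\otimes\RR$, not in $\bN^\vee$. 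More to the point, the entire step can be avoided. Since $\cA_-$ is closed under enlargement of sets (Assumption~\ref{assumption}) and $M_0\cup M_-\in\cA_-$ via the thin route of Lemma~\ref{lem:Apm}, the containment $M_0\cup M_-\subset\overline{\{i\}}$, which holds whenever $i\in M_+$, immediately gives $\overline{\{i\}}\in\cA_-$, contradicting $i\in S_-$. This is the paper's argument; it is uniform in $\sharp(M_+)$ and needs no polyhedral duality, so your Gordan detour, while valid, buys nothing here.
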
 

\begin{proof} 
First we show that $S_0\subset M_0$. 
Take $i \in S_0$. 
Suppose that $i\in M_+$. 
Since $M_0\in \cA_0^{\rm thin}$, we have 
$M_0 \cup M_-\in \cA_-$ by Lemma \ref{lem:Apm}. 
Thus $\overline{\{i\}} = 
M_0 \cup M_- \cup (M_+\setminus \{i\})$ also belongs 
to $\cA_-$. This contradicts the fact that $i\in S_-$. 
Thus we have $i\notin M_+$, and similarly that 
$i\notin M_-$. Hence $i\in M_0$. We have shown 
that $S_0 \subset M_0$. 

Next we claim that:
\begin{itemize} 
\item[(a)] if $S_-\setminus S_+$ is non-empty, 
then we have $\sharp(S_- \setminus S_+) = 1$ and 
$M_- = S_-\setminus S_+$;
\item[(b)] if $S_- \subset S_+$, then $\sharp(M_-) \ge 2$. 
\end{itemize} 
Take $i\in S_-\setminus S_+$. 
We have $\overline{\{i\}} \in \cA_+\setminus \cA_-$. 
Lemma \ref{lem:Apm} implies that an element of 
$\cA_+ \setminus \cA_-$ is of the form $I \sqcup J$ with 
$\emptyset \neq J \subset M_+$ and $I\subset M_0$, 
and in particular does not intersect with $M_-$. 
This implies that $\{i\} = M_-$. 
Therefore $S_-\setminus S_+ = M_-$ consists of 
only one element. This proves (a).   
Conversely, if $M_-=\{i\}$, 
it follows from Lemma \ref{lem:Apm} that 
$\overline{\{i\}}\in \cA_+\setminus \cA_-$ 
and thus $i \in S_- \setminus S_+$. 
This proves (b). 
The same claim holds if we exchange $+$ and $-$. 
It follows that one and only one of (I), (II-i), (II-ii), (III) happens. 
\end{proof} 

\begin{proposition} 
The loci of indeterminacy of $\varphi$ and $\varphi^{-1}$ are 
the toric substacks 
\begin{align*}
\bigcap_{j\in M_-} \{z_j=0\}\subset X_+ && \text{and} &&
\bigcap_{j\in M_+} \{z_j=0\}\subset X_-
\end{align*}
respectively.   With cases as in Proposition~\ref{prop:Spm_classif}, we have: 
\begin{itemize} 
\item[(I)] $X_+$ and $X_-$ are isomorphic 
in codimension one; 
\item[(II-i)] $\varphi$ induces a morphism 
$\varphi \colon X_+ \to |X_-|$ that contracts the 
divisor $\{z_i=0\}$ to the subvariety 
$\bigcap_{j\in M_+} \{z_j=0\}$; 
\item[(II-ii)] a statement similar to {\rm (II-i)} with $+$ and $-$ 
interchanged; 
\item[(III)] $\varphi$ induces an isomorphism 
$X_+^{\rm rig} \cong X_-^{\rm rig}$
between the rigidifications. 
\end{itemize} 
\end{proposition}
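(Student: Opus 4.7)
The plan is to first identify the indeterminacy loci directly from Lemma~\ref{lem:Apm}, and then distinguish the four sub-cases by invoking Proposition~\ref{prop:Spm_classif}. For the first step, observe that $\varphi$ is the identity on $[(U_{\omega_+}\cap U_{\omega_-})/K]\subset X_+$, so its indeterminacy locus is the image of $U_{\omega_+}\setminus U_{\omega_-}$. Lemma~\ref{lem:Apm} gives $\cA_+\cap\cA_-=\cA_0^{\rm thick}$, so a point $z\in U_{\omega_+}$ lies in $U_{\omega_-}$ iff $\text{supp}(z)\in\cA_0^{\rm thick}$, i.e.\ iff $\text{supp}(z)$ meets both $M_+$ and $M_-$. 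Thus $[z]$ is in the indeterminacy locus iff $z_j=0$ for all $j\in M_-$; the auxiliary condition $\text{supp}(z)\cap M_+\neq\emptyset$ is automatic because $\cA_0^{\rm thin}\cap\cA_+=\emptyset$ (again by Lemma~\ref{lem:Apm}, since an element of $\cA_0^{\rm thin}$ lies inside $M_0$ and so cannot appear in the second summand of the decomposition of $\cA_+$). This produces the indeterminacy locus $\bigcap_{j\in M_-}\{z_j=0\}\subset X_+$; the statement for $\varphi^{-1}$ follows by symmetry.

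For the case analysis, Proposition~\ref{prop:Spm_classif} controls $S_\pm$. In Case~(I) one has $S_\pm=S_0\subset M_0$, so each $\{z_j=0\}$ with $j\in M_\mp$ is a nonempty toric divisor in $X_\pm$, and both indeterminacy loci are of codimension $|M_\mp|\ge 2$---yielding the flop statement. In Case~(II-i), $M_-=\{i\}$ with $i\notin S_+$ makes $\{z_i=0\}\subset X_+$ a genuine divisor, while $S_-=S_+\sqcup\{i\}\subset M_0\cup M_-$ is disjoint from $M_+$, so $\bigcap_{j\in M_+}\{z_j=0\}\subset X_-$ has codimension $|M_+|\ge 2$. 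To extend $\varphi$ across $\{z_i=0\}$ as a morphism $X_+\to|X_-|$, appeal to the common blowdown $\overline{X}_0$ of \eqref{eq:crepant_diagram} (to be constructed in \S\ref{sec:FM}): the morphism $g_+\colon X_+\to\overline{X}_0$ is everywhere defined, and a direct fan-theoretic check in the $|M_-|=1$ setting identifies $|X_-|\cong\overline{X}_0$, so $\varphi$ factors as $X_+\xrightarrow{g_+}\overline{X}_0\cong|X_-|$, contracting $\{z_i=0\}$ onto $\bigcap_{j\in M_+}\{z_j=0\}$. Case~(II-ii) follows by symmetry.

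For Case~(III), the key is that $M_0=\{1,\dots,m\}\setminus\{i_+,i_-\}$, so the crepancy condition $\sum_i D_i\in W=e^\perp$ collapses to $D_{i_+}\cdot e=-D_{i_-}\cdot e=:d>0$. Applying $\beta\colon\ZZ^m\to\bN$ to the vanishing combination $\sum_i(D_i\cdot e)e_i=D(e)\in\LL$ yields $d(b_{i_+}-b_{i_-})=0$ in $\bN$, hence $b_{i_+}\equiv b_{i_-}$ modulo $\bN_{\text{tors}}$. The extended stacky fans of $X_\pm$ therefore coincide after passing to $\bN/\bN_{\text{tors}}$: the only rays that differ (indexed by $i_-$ in $X_+$ and by $i_+$ in $X_-$) project to the same generator in the free quotient, while the remaining rays are common. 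Since the generic stabilizer of $X_\omega$ is determined by $\bN_{\text{tors}}$, rigidification realizes precisely this quotient, giving $X_+^{\rm rig}\cong X_-^{\rm rig}$. The main obstacle in the proof is Case~(II-i), where codimension bookkeeping alone does not supply the contraction morphism, and one must appeal to the common blowdown $\overline{X}_0$ of \S\ref{sec:FM} together with the identification $|X_-|\cong\overline{X}_0$; Case~(III) is combinatorially delicate in that the gerbe-flop phenomenon is entirely encoded in the torsion of $\bN$ and its interplay with the rigidification functor.
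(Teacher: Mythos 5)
Your proposal is correct and its overall structure mirrors the paper's: you identify the indeterminacy loci by applying Lemma~\ref{lem:Apm} to the equality $U_{\omega_+}\cap U_{\omega_-}=\{z:\mathrm{supp}(z)\in\cA_0^{\rm thick}\}$, and then you sort out the four cases using Proposition~\ref{prop:Spm_classif}.  Your treatment of the indeterminacy locus and of Cases~(I) and~(III) is essentially the paper's argument (in~(III) the paper is equally terse about rigidification, noting only that $b_{i_+}-b_{i_-}$ is torsion).  The one place where you genuinely deviate is Case~(II-i): the paper reads the contraction morphism straight off the stacky fan ($\Sigma_-$ is obtained from $\Sigma_+$ by deleting the ray $\overline{b}_i$, with $\sigma_{M_+}\in\Sigma_-$ the minimal cone containing $\overline{b}_i$, and the divisor $\{z_i=0\}$ maps onto the orbit closure of $\sigma_{M_+}$), while you factor $\varphi$ through the GIT quotient $\overline{X}_0$ and invoke the identification $|X_-|\cong\overline{X}_0$.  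That identification is correct, but the ``direct fan-theoretic check'' you wave at is in substance the same combinatorial computation the paper performs directly (showing that when $|M_-|=1$ every cone $\sigma_I$ with $\overline{I}\in\cA_0^{\rm thin}$ coincides with the cone $\sigma_{I\setminus\{i\}}$ of $\Sigma_-$, because $\overline{b}_i\in\sigma_{M_+}$), so your route is not shorter --- it merely relocates the work.  It also relies on the morphisms $g_\pm\colon X_\pm\to\overline{X}_0$ of diagram~\eqref{eq:crepant_diagram}, which in the paper's logical order are only asserted, not constructed, at this point; this is harmless (they are standard GIT contractions, defined because every anticone for $\omega_\pm$ contains an anticone for $\omega_0$, so $U_{\omega_\pm}\subset U_{\omega_0}$), but the paper's fan-level argument avoids the dependency.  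If you keep your route for~(II-i), you should actually carry out the check $\Sigma_-=\Sigma_{\omega_0}$ rather than asserting it, since it is the mathematical content of that case.
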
 
\begin{proof} 
One can check that 
$U_{\omega_+} \cap U_{\omega_-} = 
U_{\omega_+} \setminus \bigcap_{i\in M_-} \{z_i=0\} 
= U_{\omega_-} \setminus \bigcap_{i\in M_+} \{z_i=0\}$ 
using Lemma \ref{lem:Apm}. 
The geometric picture in each case can be seen from 
the stacky fans: (I) the sets of one-dimensional 
cones are the same; 
(II-i) the fan $\Sigma_-$ is obtained by 
deleting the ray $\RR_{\ge 0} \overline{b}_i$ from $\Sigma_+$; 
$\sigma_{M_+} \in \Sigma_-$ is a minimal cone containing 
$\overline{b}_i$; 
$\varphi$ contracts the toric divisor $\{z_i=0\}$ 
to the closed subvariety associated with $\sigma_{M_+}$; (II-ii) similar;
(III) the stacky fan $\mathbf{\Sigma}_-$ is obtained 
from $\mathbf{\Sigma}_+$ by replacing $b_{i_-}$ with 
$b_{i_+}$; one has $(D_{i_+} \cdot e) b_{i_+} = 
- (D_{i_-} \cdot e) b_{i_-}$ by \eqref{eq:exact} 
and $D_{i_+} \cdot e + D_{i_-} \cdot e =0$; thus 
$b_{i_+}$ and $b_{i_-}$ differ only by a torsion element 
in $\bN$. 
\end{proof} 

\begin{example} \ 
  \begin{itemize}
  \item[(I)] Let $a_1,\dots,a_k, b_1,\dots,b_l$ be positive integers such that 
    $a_1+ \cdots + a_k = b_1+ \cdots + b_l$. 
    Consider the GIT data given by $\LL^\vee =\ZZ$, $D_1 = a_1,\dots,D_k = a_k$, 
    $D_{k+1} = -b_1,\dots,D_{k+l}= -b_l$. 
    If $k,l\ge 2$, we have a flop between 
    \begin{align*}
      X_+= \bigoplus_{i=1}^k \cO_{\PP(a_1,\dots,a_k)}(-b_i) 
      && \text{and} &&
      X_- = \bigoplus_{j=1}^l \cO_{\PP(b_1,\dots,b_l)}(-a_j). 
    \end{align*}

  \item[(II)] Consider the case where $l=1$ in (I). Setting $d = a_1+ \cdots + a_k = b_1$, we have that $X_+ = \cO_{\PP(a_1,\dots,a_k)}(-d)$ is a crepant (partial) resolution of $|X_-| = \CC^k/\bmu_d$ where $\bmu_d$ acts on $\CC^k$ by the weights $(a_1/d,\dots,a_k/d)$.

  \item[(III)] Consider the GIT data given by $\LL^\vee = \ZZ^2$, $D_1 = (1,0)$, $D_2 = (1,2)$, $D_3 = (0,2)$.  Take $\omega_+$ from the chamber $\{(x,y) : 0< y < 2x \}$ and $\omega_-$ from the chamber $\{(x,y) : 0< 2x < y \}$. Then we have a ``gerbe flop" between $X_+ = \PP(2,2)$ and $X_- = \PP^1 \times B\bmu_2$.  
  \end{itemize}
\end{example}

\subsection{Decompositions of Extended Ample Cones} 

Recall the decomposition \eqref{eq:Lvee_decomp} of the vector space 
$\LL^\vee \otimes \RR$ and the decomposition 
\eqref{eq:splitcone} of the extended ample cone. 
In the case at hand, we have two (possibly different) 
decompositions of $\LL^\vee \otimes \RR$ 
associated to the GIT quotients $X_+$ and $X_-$: 
\begin{equation} 
\label{eq:decomp_pm} 
\LL^\vee \otimes \RR = \bigcap_{j\in S_\pm} 
\Ker(\xi_j^\pm) \oplus \bigoplus_{j\in S_\pm} \RR D_j 
\end{equation} 
where elements $\xi_j^\pm \in \LL \otimes \RR$, $j\in S_\pm$ 
are as in \eqref{eq:def_xi} and 
$\bigcap_{j\in S_\pm} \Ker(\xi_i^+) \cong H^2(X_\pm,\RR)$. 
Under these decompositions, one has 
\[
C_\pm = C_\pm' \times \sum_{j\in S_\pm} \RR_{>0} D_j 
\]
where $C_\pm' \subset \bigcap_{i\in S_\pm} \Ker(\xi^\pm_i) \cong
H^2(X_\pm;\RR)$ is the ample cone of $X_\pm$. 
Let $\overline{C_W} := W \cap \overline{C_+} 
= W \cap \overline{C_-}$ be a common facet of $C_+$ and $C_-$, 
and write $C_W$ for the relative interior of $\overline{C_W}$. 
We now show that these decompositions of 
the cones $C_+$,~$C_-$ are compatible along the wall. 

\begin{proposition} 
\label{prop:wallcone_decomp}
We have $\xi_i^+|_W = \xi_i^-|_W$ for $i\in S_0 = S_+ \cap S_-$ 
and $\xi_i^\pm|_W = 0$ for $i\in S_\pm \setminus S_\mp$. 
Set $\xi_i^W = \xi_i^+|_W = \xi_i^-|_W \in \Hom(W,\RR)$. 
Then we have 
\[
W' := W \cap \bigcap_{i\in S_+} \Ker(\xi_i^+) = W \cap \bigcap_{i\in S_-} 
\Ker(\xi_i^-) = \bigcap_{i\in S_0} \Ker(\xi^W_i) 
\]
and so the decompositions \eqref{eq:decomp_pm} restrict to 
the same decomposition of $W$:  
\begin{equation} 
\label{eq:decomp_W} 
W = W' \oplus 
\bigoplus_{i\in S_0} \RR D_i. 
\end{equation} 
Under this decomposition of $W$, the cone $C_W$ decomposes as 
\[
C_W = C_W' \times \sum_{i\in S_0} \RR_{>0} D_i 
\]
for some cone $C_W'$ in $W'$. 
With cases as in Proposition \ref{prop:Spm_classif}, we have:
\begin{itemize} 
\item[(I)] $C_W'$ is a common facet of $C_+'$ and $C_-'$; 
\item[(II-i)] $C_W' = C_-'$, $C_W'$ is a facet of $C_+'$ 
and $C_- = C_W + \RR_{>0} D_i$;  
\item[(II-ii)] $C_W' = C_+'$, $C_W'$ is a facet of $C_-'$
and $C_+ = C_W + \RR_{>0} D_i$;  
\item[(III)] $C_W' = C_+' = C_-'$, 
$C_+ = C_W + \RR_{>0} D_{i_+}$ and 
$C_- = C_W + \RR_{>0} D_{i_-}$. 
\end{itemize} 
\end{proposition}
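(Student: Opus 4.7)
The plan is to prove the two statements about $\xi_i^\pm|_W$ first, and then read off the decompositions of $W$ and $C_W$. My main inputs will be the circuit relation $\sum_k (D_k \cdot e)\,b_k = 0$ in $\bN$ (which comes from $e \in \LL$ via~\eqref{eq:exact}) and the case analysis of Proposition~\ref{prop:Spm_classif}.

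To prove $\xi_i^\pm|_W = 0$ for $i \in S_\pm \setminus S_\mp$, I observe that by Proposition~\ref{prop:Spm_classif} this set is nonempty only in Cases (II-i), (II-ii), and (III), in each of which $M_\pm = \{i\}$. The circuit relation then specializes to $\overline{b_i} = -\sum_{k\in M_\mp}\tfrac{D_k\cdot e}{D_i\cdot e}\,\overline{b_k}$ with positive coefficients. Using Lemma~\ref{lem:Apm} I would verify that $\sigma_{M_\mp}$ is a cone of $\Sigma_\pm$, and it is automatically the minimal cone containing $\overline{b_i}$ since $M_+\cup M_-$ is a circuit. Substituting $I_i^\pm = M_\mp$ and these coefficients into~\eqref{eq:def_xi} yields $D_k \cdot \xi_i^\pm = \tfrac{D_k\cdot e}{D_i\cdot e}$ for every $k$, so $\xi_i^\pm = \tfrac{1}{D_i\cdot e}\,e$ lies in $\RR e = W^\perp$.

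The main step is the agreement $\xi_j^+|_W = \xi_j^-|_W$ for $j \in S_0$. Given $j \in S_0 \subset M_0$, comparing the two minimal-cone expressions $\overline{b_j} = \sum_{k\in I_j^\pm} c_{kj}^\pm\,\overline{b_k}$ produces a relation $\sum_k \delta_k\,\overline{b_k} = 0$ with $\delta_k := c_{kj}^+ - c_{kj}^-$, and~\eqref{eq:exact} identifies $\delta$ with the image in $\ZZ^m\otimes\RR$ of $\xi_j^- - \xi_j^+ \in \LL\otimes\RR$. Since the space of relations among $\{\overline{b_k}\}$ supported on the circuit $M_+\cup M_-$ is one-dimensional and spanned by $(D_k\cdot e)_k$, it is enough to show $\delta_k = 0$ for $k \in M_0 \setminus S_0$. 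In Case (II-ii) this is an explicit substitution: either the minimal cone $\sigma_{I_j^-}$ does not meet the ray $\overline{b_{i_+}}$ (so $I_j^+ = I_j^-$ and $\delta = 0$), or $i_+ \in I_j^-$ and substituting $\overline{b_{i_+}} = \sum_{k\in M_-} \tfrac{-(D_k\cdot e)}{D_{i_+}\cdot e}\,\overline{b_k}$ into the $\Sigma_-$-expression converts it into the $\Sigma_+$-expression and produces $\delta_k = \tfrac{c_{i_+,j}^-}{D_{i_+}\cdot e}(D_k\cdot e)$, which indeed vanishes on $M_0$. Case (III) is identical after replacing the substitution by $\overline{b_{i_+}} = c\,\overline{b_{i_-}}$, and Case (I) requires iterating this process, adding multiples of the full circuit relation in order to pass from the $\Sigma_-$-triangulation to the $\Sigma_+$-triangulation of a common region around $\overline{b_j}$.

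With the two statements about $\xi$ established, the common value $\xi_j^W := \xi_j^+|_W = \xi_j^-|_W$ is well-defined for $j \in S_0$, and the splittings~\eqref{eq:decomp_pm} restrict on $W$ to the same direct sum $W = W' \oplus \bigoplus_{j\in S_0}\RR D_j$ with $W' := W\cap \bigcap_{j\in S_0}\Ker(\xi_j^W)$; the characters $D_j$ with $j\in S_\pm\setminus S_\mp$ drop out of $W$ since they lie in the open half-space on the $\pm$ side of $W$. The decomposition $C_W = C_W' \times \sum_{j\in S_0}\RR_{>0} D_j$ then follows from $C_W = W\cap \overline{C_\pm}$ together with $\overline{C_\pm} = \overline{C_\pm'} \times \sum_{j\in S_\pm}\RR_{\ge 0} D_j$, and the case-by-case identification of $C_W'$ with (a common facet of, or all of) $C_\pm'$ is obtained by reading off Proposition~\ref{prop:Spm_classif}. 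The main technical obstacle I anticipate is the combinatorial bookkeeping in Case (I), where $\Sigma_+$ and $\Sigma_-$ can differ by a non-trivial retriangulation of a cone around the circuit rather than by a single star subdivision as in Cases (II) and (III).
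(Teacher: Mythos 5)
Your argument for the vanishing statement $\xi_i^\pm|_W = 0$ for $i \in S_\pm\setminus S_\mp$ matches the paper's and is correct: the circuit relation shows $\overline{b_i}\in\sigma_{M_\mp}$, and then \eqref{eq:def_xi} forces $\xi_i^\pm \in \QQ e = W^\perp$. Similarly, your reduction of the last two claims (the splitting of $W$ and of $C_W$, and the case-by-case identification of $C_W'$) to the two $\xi$-statements is essentially what the paper does.

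The gap is in the agreement step, and it is the Case (I) difficulty you flag yourself. The worry about ``iterating'' and ``non-trivial retriangulation'' is unfounded, but your case-by-case ``substitution'' picture does not resolve it. The paper's observation is uniform across all four cases and is sharper than a substitution: starting from the $\Sigma_+$-minimal-cone expression $\overline{b_i}=\sum_{k\in I}c_k\overline{b_k}$ with $\overline{I}\in\cA_+\setminus\cA_-$, one adds the single scalar multiple $\alpha = \min_{k\in M_-}c_k/(-D_k\cdot e)$ of the circuit relation $\sum_{k\in M_+}(D_k\cdot e)\overline{b_k}=\sum_{k\in M_-}(-D_k\cdot e)\overline{b_k}$. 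This kills at least one $M_-$-coefficient and keeps all remaining coefficients strictly positive, producing $\overline{b_i}=\sum_{k\in I'}c_k'\overline{b_k}$ with $I'=(I\cup M_+)\setminus J$ for some $\emptyset\neq J\subset M_-$; by Lemma~\ref{lem:Apm}, $\overline{I'}\in\cA_-$, so $\sigma_{I'}$ is automatically the $\Sigma_-$-minimal cone. Crucially, the added relation is supported on $M_+\cup M_-$, so $c_k = c_k'$ for every $k\in M_0$; this is exactly $D_k\cdot\xi_i^+ = D_k\cdot\xi_i^-$ for $k\in M_0$, which gives $\xi_i^+|_W=\xi_i^-|_W$ since $\{D_k:k\in M_0\}$ spans $W$. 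There is no need for iteration in Case (I), because a toric wall-crossing is by definition a single modification along the circuit $M_+\cup M_-$: the two triangulations never differ by anything more than replacing anticones meeting $M_+$ by anticones meeting $M_-$.

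Separately, the sentence ``Since the space of relations\dots is one-dimensional\dots it is enough to show $\delta_k=0$ for $k\in M_0\setminus S_0$'' is circular as stated: showing $\delta_k=0$ on $M_0$ \emph{is} the agreement on $W$ (and $\delta_k=0$ on $S_0$ is automatic, since $j'\in S_0\subset S_\pm$ is never a ray index of $\Sigma_\pm$), so the one-dimensionality of the circuit's relation space plays no role at that point. It is the explicit single-step passage between the two cone expressions that supplies the missing content.
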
 
\begin{proof} 
It suffices to show that $\xi_i^+|_W = \xi_i^-|_W$ 
for $i\in S_0$ and that $\xi_i^\pm|_W = 0$ for 
$i\in S_\pm \setminus S\mp$. The rest of the statements 
follow easily. 
Suppose that $i\in S_0$. 
Recall the definition of $\xi_i^\pm$ in \eqref{eq:def_xi}. 
Let $\sigma_{I} \in\Sigma_{+}$ be the minimal 
cone containing $\overline{b_i}$. 
Then $\overline{I} \in \cA_+$. 
If $\overline{I} \in \cA_-$, we have 
$\xi_i^+ = \xi_i^-$ by the definition of $\xi_i^\pm$. 
Suppose that $\overline{I}\notin \cA_-$. 
By Lemma \ref{lem:Apm}, $I$ contains $M_-$
but not $M_+$. We have a relation of the form:
\begin{equation} 
\label{eq:b_i_cone}
\overline{b_i} = \sum_{j\in I} c_j \overline{b_j} 
\end{equation} 
with $c_j>0$. 
By adding to the right-hand side of 
\eqref{eq:b_i_cone} 
a suitable positive multiple of the relation 
\[
\sum_{j \in M_+} (D_j \cdot e) \overline{b_j} - 
\sum_{j\in M_-} (-D_j\cdot e) \overline{b_j} =0 
\]
given by $e\in \LL$ via \eqref{eq:exact}, we obtain a 
relation of the form 
\[
\overline{b_i} = \sum_{j\in I'} c_j' \overline{b_j} 
\]
such that $c'_j>0$ and 
$I' = (I \cup M_+) \setminus J$ with $\emptyset \neq J\subset M_-$. 
Then $\overline{I'} \in \cA_-$ by Lemma \ref{lem:Apm} 
(see also Remark \ref{rem:circuit}). 
Note that $c_j = c_j'$ if $j \in I \cap M_0 = I' \cap M_0$. 
This implies that $D_j \cdot \xi_i^+ = D_j \cdot \xi_i^-$ 
for all $j\in M_0$.  Since $\{D_j: j\in M_0\}$ spans $W$, 
we have $\xi_i^+|_W = \xi_i^-|_W$. 

Now suppose that $i\in S_+ \setminus S_-$.  
Then $M_+ = \{i\}$ by Proposition \ref{prop:Spm_classif} 
and we have a relation $(D_i \cdot e) 
\overline{b_i} = \sum_{j\in M_-} (-D_j \cdot e) \overline{b_j}$ 
given by $e\in\LL$. This implies that $\overline{b_i}$ 
is contained in the cone $\sigma_{M_-}$ of $\Sigma_+$, 
and the definition of $\xi_i^+$ implies that 
$D_j \cdot \xi_i^+=0$ for all $j\in M_0$. 
Thus $\xi_i^+|_W= 0$. 
The case where $i\in S_- \setminus S_+$ is similar. 
\end{proof} 

\subsection{Global Extended K\"{a}hler Moduli} 
\label{sec:global_KM} 
Our next goal is to describe a global `moduli space' $\tcM$ and a flat connection over $\tcM$, together with two neighbourhoods in $\tcM$ such that the restriction of the flat connection to one of the neighbourhoods (respectively to the other neighbourhood) is isomorphic to the equivariant quantum connection for $X_+$ (respectively for $X_-$).  Thus the equivariant quantum connections for $X_+$ and $X_-$ can be analytically continued to each other.  Roughly speaking, the space $\tcM$ will be a covering of a neighbourhood of a certain curve in the \emph{secondary toric variety} for $X_\pm$; in this section we introduce notation for and local co-ordinates on this secondary toric variety.

The wall and chamber structure of $\LL^\vee \otimes \RR$ described in \S\ref{sec:birational_transformations} defines a fan in $\LL^\vee\otimes \RR$, called the \emph{secondary fan} or \emph{Gelfand--Kapranov--Zelevinsky (GKZ) fan}.  The toric variety associated to the GKZ fan is called the secondary toric variety.  We consider the subfan of the GKZ fan consisting of the cones $\overline{C_+}$, $\overline{C_-}$ and their faces, and consider the toric variety $\cM$ associated to this fan.  (Thus $\cM$ is an open subset of the secondary toric variety.)  In the context of mirror symmetry, $\cM$ arises as the moduli space of Landau--Ginzburg models mirror to $X_\pm$.  It contains the torus fixed points $P_+$ and $P_-$ associated to the cones $C_+$ and $C_-$, which are called the \emph{large radius limit points} for $X_+$ and $X_-$.  More precisely, because we want to impose only very weak convergence hypotheses on the equivariant quantum products for $X_\pm$, we restrict our attention to the formal neighbourhood of the torus-invariant curve $\cC \subset \cM$ connecting $P_+$ and $P_-$: $\cC$ is the closed toric subvariety associated to the cone $\overline{C_W} = W \cap \overline{C_+} = W \cap \overline{C_-}$.

Our secondary toric variety $\cM$ is covered by two open 
charts
\begin{align} 
\label{eq:charts_cM} 
\Spec \CC[C_+^\vee \cap \LL] && \text{and} &&
\Spec \CC[C_-^\vee \cap \LL] 
\end{align}
that are glued along $\Spec \CC[C_W^\vee \cap \LL]$. 
Since the cones $C_\pm$ are not necessarily simplicial, 
$\cM$ is in general singular.  
For our purpose, it is convenient to use a lattice structure 
different from $\LL$ and to work with a smooth cover 
$\cM_{\rm reg}$ of $\cM$. 
We will define the cover $\cM_{\rm reg}$ by choosing 
suitable co-ordinates. 
As in \S \ref{sec:fixed_points}, 
consider the subsets $\KK_\pm \subset \LL\otimes \QQ$: 
\[
\KK_\pm := \Big\{ f \in \LL \otimes \QQ : \big\{ i \in \{1,2,\ldots,m\} : 
D_i \cdot f \in \ZZ \big\}  \in \cA_\pm \Big\} 
\]
and define $\tLL_+$ (respectively $\tLL_-$) to be the free $\ZZ$-submodule of 
$\LL\otimes \QQ$ generated by $\KK_+$ (respectively by $\KK_-$). 
Note that $\tLL_+$ and $\tLL_-$ are overlattices of $\LL$. 
\begin{lemma} 
\label{lem:integrallattice_decomp} 
Set $\tLL_\pm^\vee = \Hom(\tLL_\pm,\ZZ)\subset \LL^\vee$. 
We have $D_j \in \tLL_\pm^\vee$ if $j\in S_\pm$.  
The decomposition \eqref{eq:decomp_pm} of $\LL^\vee \otimes \RR$ is 
compatible with the integral lattice $\tLL^\vee_\pm$: one has 
\begin{equation} 
\label{eq:lattice_tLvee_decomp} 
\tLL^\vee_\pm = 
\left(H^2(X_\pm;\RR) \cap \tLL^\vee_\pm \right)
\oplus \bigoplus_{j\in S_\pm} \ZZ D_j  
\end{equation} 
where we regard $H_2(X_\pm;\RR)$ as a subspace of 
$\LL^\vee\otimes \RR$ via the isomorphism $H^2(X_\pm;\RR) 
\cong \bigcap_{j\in S_\pm} \Ker(\xi_j^\pm)$. 
The lattices $\tLL^\vee_+$ and $\tLL^\vee_-$ are compatible 
along the wall; one has (see equation~\ref{eq:decomp_W}):
\begin{equation} 
\label{eq:compatible_int_lattice}
W \cap \tLL_+^\vee = W \cap \tLL_-^\vee
= (W' \cap \tLL_\pm^\vee) \oplus \bigoplus_{j\in S_0} \ZZ D_j.  
\end{equation} 
\end{lemma}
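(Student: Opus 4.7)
The plan is to build the two dual structural facts $D_j \in \tLL_\pm^\vee$ for $j \in S_\pm$ and $\xi_j^\pm \in \tLL_\pm$ for $j \in S_\pm$, from which the decomposition in~(b) drops out by pairing, and then to handle the wall-compatibility in~(c) by a shift-by-$e$ argument. For $D_j$: the condition $j \in S_\pm$ means $\overline{\{j\}} \notin \cA_\pm$, and since $\cA_\pm$ is closed under enlargement (Assumption~\ref{assumption}) every $I \in \cA_\pm$ must contain $j$; hence for each $f \in \KK_\pm$ we have $j \in I_f$, so $D_j \cdot f \in \ZZ$. For $\xi_j^\pm$: reading \eqref{eq:def_xi} directly, the set $I_{-\xi_j^\pm} = \{i : D_i \cdot \xi_j^\pm \in \ZZ\}$ contains every index outside $I_j$ (where $\sigma_{I_j}$ is the minimal cone of $\Sigma_\pm$ through $\overline{b_j}$); thus $I_{-\xi_j^\pm} \supset \overline{I_j} \in \cA_\pm$, and closure under enlargement gives $-\xi_j^\pm \in \KK_\pm$.

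Given these, (b) is immediate from the ambient real decomposition \eqref{eq:Lvee_decomp}. For $\eta \in \tLL_\pm^\vee$ write $\eta = \eta_0 + \sum_{j \in S_\pm} a_j D_j$ with $\eta_0 \in H^2(X_\pm;\RR)$. The relation $D_i \cdot \xi_j^\pm = \delta_{ij}$ for $i,j \in S_\pm$ yields $a_j = \xi_j^\pm(\eta) \in \ZZ$ (using $\xi_j^\pm \in \tLL_\pm$), and then $\eta_0 = \eta - \sum_j a_j D_j \in \tLL_\pm^\vee$ (using $D_j \in \tLL_\pm^\vee$), so $\eta_0 \in H^2(X_\pm;\RR) \cap \tLL_\pm^\vee$. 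The reverse inclusion is trivial.

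The main obstacle is the equality $W \cap \tLL_+^\vee = W \cap \tLL_-^\vee$, for which I will use the following \emph{shift-by-$e$} device: given $\eta \in W \cap \tLL_+^\vee$ and $f \in \KK_-$, it suffices to produce $c \in \QQ$ with $f + c e \in \KK_+$, because then $\eta(f) = \eta(f + c e) - c\,\eta(e) = \eta(f + c e) \in \ZZ$ using $\eta(e) = 0$ (as $W = e^\perp$). To construct $c$: if $I_f \in \cA_+$ already, take $c = 0$; otherwise Lemma~\ref{lem:Apm} writes $I_f = I \sqcup J$ with $I \in \cA_0^{\rm thin}$ (so $I \subset M_0$) and $\emptyset \neq J \subset M_-$. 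Pick any $i_0 \in M_+$ and choose $c \in \QQ$ so that $D_{i_0} \cdot f + c(D_{i_0} \cdot e) \in \ZZ$ (possible because $D_{i_0} \cdot e > 0$). Since $D_i \cdot e = 0$ for $i \in M_0$, the intersection $I_{f+ce} \cap M_0$ remains $I$, so $I_{f+ce} \supset I \sqcup \{i_0\}$; Lemma~\ref{lem:Apm} places $I \sqcup \{i_0\}$ in $\cA_+$, and closure under enlargement promotes this to $I_{f+ce} \in \cA_+$. The reverse inclusion is symmetric in $\pm$.

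Finally, the explicit form \eqref{eq:compatible_int_lattice} follows by combining~(b) with Proposition~\ref{prop:wallcone_decomp}. Expanding $\eta \in W \cap \tLL_\pm^\vee$ as $\eta_0 + \sum_{j \in S_\pm} a_j D_j$, the vanishing $\xi_j^\pm|_W = 0$ for $j \in S_\pm \setminus S_0$ (Proposition~\ref{prop:wallcone_decomp}) forces $a_j = \xi_j^\pm(\eta) = 0$ for those $j$. For $j \in S_0$ we have $S_0 \subset M_0$ by Proposition~\ref{prop:Spm_classif}, so $D_j \cdot e = 0$ and $D_j \in W$; therefore $\eta_0 = \eta - \sum_{j \in S_0} a_j D_j \in W \cap H^2(X_\pm;\RR) = W'$, giving the required decomposition.
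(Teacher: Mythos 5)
Your proof is correct and follows essentially the same route as the paper: the decomposition is derived from the dual structural facts $D_j \in \tLL_\pm^\vee$ and $\xi_j^\pm \in \tLL_\pm$ for $j \in S_\pm$, and the wall compatibility is reduced to the shift-by-$e$ claim that the paper also invokes (and leaves to the reader, asserting it ``follows easily''). You supply more detail than the paper at a few points---in particular you spell out why $\xi_j^\pm \in \tLL_\pm$ via closure of $\cA_\pm$ under enlargement, and you give the explicit construction of the shift $c$---but the underlying argument is the same.
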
 
\begin{proof} 
Equation \eqref{eq:lattice_tLvee_decomp} holds for both 
$X_+$ and $X_-$ and we omit the subscript $\pm$ 
in what follows.
Since every element in $\cA$ contains $S$, 
we have $D_j \cdot f \in \ZZ$ for all $j\in S$ and $f\in \KK$. 
This shows that $D_j\in \tLL^\vee$ for $j\in S$. 
Thus $\tLL^\vee \supset 
(H^2(X;\RR)\cap \tLL^\vee) \oplus \bigoplus_{j\in S} \ZZ D_j$. 
Conversely, for $v\in \tLL^\vee$, one has 
$v \cdot \xi_i \in \ZZ$ for 
all $i\in S$ because $\xi_i \in \KK$. 
Then 
$w = v- \sum_{i\in S} (v\cdot \xi_i) D_i$ lies in 
$\bigcap_{j\in S} \Ker(\xi_j)  \cap \tLL^\vee$ 
and $v = w + \sum_{i\in S} (v\cdot \xi_i) D_i$. 

Next we prove \eqref{eq:compatible_int_lattice}. 
First we claim that for every element $f\in \KK_+\setminus \KK_-$, 
there exists $\alpha \in \QQ$ such that $f + \alpha e \in \KK_-$. 
This follows easily from the definition of $\KK_\pm$ 
and Lemma \ref{lem:Apm}. 
It follows from the claim that for any $f\in \tLL_+$, there exists 
$\alpha \in \QQ$ such that $f + \alpha e \in \tLL_-$. 
Suppose that $v\in W \cap \tLL_-^\vee$. 
For any $f\in \tLL_+$, taking $\alpha\in \QQ$ as above, 
one has $v \cdot f = v \cdot (f + \alpha e) \in \ZZ$. 
Therefore $v \in W \cap \tLL_+^\vee$. 
This shows that $W \cap \tLL_-^\vee \subset 
W \cap \tLL_+^\vee$. The reverse inclusion follows similarly. 
The second equality in \eqref{eq:compatible_int_lattice} 
follows from \eqref{eq:lattice_tLvee_decomp} 
and Proposition \ref{prop:wallcone_decomp}. 
\end{proof} 
\begin{remark} 
We have $H_2(X_\pm;\RR) \cap \tLL_\pm 
= H_2(|X_\pm|;\ZZ)$. 
\end{remark} 

Set $\ell_\pm = \dim H^2(X_\pm;\RR) = r -\sharp(S_\pm)$ 
and $\ell = \dim W' = r-1 - \sharp(S_0)$. 
We have $\ell \le \min\{\ell_+,\ell_-\}$. 
With cases as in Proposition \ref{prop:Spm_classif}, we have: 
\begin{itemize} 
\item[(I)] $\ell_+ = \ell_-= \ell+1$; 
\item[(II-i)] $\ell_+ = \ell+1$, $\ell_-= \ell$; 
\item[(II-ii)] $\ell_- = \ell+1$, $\ell_+ = \ell$; 
\item[(III)] $\ell_+ = \ell_- = \ell$.  
\end{itemize} 
Using Lemma \ref{lem:integrallattice_decomp}, 
we can choose integral bases 
\begin{align} 
\label{eq:two_bases} 
\begin{split} 
& \{p_1^+,\dots,p_{\ell_+}^+\} \cup \{D_j : j\in S_+\}  
\subset \tLL^\vee_+ \\ 
& \{p_1^-,\dots,p_{\ell_-}^-\} \cup \{D_j : j\in S_-\} \subset \tLL^\vee_- 
\end{split} 
\end{align} 
of $\tLL_\pm^\vee$ such that 
\begin{itemize} 
\item $p_1^+,\dots, p_{\ell_+}^+$ lie in the nef cone 
$\overline{C'_+} \subset H^2(X_+;\RR)$; 
\item $p_1^-,\dots,p_{\ell_-}^-$ lie in the nef cone 
$\overline{C'_-} \subset H^2(X_-;\RR)$; 
\item $p_i^+ = p_i^- \in \overline{C_W'}$ for $i=1,\dots,\ell$. 
\end{itemize} 
These bases give co-ordinates on the toric charts 
\eqref{eq:charts_cM}. 
For $d\in \LL$, we write $\sfy^d$ for the corresponding 
element in the group ring $\CC[\LL]$. 
The homomorphisms 
\begin{align*} 
& \CC[C_+^\vee \cap \LL] \hookrightarrow 
\CC[y_1,\dots,y_{\ell_+}, \{x_j : j\in S_+\}], 
\qquad \sfy^d \mapsto \textstyle \prod_{i=1}^{\ell_+} y_i^{p_i^+ \cdot d} 
\cdot \prod_{j\in S_+} x_j^{D_j \cdot d} \\ 
& \CC[C_-^\vee \cap \LL] \hookrightarrow 
\CC[\ty_1,\dots,\ty_{\ell_-}, \{\tx_j : j\in S_-\}], 
\qquad \sfy^d \mapsto \textstyle \prod_{i=1}^{\ell_-} \ty_i^{p_i^- \cdot d} 
\cdot \prod_{j\in S_-} \tx_j^{D_j \cdot d}
\end{align*}  
define the two smooth co-ordinate charts 
\begin{align*}
(y_i, x_j : 1\le i\le \ell_+, j\in S_+) && \text{and} &&
(\ty_i,\tx_j: 1\le i\le \ell_-, j\in S_-)
\end{align*}
which are resolutions of (respectively) $\Spec \CC[C_+^\vee \cap \LL]$ and $\Spec \CC[C_-^\vee \cap \LL]$. 
We reorder the bases \eqref{eq:two_bases} 
\begin{align*} 
&  \{p_1^+,\dots,p_{\ell_+}^+\} \cup \{D_j : j\in S_+\}  
= \{\sfp_1^+,\dots,\sfp_{r-1}^+,\sfp_r^+\} \\ 
& \{p_1^-,\dots,p_{\ell_-}^-\} \cup \{D_j : j\in S_-\} 
= \{ \sfp_1^-,\dots,\sfp_{r-1}^-,\sfp_r^-\} 
\end{align*}  
in such a way that $\sfp_i^+ = \sfp^-_i\in W$ for $i=1,\dots,r-1$ 
and $\sfp_r^\pm$ is the unique vector (in each basis) 
that does not lie on the wall $W$. 
Let 
\begin{align*} 
\{y_i, x_j : 1\le i\le \ell_+, j\in S_+\} 
& = \{ \sfy_1,\dots,\sfy_r\}   \\
\{\ty_i, \tx_j : 1\le i \le \ell_-, j\in S_-\} 
&= \{\tsfy_1,\dots,\tsfy_r\}
\end{align*} 
be the corresponding reordering of the co-ordinates. 
Then the change of co-ordinates 
is of the form:  
\begin{align}
\label{eq:change_of_variables}
\tsfy_i =
\begin{cases}
  \sfy_i \sfy_r^{c_i} & 1\le i\le r-1 \\
  \sfy_r^{-c} & i=r
\end{cases}
\end{align} 
for some $c\in \QQ_{>0}$ and $c_i\in \QQ$. 
The numbers $c_i$, $c$ here arise from the transition 
matrix of the two bases \eqref{eq:two_bases}. 
We find a common denominator for $c$, $c_i$ and 
write $c = A/B$ and $c_i = A_i/B$, $1\le i\le r-1$ 
for some $A, B\in \ZZ_{>0}$ and $A_i\in \ZZ$. 
Then $\sfy_r^{1/B} = \tsfy_r^{-1/A}$. 
The smooth manifold $\cM_{\rm reg}$ is defined by 
gluing the two charts  
\begin{align*}
  U_+ = \Spec \CC[\sfy_1,\dots,\sfy_{r-1},\sfy_r^{1/B}] 
  && \text{and} &&
  U_- = \Spec \CC[\tsfy_1,\dots,\tsfy_{r-1},\tsfy_r^{1/A}]
\end{align*}
via the change of variables \eqref{eq:change_of_variables}.  
The large radius limit points $P_+\in U_+$ and $P_-\in U_-$ are given 
respectively by $\sfy_1= \cdots =\sfy_{r} =0$ and 
$\tsfy_1= \cdots = \tsfy_r=0$. 
Note that the last variables $\sfy_r$, $\tsfy_r$ 
correspond to the direction of $e\in \LL$: one has 
$\sfy^e = \sfy_r^{\sfp_r^+ \cdot e} = \tsfy_r^{\sfp_r^- \cdot e}$. 

The torus-invariant rational curve $\cC_{\rm reg} 
\subset \cM_{\rm reg}$ associated to $C_W$ is given by 
$\sfy_1= \cdots = \sfy_{r-1} =0$ on $U_+$ 
and by $\tsfy_1 = \cdots = \tsfy_{r-1}=0$ on $U_-$.  
Let $\hcM_{\rm reg}$ be the formal neighbourhood of 
$\cC_{\rm reg}$ in $\cM_{\rm reg}$. 
Since the global quantum connection is an analytic object, 
we need to work with a suitable analytification of $\hcM_{\rm reg}$: 
we include analytic functions in the last variable $\sfy_r$ 
in the structure sheaf 
and use the analytic topology on $\cC_{\rm reg} \cong \PP^1$. 
The underlying topological space of 
$\hcM_{\rm reg}$ is therefore $\PP^{1,\rm an}$; 
$\hcM_{\rm reg}$ is covered by two charts 
$\hU_+$ and $\hU_-$ with structure sheaves: 
\begin{equation} 
\label{eq:Upm_str_sheaf} 
\cO_{\hU_+} = \cO_{\CC_{+}}^{\rm an}[\![ \sfy_1,\dots,\sfy_{r-1}]\!] 
\quad \text{and} \quad
\cO_{\hU_-} = \cO_{\CC_{-}}^{\rm an}[\![ \tsfy_1,\dots,\tsfy_{r-1}]\!] 
\end{equation} 
where $\CC_{+}$ and $\CC_{-}$ denote the 
complex plane with co-ordinates $\sfy_r^{1/B}$ and 
$\tsfy_r^{1/A}$ respectively 
and the superscript ``an'' means analytic 
(space or structure sheaf). 
In other words, we can regard $\hcM_{\rm reg}$ as a sheaf 
of algebras over $\PP^{1,\rm an}$. 

The same construction works over an arbitrary $\CC$-algebra $R$. 
We define $\hcM_{\rm reg}(R)$ 
by replacing the structure sheaves in \eqref{eq:Upm_str_sheaf} 
with $(\cO_{\CC_{+}}^{\rm an} \otimes R)[\![\sfy_1,\dots,\sfy_{r-1}]\!]$ 
and $(\cO_{\CC_{-}}^{\rm an} \otimes R)[\![\tsfy_1,\dots,\tsfy_{r-1}]\!]$. 
In the equivariant theory, we use $R = R_T[z] = 
H^\bullet_T({\rm pt})\otimes \CC[z]$ for the ground ring. 
The global equivariant quantum connection will be defined over $R_T[z]$ and 
on (a formal thickening of) a simply-connected open subset of $\PP^{1,\rm an}$ 
containing $P_+$ and $P_-$. 

\begin{remark} 
\label{rem:overlattice} 
Taking an overlattice $\tLL_\pm$ of $\LL$ 
corresponds to taking a finite cover of $\cM$. 
This is necessary because the power series defining the $I$-function (see~\S\ref{sec:I_function})
is indexed by elements in $\tLL_\pm$. 
If one takes into consideration Galois symmetry \cite{Iritani}
of the quantum connection, one can see that the quantum connection 
(near $P_\pm$) 
descends to the secondary toric variety with respect 
to the original lattice $\LL$. 
\end{remark} 

\subsection{The $I$-Function}
\label{sec:I_function}

Recall Givental's Lagrangian cone introduced 
in Definition \ref{def:Lag_cone}. 
We consider the Givental cone $\cL_{X_\omega}$  
associated to the toric Deligne--Mumford stack $X_\omega$.
Under the decomposition \eqref{eq:L_decomp} of $\LL\otimes \RR$, 
we decompose $d\in \LL\otimes \RR$ as: 
\[
d = \overline{d} + \sum_{j\in S_\pm} (D_j \cdot d) \xi_j 
\]
where $\overline{d}$ is the $H_2(X_\omega;\RR)$-component of $d$. 
Define the $H_{\CR,T}^\bullet(X_\omega)$-valued hypergeometric series 
$I^{\rm temp}_\omega(\sigma,x,z) 
\in H^\bullet_{\CR,T}(X_\omega) 
\otimes_{R_T} R_T(\!(z^{-1})\!)[\![Q,\sigma,x]\!]$ by 
\begin{equation*} 
I^{\rm temp}_\omega(\sigma,x,z) := z e^{\sigma/z} 
\sum_{d\in \KK} e^{\sigma\cdot \overline{d}} 
Q^{\overline{d}} \prod_{j\in S} x_j^{D_j\cdot d}
\left( 
\prod_{j=1}^{m}
    \frac{\prod_{a : \<a\>=\< D_j\cdot d \>, a \leq 0}(u_j+a z)}
    {\prod_{a : \<a \>=\< D_j \cdot d \>, a\leq D_j \cdot d} (u_j+a z)} 
  \right) 
  \fun_{[{-d}]}
\end{equation*} 
where $\KK$ is introduced in \S \ref{sec:fixed_points}, 
$x =(x_j : j\in S)$ and $\sigma \in H^2_T(X_\omega)$ are 
variables, and $[-d]$ is the equivalence class 
of $-d$ in $\KK/\LL$ (recall from \S\ref{sec:inertia} 
that $\KK/\LL$ parametrizes inertia components).  The subscript `temp' reflects the fact that we are just about to change notation, by specializing certain parameters, and so this notation for the $I$-function is only temporary.
One can see that the summand of $I^{\rm temp}_\omega$ corresponding 
to $d\in \KK$ vanishes unless $d\in C_\omega^\vee$. 
Therefore the summation ranges over all $d\in \KK$ such that 
$\overline{d}$ lies in the Mori cone 
$\NE(X_\omega) = C_\omega'^\vee$ and $D_j \cdot d \ge 0$ 
for all $j\in S$.  
The Mirror Theorem for toric Deligne--Mumford stacks can be 
stated as follows: 
\begin{theorem}[\!\!\cite{CCIT,Cheong--Ciocan-Fontanine--Kim}] 
\label{thm:mirror_thm} 
$I^{\rm temp}_\omega(\sigma,x,-z)$ is an 
$S_T[\![Q,\sigma,x]\!]$-valued point on $\cL_{X_\omega}$. 
\end{theorem}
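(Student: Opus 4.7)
The plan is to combine Givental's characterization of points on $\cL_{X_\omega}$ by $T$-localization with a direct residue analysis of $I^{\rm temp}_\omega$. Recall the recursive criterion: an $S_T(\!(z^{-1})\!)[\![\sigma,x,Q]\!]$-valued family $J(\sigma,x,z)$ is an $S_T[\![\sigma,x,Q]\!]$-valued point on $\cL_{X_\omega}$ if and only if, at every $T$-fixed point $(\delta,f)$ of the inertia stack $IX_\omega$, the restriction $J|_{(\delta,f)}$ decomposes as a part regular in $z^{-1}$ plus a sum of simple-pole contributions, the residues of which are related by explicit edge factors to the restriction $J|_{(\delta',f')}$ at neighbouring fixed points connected by torus-invariant orbit curves. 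The edge factor is forced by virtual localization of stable-map invariants and involves the products of $T$-weights and the orbifold Euler class of the normal bundle of the orbit curve.

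First I would fix a minimal anticone $\delta \in \cA_\omega$ and an inertia element $f \in \KK/\LL$, and restrict $I^{\rm temp}_\omega$ to $(\delta,f)$, so that each class $u_j$ becomes the corresponding $T$-weight $\lambda_j^\delta \in R_T$. Reading off the factors $(u_j + a z)$ in the denominator, one sees immediately that each summand indexed by $d \in \KK$ with $[-d]=f$ has only simple poles in $z$, located at $z = -\lambda_j^\delta/a$ for indices $j$ and integers $a$ in the prescribed range determined by $\<a\> = \<D_j\cdot d\>$ and $0 < a \le D_j \cdot d$. Second, I would compute the residue at each such pole and identify it with the predicted edge factor times $I^{\rm temp}_\omega|_{(\delta',f')}$, where $(\delta',f')$ is the neighbouring fixed point obtained by exchanging an appropriate ray of $\delta$ with the index $j$; the combinatorial matching reduces to a telescoping of hypergeometric factors and a shift of the summation index $d \mapsto d - d_0$, where $d_0 \in \KK$ is the class of the orbit curve. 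Finally, one checks that $I^{\rm temp}_\omega(\sigma,x,-z) = -z + \sigma + O(z^{-1})$, which gives the correct asymptotic behaviour required of a point on $\cL_{X_\omega}$ in the sense of Definition~\ref{def:Lag_cone}.

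The main obstacle is the combinatorial bookkeeping across inertia sectors: the orbit curves in a toric Deligne--Mumford stack generically carry non-trivial cyclic stabilizers, and the jump in the class $f \in \KK/\LL$ across an edge must be tracked with care so that the age shifts and the fractional parts $\<D_j \cdot d\>$ match the recursion correctly. The extended rays $b_j$ with $j \in S$ of \cite{Jiang} add a further layer of complication, since they contribute summation indices $d \in \KK$ with $D_j \cdot d > 0$ that do not correspond to actual orbit curves in $X_\omega$; their contributions must be reorganized so as to produce terms that are regular in $z^{-1}$, and therefore do not affect the recursion for the polar part. An alternative route, carried out in \cite{Cheong--Ciocan-Fontanine--Kim}, identifies $I^{\rm temp}_\omega$ with the $(0^+)$-stable quasimap $I$-function for $X_\omega$ and then invokes the quasimap wall-crossing theorem of Ciocan-Fontanine--Kim; this bypasses the residue analysis but transfers the combinatorial difficulty to the construction of quasimap moduli for toric Deligne--Mumford targets with extended stacky data.
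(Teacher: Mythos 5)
The paper does not prove Theorem~\ref{thm:mirror_thm}; it is quoted with citations to \cite{CCIT} and \cite{Cheong--Ciocan-Fontanine--Kim}, so there is no in-text argument to compare against. Your sketch does correctly identify the two known routes: the fixed-point localization argument of \cite{CCIT} (in the tradition of Givental, Brown, and Tseng), and the quasimap wall-crossing route of \cite{Cheong--Ciocan-Fontanine--Kim}.

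Your statement of the localization criterion is, however, incomplete in a way that matters. You frame it as: at each $T$-fixed point of $IX_\omega$ the restriction of $J$ splits into a ``regular'' part plus simple-pole contributions whose residues satisfy the recursion with edge factors. The characterization theorem used in \cite{CCIT} has a second, substantially harder, half besides the recursion on residues: the non-recursive part of the restriction --- the piece with no poles in $z$ away from $0$ and $\infty$ --- must be shown to lie on the Lagrangian cone of a suitably \emph{twisted} Gromov--Witten theory of the classifying stack of the isotropy group at that fixed point, the twist being governed by the $T$-weights of the normal bundle. This ``polynomiality'' or ``initial'' condition is much stronger than the leading-order check $I^{\rm temp}_\omega(\sigma,x,-z) = -z\,1 + \sigma + O(z^{-1})$ you propose at the end, and verifying it --- together with proving the characterization theorem itself for toric Deligne--Mumford stacks --- accounts for most of the technical work in \cite{CCIT}; your sketch would need to address it before the argument closes. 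A smaller slip: in the factors $(u_j + az)$ the parameter $a$ runs over a coset $\< D_j\cdot d \> + \ZZ$, not over the integers, so the pole locations of the restriction are at fractional multiples of the $T$-weights, and determining which of them survive as genuine poles of the whole sum (and matching each to the right covering degree of the corresponding orbit curve, with the correct jump in the inertia label $f$) is precisely the bookkeeping your own caveat flags but does not resolve.
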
 

We adapt the above theorem to the situation of toric wall-crossing. 
Let $I_\pm^{\rm temp}$ denote the $I$-function of $X_\pm$. 
We introduce a variant $I_\pm$ of the $I$-function which gives a 
cohomology-valued function on a neighbourhood of $P_\pm$ 
in $\hcM_{\rm reg}$. The $I$-function $I_\pm$ is obtained 
from $I_\pm^{\rm temp}$ by the following specialization: 
\begin{itemize} 
\item $Q=1$;
\item for $I_+$, $\sigma = \sigma_+ := 
\theta_+( \sum_{i=1}^r \sfp^+_i \log \sfy_i)
+ c_0(\lambda)$, 
\item  for $I_-$, $\sigma = \sigma_- := 
\theta_-( \sum_{i=1}^r \sfp_i^- \log \tsfy_i)
+ c_0(\lambda)$.  
\end{itemize} 
where 
$\theta_\pm \colon \LL^\vee\otimes \CC \to H^2_T(X_\pm;\CC)$ 
are the maps introduced in \eqref{eq:theta} and $c_0(\lambda) 
= \lambda_1+ \cdots + \lambda_m$. 
Note that we have 
\begin{equation} 
\label{eq:sigma_p}
\sigma_+ 
= \sum_{i=1}^{\ell_+} \theta_+(p_i^+) \log y_i  -  
\sum_{j\in S_{+}} \lambda_j \log x_j  + c_0(\lambda) 
\end{equation} 
since $\theta_+(D_j) = -\lambda_j$ for $j\in S_+$. 
More explicitly, one can write $I_+$ as: 
\begin{equation}
  \label{eq:I+}
  I_+(\sfy,z):=
z e^{\sigma_+/z} 
  \sum_{d \in \KK_+} 
  \sfy^d 
  \left(
    \prod_{j=1}^{m}
    \frac{\prod_{a : \<a\>=\< D_j\cdot d \>, a \leq 0}(u_j+a z)}
    {\prod_{a : \<a \>=\< D_j \cdot d \>, a\leq D_j \cdot d} (u_j+a z)} 
  \right) 
  \fun_{[{-d}]}
\end{equation}
where recall that $(\sfy_1,\dots,\sfy_r) =(y_i, x_j : 1\le i\le \ell_+, j\in S_+)$ 
are co-ordinates on 
$\hU_+\subset \hcM_{\rm reg}$ and that
\begin{align*} 
\sfy^d &= \sfy_1^{\sfp^+_1 \cdot d} \cdots \sfy_r^{\sfp^+_{r}\cdot d} 
= \textstyle \prod_{i=1}^{\ell_+} y_i^{p_i^+\cdot d} 
\prod_{j\in S_+} x_j^{D_j \cdot d}. 
\end{align*} 
The $I$-function $I_+$ belongs to the space: 
\[
I_+ \in H^\bullet_{\CR,T}(X_+) \otimes_{R_T} R_T[\log \sfy_1,\dots,\log \sfy_r]
(\!(z^{-1})\!)[\![\sfy_1,\dots,\sfy_r]\!]. 
\]
The series $e^{-\sigma_+/z} I_+(\sfy,z)$ is homogeneous of degree two 
with respect to the (age-shifted) grading on $H_{\CR,T}^\bullet(X_+)$ 
and the degrees for variables given by:
\begin{align}
\label{eq:deg_variable}
\deg z =2 && \text{and} &&
\sum_{i=1}^r (\deg \sfy_i) \sfp_i^+ = 2\sum_{i=1}^{m} D_i
\end{align}
Note that $\deg \sfy_r=0$ because 
$\sum_{i=1}^m D_i \in W$. 

\begin{remark} 
The extra factor $e^{c_0(\lambda)/z}$ in the $I$-function 
makes the mirror map compatible with Euler vector fields. 
\end{remark} 

We now show that $I_+(\sfy,z)$ is analytic 
in the last variable $\sfy_r$, so that it defines 
an analytic function on $\hcM_{\rm reg}$. 

\begin{lemma} 
\label{lem:I_analyticity} 
Expand the $I$-function as
\[
I_+(\sfy,z) = z e^{\sigma_+/z} 
\sum_{k_1=0}^\infty \cdots \sum_{k_{r-1} = 0}^\infty
\sum_{n\in \ZZ}  
I_{+; k_1,\dots,k_{r-1},n}(\sfy_r) 
\sfy_1^{k_1} \cdots \sfy_{r-1}^{k_{r-1}} z^{n} 
\]
Then each coefficient $I_{+;k_1,\dots,k_{r-1},n}(\sfy_r)$ is 
a convergent power series in $\sfy_r$ taking values in a homogeneous 
component of $H^\bullet_{\CR,T}(X_+)$. 
Moreover it can be analytically continued to the universal 
covering $\cU_+$ of the space $\{ \sfy_r \in \CC : 
\sfy_r^{\sfp^+_r \cdot e} \neq \frc\}$, where 
\begin{equation} 
\label{eq:conifold_point}
\frc = \prod_{j: D_j\cdot e \neq 0} (D_j \cdot e)^{D_j \cdot e} 
\end{equation} 
is the so-called ``conifold point''. 
\end{lemma}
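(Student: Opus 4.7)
The plan is to fix the tuple $(k_1,\dots,k_{r-1},n)$ and exhibit $I_{+;k_1,\dots,k_{r-1},n}(\sfy_r)$ as a classical one-variable hypergeometric series. Since $\{\sfp_1^+,\dots,\sfp_r^+\}$ is a $\ZZ$-basis of $\tLL_+^\vee$ and the first $r-1$ of these vectors span the wall $W=e^\perp$, the constraints $\sfp_i^+\cdot d = k_i$ for $i<r$ cut the summation in \eqref{eq:I+} down to a single coset of $\QQ\cdot e\cap\tLL_+ = \ZZ\cdot e'$, where $e'$ is the primitive lattice direction along $e$. Lattice shifts by $e'$ preserve the anticone $I_d$, hence preserve membership in $\KK_+$, and, since $e'$ is a rational multiple of $e\in\LL$, either fix the inertia label $[-d]\in\KK_+/\LL$ or cycle through a finite set of them. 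The effectivity condition $d\in C_+^\vee$ translates into $\sfp_r^+\cdot d\geq 0$, so the remaining series in $\sfy_r$ is a genuine formal power series supported in $\ZZ_{\geq 0}$. The homogeneity of $e^{-\sigma_+/z}I_+$ in the grading \eqref{eq:deg_variable}, together with $\deg\sfy_r = 0$, then forces all monomials of the coefficient into a single homogeneous component of $H^\bullet_{\CR,T}(X_+)$, giving the homogeneity claim.

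\textbf{Convergence via the ratio test.} Enumerate the terms as $\sum_{\ell\geq 0} a_\ell(z)\,\sfy_r^{k_r^{(0)} + \ell\,\sfp_r^+\cdot e'}$ with $d_\ell := d_0 + \ell e'$. The shift $d_\ell\mapsto d_{\ell+1}$ multiplies the hypergeometric factor by, up to finite rational expressions in $u_j$ and $z$,
\[
\prod_{j:\,D_j\cdot e'>0}\prod_{a=1}^{D_j\cdot e'}\bigl(u_j+(D_j\cdot d_\ell+a)z\bigr)^{-1}\cdot\prod_{j:\,D_j\cdot e'<0}\prod_{a=0}^{|D_j\cdot e'|-1}\bigl(u_j+(D_j\cdot d_\ell-a)z\bigr).
\]
The leading behavior as $\ell\to\infty$ is $\ell^{-\sum_j D_j\cdot e'}\cdot\prod_{j:\,D_j\cdot e'\neq 0}(D_j\cdot e')^{-D_j\cdot e'}$. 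The crepancy hypothesis $\sum_j D_j\in W$ yields the key identity $\sum_j D_j\cdot e = 0$, which kills the power of $\ell$, and the ratio tends to a positive power of $1/\frc$. Hence the radius of convergence in $w := \sfy_r^{\sfp_r^+\cdot e}$ is exactly $\frc$, establishing the convergence claim.

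\textbf{Analytic continuation and main obstacle.} Specializing the GKZ $A$-hypergeometric system for $\mathbf{\Sigma}_+$ (with $A = \{b_1,\dots,b_m\}$) to the $e$-direction, with all other combinatorial data frozen, produces a Fuchsian ODE in $w$ of the schematic form
\[
\prod_{j:\,D_j\cdot e>0}\prod_{a=0}^{D_j\cdot e-1}\bigl(\theta_w + L_j^{(a)}\bigr) \;=\; w\,\prod_{j:\,D_j\cdot e<0}\prod_{a=0}^{|D_j\cdot e|-1}\bigl(\theta_w + L_j^{(a)}\bigr),
\]
with $\theta_w = w\partial_w$ and $L_j^{(a)}$ linear in $u_j,z,k_i,n$; the match of orders on the two sides is once again the identity $\sum_j D_j\cdot e = 0$. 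The only finite singular points of this regular-singular equation are $w=0$ and $w=\frc$, so each coefficient $I_{+;k_1,\dots,k_{r-1},n}(\sfy_r)$, being a local solution at $w=0$, extends analytically along any path in $\CC\setminus\{\frc\}$ to a single-valued holomorphic function on $\cU_+$. The main obstacle is carrying out this reduction cleanly: extracting precisely the correct one-variable Fuchsian operator from the multivariate GKZ system and verifying that no spurious finite singularities emerge after freezing the non-$e$ parameters. Both the location of the conifold singularity $w=\frc$ and the absence of other finite singularities rest on the crepancy identity $\sum_j D_j\cdot e = 0$.
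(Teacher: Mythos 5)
Your overall strategy — isolate a one-variable hypergeometric series, get convergence from the ratio test using $\sum_j D_j\cdot e = 0$, and then analytically continue via a Fuchsian GKZ-type ODE singular only at $0,\frc,\infty$ — is the same strategy the paper uses, and your diagnosis that all of this hinges on the crepancy identity is correct. But two technical steps are missing or wrong, and one of them you yourself flag as ``the main obstacle,'' so you are not far off seeing it.

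\textbf{First, the reduction is along $e$, per sector $f$, not along the primitive vector $e'$ of $\tLL_+\cap W^\perp$.} Your claim ``lattice shifts by $e'$ preserve the anticone $I_d$, hence preserve membership in $\KK_+$'' fails in general: $e$ is primitive in $\LL$, but $\tLL_+\supsetneq\LL$ may contain $e/m$ for $m>1$ (e.g.\ in the gerbe-flop example $\LL=\ZZ^2$, $D_1=(1,0)$, $D_2=(1,2)$, $D_3=(0,2)$ one finds $e=(-2,1)$ but $e/2\in\tLL_+$). A shift by $e'\notin\LL$ changes $D_j\cdot d$ by a non-integer and hence can change $I_d$, so the $e'$-coset is not contained in $\KK_+$ and the ``single coset'' is a gappy set. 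What does preserve $\KK_+$, and fixes the inertia label $[-d]$, is translation by $e\in\LL$. So the correct reduction fixes $(k_1,\dots,k_{r-1})$ \emph{and} the sector $f=[-d]$, producing, for each $f$, a one-variable series with step $\sfy^e=\sfy_r^{\sfp_r^+\cdot e}$. This also makes the stated singularity exactly $\sfy^e=\frc$; with the $e'$-normalization you would instead see singularities at $m$-th roots of $\frc$ in the variable $\sfy_r^{\sfp_r^+\cdot e}$.

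\textbf{Second, the series is cohomology-valued, not a classical scalar hypergeometric series.} The coefficients carry the classes $u_j\in H^2_T(X_+^f)$, so neither the ratio test nor the claim that the ODE has no spurious singularities can be applied directly. The paper's device is to restrict to the $T$-fixed points of the sector $X_+^f$: since $H^\bullet_T(X^f_+)\to H^\bullet_T(X^{f,T}_+)$ is injective, one may verify convergence and continuation after such a restriction. At a fixed point the $u_j$'s become scalars $u_j(\delta)$, which after dividing by $z$ one treats as small complex parameters $\beta_j$; this is what turns your ``schematic'' Fuchsian operator into an honest one-variable equation in $\sx=\sfy_r^{\sfp_r^+\cdot e}$ with analytic dependence on $\beta$, and proves the series for the fixed-point restrictions converge and continue to $\cU_+$. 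Your proposal asserts the ODE reduction and declares its difficulty; this fixed-point localization, followed by expansion in $\beta_j$, is precisely the missing step that carries it out and rules out extra finite singularities.

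Two smaller points: the fact that the $k$-sum is bounded below does not come from the effectivity condition alone (which gives $\sfp_r^+\cdot d\geq 0$ for nonvanishing terms) but, after restricting to the fixed point indexed by $\delta$, from $\beta_j=\<d_j\>=0$ for $j\in\delta$ together with $\delta\cap M_+\neq\emptyset$; and the radius of convergence need only be shown positive, so there is no need to claim it equals $\frc$.
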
 
\begin{proof} 
The homogeneity of $I_{+;k_1,\dots,k_{r-1},n}(\sfy_r)$ 
follows from the homogeneity of the $I$-function mentioned 
above. Fix $d\in \KK_+$ such that $k_i = 
\sfp_i^+ \cdot d$ for all $i=1,\dots,r-1$ 
and let $f=[-d] \in \KK_+/\LL$ be the corresponding sector. 
The $f$-component of $I_{+;k_1,\dots,k_{r-1},n}(\sfy_r)$ 
is given by a certain coefficient (in front of some powers of $z^{-1}$)  
of the $z^{-1}$-expansion of the series: 
\[
\sfy_r^{\sfp_r^+ \cdot d} 
\sum_{k\in \ZZ} \sfy_r^{k(\sfp_r^+ \cdot e)} 
\left( 
\prod_{j=1}^m 
\frac{\prod_{a: \<a\> = \<d_j\>, a\le 0} (\frac{u_j}{z}+a)}{
\prod_{a: \<a\> = \<d_j\>, a\le d_j + k e_j} 
(\frac{u_j}{z} + a) } 
\right) \fun_f, 
\]
where $d_j = D_j\cdot d$ and $e_j = D_j \cdot e$.  
Since the natural restriction map 
$H^\bullet_{T}(X_f) \to H^\bullet_T(X_f^T)$ is injective, it 
suffices to check that the restriction of the above series to each 
$T$-fixed point is convergent and analytically continues to 
$\cU_+$. 
Consider a $T$-fixed point in $X_+^f$ corresponding 
to a minimal anticone $\delta\in \cA_+$ with $\delta \subset I_f$ 
(see equation~\ref{eq:has_fixed_points} for $I_f$). 
Let $\beta_j$ denote the restriction of $u_j/z$ to the fixed point 
and set $\sx = \sfy_r^{\sfp^+_r \cdot e}$. 
The restriction of the above series gives: 
\[
\Phi(\beta; \sx) = 
\sum_{k\in \ZZ} \sx^{k} 
\prod_{j=1}^m 
\frac{\prod_{a: \<a\> = \<d_j\>, a\le 0} (\beta_j+a)}{
\prod_{a: \<a\> = \<d_j\>, a\le d_j + k e_j} 
(\beta_j + a) }  
\]
ignoring the prefactor $\sfy_r^{\sfp^+_r \cdot d}$. 
Since $\beta_j = \<d_j\>=0$ for $j\in \delta$ 
and $\delta \cap M_+ \neq \emptyset$, 
it follows that the summand vanishes for $k\ll 0$. 
We now regard $\{\beta_j : j \notin \delta\}$ as 
small\footnote{Note that $\Phi(\beta;\sx)$ have poles at $\beta_j=-a$ 
for some $a>0$.} complex parameters and prove the analyticity of 
$\Phi(\beta;\sx)$. 
The conclusion follows by 
expanding $\Phi(\beta;\sx)$ in the parameters $\beta_j$ and 
evaluating the expansion in the $T$-equivariant cohomology of a point.  
By using the ratio test and the fact that $\sum_{i=1}^m e_i =0$, 
we see that the radius of convergence of $\Phi$ is positive. 
Moreover one sees that $\Phi$ satisfies the differential 
equation: 
\[
\left[ 
\prod_{j: e_j>0} \prod_{l=0}^{e_j-1} 
\left( e_j \sx \parfrac{}{\sx} + (d_j+\beta_j -l) 
\right) 
- \sx \prod_{j: e_j<0} \prod_{l=0}^{-e_j-1} 
\left( e_j \sx \parfrac{}{\sx} + (d_j + \beta_j - l) \right) 
\right] \Phi = 0 
\]
which has singularities only at $\sx = 0, \frc, \infty$. 
Thus $\Phi$ can be analytically continued to $\cU_+$.   
\end{proof} 

An entirely parallel statement holds for $I_-(\sfy,z)$. 

\begin{remark} 
\label{rem:I_analyticity} 
Set $I_{+;k_1,\dots,k_{r-1}}(\sfy_r,z) := \sum_{n\in \ZZ} 
I_{+;k_1,\dots,k_{r-1},n}(\sfy_r) z^n$. 
In the proof of Lemma \ref{lem:I_analyticity}, we 
observed that $\Phi(\beta;\sx)$ is analytic for sufficiently 
small $\beta_j$. 
Therefore, if we expand $I_{+;k_1,\dots,k_{r-1}}(\sfy_r,z) 
= \sum_{i=0}^N f_i(\lambda,z,\sfy_r) \phi_i$ for a 
suitable $R_T$-basis $\{\phi_i\}$ of $H_{\CR,T}^\bullet(X_+)$, 
the coefficient $f_i(\lambda,z,\sfy_r)$ is analytic 
on the region $\{(\lambda_1,\dots,\lambda_m, z, \sfy_r) 
\in \CC^m \times \Cstar\times \cU_+ : 
|\lambda_i| < \epsilon |z|\}$ for some $\epsilon>0$, 
where $\lambda_i$ are $T$-equivariant parameters. 
\end{remark} 

\subsection{Global Equivariant Quantum Connection} 
\label{sec:global_qconn} 
In this section we use the $I$-function $I_+$ 
to construct a global 
quantum connection on the universal cover 
\begin{equation*} 
\tcM_+ := 
\left( \left(\hU_+ \setminus 
\{\sfy^e =\frc\}\right) \bigr/ \bmu_B \right)\sptilde  
\end{equation*} 
where $\hU_+$ is the open chart \eqref{eq:Upm_str_sheaf} 
of $\hcM_{\rm reg}$ and $\sfy^e = \sfy_r^{\sfp^+_r \cdot e}$ 
is a co-ordinate on $\hU_+$; 
$\bmu_B$ acts on $\hU_+$ by deck transformations 
of $\sfy_r^{1/B} \mapsto \sfy_r$. 
As in Lemma \ref{lem:I_analyticity}, 
we denote by $\cU_+$ the universal cover of 
$\{\sfy_r \in \CC : \sfy_r^{\sfp^+_r \cdot e} \neq \frc\}$.  
The space $\cU_+$ is the underlying topological space of $\tcM_+$,  
and $\tcM_+$ is a formal thickening of $\cU_+$. 
In a neighbourhood of $P_+$, 
the global quantum connection that we will construct can be identified with the equivariant 
quantum connection of $X_+$. 
The main result in this section is: 

\begin{theorem} 
\label{thm:global_qconn} 
There exist the following data: 
\begin{itemize} 
\item an open subset $\cU_+^\circ\subset \cU_+$ such that 
$P_+ \in \cU_+^\circ$ and that the complement 
$\cU_+ \setminus \cU_+^\circ$ is a discrete set; 
we write $\tcM_+^\circ = \tcM_+|_{\cU_+^\circ}$;  

\item a trivial $H_{\CR,T}^\bullet(X_+)$-bundle 
$\bF^+$ over $\tcM_+^\circ(R_T[z])$: 
\[
\bF^+ = H_{\CR,T}^\bullet(X_+)\otimes_{R_T} 
(\cO_{\cU^\circ_+}\otimes R_T[z])[\![\sfy_1,\dots,\sfy_{r-1}]\!];  
\]
\item a flat connection 
$\bnabla^+ = d + z^{-1} \bA^+(\sfy)$ 
on $\bF^+$ of the form: 
\[
\bA^+(\sfy) = \sum_{i=1}^{\ell_+} B_i(\sfy) \frac{dy_i}{y_i} 
+ \sum_{j\in S_+} C_j(\sfy) dx_j 
- \sum_{j\in S_+} \lambda_j \frac{dx_j}{x_j} 
\]
with $B_i(\sfy), C_j(\sfy) 
\in 
\End(H_{\CR,T}^\bullet(X_+)) \otimes_{R_T} 
(\cO_{\cU^\circ_+}\otimes R_T)[\![\sfy_1,\dots,\sfy_{r-1}]\!]$; 
\item a vector field $\bE^+$ on $\tcM_+(R_T)$, called the Euler vector field, defined by:
\[
\bE^+ = \sum_{i=1}^m \lambda_i \parfrac{}{\lambda_i}
+ \sum_{i=1}^r \frac{1}{2}(\deg \sfy_i) \sfy_i 
\parfrac{}{\sfy_i}; 
\]
\item a mirror map $\tau_+ \colon \tcM_+(R_T) \to H_{\CR,T}^\bullet(X_+)$ 
of the form:
\begin{align*}
  \tau_+ = \sigma_+ + \ttau_+ &&&
  \ttau_+\in H^\bullet_{\CR,T}(X_+)\otimes_{R_T}  (\cO_{\cU^\circ_+}\otimes R_T)[\![\sfy_1,\dots,\sfy_{r-1}]\!] \\
  &&& \ttau_+|_{\sfy_1=\cdots = \sfy_r=0}= 0
\end{align*}
\end{itemize} 
such that $\bnabla^+$ equals the pull-back $\tau_+^*\nabla^+$ of the 
equivariant quantum connection $\nabla^+$ of $X_+$ by $\tau_+$, 
that is:
\begin{align*} 
B_i(\sfy) & = \sum_{k=0}^N \parfrac{\tau_+^k(\sfy)}
{\log y_i} (\phi_k \star_{\tau_+(\sfy)}) && 
1\le i\le \ell_+ \\
C_j(\sfy) & = 
\sum_{k=0}^N \parfrac{\ttau_+^k(\sfy)} 
{x_j} (\phi_k\star_{\tau_+(\sfy)}) &&
j\in S_+ 
\end{align*} 
and that the push-forward of $\bE^+$ by $\tau_+$ 
is the Euler vector field $\cE^+$  for $X_+$ defined in equation~\ref{eq:Euler_mu}. 
Moreover, there exists a global section $\Upsilon^+_0(\sfy,z)$ of 
$\bF^+$ such that 
\[
I_+(\sfy,z) = z L_+(\tau_+(\sfy),z)^{-1} \Upsilon^+_0(\sfy,z) 
\]
where $L_+(\tau,z)$ is the fundamental solution 
for the quantum connection of $X_+$ 
in Proposition~\ref{prop:fundsol}. 
\end{theorem}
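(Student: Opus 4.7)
The plan is to derive the theorem from the Mirror Theorem (Theorem~\ref{thm:mirror_thm}) via a Birkhoff-factorization procedure. After the specialization $Q=1$, $\sigma=\sigma_+$, which is legitimate because of the divisor equation together with the analyticity in $\sfy_r$ established in Lemma~\ref{lem:I_analyticity}, the $I$-function $I_+(\sfy,-z)$ becomes a point of Givental's Lagrangian cone $\cL_{X_+}$ that is formal in $\sfy_1,\dots,\sfy_{r-1}$ and analytic in $\sfy_r$. The geometric property recalled in Remark~\ref{rem:fundamentalsol_cone}, namely $\cL_{X_+}=\bigcup_\tau z\,L_+(\tau,-z)^{-1}\cH_+$ with each tangent space tangent to the cone along its $z$-scaling, implies that any such point has a unique presentation
\[
  I_+(\sfy,z) \;=\; z\,L_+(\tau_+(\sfy),z)^{-1}\,\Upsilon^+_0(\sfy,z)
\]
with $\tau_+(\sfy)\in H^\bullet_{\CR,T}(X_+)$ and $\Upsilon^+_0(\sfy,z)$ polynomial in $z$. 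Concretely, I would read $\tau_+(\sfy)$ off from the $z^0$-term of the asymptotic expansion of $I_+(\sfy,z)$ at $z=\infty$ and then define $\Upsilon^+_0(\sfy,z):=z^{-1}L_+(\tau_+(\sfy),z)I_+(\sfy,z)$; the nontrivial content is that this expression lies in $\cH_+$, which is precisely the Birkhoff-factorization statement guaranteed by the cone property. Solving order by order in $\sfy_1,\dots,\sfy_{r-1}$ yields existence and uniqueness as formal power series in those variables with coefficients analytic in $\sfy_r$.

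With $\tau_+$ in hand, setting $\bnabla^+:=\tau_+^*\nabla^+$ immediately produces a flat connection, and the formulas for $B_i$ and $C_j$ in the statement are then the pull-back of the definition $\nabla_i=\partial_i+z^{-1}(\phi_i\star_\tau)$. The distinguished piece $-\sum_{j\in S_+}\lambda_j\,dx_j/x_j$ of $\bA^+$ arises because, by \eqref{eq:sigma_p}, $\sigma_+$ depends on $\log x_j$ through the scalar $-\lambda_j\phi_0$, and quantum multiplication by $\phi_0=\fun$ is the identity; this piece separates naturally from the remaining, genuinely $\tau$-dependent part that is absorbed into $C_j$. To see directly that $\bnabla^+\Upsilon^+_0=0$, I would differentiate the factorization: the cone property gives $\partial_{\sfy_i}I_+(\sfy,z)\in zL_+(\tau_+(\sfy),z)^{-1}\cH_+$, which combined with the flat-section equation for $L_+$ forces $\partial_{\sfy_i}\Upsilon^+_0+z^{-1}\bA^+_i\Upsilon^+_0=0$ with $\bA^+_i$ independent of $z$. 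The Euler vector-field identity $(\tau_+)_*\bE^+=\cE^+$ then follows by matching the homogeneity weights \eqref{eq:deg_variable} of $I_+$ with the homogeneity of $L_+$ in Proposition~\ref{prop:fundsol}.

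The remaining, and hardest, issue is globalization in the analytic variable $\sfy_r$. The Birkhoff factorization above is carried out order by order in $\sfy_1,\dots,\sfy_{r-1}$, so the question is whether each coefficient of the resulting series $\tau_+$, $\Upsilon^+_0$, $B_i$, $C_j$ extends analytically over all of $\cU_+$. The $I$-function is analytic on $\cU_+$ by Lemma~\ref{lem:I_analyticity} and Remark~\ref{rem:I_analyticity}, but the factorization requires nonvanishing of a Wronskian-type determinant formed from $\sfy_r$-derivatives of $I_+$, which may fail on a thin locus; I take $\cU_+^\circ$ to be the complement inside $\cU_+$ of this discrete set of bad points. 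Controlling the factorization uniformly on the non-compact cover $\cU_+$ with its infinite monodromy is the main obstacle; I expect to handle it by combining the analyticity statement in Remark~\ref{rem:I_analyticity} with the explicit hypergeometric ODE satisfied by the restrictions of $I_+$ to the individual $T$-fixed points derived in the proof of Lemma~\ref{lem:I_analyticity}, which provides regularity away from $0$, $\frc$, $\infty$ and bounds the growth of $L_+(\tau_+(\sfy),z)$ needed to conclude that $\Upsilon^+_0$ is globally polynomial in $z$.
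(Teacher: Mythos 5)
Your overall strategy — deduce a factorization $I_+ = z\,L_+^{-1}\Upsilon_0^+$ from the Mirror Theorem, perform a Birkhoff factorization, and take $\cU_+^\circ$ to be the locus where the factorization exists — is the same as the paper's. There is, however, a genuine gap in the step where you claim to "read $\tau_+(\sfy)$ off from the $z^0$-term of the asymptotic expansion of $I_+(\sfy,z)$." In general $\Upsilon_0^+$ is a nonconstant polynomial in $z$, even along the curve $\sfy_1=\cdots=\sfy_{r-1}=0$ with $\sfy_r$ fixed away from the origin, so the $z^0$-coefficient of $z^{-1}e^{-\sigma_+/z}I_+$ is $[\bL_+^{-1}\Upsilon_0^+]_{z^0}$, which mixes the contribution of $\ttau_+$ with contributions from the positive-$z$ coefficients of $\Upsilon_0^+$. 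You cannot disentangle them by inspecting a single $z$-coefficient of the single vector $I_+$. The paper resolves this by assembling the full square matrix $\II_+(\sfy,z)$ of derivatives of $I_+$ via the operators $P_{f,i}^+$ of Lemma~\ref{lem:I_derivatives} (see equation~\eqref{eq:I_derivative_matrix}); the key normalization $\II_+ = \id + O(\sfy)$ makes the factorization well-posed (Lemma~\ref{lem:formal_Birkhoff}), and the mirror map is then read off from the Birkhoff factor $\bL_+(\sfy,z)^{-1}1$, not from $I_+$ directly. Constructing the $P_{f,i}^+$ requires Assumption~\ref{assump:generation}, which you should invoke (or explain how to work around, as in Remark~\ref{rem:removing_assumption}). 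Your globalization step should also be sharpened: the "Wronskian + growth estimates on $L_+$" heuristic is not what is needed. Because the restriction $\gamma(\sfy_r,z) = \II_+\big|_{\sfy'=0,\,\lambda=0}$ is a homogeneous Laurent-polynomial loop, its Birkhoff factorization is equivalent to a block LU decomposition of the single matrix $\gamma(\sfy_r,1)$, which is a Zariski-open condition in the holomorphic parameter $\sfy_r$; together with $\gamma(0,1)=\id$ this gives at once that the bad locus is a discrete subset of $\cU_+$, with no growth control on $L_+$ required.
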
 

\begin{remark}
  Here the Novikov variables $Q$ in the quantum product and the fundamental solution have been specialized to $1$: see \S\ref{sec:specialization}.  
\end{remark}

\begin{remark} An entirely analogous result holds for $X_-$. 
\end{remark} 

\begin{remark} 
The data in Theorem \ref{thm:global_qconn} satisfy 
some compatibility equations. The connection matrices $B_i$, $C_i$ 
are self-adjoint with respect to the equivariant orbifold 
Poincar\'{e} pairing $(\cdot,\cdot)$.  Furthermore the grading operator 
$\bGr^+ = z \parfrac{}{z} + \bE^+ + \mu^+$ on $\bF^+$ 
(where $\mu^+$ is the grading operator on $H_{\CR,T}^\bullet(X_+)$ 
defined in equation~\ref{eq:Euler_mu}) satisfies 
$[ \bGr^+, \bnabla^+_v] = \bnabla^+_{[\bE^+,v]}$ 
for any vector field $v$. 
These properties are inherited from the quantum connection. 
\end{remark} 
\begin{remark}[{\cite[Remark 3.5]{Iritani}}]
By construction, 
the mirror map $\tau_+$ here depends on how much 
we have extended vectors $b_j, j\in S_+$ in the extended 
stacky fan. If we add sufficiently many extended vectors, 
we can make it submersive near $P_+$ and 
Theorem \ref{thm:global_qconn} gives an analytic 
continuation of the big quantum cohomology. 
In fact we have 
\[
\tau(\sfy) = c_0(\lambda) + \sum_{j\in S_+\setminus S_0} 
\lambda_j \log x_j + 
\sum_{i=1}^{\ell_+} \theta(p_i^+) \log y_i 
+ 
\sum_{j\in S_+} 
\alpha_j x_j + \text{higher order terms}. 
\]
Here $\alpha_j = \prod_{i\in I_j} u_i^{n_{ij}} \fun_{[-\xi_j]}$, 
where $\xi_j\in \KK_+$ is given in \eqref{eq:def_xi}, 
$I_j\subset \{1,\dots,m\} \setminus S_+$ is such that 
$\sigma_{I_j}$ contains $\overline{b_j}$, and 
$\overline{b_j} = \sum_{i\in I_j} (n_{ij} + \epsilon_{ij}) 
\overline{b_i}$ with 
$\epsilon_{ij} \in [0,1)$ and $n_{ij} \in \ZZ_{\ge 0}$. 
Note that $\fun_{[-\xi_j]}$ corresponds to the Box element 
$b_j - \sum_{i\in I_j} n_{ij} b_i\in \cbox(X_+)$. 
\end{remark} 

\begin{remark} 
The logarithmic singularity of $\bnabla^+$ 
along $\prod_{j\in S_+} x_j=0$ is not very important: 
this can be eliminated by shifting the mirror map 
$\tau$ by $\sum_{j\in S_+} \lambda_j \log x_j$;
see \eqref{eq:sigma_p}. 
\end{remark}

The rest of this section is devoted to the proof of 
Theorem \ref{thm:global_qconn}. 
First we recall how to compute the quantum connection 
of $X_+$ using the $I$-function (cf.~\cite{CCIT:applications}). 
By the Mirror Theorem~\ref{thm:mirror_thm},  
$I_+^{\rm temp}(\sigma,x,-z)$ is a point on the Givental
cone $\cL_+:= \cL_{X_+}$ for $X_+$.  
Recall from Remark \ref{rem:fundamentalsol_cone} 
that the cone $\cL_+$ is ruled by its tangent spaces 
(multiplied by $z$): 
\[
\cL_+ = \bigcup_{\tau \in H^\bullet_{\CR,T}(X_+)} z 
L_+(\tau,-z)^{-1} \cH_+. 
\]
This implies that one has: 
\[
I_+^{\rm temp}(\sigma, x, z) = z L_+(\tau,z)^{-1} \Upsilon^+_0
\]
for some $\tau= \tau(\sigma,x) \in H^\bullet_{\CR,T}(X_+) 
\otimes_{R_T} R_T[\![Q,\sigma,x]\!]$ and 
$\Upsilon^+_0 \in \cH_+[\![\sigma,x]\!] = H^\bullet_{\CR,T}(X_+) 
\otimes_{R_T} R_T[z][\![Q,\sigma,x]\!]$. 
The map $(\sigma,x) \mapsto \tau(\sigma,x)$ 
is called the \emph{mirror map}: this will be determined below. 
In Lemma~\ref{lem:I_derivatives} we will construct differential operators 
$P_i = P_i(z\partial)$, $i=0,\dots,N$ 
which depend polynomially on $z$ and on the vector fields $z \partial_v$, $v\in H^2_T(X_+)$,
and $z\partial_{x_j}$, $j\in S_+$, and which satisfy:
\begin{itemize} 
\item $\phi_i = z^{-1} P_i I_+^{\rm temp}|_{Q=\sigma=x=0}$, 
$0\le i\le N$ 
are independent of $z$; 
\item $\{\phi_i : 0\le i\le N\}$ form a basis of $H^*_{\CR,T}(X_+)$ 
over $R_T$;
\item $P_0 = 1$.
\end{itemize} 
Then:
\begin{equation} 
\label{eq:Birkhoff}
\begin{bmatrix} 
\vert &  & \vert \\ 
z^{-1}P_0 I^{\rm temp}_+ & \cdots & z^{-1} P_N I^{\rm temp}_+ \\ 
\vert &  & \vert  
\end{bmatrix} 
= L_+(\tau,z)^{-1} 
\begin{bmatrix} 
\vert & & \vert \\ 
\Upsilon^+_0 & \cdots & \Upsilon^+_N \\
\vert & & \vert 
\end{bmatrix} 
\end{equation} 
for $\Upsilon^+_i := P_i( z\tau^*\nabla) \Upsilon^+_0$. 
Here $\tau^*\nabla$ is the pull-back of the quantum connection 
of $X_+$ via the mirror map $\tau$, and 
we used the fact that one has $\partial_v \circ L_+(\tau,z)^{-1} 
= L_+(\tau,z)^{-1}\circ  (\tau^* \nabla)_v$ for any 
vector field $v$ on $(\sigma,x)$-space. 
Note that: 
\begin{itemize} 
\item $\Upsilon^+_i \in \cH_+[\![\sigma,x]\!]$ does not 
contain negative powers of $z$; 
\item $L_+(\tau,z)$ does not contain positive powers of $z$; and
\item $L_+(\tau,z)= \id + O(z^{-1})$. 
\end{itemize} 
Thus the right-hand side of \eqref{eq:Birkhoff} can be regarded 
as the \emph{Birkhoff factorization} of the left-hand side 
(see \cite{Pressley-Segal}), when 
we view both sides as elements in the loop group $LGL_{N+1}$ 
with $z$ the loop parameter.
The properties of $P_i$ listed above ensure that the left-hand side of \eqref{eq:Birkhoff}
is invertible at $Q=\sigma=x=0$, and that its Birkhoff factorization 
can be determined recursively in powers of $Q$, $\sigma$ and $x$ 
(see Lemma \ref{lem:formal_Birkhoff}). 
Thus the $I$-function determines $L_+(\tau,z)^{-1}$ as a function
of $(\sigma,x)$, via Birkhoff factorization. 
The mirror map $\tau=\tau(\sigma,x)$ is determined by the asymptotics 
\[
L_+(\tau,z)^{-1} 1 = 1 + \tau z^{-1} + O(z^{-2})
\]
and $L_+(\tau,z)^{-1}$ determines the pulled-back quantum 
connection $\tau^* \nabla$. 

We perform the above procedure globally on $\hcM_{\rm reg}$, 
using the $I$-function $I_+$ obtained from $I_+^{\rm temp}$ 
by the specialization $Q=1, \sigma= \sigma_+$. 
It will be convenient to assume the following condition. 

\begin{assumption} 
\label{assump:generation} 
The set $\bN \cap |\Sigma_+| = \{ v\in \bN : 
\overline{v} \in |\Sigma_+|\}$ of lattice points in the 
support $|\Sigma_+|$ of the fan is generated by 
$b_j, j=1,\dots,m$ as an additive monoid. 
\end{assumption} 

\begin{remark}
  Assumption~\ref{assump:generation}  is harmless: it can be always achieved by adding enough extended vectors to the extended stacky fan and in fact   Theorem \ref{thm:global_qconn} holds without this assumption (see Remark \ref{rem:removing_assumption}).
\end{remark}

Recall from \S\ref{sec:inertia} that $H_{\CR,T}^\bullet(X_+)$ 
is the direct sum of sectors $H_{T}^\bullet(X_+^f)$, $f\in \KK_+/\LL$ 
and recall from \S \ref{sec:equiv_coh} that each sector 
$H_T^\bullet(X_+^f)$ is generated by divisor classes. 
Thus we can take an $R_T$-basis of $H_{\CR,T}^\bullet(X_+)$ 
of the form: 
\begin{align*}
  \phi_{f,i} = F_{f,i}\left(\theta(p^+_1),\dots, \theta(p^+_{\ell_+})\right) \fun_f  
  && \text{$f\in \KK/\LL$,  $1\le i \le \dim H^\bullet(X_+^f)$}
\end{align*}
where $F_{f,i}(a_1,\dots,a_{\ell_+}) \in \CC[a_1,\dots,a_{\ell_+}]$ 
is a homogeneous polynomial. 
Recall from \S \ref{sec:inertia} that elements in $\KK_+/\LL$ 
are in one-to-one correspondence with elements in $\cbox(X_+)$. 
Let $v_f \in \cbox(X_+)$ be 
the element corresponding to $f\in \KK_+/\LL$. 
By Assumption \ref{assump:generation}, there exist
non-negative integers $n_{f,j}$, $j=1,\dots,m$,
such that 
\begin{equation} 
\label{eq:vf} 
v_f = \sum_{j=1}^m n_{f,j} b_j. 
\end{equation} 
On the other hand, taking a minimal cone $\sigma_f$ in $\Sigma_+$ 
containing $v_f$, we can write 
\[
\overline{v_f} = \sum_{j \notin S_+, \overline{b_j} \in \sigma_f} c_{f,j} 
\overline{b_j}  
\]
for some $c_{f,j} \in [0,1)$. 
We set $c_{f,j} = 0$ if $j\in S_+$ or $b_j \notin\sigma_f$. 
Then $\sum_{j=1}^m (n_{f,j}-c_{f,j}) \overline{b_j}=0$ 
and by \eqref{eq:exact}, 
there exists an element $d_f \in \LL\otimes \QQ$ such 
that $D_j \cdot d_f = n_{f,j} - c_{f,j}$. By definition 
of $\KK_+$, $d_f \in \KK_+$ and $[-d_f] = f$ in 
$\KK_+/\LL$ by \eqref{eq:KL_Box} and \eqref{eq:vf}. 
Set $D_j = \sum_{a=1}^{r} \mu_{ja} \sfp^+_a$  
for some $\mu_{ja} \in \ZZ$. 
Define differential operators $\calD_j$, $\Delta_f$ as 
\begin{align*} 
\calD_j & := \sum_{a=1}^{r} \mu_{ja} z \sfy_a \parfrac{}{\sfy_a} \\ 
\Delta_f &:= \sfy^{-d_f} 
\prod_{j=1}^m \prod_{\nu=0}^{n_{f,j}-1} 
\left (\calD_j + \lambda_j  - \nu z \right).  
\end{align*}

\begin{lemma} 
\label{lem:I_derivatives}
Let $F_{f,i}$,~$\phi_{f,i}$,~$\Delta_f$ be as above. 
Define the differential operator $P^+_{f,i}$ by 
\[
P^+_{f,i}:= F_{f,i}\left(
zy_1\parfrac{}{y_1},\dots, 
z y_{\ell_+} \parfrac{}{y_{\ell_+}} \right) 
\Delta_f. 
\]
Then we have: 
\[
P^+_{f,i} I_+(\sfy,z) = z e^{\sigma_+/z} 
(\phi_{f,i} + O(\sfy)).  
\]
\end{lemma}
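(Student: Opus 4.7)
The plan is to prove the identity by direct computation, exploiting the interplay between the exponential prefactor, the monomial $\sfy^d$, and the differential operators. First I would establish the key operator identity: since $\sfy_a \partial_{\sfy_a} \sigma_+ = \theta(\sfp_a^+)$ and $\sfy_a \partial_{\sfy_a} \sfy^d = (\sfp_a^+ \cdot d) \sfy^d$, combining with $D_j = \sum_a \mu_{ja} \sfp_a^+$ and $\theta(D_j) = u_j - \lambda_j$ (from \eqref{eq:theta}) gives
\[
(\calD_j + \lambda_j - \nu z)\bigl(e^{\sigma_+/z} \sfy^d \alpha\bigr) = \bigl(u_j + (D_j \cdot d - \nu) z\bigr) e^{\sigma_+/z} \sfy^d \alpha
\]
for any $\sfy$-independent coefficient $\alpha$. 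This is the fundamental mechanism by which $\Delta_f$ interacts with each summand of $I_+$.

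Next I would apply the differential part of $\Delta_f$ termwise to \eqref{eq:I+}, multiply by $\sfy^{-d_f}$, and re-index $d' \mapsto d = d' - d_f$. The defining relation $D_j \cdot d_f = n_{f,j} - c_{f,j}$ is arranged precisely so that the $n_{f,j}$ new numerator factors produced by Step~1 telescope against the top $n_{f,j}$ factors of the hypergeometric denominator, yielding
\[
\Delta_f I_+ \;=\; z e^{\sigma_+/z} \sum_{d \,:\, d + d_f \in \KK_+} \sfy^d\, R(d)\, \fun_{[-d - d_f]}, \qquad R(d) := \prod_{j=1}^m \frac{\prod_{a \le 0,\, \<a\> = \<\tilde d_j\>}(u_j + az)}{\prod_{a \le \tilde d_j,\, \<a\> = \<\tilde d_j\>}(u_j + az)},
\]
where $\tilde d_j := D_j \cdot d - c_{f,j}$. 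The $d = 0$ contribution is then easy to pin down: since $\tilde d_j|_{d=0} = -c_{f,j} \in (-1, 0]$, the numerator and denominator of each $j$-factor of $R(0)$ coincide (trivially when $c_{f,j} = 0$; when $c_{f,j} > 0$, because the half-open interval $(-c_{f,j}, 0]$ contains no element of the arithmetic progression), so $R(0) = 1$ and the leading summand is $z e^{\sigma_+/z} \fun_f$. Finally, applying $F_{f,i}(zy_1 \partial_{y_1}, \dots, zy_{\ell_+}\partial_{y_{\ell_+}})$ and using the Leibniz rule together with Step~1 (each $zy_a \partial_{y_a}$ acts on the constant-in-$\sfy$ part as multiplication by $\theta(p_a^+)$, and produces only $O(\sfy)$ corrections elsewhere) yields the leading term $F_{f,i}(\theta(p_1^+),\dots,\theta(p_{\ell_+}^+)) \fun_f = \phi_{f,i}$.

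The main obstacle is verifying that all summands with $d \ne 0$ contribute to $O(\sfy)$. For $d \in C_+^\vee \setminus \{0\}$ there is nothing to check, as $\sfy^d$ already lies in the maximal ideal. The delicate case is $d \in (C_+^\vee - d_f) \setminus C_+^\vee$, where $\sfy^d$ may carry negative exponents; here I would need to show $R(d) \fun_{[-d-d_f]} = 0$ in $H^\bullet_T(X_+^{[-d-d_f]})$. The strategy is to identify the set of indices $j$ for which the $j$-factor of $R(d)$ contributes an actual $u_j$ (precisely those $j \in I_{-d-d_f}$ with $\tilde d_j$ a non-positive integer), and then to show that the resulting monomial $\prod u_j$ lies in the Stanley--Reisner ideal of $X_+^{[-d-d_f]}$, inherited from the relations $\mathfrak{J}$ in \S\ref{sec:equiv_coh} via the sub-anticone structure $\{I \in \cA_+ : I \subset I_{-d-d_f}\}$. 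This is essentially the same combinatorial mechanism that underlies the standard fact that the untransformed $I$-function is supported on $d' \in C_+^\vee$, now adapted to the index shift by $d_f$.
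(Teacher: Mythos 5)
Your proposal matches the paper's proof essentially step for step: the same operator identity for $\calD_j$ acting on $e^{\sigma_+/z}\sfy^d$, the same telescoping after re-indexing by $d_f$, the same verification that $R(0)=1$, and the same Stanley--Reisner vanishing argument for summands with $d'-d_f \notin C_+^\vee$. Two small points worth fixing: a $u_j$ factor appears precisely when $\tilde d_j$ is a \emph{strictly} negative integer (not merely non-positive, since $\tilde d_j=0$ gives numerator equal to denominator), and where you only gesture at the final combinatorial step, the paper records a one-line contradiction (if $J:=\{j : D_j\cdot d'\in\ZZ,\ \tilde d_j\ge 0\}\in\cA_+$ then $C_+\subset\angle_J$ while $d'-d_f\in\angle_J^\vee$, forcing $d'-d_f\in C_+^\vee$) which you should write out.
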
 
\begin{proof} 
The proof is parallel to \cite[Lemma 4.7]{Iritani}. 
Note that the vector field $z y_i \partial/\partial y_i$ applied 
to $e^{\sigma_+/z}$ yields the factor $\theta(p_i^+)$ 
for $1\le i\le \ell_+$ (see equation~\ref{eq:sigma_p}). 
Thus it suffices to show that $\Delta_f I_+ = z e^{\sigma_+/z} 
(\fun_{f} + O(\sfy))$. 
Note that we have
\begin{align*} 
\calD_j \left( e^{\sigma_+/z} \sfy^d\right) 
& = \left( u_j - \lambda_j  
+ z  (D_i \cdot d) \right) e^{\sigma_+/z} \sfy^d
\end{align*} 
where we use $\sum_{a=1}^{\ell_+} \mu_{ja} \theta(\sfp_a^+) 
= \theta(D_j) = u_j -\lambda_j$. 
Therefore one has: 
\[
\Delta_f I_+ = e^{\sigma_+/z} 
\sum_{d\in \KK_+} \sfy^{d-d_f}  
\prod_{j=1}^m 
\frac{\prod_{a\le 0, \<a\> = \<D_j \cdot d\>} (u_j + az)}
{\prod_{a\le D_j\cdot d - n_{f,j}, \<a\> = \<D_j\cdot d\>} 
(u_j + az)}  
\fun_{[-d]}
\]
Note that the summand equals $\fun_f$ when $d= d_f$. 
We claim that the summand for $d\in \KK_+$ vanishes 
if $d-d_f$ does not lie in the dual cone $C_+^\vee$ of $C_+$. 
Suppose that $d-d_f\notin C_+^\vee$. 
Note that the summand for $d$ contains the factor 
$\prod_{j: D_j\cdot d\in \ZZ, D_j \cdot (d-d_f)<0} u_j$. 
By the description \eqref{eq:Qequiv_cohomology} 
of $H^\bullet_T(X^{[-d]}_+)$, it vanishes in cohomology if 
$I = \{j : D_j\cdot d\in \ZZ, D_j\cdot (d-d_f)\ge 0\}\notin \cA_+$. 
It now suffices to check $I\notin \cA_+$. 
If $I\in \cA_+$, one has $C_+ \subset \angle_I$.  
But $d-d_f$ lies in the dual cone of $\angle_I$. 
Thus $d-d_f \in C_+^\vee$: this is a contradiction. 
The claim follows and the Lemma is proved. 
\end{proof} 

Applying the differential operators $P^+_{f,i}$, $f\in \KK_+/\LL$, 
$1\le i\le \dim H^\bullet(X_+^f)$, to $I_+$, 
we obtain a matrix of the form: 
\begin{equation} 
\label{eq:I_derivative_matrix}
\begin{bmatrix} 
 & \vert & \\ 
\cdots & z^{-1} P^+_{f,i} I_+ & \cdots \\ 
& \vert &  
\end{bmatrix} 
= e^{\sigma_+/z} \II_+(\sfy,z)
\end{equation} 
where $I_+$ is regarded as a column vector 
written in the basis $\{\phi_{f,i}\}$ of $H^\bullet_{\CR,T}(X_+)$ 
and $\II_+(\sfy,z)= \id + O(\sfy)$ is a square matrix. 
We may also view $\II_+(\sfy,z)$ as an 
$\End(H^\bullet_{\CR}(X_+))$-valued function 
via the basis $\{\phi_{f,i}\}$. 
By the homogeneity of $e^{-\sigma_+/z}I_+$ and $P^+_{f,i}$, 
we find that the endomorphism $\II_+(\sfy,z)$ is homogeneous 
of degree-zero with respect to the degree \eqref{eq:deg_variable} 
of variables and the grading on $H^\bullet_{\CR}(X_+)$, i.e.~that: 
\begin{equation} 
\label{eq:I+homogeneous}
\left(z \parfrac{}{z} + \bE^+ + \ad(\mu^+) \right) \II_+(\sfy,z) = 0
\end{equation} 
As in \eqref{eq:Birkhoff}, we consider the Birkhoff factorization 
of \eqref{eq:I_derivative_matrix}. 
Since $e^{\sigma_+/z} = \id + O(z^{-1})$, 
it suffices to consider the Birkhoff factorization of $\II_+(\sfy,z)$. 
Set: 
\[
\gamma(\sfy_r,z) := \II_+(\sfy,z)
\Big|_{\sfy_1 = \cdots = \sfy_{r-1}=0, \lambda_1 = \cdots 
= \lambda_m =0}. 
\]
By Lemma \ref{lem:I_analyticity}, $z\mapsto \gamma(\sfy_r,z)$ 
is a loop in $\End(H_{\CR}^\bullet(X_+))$ 
that depends analytically on $\sfy_r\in \cU_+$. 
We first consider the Birkhoff factorization of $\gamma(\sfy_r,z)$. 
Since $\gamma(\sfy_r,z)$ is homogeneous, 
it is a Laurent polynomial in $z$ and both factors  
of the Birkhoff factorization $\gamma(\sfy_r,z) = \gamma_-(z) 
\gamma_+(z)$ are also homogeneous if the factorization exists. 
Therefore the Birkhoff factorization is equivalent to 
the \emph{block LU decomposition} of $\gamma(\sfy_r,1)$: 
\begin{align*}
\gamma_-(1) = 
\left [
\begin{array}{cccc} 
I_{r_1} & & & \\    
 * & I_{r_2} & &\hsymb{0}  \\ 
  \vdots & \vdots & \ddots &  \\ 
*&  *   & \cdots & I_{r_k} 
\end{array}  
\right ]
&&
\gamma_+(1) = 
\left [
\begin{array}{cccc} 
* & *& \cdots & * \\    
  & * & \cdots &  *\\ 
  &  & \ddots & \vdots  \\ 
\hsymb{0} &    &  & *
\end{array}  
\right] 
\end{align*}
where each block corresponds to a homogeneous component of  
$H^\bullet_{\CR}(X_+)$ and $I_r$ denotes the identity 
matrix of size $r$. 
The block LU decomposition of $\gamma(\sfy_r,1)$ exists if and only if 
\[
H = (\gamma(\sfy_r,1) H^{\le p}) \oplus H^{>p} 
\] 
holds for all $p\in \QQ$, 
where $H = H^\bullet_{\CR}(X_+)$ and $H^{\le p}$ 
(resp.~$H^{>p}$) denotes the subspace of degree 
less than or equal to $p$ (resp.~greater than $p$). 
This is a Zariski open condition for $\gamma(\sfy_r,1)$. 
Since $\gamma(\sfy_r=0,1) = \id$, it follows that $\gamma(\sfy_r,z)$ 
admits a Birkhoff factorization on the complement  
$\cU^\circ_+$ of a discrete set in $\cU_+$. 
Clearly one has $P_+ \in \cU_+^\circ$. 

\begin{lemma} 
\label{lem:formal_Birkhoff} 
Let $\gamma(z)\in LGL_{N+1}(\CC)$ be a Laurent polynomial 
loop admitting a Birkhoff factorization $\gamma = \gamma_- \gamma_+$. 
Let $\Gamma(s,z) \in \End(\CC^{N+1}) \otimes 
\CC[z,z^{-1}][\![s_1,\dots,s_l]\!]$ be a formal loop 
such that $\Gamma|_{s=0} = \gamma$. 
Then $\Gamma(s,z)$ admits a unique Birkhoff factorization 
of the form 
\[
\Gamma(s,z) = \Gamma_-(s,z) \Gamma_+(s,z) 
\]
such that $\Gamma_-(s,z) \in \End(\CC^{N+1}) 
\otimes \CC[z^{-1}][\![s_1,\dots,s_l]\!]$, 
$\Gamma_-(s,\infty) = \id$ 
and $\Gamma_+(s,z) \in \End(\CC^{N+1}) 
\otimes \CC[z][\![s_1,\dots,s_l]\!]$.  
\end{lemma}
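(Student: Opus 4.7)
The plan is to construct $\Gamma_\pm(s,z)$ order by order in the multi-index $k$ of the formal parameters $s = (s_1,\dots,s_l)$. Writing
\[
\Gamma(s,z) = \sum_k \Gamma^{(k)}(z)\, s^k,\qquad \Gamma_\pm(s,z) = \sum_k \Gamma_\pm^{(k)}(z)\, s^k,
\]
with $\Gamma_-^{(0)} = \gamma_-$ and $\Gamma_+^{(0)} = \gamma_+$, the required structural conditions become $\Gamma_-^{(k)} \in V_-^0 := z^{-1}\CC[z^{-1}]\otimes \End(\CC^{N+1})$ for $|k|\geq 1$ (this encodes both $\Gamma_-\in\CC[z^{-1}]\otimes\End\otimes\CC[\![s]\!]$ and the normalization $\Gamma_-(s,\infty) = \id$) and $\Gamma_+^{(k)}\in V_+ := \CC[z]\otimes\End(\CC^{N+1})$. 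Expanding $\Gamma_-\Gamma_+ = \Gamma$ at order $s^k$ for $|k|\geq 1$ gives
\[
\gamma_-\,\Gamma_+^{(k)} + \Gamma_-^{(k)}\,\gamma_+ = R_k,
\]
where $R_k \in V := \CC[z,z^{-1}]\otimes\End(\CC^{N+1})$ is determined by $\Gamma^{(k)}$ and by the lower-order factors $\Gamma_\pm^{(k')}$, $|k'|<|k|$, already constructed. Thus everything reduces to showing that the linear map
\[
\Theta \colon V_-^0 \oplus V_+ \longrightarrow V,\qquad (A,B)\mapsto A\gamma_+ + \gamma_-B,
\]
is a bijection, since the induction then proceeds by setting $(\Gamma_-^{(k)}, \Gamma_+^{(k)}) = \Theta^{-1}(R_k)$.

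To prove the bijectivity of $\Theta$ I will first unpack what is implicit in the hypothesis that $\gamma\in LGL_{N+1}(\CC)$ admits a Birkhoff factorization: the determinant $\det\gamma$ is a unit of $\CC[z,z^{-1}]$, hence a scalar monomial, and combined with $\det\gamma_-\in 1+z^{-1}\CC[z^{-1}]$ and $\det\gamma_+\in\CC[z]$ this forces $\det\gamma_- = 1$ and $\det\gamma_+\in\Cstar$, so that $\gamma_-^{-1}\in\CC[z^{-1}]\otimes\End$ with $\gamma_-^{-1}(\infty) = \id$ and $\gamma_+^{-1}\in V_+$. Writing $\gamma_- = \id + \gamma_-^0$ with $\gamma_-^0\in V_-^0$ and setting $B' := B\gamma_+^{-1}\in V_+$, multiplication of $\Theta(A,B) = R$ on the right by $\gamma_+^{-1}$ turns the equation into
\[
A + B' + \gamma_-^0\, B' = R\gamma_+^{-1}.
\]
Decomposing both sides along $V = V_+ \oplus V_-^0$ via the projections $\pi_\pm$, the $V_+$-component reads $(\id + \phi^0)\,B' = \pi_+(R\gamma_+^{-1})$ for the operator $\phi^0 \colon V_+ \to V_+$ defined by $\phi^0(B') := \pi_+(\gamma_-^0 B')$, while the $V_-^0$-component then determines $A = R\gamma_+^{-1} - \gamma_- B'$ uniquely.

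The main obstacle is therefore the invertibility of $\id + \phi^0$ on $V_+$, all other steps being routine. This I expect to follow from the observation that $\gamma_-^0$ contains only strictly negative powers of $z$, so $\phi^0$ sends $\CC[z]_{\leq d}\otimes\End$ into $\CC[z]_{\leq d-1}\otimes\End$ and is consequently locally nilpotent on $V_+$. On each finite-degree subspace the Neumann series $\sum_{j\geq 0}(-\phi^0)^j$ terminates, providing a two-sided inverse, and by exhaustion $\id + \phi^0$ is bijective on all of $V_+$. This yields existence and uniqueness of $(\Gamma_-^{(k)}, \Gamma_+^{(k)}) = (A,\, B'\gamma_+)$ at every order, completing the inductive construction of the formal Birkhoff factorization $\Gamma = \Gamma_-\Gamma_+$.
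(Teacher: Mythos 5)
Your proof is correct, but it organizes the induction differently from the paper.  The paper's proof is shorter: it first conjugates the whole formal loop, setting $\Gamma' := \gamma_-^{-1}\,\Gamma\,\gamma_+^{-1}$ so that $\Gamma'|_{s=0} = \id$, and then observes that the order-$k$ equation $\Gamma'^{(k)}_- + \Gamma'^{(k)}_+ = R'_k$ is solved instantly by projecting $R'_k$ onto the two summands of the direct sum $\big(z^{-1}\CC[z^{-1}]\otimes\End(\CC^{N+1})\big) \oplus \big(\CC[z]\otimes\End(\CC^{N+1})\big)$.  Your version keeps $\Gamma$ as is and solves $\gamma_- \Gamma_+^{(k)} + \Gamma_-^{(k)}\gamma_+ = R_k$ at each order; after right-multiplying by $\gamma_+^{-1}$ the residual term $\gamma_-^0 B'$ is absorbed by showing that $\phi^0 = \pi_+\circ(\gamma_-^0\cdot)$ strictly lowers $z$-degree on $V_+$ and hence that $\id+\phi^0$ is invertible by a terminating Neumann series.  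Because you only untwist by $\gamma_+^{-1}$ and not also by $\gamma_-^{-1}$, the operator $\phi^0$ does not vanish and you have to do a bit of extra work; untwisting on both sides at the outset, as the paper does, makes the linear algebra at each step trivial.  That said, your proof has one virtue the paper leaves tacit: you explicitly verify that $\det\gamma_-=1$ and $\det\gamma_+\in\Cstar$, so that $\gamma_-^{-1}$ and $\gamma_+^{-1}$ remain Laurent-polynomial loops in the appropriate subalgebras --- a fact the paper's one-line reduction to $\Gamma'$ is silently using.
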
 
\begin{proof} 
It suffices to show that $\Gamma' =
\gamma_-^{-1} \Gamma \gamma_+^{-1}$ 
admits a Birkhoff factorization $\Gamma' = \Gamma'_-\Gamma'_+$. 
Expanding $\Gamma'$ and $\Gamma'_\pm$ in power series in 
$s_1,\dots,s_l$, one can determine the coefficients 
recursively from the equation $\Gamma' = \Gamma'_- \Gamma'_+$. 
\end{proof} 

Applying the above lemma to $\II_+(\sfy,z)$, we see that 
$\II_+(\sfy,z)$ with $\sfy_r \in \cU^\circ_+$ admits a 
Birkhoff factorization 
\begin{equation} 
\label{eq:Birkhoff_I}
\II_+(\sfy,z) = \bL_+(\sfy,z)^{-1} \Upsilon^+(\sfy,z) 
\end{equation} 
where 
\begin{align*} 
& \bL_+(\sfy,z) \in \End(H_{\CR}^\bullet(X_+))
\otimes \cO_{\cU^\circ_+}[z^{-1}][\![\lambda_1,\dots,\lambda_m,
\sfy_1,\dots,\sfy_{r-1}]\!], \\ 
&\Upsilon^+(\sfy,z) \in \End(H_{\CR}^\bullet(X_+)) 
\otimes \cO_{\cU^\circ_+}[z][\![\lambda_1,\dots,\lambda_m, 
\sfy_1,\dots,\sfy_{r-1}]\!]  
\end{align*} 
and $\bL_+(\sfy,\infty) = \id$. 
Using the homogeneity equation \eqref{eq:I+homogeneous}, 
we find that the Birkhoff factors $\bL_+$, $\Upsilon^+$ 
are also homogeneous of degree zero. 
Also the chosen $R_T$-basis $\{\phi_{f,i}\}$ of $H_{\CR,T}^\bullet(X_+)$ 
defines a splitting $H_{\CR,T}^\bullet(X_+)  \cong 
H_{\CR}^\bullet(X_+) \otimes R_T$, and by the splitting, 
one may naturally regard $\bL_+$, $\Upsilon^+$ as 
$\End(H_{\CR,T}^\bullet(X_+))$-valued functions. 
It follows that: 
\begin{align*} 
& \bL_+(\sfy,z) \in \End(H_{\CR,T}^\bullet(X_+))
\otimes_{R_T} 
(\cO_{\cU^\circ_+}\otimes R_T)[\![z^{-1}]\!] 
[\![\sfy_1,\dots,\sfy_{r-1}]\!], \\ 
&\Upsilon^+(\sfy,z) \in \End(H_{\CR,T}^\bullet(X_+)) 
\otimes_{R_T} (\cO_{\cU^\circ_+}\otimes R_T)[z]
[\![ \sfy_1,\dots,\sfy_{r-1}]\!]. 
\end{align*} 
Comparing \eqref{eq:Birkhoff_I} with \eqref{eq:Birkhoff}, 
we obtain 
\begin{equation} 
\label{eq:fundsol_mirror} 
L_+(\tau_+(\sfy),z)^{-1}\bigl|_{Q=1} = e^{\sigma_+/z} 
\bL_+(\sfy,z)^{-1}. 
\end{equation} 
The mirror map $\tau_+(\sfy)$ is given by 
$\tau_+(\sfy) = \sigma_+ + \ttau_+(\sfy)$ with 
$\ttau_+(\sfy)$ determined by: 
\begin{align*} 
\bL_+(\sfy,z)^{-1} 1 = 1 + \ttau_+(\sfy) z^{-1}  + O(z^{-2}).   
\end{align*} 
We have $\ttau_+(0)=0$ and $\ttau_+(\sfy) \in 
H_{\CR,T}^\bullet(X_+) \otimes 
(\cO_{\cU^\circ_+}\otimes R_T)[\![\sfy_1,\dots,\sfy_{r-1}]\!]$. 
The first column of \eqref{eq:Birkhoff_I} gives 
$I_+(\sfy,z) =z L(\tau(\sfy),z)|_{Q=1} \Upsilon^+_0$, where 
$\Upsilon^+_0$ is the first column of $\Upsilon^+$. 
(Here we assume that the first column corresponds to 
the basis vector 
$\phi_{0,1} = 1$ and the differential operator 
$P^+_{0,1} =1$.)  
\begin{remark} 
Equation \eqref{eq:fundsol_mirror} is an equality in the ring: 
\[
\End(H_{\CR,T}^\bullet(X_+)) \otimes_{R_T} 
R_T[\log \sfy_1,\dots, \log \sfy_r][\![z^{-1}]\!][\![\sfy]\!]. 
\] 
Note that the substitution $\tau=\tau_+(\sfy)$ in $L_+|_{Q=1}$ 
makes sense: see \S \ref{sec:specialization}. 
\end{remark} 

By equation \eqref{eq:fundsol_mirror}, $\bL_+(\sfy,z)$ determines 
the quantum connection pulled-back by the mirror map $\tau_+(\sfy)$. 
Set $\tau_+^* \nabla^+ = d + z^{-1} \bA^+(\sfy)$. 
The connection 1-form $\bA^+(\sfy)$ is computed by:  
\begin{align*} 
\bA^+(\sfy) & := - 
z d (\bL_+(\sfy,z) e^{-\sigma_+/z}) e^{\sigma_+/z} \bL_+(\sfy,z)^{-1} \\ 
& = -z (d\bL_+(\sfy,z)) \bL_+(\sfy,z)^{-1} 
+ \bL_+(\sfy,z) (d \sigma_+) \bL_+(\sfy,z)^{-1}
\end{align*} 
where the term $d\sigma_+$ gives a logarithmic singularity 
(see equation~\ref{eq:sigma_p}): 
\[
d\sigma_+ = \sum_{i=1}^{\ell_+} \theta_+(p_i^+) \frac{dy_i}{y_i} 
- \sum_{j\in S_+} \lambda_j \frac{dx_j}{x_j}. 
\]
Thus the connection form $\bA^+(\sfy)$ is a global $1$-form on $\tcM_+$ 
satisfying the properties in Theorem \ref{thm:global_qconn}.  

\begin{remark} 
Note that $\bA^+(\sfy)$ is independent of $z$: in the formal 
neighbourhood of $P_+ = \{\sfy_1=\cdots=\sfy_r=0\}$ this follows from 
the fact that $d+ z^{-1} \bA^+(\sfy)$ is the pulled-back 
quantum connection, 
and this is true everywhere by analytic continuation. 
\end{remark} 

Finally we see that $\bE^+$ corresponds to $\cE^+$. 
Choose a homogeneous $R_T$-basis $\{\phi_i\}$ of $H_{\CR,T}^\bullet(X_+)$ 
such that $\phi_0 = 1$ and $\phi_i = \theta(p_i^+)$ 
for $1\le i\le \ell_+$ and write $\tau^i_+(\sfy)$ for the $i$th component of $\tau_+(\sfy)$ with respect to this basis. 
One needs to check that $\bE^+ \tau^i_+(\sfy) = (1- \frac{1}{2} \deg \phi_i) 
\tau^i_+(\sfy) + \rho^i$, where $\rho = \sum_{i=0}^N \rho^i 
\phi_i$. 
The homogeneity of $\bL_+^{-1}$ shows that $\ttau_+(\sfy)$ 
is homogeneous of (real) degree two: this implies that
$\bE^+ \ttau_+^i(\sfy) = (1 - \frac{1}{2} \deg \phi_i) 
\ttau_+^i(\sfy)$. 
If we set $\sigma_+ = \sum_{i=0}^{\ell_+} 
\sigma_+^i \phi_i$, we have:
\[
\bE^+ \sigma_+^i = 
\begin{cases} 
c_0(\lambda) 
- \sum_{j\in S_+} (\lambda_j \log x_j + \lambda_j \frac{1}{2} (\deg x_j) ) 
& i=0 \\ 
\frac{1}{2} \deg y_i & 1\le i\le \ell_+
\end{cases} 
\]
Thus we have: 
\[
\bE^+ \tau^i_+(\sfy) 
= \begin{cases}
\tau^0_+(\sfy) + c_0(\lambda)
- \sum_{j\in S_+} \lambda_j \frac{1}{2} (\deg x_j) 
& i=0 \\ 
\frac{1}{2} (\deg y_i) & 1\le i \le \ell_+ \\ 
\left(1 - \frac{1}{2} \deg \phi_i\right) \tau^i_+(\sfy)  
& i > \ell_+
\end{cases}
\]
On the other hand, we have 
\begin{align*} 
\rho & = u_1 + \cdots + u_m = \theta(D_1 + \cdots + D_m) + \lambda_1 + 
\cdots + \lambda_m \\ 
& = \sum_{i=1}^{\ell_+} \frac{1}{2}(\deg y_i) \theta(p_i^+) 
+  \sum_{j\in S_+} \frac{1}{2} (\deg x_j) (-\lambda_j) 
+ \lambda_1 + \cdots + \lambda_m 
\end{align*} 
and therefore: 
\[
\rho^i = \begin{cases} 
c_0(\lambda) - \sum_{j\in S_+} \lambda_j \frac{1}{2} (\deg x_j)& i=0 \\ 
\frac{1}{2} (\deg y_i) & 1\le i\le \ell_+ 
\end{cases} 
\]
Thus $\bE^+ \tau^i_+(\sfy) = (1- \frac{1}{2} \deg \phi_i) 
\tau^i_+(\sfy) + \rho^i$. 
The proof of Theorem \ref{thm:global_qconn} is complete. 

\begin{remark} 
\label{rem:removing_assumption}
For the existence of a global quantum connection 
in Theorem \ref{thm:global_qconn} and other main results in this paper, 
we do not need Assumption \ref{assump:generation}. 
Let us write $\tcM_+^S$ for $\tcM_+$ to emphasize the 
dependence on the extension data $S$. 
Then one has: 
\[
S \subset S' \ \Longrightarrow \ 
\tcM_{+}^S\subset \tcM_{+}^{S'}
\]
Suppose that an $S$-extended stacky fan does not satisfy 
Assumption~\ref{assump:generation}. 
By taking a bigger $S'\supset S$, we can achieve Assumption 
\ref{assump:generation} and construct a global quantum 
connection on $\tcM_+^{S'}$. 
Then we obtain a global quantum connection on $\tcM_+^S$ 
by restriction. 
In this way, the global quantum connections 
form a projective system over all extension data $S$. 
Assumption \ref{assump:generation} ensures that 
$\bF^+$ is generated by 
a section $\Upsilon_0^+$ and its covariant derivatives. 
For the convenience of discussion, we will sometimes 
use Assumption \ref{assump:generation} in the rest of the paper, 
but this does not affect the final conclusion. 
\end{remark} 

\section{The Crepant Resolution Conjecture}
\label{sec:CRC}

We now come to the main result in this paper.  In Theorem~\ref{thm:global_qconn}, 
we constructed a global quantum connection $(\bF^+,\bnabla^+, \bE^+)$ for $X_+$ 
on $\tcM_+^\circ$, where $\tcM_+^\circ$ is an open subset of 
the universal cover $\tcM_+$ of $(\hU_+\setminus \{\sfy^e = \frc\})/\bmu_B$. 
By applying Theorem~\ref{thm:global_qconn} to $X_-$ rather than $X_+$, 
we obtain a global quantum connection $(\bF^-,\bnabla^-, \bE^-)$ for $X_-$ 
on $\tcM_-^\circ$, where $\tcM_-^\circ$ is an open subset of 
the universal cover $\tcM_-$ of $(\hU_-\setminus \{\sfy^e = \frc\})/\bmu_A$. 
We now show that these two global quantum connections are gauge-equivalent 
on a common covering $\tcM$: the universal cover of 
$\hcM_{\rm reg} \setminus \{\sfy^e = 0,\frc,\infty\}$. 
\[
\xymatrix{
&  \tcM \ar[dl]_{\pi_+} \ar[dr]^{\pi_-} \ar[d]  & \\ 
\tcM_+ \ar[d] 
& \hcM_{\rm reg}\setminus \{\sfy^e = 0,\frc,\infty\} \ar[dl]\ar[dr] 
& \tcM_- \ar[d] \\ 
(\hU_+\setminus \{\sfy^e = \frc\})/
\bmu_B &  
& (\hU_-\setminus \{\sfy^e = \frc\})/\bmu_A }
\]
Moreover, we show that the analytic continuation of flat sections is induced 
by a Fourier--Mukai transformation $\FM \colon K_T^0(X_-) \to K_T^0(X_+)$ 
through the equivariant integral structure in~\S\ref{sec:equiv_int_str}.  We establish the gauge-equivalence of the two global quantum connections in several steps, beginning in~\S\ref{sec:gauge-equivalence} by expressing the gauge transformation involved as a linear sympletomorphism $\UU$ between the Givental spaces for $X_+$ and $X_-$.  In~\S\ref{sec:Mellin-Barnes} we use the Mellin--Barnes method to analytically continue the $I$-function $I_+$, deducing from this a formula for~$\UU$.  In~\S\ref{sec:FM} we construct a Fourier--Mukai transformation $\FM \colon K_T^0(X_-) \to K_T^0(X_+)$  associated to the toric birational transformation $X_+ \dashrightarrow X_-$.  Finally in~\S\ref{sec:FM_match} and~\S\ref{sec:end_of_the_proof} we complete the proof of gauge-equivalence, and of the Crepant Resolution Conjecture in the toric case, by showing that the symplectic transformation~$\UU$ coincides, via the equivariant integral structure, with the Fourier--Mukai transformation~$\FM$.

\subsection{The Global Quantum Connections are Gauge-Equivalent}
\label{sec:gauge-equivalence}
Let $\cU_\pm$ denote the underlying topological space of 
$\tcM_\pm$. The space $\cU_+$ is the universal cover 
of $\{\sfy_r \in \CC : \sfy_r^{\sfp_r^+ \cdot e} \neq \frc\}$ 
and $\cU_-$ is the universal cover of 
$\{\tsfy_r \in \CC: \tsfy_r^{\sfp_r^- \cdot (-e)} \neq \frc^{-1} \}$. 
The underlying topological space of $\tcM$ is the universal cover 
$\cU$ of $\cC_{\rm reg} \setminus \{\sfy^e = 0,\frc,\infty\}$. 
We have natural maps $\pi_\pm \colon \cU \to \cU_\pm$ 
and set 
\begin{align*} 
\cU^\circ & := \pi_+^{-1}(\cU^\circ_+) \cap \pi_-^{-1}(\cU^\circ_-)
\subset \cU \\ 
\tcM^\circ & := \tcM|_{\cU^\circ} 
\end{align*} 
where $\cU_\pm^\circ\subset \cU_\pm$ is the open 
dense subset from Theorem \ref{thm:global_qconn}. 
Note that $\cU \setminus \cU^\circ$ is a discrete set. 
Since we use $P_\pm \in \cC_{\rm reg}$ as base points 
of the universal covers $\cU_\pm$, we need to specify 
a path from $P_+$ to $P_-$ in $\cC_{\rm reg}\setminus\{\sfy^e = \frc\}$ 
in order to identify the maps $\cU \to \cU_\pm$ between universal covers. 
We consider a path in the $\log(\sfy^e)$-plane starting from 
$\log(\sfy^e) = -\infty$ and ending at $\log(\sfy^e) = \infty$ 
such that it avoids $\log(\frc) + 2\pi \tti \ZZ$.  
We use a path $\gamma$ as in Figure \ref{fig:path_ancont} 
passing through the interval 
\[
\bigl ( \log|\frc|  + \pi \tti (w-1), \log |\frc| + \pi\tti(w+1) \bigr ) 
\]
in the $\log(\sfy^e)$-plane, where 
$w := -1 - \sum_{j: D_j\cdot e<0} (D_j \cdot e) 
= -1 + \sum_{j:D_j \cdot e>0} (D_j\cdot e)$. 

\begin{figure}[htbp] 
\centering
\includegraphics[, width=0.6\textwidth,bb=135 573 484 729]{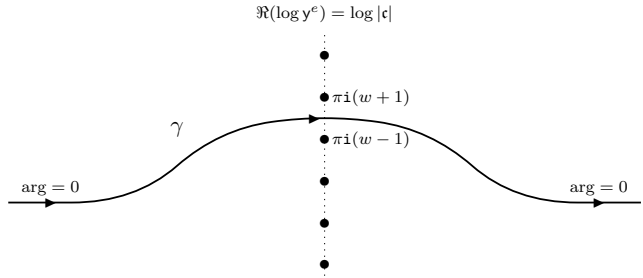} 
\caption{The path $\gamma$ of analytic continuation on the 
$\log(\sfy^e)$-plane}
\label{fig:path_ancont}
\end{figure}

\begin{theorem} 
\label{thm:U} 
Let $\cH(X_\pm) = H^\bullet_{\CR,T}(X_\pm) \otimes_{R_T} 
R_T(\!(z^{-1})\!)$ 
denote Givental's symplectic vector space for $X_\pm$ (see~\S\ref{sec:Givental_cone})
without Novikov variables, i.e.~with $Q$ specialized to~$1$.
There exists a degree-preserving\footnote{We use the 
usual grading on $H_{\CR,T}^\bullet(X_\pm)$, $R_T =H_T^\bullet({\rm pt})$ 
and set  $\deg z=2$.}   
$R_T(\!(z^{-1})\!)$-linear symplectic transformation 
$\UU \colon \cH(X_-) \to \cH(X_+)$ such that:
\begin{enumerate}
\item $I_+(\sfy,z) = \UU I_-(\sfy,z)$ after analytic continuation in $\sfy^e$
along the path $\gamma$ in Figure {\rm\ref{fig:path_ancont}}; 
\item $\UU \circ (g_-^\star v\cup)= (g_+^\star v\cup) \circ \UU$ for all 
$v\in H^2_T(\overline{X}_0)$, where  $g_\pm \colon X_\pm \to 
\overline{X}_0$ is the common blow-down appearing in the 
diagram \eqref{eq:crepant_diagram}; 
\item there exists a Fourier--Mukai transformation 
$\FM \colon K_T^0(X_-) \to K_T^0(X_+)$ such 
that the following diagram commutes: 
\begin{equation} 
  \label{eq:FM_U}
  \begin{aligned}
    \xymatrix{
      K_T^0(X_-) \ar[r]^{\FM} \ar[d]_{\tPsi_-} & K_T^0(X_+) \ar[d]^{\tPsi_+}\\ 
      \tcH(X_-) \ar[r]^{\UU} & \tcH(X_+) 
    }
  \end{aligned}
\end{equation}
where the vertical map $\tPsi_\pm\colon 
K_T^0(X_\pm) \to \tcH(X_\pm)$ is the map  
\[ 
\tPsi_\pm(E) = z^{-\mu^\pm} z^{\rho^\pm} 
\left( \hGamma_{X_\pm} \cup (2\pi\tti)^{\frac{\deg_0}{2}} 
\inv^* \tch(E) \right) 
\]
taking values\footnote{Cf.~Corollary~\ref{cor:homogeneous_flat_sections}.} in the ``multi-valued Givental space'':
\[
\tcH(X_\pm) = H_{\CR,T}^\bullet(X_\pm) 
\otimes_{R_T} R_T[\log z](\!(z^{-1/k})\!)
\]
Here $k\in \N$ is an integer such that all the eigenvalues 
of $k \mu^{+}, k\mu^-$ are integers.   
\end{enumerate} 
\end{theorem}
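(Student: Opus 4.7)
The plan is to construct $\UU$ explicitly via the Mellin--Barnes analytic continuation of the $I$-functions, working in localized $T$-equivariant cohomology where the computation reduces to a one-variable hypergeometric problem at each fixed point. First I would pass to the localized ring $S_T$ and restrict both $I_+$ and $I_-$ to $T$-fixed points. By \S\ref{sec:fixed_points} and Lemma \ref{lem:Apm}, fixed points of $X_+$ split into those surviving the wall-crossing (indexed by minimal anticones $\delta \in \cA_0^{\rm thick}$) and those that disappear (where $\delta \cap M_- \neq \emptyset$ contains some element from $M_-$); similarly for $X_-$. Since $I_\pm$ are compatible with pullback to toric sub-substacks, at each fixed point the summation for $I_\pm$ reduces to the series $\Phi(\beta;\sx)$ from Lemma \ref{lem:I_analyticity} depending on the single variable $\sx = \sfy^e$ (with the other $\sfy_i$ and the variables $\beta_j$ treated as parameters, formal in the $\sfy_i$ and analytic/equivariant in $\beta_j$).

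Next I would carry out the Mellin--Barnes analytic continuation. The series $\Phi(\beta;\sx)$ is the series expansion at $\sx=0$ of a Mellin--Barnes integral of the form $\int \Gamma(\cdots) \Gamma(\cdots) \sx^s\, ds$ along a contour separating the two families of poles. Closing the contour in the opposite direction and picking up the residues at the other family of poles produces the expansion valid near $\sx = \infty$, i.e.\ near $P_-$. Because of the fully equivariant setting, the generic values of $\beta_j$ ensure all the poles encountered are simple, so each residue contributes a clean hypergeometric series. Matching the residue series with the corresponding component of $I_-$ at a fixed point of $X_-$ yields, fixed point by fixed point, a linear map $\UU_{\rm loc}$ between the localized Givental spaces intertwining $I_+$ and $I_-$. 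The path choice of Figure \ref{fig:path_ancont} determines how to split the residues between the two asymptotic expansions; this is where the specific choice of $\gamma$ enters. The intertwining property (2) with $H^2_T(\overline{X}_0)$ follows because divisor classes coming from $\overline{X}_0$ act diagonally in the fixed-point basis with eigenvalues that are preserved under wall-crossing (they are pull-backs from the common blow-down), and the Mellin--Barnes contour shift commutes with multiplication by such classes. Symplecticity follows once we identify $\UU$ with a Fourier--Mukai transformation, using that $\FM$ is a derived equivalence, preserves Mukai pairings, and that the framing maps $\tPsi_\pm$ carry the Mukai pairing to Givental's symplectic pairing via the equivariant HRR formula (Proposition \ref{prop:K_framing_pairing}).

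Finally I would construct the Fourier--Mukai kernel and verify part (3). Using the common toric resolution $\tX$ constructed from the star subdivision along the circuit $M_+ \cup M_-$ (Remark \ref{rem:circuit}), I would take $\FM(E) = (f_+)_\star (f_-)^\star E$. In localized $K$-theory the fixed points of $\tX$ project to fixed points of $X_\pm$, and the Lefschetz--Riemann--Roch formula expresses $\FM$ as an explicit matrix in the fixed-point basis of $K^0_T(X_\pm) \otimes_{\ZZ[T]} \mathrm{Frac}(\ZZ[T])$. The entries involve products of $(1 - e^{-\lambda})$ factors over the normal weights and become, after comparison with the $\hGamma$-class via the reflection identity $\Gamma(1-z)\Gamma(z) = \pi/\sin(\pi z)$, precisely the residues computed by the Mellin--Barnes method (the $\sin$ factors arising from $\Gamma$-function identities match the $e^{\pm\pi\tti}$ factors from the path $\gamma$). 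This matching of $\UU$ and $\FM$ through $\tPsi_\pm$ is the heart of the theorem. The main obstacle will be the bookkeeping: keeping track of the correct branches, signs, and the $(2\pi\tti)^{\deg_0/2}$ and $e^{\pi\tti\rho}$ factors so that the equivariant Gamma class on each side of the wall combines with the Mellin--Barnes residues to reproduce exactly the pushforward--pullback formula for $\FM$. Once the identification on the fixed-point basis is established, the global statements --- that $\UU$ is defined over $R_T(\!(z^{-1})\!)$ (not just its localization), is degree-preserving, and is symplectic --- follow from the integrality of $\FM$, the homogeneity of the $I$-functions, and Proposition \ref{prop:K_framing_pairing}.
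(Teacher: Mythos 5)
Your proposal follows essentially the same route as the paper's proof: localize the $I$-functions at $T$-fixed points, carry out the Mellin--Barnes analytic continuation in the single variable $\sfy^e$ (exploiting that equivariance makes all poles simple), compute the Fourier--Mukai transformation on the localized equivariant $K$-theory via the common toric blow-up $\tX$, and match the two using $\Gamma$-function reflection identities to absorb the $\sin$ factors from the residues into the $\hGamma$-classes in $\tPsi_\pm$, after which integrality, degree-preservation, and symplecticity follow from $\FM$ and Proposition~\ref{prop:K_framing_pairing}. The only small divergence is in part~(2), where the paper argues via the monodromy of $I_\pm$ under $\sfy_j \mapsto e^{2\pi\tti}\sfy_j$ for the on-wall directions rather than directly from the diagonal action at fixed points, but these amount to the same thing given Corollary~\ref{cor:weights}.
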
 

Theorem~\ref{thm:U} will be proved in~\S\ref{sec:Mellin-Barnes} and~\S\ref{sec:FM}. 
The Fourier--Mukai kernel will be described in \S\ref{sec:FM}: it is given by 
a toric common blow-up $\tX$ of $X_\pm$. 

\begin{notation} 
\label{not:murho} 
In Theorem \ref{thm:U}, $\rho^\pm = c_1^T(TX_\pm)\in H^2_T(X_\pm)$, 
$\mu^\pm$ is the grading operator \eqref{eq:Euler_mu} 
on $H^\bullet_{\CR,T}(X_\pm)$ and 
$\deg_0\colon H^{\bullet\bullet}_T(IX_\pm) 
\to H^{\bullet\bullet}_T(IX_\pm)$ 
is the degree operator as in \S \ref{sec:equiv_int_str}. 
\end{notation} 

\begin{theorem} 
\label{thm:CTC_qconn}
Let $(\bF^\pm, \bnabla^\pm,\bE^\pm)$ be the global 
quantum connections for $X_\pm$ over $\tcM_\pm^\circ(R_T[z])$ 
from Theorem \ref{thm:global_qconn}. 
We have that $\bE^+ = \bE^-$ on $\tcM(R_T)$. 
There exists a gauge transformation 
\[
\Theta \in \Hom\big(H^\bullet_{\CR,T}(X_-), H^\bullet_{\CR,T}(X_+)\big) 
\otimes_{R_T} (\cO_{\cU^\circ}\otimes R_T)[z][\![\sfy_1,\dots,\sfy_{r-1}]\!] 
\]
over $\tcM^\circ(R_T[z])$ such that:
\begin{itemize} 
\item $\bnabla^-$ and $\bnabla^+$ are gauge-equivalent 
via $\Theta$, i.e.~$
\bnabla^+ \circ \Theta = \Theta \circ \bnabla^-$; 

\item $\Theta$ is homogeneous of degree zero, i.e.~$
\bGr^+ \circ \Theta = \Theta \circ \bGr^-$ with 
$\bGr^\pm := z\parfrac{}{z} + \bE^\pm + \mu^\pm$; 

\item $\Theta$ preserves the orbifold Poincar\'{e} pairing, i.e.~$(\Theta(\sfy,-z) \alpha, \Theta(\sfy,z) \beta) = (\alpha,\beta)$.
\end{itemize} 
Moreover, the analytic continuation of the $K$-theoretic flat sections 
in Definition \ref{def:K_framing} (with Novikov variables $Q$ 
set to be one, see \S \ref{sec:specialization}) is induced 
by the Fourier--Mukai transformation: 
\begin{align*}
  \Theta\Bigl( \frs(E)(\tau_-(\sfy),z) \Bigr) = \frs(\FM(E))(\tau_+(\sfy),z) &&
  \text{for all $E \in K_T^0(X_-)$}
\end{align*}
where $\tau_\pm$ are the mirror maps in Theorem~\ref{thm:global_qconn}.
\end{theorem}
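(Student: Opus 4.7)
The strategy is to define the gauge transformation via
\[
\Theta(\sfy,z) := L_+\bigl(\tau_+(\sfy),z\bigr) \circ \UU \circ L_-\bigl(\tau_-(\sfy),z\bigr)^{-1}
\]
where $\UU$ is the symplectic transformation from Theorem~\ref{thm:U} and $L_\pm$ are the fundamental solutions of Proposition~\ref{prop:fundsol} specialized to $Q=1$ (as in \S\ref{sec:specialization}). Before turning to $\Theta$, I will verify the Euler field identity $\bE^+ = \bE^-$ by a direct change-of-variables calculation: using $\tsfy_i = \sfy_i\sfy_r^{c_i}$ for $i<r$ and $\tsfy_r=\sfy_r^{-c}$, one checks that $\tsfy_i\partial_{\tsfy_i}=\sfy_i\partial_{\sfy_i}$ for $i<r$, while the $r$-th contributions to both Euler vector fields vanish because $\sum_{i=1}^m D_i\in W$ forces $\deg\sfy_r=\deg\tsfy_r=0$; the scalings $\deg\tsfy_i=\deg\sfy_i$ for $i<r$ follow similarly.

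Next I will verify that $\Theta$ is a gauge equivalence with the right algebraic properties. Since $L_\pm(\tau_\pm(\sfy),z)$ is by construction a fundamental solution for the pulled-back connection $\bnabla^\pm = d + z^{-1}\bA^\pm$, we have $dL_\pm = -z^{-1}\bA^\pm L_\pm$, and since $\UU$ is constant in $\sfy$ the direct computation
$\bnabla^+\circ\Theta = \Theta\circ\bnabla^-$ is immediate. Homogeneity follows from combining the degree-preserving property of $\UU$ with the homogeneity relation $\Gr L_\pm = L_\pm(\mu^\pm-\rho^\pm/z)$ of Proposition~\ref{prop:fundsol}. Pairing invariance follows by inserting the unitarity identity of Proposition~\ref{prop:fundsol} twice, reducing $(\Theta(-z)\alpha,\Theta(z)\beta)$ to $(\UU(-z)\tilde\alpha,\UU(z)\tilde\beta)$, which equals $(\alpha,\beta)$ by the symplecticity of $\UU$.

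The delicate point—and the main obstacle—is to show that $\Theta$ is polynomial in $z$ and regular in the $\sfy_i$, i.e.\ lies in $\End(H^\bullet_{\CR,T}(X_-),H^\bullet_{\CR,T}(X_+))\otimes_{R_T}(\cO_{\cU^\circ}\otimes R_T)[z][\![\sfy_1,\dots,\sfy_{r-1}]\!]$. A priori $L_+\UU L_-^{-1}$ is only a Laurent series in $z^{-1}$. The input is the identity $\Theta\,\Upsilon_0^- = \Upsilon_0^+$, obtained by combining $I_\pm(\sfy,z)=zL_\pm(\tau_\pm(\sfy),z)^{-1}\Upsilon_0^\pm$ from Theorem~\ref{thm:global_qconn} with the analytic continuation $\UU I_- = I_+$ from Theorem~\ref{thm:U}. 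Under Assumption~\ref{assump:generation}, the bundle $\bF^-$ is generated over $\cO_{\cU^\circ}\otimes R_T[z][\![\sfy_1,\ldots,\sfy_{r-1}]\!]$ by the single section $\Upsilon_0^-$ together with iterated covariant derivatives $\bnabla^-_{v_1}\cdots\bnabla^-_{v_k}\Upsilon_0^-$ (Lemma~\ref{lem:I_derivatives} and the Birkhoff factorization of \S\ref{sec:global_qconn}). Since $\Theta$ intertwines $\bnabla^-$ and $\bnabla^+$, it sends each such generator to the corresponding iterated covariant derivative of $\Upsilon_0^+$, which lies in $\bF^+$. Hence $\Theta(\bF^-)\subset \bF^+$, giving the claimed polynomiality. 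The general case (without Assumption~\ref{assump:generation}) follows by restriction along the projective system discussed in Remark~\ref{rem:removing_assumption}.

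Finally, the identification of the analytic continuation of $K$-theoretic flat sections is a one-line computation: by Definition~\ref{def:K_framing}, $\frs(E)(\tau_-(\sfy),z) = (2\pi)^{-\dim X_-/2}\,L_-(\tau_-,z)\,\tPsi_-(E)$, so
\[
\Theta\bigl(\frs(E)(\tau_-(\sfy),z)\bigr) = \tfrac{1}{(2\pi)^{\dim X_-/2}} L_+\,\UU\,\tPsi_-(E) = \tfrac{1}{(2\pi)^{\dim X_+/2}} L_+\,\tPsi_+(\FM(E)),
\]
using the commutativity of diagram~\eqref{eq:FM_U} from Theorem~\ref{thm:U} and $\dim X_+ = \dim X_-$; this is precisely $\frs(\FM(E))(\tau_+(\sfy),z)$.
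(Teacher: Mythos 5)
Your proof is correct and follows essentially the same strategy as the paper's: define $\Theta = L_+\UU L_-^{-1}$, use $\UU I_- = I_+$ to show $\Theta\Upsilon_0^- = \Upsilon_0^+$, derive the gauge-equivalence, homogeneity, and unitarity from the corresponding properties of $L_\pm$ (Proposition~\ref{prop:fundsol}) and $\UU$ (Theorem~\ref{thm:U}), and read off the Fourier--Mukai statement from the commutativity of \eqref{eq:FM_U}.

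There is one mild (and arguably clarifying) variation in how you control the regularity of $\Theta$. The paper forms $\tUpsilon^-$ by applying the $X_+$-adapted operators $P^+_{f,i}(z\bnabla^-)$ to $\Upsilon_0^-$, deduces its invertibility from the identity $e^{\sigma_+/z}\bL_+^{-1}\Upsilon^+ = \UU\, e^{\sigma_-/z}\bL_-^{-1}\tUpsilon^-$, and sets $\Theta = \Upsilon^+(\tUpsilon^-)^{-1}$. You instead use the matrix $\Upsilon^-$ (columns $P^-_{f,i}(z\bnabla^-)\Upsilon_0^-$), whose invertibility over $(\cO_{\cU_-^\circ}\otimes R_T)[z][\![\tsfy_{<r}]\!]$ is already furnished by the Birkhoff factorization of \S\ref{sec:global_qconn}, and observe that the $\bnabla^\pm$-intertwining property plus $\Theta\Upsilon_0^- = \Upsilon_0^+$ forces $\Theta(\bF^-)\subset\bF^+$; since the constant sections $\phi_i$ lie in $\bF^-$, this gives the stated regularity of $\Theta$ directly. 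This version makes the polynomiality-in-$z$ of $\Theta$ a little more transparent than the paper's (where the polynomiality of $(\tUpsilon^-)^{-1}$ is left implicit), but both rest on the same input, namely that the fundamental solution $\bL_-$ is the negative Birkhoff factor and $\Upsilon^-$ the positive one.
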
 

\begin{remark} 
The symplectic transformation $\UU$ in Theorem~\ref{thm:U} and the gauge transformation 
$\Theta$ in Theorem~\ref{thm:CTC_qconn} are related by 
\begin{equation} 
\label{eq:Theta-U}
L_+(\tau_+(\sfy),z)^{-1} \circ \Theta = 
\UU \circ L_-(\tau_-(\sfy),z)^{-1} 
\end{equation} 
where $L_\pm$ is the fundamental solution for the quantum 
connection of $X_\pm$ in Proposition \ref{prop:fundsol}.
The gauge transformation $\Theta$ sends 
the section $\Upsilon_0^-\in \bF^-$ to the section 
$\Upsilon_0^+\in \bF^+$, 
where $\Upsilon_0^\pm$ are as 
in Theorem \ref{thm:global_qconn}. 
\end{remark} 

\begin{remark} 
  Theorems~\ref{thm:U} and~\ref{thm:CTC_qconn} 
can be interpreted as the statement that the symplectic 
transformation $\UU$ matches up the Givental
cones $\cL_\pm$ associated to $X_\pm$ after
analytic continuation of $\cL_\pm$:  
\begin{equation} 
\label{eq:U_cones}
\UU(-z) \cL_- = \cL_+.  
\end{equation} 
In fact, Remark \ref{rem:fundamentalsol_cone} 
suggests that we may analytically continue the Lagrangian 
cones by the formula: 
\[
\cL_\pm \, \text{``$=$''} \, \bigcup_{\sfy\in \tcM^\circ} 
z L_\pm(\tau_\pm(\sfy), -z)^{-1} \cH_+(X_\pm) 
\]
and then equation \eqref{eq:Theta-U} would imply \eqref{eq:U_cones}. 
As discussed in the Introduction, to avoid subtleties in defining the analytic continuation of Givental cones in the equivariant setting, in this paper we state our results in terms of analytic continuation of the $I$-function (Theorem~\ref{thm:U}) or in terms of the equivariant quantum connection and gauge transformations (Theorem~\ref{thm:CTC_qconn}).
\end{remark} 

\begin{remark} 
 Theorem~\ref{thm:CTC_qconn} implies that the global quantum connections 
of $X_+$ and $X_-$ can be glued together to give a flat connection over 
$\tcM^\circ$. 
This flat connection descends to the formal neighbourhood 
$\hcM$ of $\cC$ in the secondary toric variety $\cM$ 
via Galois symmetry as in Remark \ref{rem:overlattice}. 
This global connection, or $D$-module, on 
$\hcM$ can be described by explicit GKZ-type differential 
equations: it is a completed version of Borisov--Horja's 
\emph{better-behaved GKZ system}\footnote
{The better-behaved GKZ system is in general generated by several elements. 
In our case, by adding enough extended vectors that 
Assumption \ref{assump:generation} is satisfied, 
we can make it generated by a single standard generator 
$1$, and in this case the better-behaved GKZ system is the same as the 
original GKZ system~\cite{GKZ:diffeq}.}
\cite{Borisov--Horja:GKZ}. 
In the papers \cite{Iritani, Reichelt--Sevenheck}, 
the toric quantum connection is described in terms of 
GKZ-type differential equations through mirror symmetry. 
The $I$-functions $I_\pm(q,z)$ are local solutions to 
these differential equations around the large radius limit points. 
\end{remark}

\begin{proof}[Proof that Theorem~\ref{thm:U} implies Theorem \ref{thm:CTC_qconn}]
One can easily check that the change of variables 
\eqref{eq:change_of_variables} preserves degree, and that 
$\bE^+ = \bE^-$. 
By Theorem \ref{thm:U}, we have 
\begin{equation} 
\label{eq:U_Ipm} 
I_+(\sfy,z) = \UU I_-(\sfy,z)
\end{equation} 
under analytic continuation along the path $\gamma$.  
The discussion in~\S\ref{sec:global_qconn} 
(see Lemma~\ref{lem:I_derivatives}, equation~\ref{eq:I_derivative_matrix}, and equation~\ref{eq:Birkhoff_I}) yields:
\begin{equation} 
\label{eq:ILUpsilon_+}
\begin{bmatrix} 
 & \vert &  \\ 
\cdots & z^{-1} P_{f,i}^+ I_+ & \cdots \\ 
 & \vert &   
\end{bmatrix} 
= e^{\sigma_+/z}\bL_+(\sfy,z)^{-1} \Upsilon^+(\sfy,z). 
\end{equation} 
Similarly, the discussion in~\S\ref{sec:global_qconn} 
applied to $X_-$ yields a global section $\Upsilon^-_0$ of $\bF^-$ 
and a (global) fundamental solution $\bL_-(\sfy,z)e^{-\sigma_-/z}$ 
for $\bnabla^-= d + z^{-1} \bA^-(\sfy)$ such that:
\begin{equation*}
z^{-1} I_-(\sfy,z) = e^{\sigma_-/z} \bL_-(\sfy,z)^{-1} 
\Upsilon^-_0(\sfy,z) 
\end{equation*} 
Applying the differential operators $P_{f,i}^+$ to $z^{-1} I_-(\sfy,z)$, 
we obtain 
\begin{equation} 
\label{eq:ILUpsilon_-} 
\begin{bmatrix} 
 & \vert &  \\ 
\cdots & z^{-1} P_{f,i}^+ I_- & \cdots \\ 
 & \vert &   
\end{bmatrix} 
= e^{\sigma_-/z} \bL_-(\sfy,z)^{-1} 
\begin{bmatrix} 
& \vert &  \\ 
\cdots & P_{f,i}^+(z\bnabla^-) \Upsilon^-_0 & \cdots \\ 
& \vert & 
\end{bmatrix} 
\end{equation} 
where $P_{f,i}^+(z\bnabla^-)$ is obtained from $P_{f,i}$ 
by replacing $z\partial_v$ with $z\bnabla^-_v$ for 
vector fields $v$. 
Let $\tUpsilon^-$ denote the matrix with column vectors 
$P_{f,i}^+(z\bnabla^-) \Upsilon^-_0$. 
Comparing \eqref{eq:ILUpsilon_+} with 
\eqref{eq:ILUpsilon_-} and using \eqref{eq:U_Ipm}, 
we obtain 
\[
e^{\sigma_+/z} \bL_+(\sfy,z)^{-1} \Upsilon^+ 
=\UU e^{\sigma_-/z} \bL_-(\sfy,z)^{-1}  \tUpsilon^- 
\]
since $\UU$ is independent of the base variables $\sfy$. 
In particular, it follows that $\tUpsilon^-$ is invertible. 
Setting $\Theta = \Upsilon^+ (\tUpsilon^-)^{-1}$, we obtain:
\begin{equation} 
\label{eq:Theta-U_relation} 
\left( e^{\sigma_+/z} \bL_+(\sfy,z)^{-1}\right)  \Theta(\sfy,z) 
=\UU \left( e^{\sigma_-/z} \bL_-(\sfy,z)^{-1}\right). 
\end{equation} 
Since $e^{\sigma_\pm/z} \bL_\pm^{-1}$ are fundamental 
solutions for $\bnabla^\pm$, 
$\Theta$ gives a gauge transformation 
between $\bnabla^-$ and $\bnabla^+$, i.e.~
$\Theta \circ \bnabla^- = \bnabla^+ \circ\Theta$. 
One may assume that the first columns of $\Upsilon^+$ 
and $\tUpsilon^-$ are given respectively by $\Upsilon_0^+$ 
and $\Upsilon^-_0$, and therefore $\Theta(\Upsilon_0^-) 
= \Upsilon_0^+$. 

Next we see that $\Theta$ preserves the grading and the pairing. 
Part (2) in Theorem \ref{thm:U} implies that 
$\UU \circ \theta_-(\sfp^-_i) = \theta_+(\sfp^+_i)\circ \UU$ 
for $i=1,\dots,r-1$, since $\sfp^+_i = \sfp^-_i$ lies on the 
wall $W$ for $1\le i\le r-1$. 
Therefore 
\begin{align*} 
e^{-\sigma_+/z} \circ 
\UU \circ e^{\sigma_-/z} 
& = e^{-\theta_+(\sfp^+_r) \log \sfy_r/z} 
\circ \UU \circ e^{\theta_-\left( \sum_{i=1}^r \sfp^-_i \log \tsfy_i 
- \sum_{i=1}^{r-1} \sfp^+_i \log \sfy_i
\right)/z}  \\ 
& =  e^{-\theta_+(\sfp^+_r) \log \sfy_r/z} 
\circ \UU \circ e^{\theta_-( \sfp_r^+) \log \sfy_r /z} 
\end{align*} 
where we used $\sum_{i=1}^r \sfp^+_i \log \sfy_i = 
\sum_{i=1}^r \sfp^-_i \log \tsfy_i$. 
This together with \eqref{eq:Theta-U_relation} implies that: 
\[
 \bL_+(\sfy,z)^{-1} 
\Theta(\sfy,z) 
= \left( 
e^{-\theta_+(\sfp_r^+) \log \sfy_r/z} \circ \UU \circ 
e^{\theta_-(\sfp_r^+) \log \sfy_r/z}\right)  
\bL_-(\sfy,z)^{-1}
\]
Since $\deg \sfy_r =0$, we know that all of the factors 
in this equation except for $\Theta$ are homogeneous 
of degree zero; thus $\Theta$ is also homogeneous of degree zero.  
The fundamental solutions $e^{\sigma_\pm/z} \bL_\pm^{-1}$ 
preserve the pairing by Proposition \ref{prop:fundsol} 
(we saw in \S \ref{sec:global_qconn} that 
they coincide with the fundamental solutions from 
Proposition \ref{prop:fundsol} via the mirror maps $\tau_\pm$) 
and $\UU$ also preserves the pairing. Thus $\Theta$ preserves the pairing. 

Finally we consider the analytic continuation of $K$-theoretic 
flat sections. Note that the flat section $\frs(E)(\tau_-(\sfy),z)$ 
is analytically continued along $\tcM^\circ$ by the right-hand 
side of the formula
\[
\frs(E)(\tau_-(\sfy),z) = \frac{1}{(2\pi)^{\dim X_-/2}}
\bL_-(\sfy,z) e^{- \sigma_-/z} \tPsi_-(E)
\]
where $\tPsi_-$ is the map in Theorem \ref{thm:U}. 
Using \eqref{eq:Theta-U_relation}, we obtain:
\[
\Theta(\frs(E)(\tau_-(\sfy,z))) = \frac{1}{(2\pi)^{\dim X_-/2}}
\bL_+(\sfy,z) e^{-\sigma_+/z} \UU \left( 
\tPsi_-(E) \right)
\]
Part (3) of Theorem \ref{thm:U} shows 
that this is equal to $\frs(\FM(E))(\tau_+(\sfy),z)$.  
\end{proof} 

\subsection{Mellin--Barnes Analytic Continuation}
\label{sec:Mellin-Barnes}

In this section, we compute the analytic continuation of 
the $I$-function and determine the linear transformation 
$\UU$ in Theorem \ref{thm:U}.

\subsubsection{The $H$-Function}

It will be convenient to introduce another cohomology-valued 
hypergeometric function called the $H$-function.  
Noting that the $I$-function can be written 
in terms of ratios of $\Gamma$-functions:
\[
I_+(\sfy,z):=
z e^{\sigma_+/z}
\sum_{d \in \KK_+} 
\frac{\sfy^d}{z^{(D_1+\cdots + D_m) \cdot d}}   
\left(
  \prod_{j=1}^{m}
  \frac{\Gamma\big(1 + \frac{u_j}{z} - \<{-D_j} \cdot d\>\big)}
  {\Gamma\big(1 + \frac{u_j}{z} + D_j \cdot d\big)}
\right) 
\frac{\fun_{[{-d}]}}{z^{\iota_{[{-d}]}}}
\]
we set:
\[
H_+(\sfy):= 
e^{\frac{\sigma_+}{2 \pi \tti} }
\sum_{d \in \KK_+} 
\sfy^d  
\left( 
\prod_{j=1}^{m}
\frac{1}
{\Gamma\big(1 + \frac{u_j}{2 \pi \tti} + D_j \cdot d\big)}
\right) 
\fun_{[d]}\]
and similarly for $H_-$.  
Formally speaking, $H_+$ belongs to the space:
\[
\prod_{p} \left( 
H^p_{T}(IX_+)[\log \sfy_1,\dots,\log\sfy_r] [\![\sfy_1,\dots,\sfy_r]\!] 
\right)
\]
Noting that the $T$-equivariant Gamma class of $X_+$ is given by 
\[
\hGamma_{X_+} = \bigoplus_{f\in \KK_+/\LL} 
\left( 
\prod_{j=1}^m \Gamma(1 + u_j - \<D_j \cdot f\>) \right) 
\fun_f
\]
we obtain the relationship between the $H$-function and the $I$-function: 
\begin{equation} 
\label{eq:IH_relation}
z^{-1} I_+(\sfy,z) = z^{-\frac{c_0(\lambda)}{2\pi\tti} - \frac{\dim X_+}{2}}
z^{-\mu^+} z^{\rho^+} 
\left(\hGamma_{X_+} \cup (2\pi\tti)^{\frac{\deg_0}{2}} 
\inv^* H\left(z^{-\frac{\deg \sfy}{2}} \sfy \right) \right)
\end{equation} 
where $\rho^+$, $\mu^+$, $\deg_0$ are as in Notation~\ref{not:murho} and 
\[
z^{-\frac{\deg \sfy}{2}}\sfy = (z^{-\frac{\deg \sfy_1}{2}}
\sfy_1 , \dots, z^{-\frac{\deg \sfy_r}{2}}\sfy_r ). 
\]
The relationship between $H_-$ and $I_-$ is similar. 
\begin{remark} 
The $H$-function $H_+$ has an analytic properties analogous 
to those of the $I$-function stated in Lemma~\ref{lem:I_analyticity} 
and Remark~\ref{rem:I_analyticity}. 
Namely $e^{-\sigma_+/(2\pi\tti)} H_+(\sfy)$ is a formal power 
series in $\sfy_1,\dots,\sfy_{r-1}$ with coefficients of the form $\sum_{i=0}^N f_i(\lambda,\sfy_r) \phi_i$ 
where $\{\phi_i\}$ is an $R_T$-basis of $H_{T}^\bullet(IX_+)$ 
and $f_i(\lambda,\sfy_r)$ is analytic in 
$(\lambda_1,\dots,\lambda_r, \sfy_r) \in \CC^m \times \cU_+$. 
Notice that the $H$-function has better analytic behaviour with respect 
to $\lambda$ since $\frac{1}{\Gamma(x)}$ is an entire function. 
The analytic continuation of $H$-functions performed below 
should be understood as analytic continuation of the coefficient functions $f_i(\lambda,\sfy_r)$.
\end{remark} 

\subsubsection{Restriction of the $H$-Function to $T$-Fixed Points}

Recall that the $T$-fixed points on $X_+$ are indexed by minimal 
anticones $\delta\in \cA_+$, and that the $T$-fixed points 
on the inertia stack $IX_+$ are 
indexed by pairs $(\delta,f)$ with $\delta \in \cA_+$ 
a minimal anticone and $f \in \KK_+/\LL$ 
satisfying $D_i \cdot f \in \ZZ$ for all $i \in \delta$.  
The minimal anticone $\delta$ determines a $T$-fixed point $x_\delta$ 
on $X_+$ and 
the pair $(\delta,f)$ determines a $T$-fixed point $x_{(\delta,f)}$ 
on the component $X_+^f$ of the inertia stack $IX_+$.  
Let $i_\delta$ and $i_{(\delta,f)}$ denote 
the inclusion maps $x_\delta \to X_+$ and $x_{(\delta,f)} \to IX_+$ 
respectively. 
Set $u_j(\delta) = i_{\delta}^\star u_j\in H^2_T({\rm pt})$, 
noting that $u_j(\delta) = 0$ if and only if $j \in \delta$.  
We have that:
\begin{equation}
  \label{eq:restriction_of_H_+}
 i_{(\delta,f)}^\star H_+ = 
  \sum_{d \in \KK_+:  [d]=f}
  \frac{\sfy^d}{\prod_{j \in \delta} \Gamma\big(1 + D_j \cdot d \big)}
  \frac{e^{\frac{1}{2\pi\tti} \sigma_+(\delta)}}
  {\prod_{j \not \in \delta}\Gamma\big(1 + \frac{u_j(\delta)}{2 \pi \tti} 
+ D_j \cdot d\big)}
\end{equation}
where $\sigma_+(\delta):= i_{\delta}^\star \sigma_+$. 
Consider the factor $\prod_{j \in \delta} \Gamma\big(1 + D_j \cdot d \big)^{-1}$ 
in the summand: since $d \equiv f \mod \LL$ and since $D_j \cdot f \in \ZZ$ 
for all $j \in \delta$, the term $D_j \cdot d$ here is an integer.  
Thus the factor $\prod_{j \in \delta} \Gamma\big(1 + D_j \cdot d \big)^{-1}$ 
vanishes unless $d \in \delta^\vee$, where 
\[
\delta^\vee := \{ d \in \LL\otimes \QQ : D_j \cdot d \in \ZZ_{\ge 0} 
\text{ for all } j\in \delta\}. 
\]
The $H$-function is a sum over the subset 
$\KKeff_+$ of $\KK_+$,
\[
\KKeff_+ = \Big\{ f \in \LL \otimes \QQ : 
\big\{ i \in \{1,2,\ldots,m\} : D_i \cdot f \in \ZZ_{\geq 0} \big\}  \in \cA_+ \Big\} 
\]
which is in general quite complicated, but the restriction 
$i_{(\delta,f)}^\star H_+$ of $H_+$ to a $T$-fixed point 
in $IX_+$ is a sum over the much simpler set $\delta^\vee$. 

\subsubsection{Analytic Continuation of the $H$-Function}  
The Localization Theorem in $T$-equivariant cohomology 
\cite{Berline--Vergne, Atiyah--Bott, GKM} implies that 
one can compute the analytic continuation of $H_+$ 
by computing the analytic continuation of the restriction 
$i_{(\delta,f)}^\star H_+$ to each $T$-fixed point 
$x_{(\delta,f)} \in I X_+$.  
The restriction $i_{(\delta,f)}^\star H_+$ is a 
$H_T^{\bullet\bullet}({\rm pt})$-valued function. During 
the course of analytic continuation, we regard the equivariant 
parameters $\lambda_1,\dots,\lambda_m$ as generic complex 
numbers.  
There are two cases: 
\begin{itemize}
\item $\delta \in \cA_+ \cap \cA_-$;
\item $\delta \in \cA_+$ but $\delta \not \in \cA_-$. 
\end{itemize}
The anticone $\delta$ determines a $T$-fixed point $x_\delta$ in $X_+$, 
and in the first case it also determines a fixed point in $X_-$.  
In the first case the birational transformation 
$\varphi \colon X_+ \dashrightarrow X_-$ 
is an isomorphism in a neighbourhood of $x_\delta$, 
and it is clear from \eqref{eq:restriction_of_H_+} 
that $i_{(\delta,f)}^\star H_+ = i_{(\delta,f)}^\star H_-$, 
noting that $u_j(\delta)$ is the same for $X_+$ and $X_-$. 
In the second case $x_\delta$ lies in the flopping locus of $\varphi$, 
and we will see that the analytic continuation of $i_{(\delta,f)}^\star H_+$ 
is a linear combination of restrictions $i_{(\delta_-,f_-)}^\star H_-$ 
for appropriate $\delta_- \in\cA_-$ and $f_- \in \KK_-$.  
Note that in the second case, $\delta$ has the form $\{j_1,\ldots,j_{r-1}, j_+\}$ 
with $D_{j_1},\ldots,D_{j_{r-1}} \in W$ 
and\footnote{Recall that $e \in \LL$ is the primitive lattice vector 
in $W^\perp$ such that $e>0$ on $C_+$ and $e<0$ on $C_-$.} 
$D_{j+} \cdot e>0$ (see Lemma \ref{lem:Apm}). 

\begin{definition} \label{def:next_to_X}
Let $\delta_+ \in \cA_+$ and $\delta_- \in \cA_-$ be minimal anticones.  
We say that $\delta_+$ is \emph{next to} $\delta_-$, 
written $\delta_+ | \delta_-$, if $\delta_+ = \{j_1,\ldots,j_{r-1},j_+\}$ 
and $\delta_- = \{j_1,\ldots,j_{r-1},j_-\}$ with 
$D_{j_1},\ldots,D_{j_{r-1}} \in W$, $D_{j_+} \cdot e > 0$, 
and $D_{j-} \cdot e < 0$. In this case $\delta_+\notin \cA_-$ 
and $\delta_- \notin \cA_+$. 
\end{definition}

\begin{definition} \label{def:next_to_IX}
Let $(\delta_+,f_+)$ index a $T$-fixed point on $IX_+$ and 
$(\delta_-, f_-)$ index a $T$-fixed point on $I X_-$.  
We say that $(\delta_+,f_+)$ is \emph{next to} $(\delta_-,f_-)$, 
written $(\delta_+,f_+) | (\delta_-,f_-)$, if $\delta_+ | \delta_-$ 
and there exists $\alpha \in \QQ$ 
such that $f_- = f_+ + \alpha e$ in $\LL\otimes \QQ/\LL$. 
\end{definition}

The analytic continuation of $i_{(\delta,f)}^\star H_+$ 
is a linear combination of  $i_{(\delta_-,f_-)}^\star H_-$ 
such that $(\delta,f)$ is next to $(\delta_-,f_-)$.

\begin{notation} 
\label{not:lift}
Fix lifts $\KK_+/\LL \to \KK_+$ and $\KK_-/\LL \to \KK_-$ 
such that, for any pairs $(d_+, d_-)\in \KK_+\times \KK_-$  
with $d_+ - d_- \in \QQ e$, the lifts of $[d_+]$ and $[d_-]$ 
differ by a rational multiple of $e$. 
\end{notation}

\begin{lemma} \label{lem:weights} 
Let $\delta_+ \in \cA_+$ and $\delta_- \in \cA_-$ be 
minimal anticones such that $\delta_+ | \delta_-$, and 
let $j_-$ be the element of $\delta_-$ such that $j_- \not \in \delta_+$.  
Then for any $j$, one has: 
  \[
u_j(\delta_+) = 
u_j(\delta_-) + \frac{D_j \cdot e}{D_{j_-} \cdot e} u_{j_-}(\delta_+). 
  \]
\end{lemma}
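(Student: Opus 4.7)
My plan is to exploit the additive relations in the ideal $\mathfrak{I}$ from~\eqref{eq:Qequiv_cohomology}: for each $\chi \in \bN^\vee$, $\chi = \sum_{k=1}^m \langle \chi, b_k \rangle u_k$ in $H^\bullet_\cQ(X_\pm)$, which after extension of scalars to $R_T$ becomes $\sum_{k=1}^m \langle \chi, b_k \rangle (u_k - \lambda_k) = 0$. First I would restrict this identity to the $T$-fixed points $x_{\delta_+} \in X_+$ and $x_{\delta_-} \in X_-$, and subtract the two resulting equations in $R_T$; the $\lambda_k$ terms cancel and we obtain
\[
\sum_{k=1}^m \langle \chi, b_k \rangle \bigl( u_k(\delta_+) - u_k(\delta_-) \bigr) = 0 \qquad \text{for every } \chi \in \bN^\vee.
\]
By the exactness of $0 \to \LL \to \ZZ^m \xrightarrow{\beta} \bN \to 0$ in~\eqref{eq:exact} (and since $R_T$ is torsion-free), this vanishing condition for all $\chi$ is equivalent to the tuple $w := (u_k(\delta_+) - u_k(\delta_-))_{k=1}^m$ lying in the image of $\LL \otimes R_T$ in $\ZZ^m \otimes R_T$. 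Equivalently, there exists $\ell \in \LL \otimes R_T$ with
\[
u_k(\delta_+) - u_k(\delta_-) = D_k \cdot \ell \qquad \text{for every } k.
\]

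Next I would pin down $\ell$. For each $l = 1, \dots, r-1$, the index $j_l$ lies in both $\delta_+$ and $\delta_-$, so the noted identity $u_j(\delta) = 0$ for $j \in \delta$ gives $D_{j_l} \cdot \ell = 0$. The vectors $D_{j_1}, \dots, D_{j_{r-1}}$ are linearly independent (they sit inside the basis $\{D_j : j \in \delta_+\}$ of $\LL^\vee$) and by the definition of $\delta_+ | \delta_-$ all lie on the hyperplane $W \subset \LL^\vee \otimes \RR$, so they span $W$. Hence $\ell$ lies in $W^\perp \otimes R_T = \RR e \otimes R_T$, i.e.~$\ell = \alpha\, e$ for a unique $\alpha \in R_T$. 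Finally, specializing the displayed identity to $k = j_-$ and using $u_{j_-}(\delta_-) = 0$ yields $u_{j_-}(\delta_+) = (D_{j_-} \cdot e)\, \alpha$, which determines $\alpha = u_{j_-}(\delta_+)/(D_{j_-} \cdot e)$ (well-defined since $D_{j_-} \cdot e < 0$); substituting back produces the claimed formula.

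The only mildly subtle step is the passage from $\sum_k \langle \chi, b_k \rangle w_k = 0$ to the existence of $\ell$, which is purely a consequence of the exact sequence~\eqref{eq:exact}; everything else is forced by the combinatorial constraints on $\delta_+ | \delta_-$ together with the vanishings $u_j(\delta) = 0$ for $j \in \delta$. I therefore do not anticipate any serious obstacle in carrying out this plan.
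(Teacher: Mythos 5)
Your argument is correct, and it reaches the same conclusion via a slightly different organisation. The paper's proof is more explicit: it picks the basis $D_{j_1},\ldots,D_{j_{r-1}},D_{j_-}$ of $\LL^\vee\otimes\QQ$ (coming from $\delta_-$), writes the linear dependence $D_j = c_1 D_{j_1} + \cdots + c_{r-1} D_{j_{r-1}} + c_- D_{j_-}$, observes that pairing with $e$ gives $c_- = (D_j\cdot e)/(D_{j_-}\cdot e)$, and then pushes this relation through the splitting $\theta_\pm$ (whose defining property $\theta_\pm(D_j)=u_j-\lambda_j$ carries the same content as your relation from $\mathfrak{I}$), restricting to the fixed points to read off the answer. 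You instead run the argument in reverse: you deduce abstractly, from the presentation $\mathfrak{I}$ and the exact sequence \eqref{eq:exact}, that the difference vector $w_k = u_k(\delta_+)-u_k(\delta_-)$ lies in $D(\LL\otimes R_T)$, and then use the vanishings $u_{j_l}(\delta_\pm)=0$ together with the wall condition to pin the preimage down to a multiple of $e$. Both routes use exactly the same combinatorial input (minimal anticones give bases of $\LL^\vee\otimes\QQ$; $D_{j_1},\ldots,D_{j_{r-1}}$ span $W$) and are about the same length; yours is a bit more invariant, the paper's a bit more computational.

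One small point in your write-up: the step from $\sum_k \langle\chi,b_k\rangle w_k = 0$ for all $\chi\in\bN^\vee$ to $w \in D(\LL\otimes R_T)$ needs $\beta(w)=0$ in $\bN\otimes R_T$, which is not quite a consequence of ``$R_T$ is torsion-free''; rather it follows because $\bN$ may have torsion but $R_T$ is a $\CC$-algebra (so $\bN_{\rm tor}\otimes_\ZZ R_T = 0$, and the pairing with $\bN^\vee$ is then nondegenerate). This is a trivial gap, easily repaired, and does not affect the argument.
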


\begin{proof} 
Write $\delta_- =\{j_1,\dots,j_{r-1},j_-\}$. 
Since $D_{j_1},\dots,D_{j_{r-1}},D_{j_-}$ form a basis of 
$\LL^\vee \otimes \QQ$, we can write:
\[
D_j = c_1 D_{j_1} + \cdots + c_{r-1} D_{j_{r-1}} + c_- D_{j_-}
\]
Since $D_{j_1}, \dots, D_{j_{r-1}}$ are on the wall, pairing 
with $e$ yields:
\begin{equation}
\label{eq:dot_e}
D_j \cdot e= c_-  (D_{j_-} \cdot e). 
\end{equation} 
Applying the homomorphism $\theta_\pm$ from \eqref{eq:theta}, 
we obtain 
\[
u_j-\lambda_j = 
c_1 (u_{j_1} - \lambda_{j_1}) + \cdots + c_{r-1} (u_{j_{r-1}} - 
\lambda_{j_{r-1}}) + 
 c_- (u_{j_-} - \lambda_{j_-}) 
\]
on both $X_+$ and $X_-$. 
Restricting to $x_{\delta_+}\in X_+$ and $x_{\delta_-}\in X_-$, 
we get the two relations: 
\begin{align*} 
u_j(\delta_+) -\lambda_j & = - c_1 \lambda_{j_1} 
-\cdots - c_{r-1} \lambda_{j_{r-1}} + c_- (u_j(\delta_+) - \lambda_{j_-}), \\ 
u_j(\delta_-) -\lambda_j & = - c_1 \lambda_{j_1} 
-\cdots - c_{r-1} \lambda_{j_{r-1}} - c_- \lambda_{j_-}. 
\end{align*} 
Comparing the two equations, we get 
$u_j(\delta_+) = u_j(\delta_-) +c_- u_{j_-}(\delta_+)$. 
The conclusion now follows from equation~\ref{eq:dot_e}. 
\end{proof}

\begin{corollary} \ 
\label{cor:weights}  
\begin{enumerate}
\item Let $\delta$ be a minimal anticone such that $\delta\in 
\cA_+ \cap \cA_-$. Then 
$\sigma_+(\delta) = \sigma_-(\delta)$. 
\item Let $\delta_+ \in \cA_+$, $\delta_-\in \cA_-$ 
be minimal anticones such that $\delta_+|\delta_-$ 
and let $j_-\in \delta_-$ be an element such that $j_- \notin \delta_+$. 
Then: 
\[
\sigma_+(\delta_+) = \sigma_-(\delta_-) + 
\frac{\log \sfy^e }{D_{j_-}\cdot e} 
 u_{j_-}(\delta_+)
\]
\end{enumerate}
\end{corollary}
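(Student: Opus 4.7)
My plan is to reduce both parts to a single linear-algebraic identity comparing the restrictions of $\theta_+$ and $\theta_-$ to $T$-fixed points, combined with the explicit change of variables \eqref{eq:change_of_variables} between $(\sfy_i)$ and $(\tsfy_i)$.

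\textbf{Step 1 (Restrictions of $\theta_\pm$).} For a minimal anticone $\delta \in \cA_\pm$, the characters $\{D_j : j \in \delta\}$ form a $\QQ$-basis of $\LL^\vee \otimes \QQ$, and $u_j$ restricts to $0$ at $x_\delta$ for $j \in \delta$. Since $\theta_\pm(D_j) = u_j - \lambda_j$, the composition $i_\delta^\star \circ \theta_\pm \colon \LL^\vee \otimes \QQ \to R_T$ is therefore the unique $\QQ$-linear map sending $D_j \mapsto -\lambda_j$ for each $j \in \delta$. When $\delta \in \cA_+ \cap \cA_-$, the two maps $i_\delta^\star \circ \theta_+$ and $i_\delta^\star \circ \theta_-$ agree on this basis and hence coincide. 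When instead $\delta_+ | \delta_-$ and $j_-$ is the element of $\delta_-$ not in $\delta_+$, Lemma~\ref{lem:weights} applied to each $D_j$ and then extended $\QQ$-linearly yields
\begin{equation*}
  (i_{\delta_+}^\star \theta_+)(p) - (i_{\delta_-}^\star \theta_-)(p) = \frac{p \cdot e}{D_{j_-} \cdot e}\, u_{j_-}(\delta_+) \qquad \text{for all } p \in \LL^\vee \otimes \QQ.
\end{equation*}

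\textbf{Step 2 (Cancelling on-wall terms).} I will substitute Step~1 into the definitions of $\sigma_\pm(\delta_\pm)$, and simplify using $\log \tsfy_i = \log \sfy_i + c_i \log \sfy_r$ for $i < r$ and $\log \tsfy_r = -c \log \sfy_r$. Because $\sfp_i^+ = \sfp_i^- \in W = e^\perp$ for $i < r$, Step~1 shows that $(i_{\delta_+}^\star \theta_+)(\sfp_i^+) = (i_{\delta_-}^\star \theta_-)(\sfp_i^+)$, so the coefficient of every $\log \sfy_i$ with $i < r$ in $\sigma_+(\delta_+) - \sigma_-(\delta_-)$ vanishes.

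\textbf{Step 3 (The $\log \sfy_r$ coefficient).} The equality $\sfy^d = \tsfy^d$ for every $d \in \LL$ forces the dual identity $\sfp_r^+ + c\,\sfp_r^- = \sum_{i=1}^{r-1} c_i \sfp_i^+$ in $\LL^\vee$. Substituting this into the surviving $\log \sfy_r$ coefficient and applying Step~1 once more collapses it to $\frac{\sfp_r^+ \cdot e}{D_{j_-}\cdot e}\, u_{j_-}(\delta_+)$. The identity $\sfy^e = \sfy_r^{\sfp_r^+ \cdot e}$ (a consequence of $\sfp_i^+ \in e^\perp$ for $i < r$) gives $\log \sfy^e = (\sfp_r^+ \cdot e) \log \sfy_r$, and part~(2) follows. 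Part~(1) is the degenerate case in which the right-hand side of the Step~1 identity is absent, so no residual $\log \sfy_r$ term survives and $\sigma_+(\delta) = \sigma_-(\delta)$.

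I do not anticipate any genuine obstacle: the argument is elementary linear algebra. The only bookkeeping that requires care is keeping straight the compatibility between the transition coefficients $c,c_i$ in the change of coordinates and the dual relation among the $\sfp_i^\pm$ in $\LL^\vee$, together with the observation that the entire discrepancy localises onto the single transverse coordinate $\sfy_r$.
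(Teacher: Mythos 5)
Your argument is correct and follows essentially the same route as the paper's: in both, the core step is the linear extension of Lemma~\ref{lem:weights} to the difference $i_{\delta_+}^\star\theta_+ - i_{\delta_-}^\star\theta_-$ (equation~\eqref{eq:weights_theta}), applied to the element appearing in $\sigma_\pm$.  The paper's version of your Steps~2--3 is slightly slicker, because it notes that $\sum_i\sfp_i^+\log\sfy_i = \sum_i\sfp_i^-\log\tsfy_i$ is literally one and the same element $p$, so that a single application of \eqref{eq:weights_theta}, together with $p\cdot e = \log\sfy^e$ (which holds since $\sfp_i^+\in W$ for $i<r$), gives the formula without the intermediate dual relation $\sfp_r^+ + c\,\sfp_r^- = \sum_{i<r}c_i\sfp_i^+$.
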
 
\begin{proof} \ 
  \begin{enumerate}
  \item As we discussed, $u_j(\delta)$ is the same for $X_+$ and $X_-$ whenever $\delta \in \cA_+\cap\cA_-$. Therefore $i_\delta^\star \theta_+(D_j) = i_\delta^\star \theta_-(D_j)$ for all $j$. In particular $i_\delta^\star \theta_+(p) = i_\delta^\star \theta_-(p)$ for every $p\in \LL^\vee \otimes \CC$.  Setting $p= \sum_{i=1}^r \sfp_i^+ \log \sfy_i = \sum_{i=1}^r \sfp_i^- \log \tsfy_i$, we obtain (1).
  \item Lemma \ref{lem:weights} shows that
    \begin{equation} 
      \label{eq:weights_theta} 
      i_{\delta_+}^\star\theta_+(p)  = i_{\delta_-}^\star \theta_-(p) 
      + \frac{p\cdot e}{D_{j_-} \cdot e} u_{j_-}(\delta_+) 
    \end{equation} 
    for all $p\in \LL^\vee \otimes \CC$. 
    Setting again $p= \sum_{i=1}^r \sfp_i^+ \log \sfy_i = 
    \sum_{i=1}^r \sfp_i^- \log \tsfy_i$, we obtain (2). 
  \end{enumerate}
\end{proof} 

\begin{theorem}
  \label{thm:analytic_continuation}
Let $(\delta_+,f_+)$ index a $T$-fixed point on $I X_+$.  
If $\delta_+ \in \cA_+ \cap \cA_-$ then: 
  \[
  i_{(\delta_+,f_+)}^\star H_+ = i_{(\delta_+,f_+)}^\star H_-. 
  \]
Otherwise, after analytic continuation along the path 
$\gamma$ in Figure \ref{fig:path_ancont}, we have:
  \[
  i_{(\delta_+,f_+)}^\star H_+ = 
  \sum_{\substack{(\delta_-,f_-) : \\ (\delta_+,f_+)|(\delta_-,f_-)}}
  C_{\delta_+,f_+}^{\delta_-,f_-} \, 
  i_{(\delta_-,f_-)}^\star H_- 
  \]
  where:
  \begin{multline*}
    C_{\delta_+,f_+}^{\delta_-,f_-} = 
    e^{\frac{\pi\tti w}{D_{j_-}\cdot e} 
\big(\frac{u_{j_-}(\delta_+)}{2 \pi \tti} 
+ D_{j_-} \cdot (f_+-f_-)\big)}
    \\
\times    \frac{\sin  \pi\Big(\frac{u_{j_-}(\delta_+)}{2 \pi \tti} 
+ D_{j_-} \cdot (f_+-f_-)\Big)}
    {({-D_{j_-}} \cdot e) \sin \frac{\pi}{{-D_{j_-}}\cdot e} 
\Big(\frac{u_{j_-}(\delta_+)}{2 \pi \tti} + D_{j_-} \cdot (f_+-f_-)\Big)}
    \prod_{\substack{j : D_j \cdot e < 0 \\ j \ne j_-}}
    \frac{\sin  \pi\Big(\frac{u_j(\delta_+)}{2 \pi \tti} + D_j \cdot f_+\Big)}
    {\sin  \pi\Big(\frac{u_j(\delta_-)}{2 \pi \tti} + D_j \cdot f_-\Big)}
  \end{multline*}
with $w := -1 - \sum_{j:D_j \cdot e<0} D_j \cdot e 
= -1 + \sum_{j:D_j\cdot e>0} D_j \cdot e$ 
and $j_-\in \delta_-$ given by the unique element such that 
$D_{j_-} \cdot e <0$. 
\end{theorem}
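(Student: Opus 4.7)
The first assertion is immediate from \eqref{eq:restriction_of_H_+}: for $\delta_+\in\cA_+\cap\cA_-$, summands with $d\notin\delta_+^\vee$ vanish because $\Gamma(1+D_j\cdot d)^{-1}=0$ when $D_j\cdot d\in\ZZ_{<0}$, so the sum ranges effectively over $d\in\delta_+^\vee\cap\KK_+$ with $[d]=f_+$. The inclusion $\delta_+^\vee\subset\KK_+\cap\KK_-$ (valid because $\cA_\pm$ is closed under enlargement of sets) and Corollary~\ref{cor:weights}(1) together force the series $i_{(\delta_+,f_+)}^\star H_+$ and $i_{(\delta_+,f_+)}^\star H_-$ to agree term by term. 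The rest of the proof addresses the non-trivial case $\delta_+\in\cA_+\setminus\cA_-$.

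The plan is to reduce the analytic continuation to a one-variable Mellin--Barnes computation. Write $\delta_+=\{j_1,\dots,j_{r-1},j_+\}$ with $D_{j_i}\in W$ and $D_{j_+}\cdot e>0$. Since $D_{j_i}\cdot e=0$ for $i<r$ and $e=(D_{j_+}\cdot e)\xi_{j_+}$, the dual $\QQ$-basis of $\{D_j\}_{j\in\delta_+}$ in $\LL\otimes\QQ$ contains $e/(D_{j_+}\cdot e)$ as its last vector. Fixing a lift $\tilde f_+\in\KK_+$, every $d\in\delta_+^\vee\cap\KK_+$ with $[d]=f_+$ decomposes as $d=\tilde f_+ + \sum_{i=1}^{r-1}k_i\xi_{j_i}^+ + (b/(D_{j_+}\cdot e))\,e$ for suitable integers $k_i,b$ satisfying $D_{j_i}\cdot d\ge 0$ and $D_{j_+}\cdot d\ge 0$ (together with a lattice condition ensuring $d-\tilde f_+\in\LL$). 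In this decomposition $\sfy^e$ enters only through $(\sfy^e)^{b/(D_{j_+}\cdot e)}$, while the variables $\sfy_1,\dots,\sfy_{r-1}$ depend only on $(k_1,\dots,k_{r-1})$, so analytic continuation in $\sfy^e$ reduces to resumming the inner sum over $b$ with everything else frozen.

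Replace the inner sum by the Mellin--Barnes integral
\[
\frac{1}{2\pi\tti}\int_{\Gamma}\Gamma(-s)\,\Gamma(1+s)\,\Phi(s)\,(-\sfy^e)^{s/(D_{j_+}\cdot e)}\,ds,
\]
where $\Phi(s)$ collects the surviving ratios of Gamma functions after substituting the complex variable $s$ in place of the integer $b$, and $\Gamma$ is chosen to separate the poles of $\Gamma(-s)$ at $s\in\ZZ_{\ge 0}$ (whose residues reproduce the original sum, valid for $|\sfy^e|<\frc$) from the poles of $\Phi(s)$. Closing the contour in the other half-plane along the path $\gamma$ of Figure~\ref{fig:path_ancont} picks up residues at the poles of $\Phi(s)$: these come from the factors $\Gamma(1+u_{j_-}(\delta_+)/(2\pi\tti)+D_{j_-}\cdot d)^{-1}$ with $D_{j_-}\cdot e<0$, each contributing one arithmetic progression of poles. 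The reflection formula $\Gamma(x)\Gamma(1-x)=\pi/\sin(\pi x)$ converts the resulting Gamma-ratios into sine-ratios; Lemma~\ref{lem:weights} and Corollary~\ref{cor:weights}(2) rewrite $u_j(\delta_+)$ and $e^{\sigma_+(\delta_+)/(2\pi\tti)}$ in terms of their $\delta_-$-analogues; and Notation~\ref{not:lift} arranges that the bijection $d\mapsto d-\alpha e$ (for the appropriate $\alpha\in\QQ$) carries the residue positions onto the summation indices of $i_{(\delta_-,f_-)}^\star H_-$ with $\delta_-=\{j_1,\dots,j_{r-1},j_-\}$ and $f_-=[\tilde f_++\alpha e]$. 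Putting these pieces together reassembles each progression of residues into a constant multiple of $i_{(\delta_-,f_-)}^\star H_-$.

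The main technical obstacle is tracking branches. The path $\gamma$ crosses the window at imaginary height $\pi\tti w$, so after contour deformation the relevant branch of $\log(\sfy^e)$---and hence of $(-\sfy^e)^{s/(D_{j_+}\cdot e)}$ inside the integrand---shifts by a specific integer multiple of $2\pi\tti$. This shift combines with the sign conventions of the reflection formula to yield precisely the exponential prefactor $\exp\bigl(\pi\tti w\,(u_{j_-}(\delta_+)/(2\pi\tti)+D_{j_-}\cdot(f_+-f_-))/(D_{j_-}\cdot e)\bigr)$ in $C_{\delta_+,f_+}^{\delta_-,f_-}$, as well as the factor $(-D_{j_-}\cdot e)^{-1}$ arising from the Jacobian of $s\mapsto D_{j_-}\cdot d$ at each pole. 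Once these branch and sign conventions are fixed, extracting the precise formula for $C_{\delta_+,f_+}^{\delta_-,f_-}$ reduces to the routine evaluation of the residues.
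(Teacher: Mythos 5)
Your proposal is correct and follows essentially the same route as the paper: decompose the fixed-point restriction of $H_+$ along the $e$-direction (the paper's $d = d_+ + ke$), replace the inner sum by a Mellin--Barnes integral, and use the reflection formula together with Lemma~\ref{lem:weights} and Corollary~\ref{cor:weights} to reassemble the residues into $\sum C^{\delta_-,f_-}_{\delta_+,f_+}\,i^\star_{(\delta_-,f_-)}H_-$. Two small imprecisions worth noting: the pole-producing factors are the $\Gamma$'s that appear \emph{after} the reflection identity is applied (the original $1/\Gamma(1+\cdots)$ factors are entire), and the $s$-plane contour is closed in a half-plane while the path $\gamma$ lives in the $\sfy^e$-plane --- the paper handles the analyticity in $\sfy^e$ and the identification with left/right residue sums by invoking Lemma~A.6 of Borisov--Horja rather than by ``closing the contour along $\gamma$.''
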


\begin{remark} 
The coefficient $C_{\delta_+,f_+}^{\delta_-,f_-}$ 
does not depend on the choice of lifts $f_+ \in \KK_+$ and $f_-\in \KK_-$ 
such that $f_+ -  f_- \in \QQ e$ (see Notation \ref{not:lift}). 
\end{remark} 

\begin{proof}[Proof of Theorem \ref{thm:analytic_continuation}] 
The first statement follows immediately from \eqref{eq:restriction_of_H_+} 
and Corollary~\ref{cor:weights}. 
In this case, $i_{(\delta_+,f_+)}^\star H_+$ 
(respectively~$i_{(\delta_+,f_+)}^\star H_-$)  
is a formal power series in 
$\sfy_1,\dots,\sfy_{r-1}$ (respectively in~$\tsfy_1,\dots,\tsfy_{r-1}$) 
with coefficients that are \emph{polynomials} in $\sfy_r$ (respectively in~$\tsfy_r$),  
and the series $i_{(\delta_+,f_+)}^\star H_+$,  
$i_{(\delta_+,f_+)}^\star H_-$ match under the change 
\eqref{eq:change_of_variables} of co-ordinates. 
Consider now
  \[
  i_{(\delta_+,f_+)}^\star H_+ = e^{\frac{\sigma_+(\delta_+)}{2\pi\tti}}
  \sum_{\substack{d \in \delta_+^\vee: \\ [d]=f_+}}
  \sfy^d 
  \frac{1}
  {\prod_{j =1}^m\Gamma\big(1 + \frac{u_j(\delta_+)}{2 \pi \tti} 
    + D_j \cdot d\big)}
  \]
  where $\delta_+ \in \cA_+$ but $\delta_+ \not \in \cA_-$.  
We can write $d \in \delta_+^\vee$ uniquely as $d = d_+ + k e$ 
with $k$ a non-negative integer, $d_+ \in \delta_+^\vee$, 
and $d_+ - e \not \in \delta_+^\vee$.  Then: 
  \begin{equation}
    \label{eq:sum_over_d_+}
    i_{(\delta_+,f_+)}^\star H_+ = 
    \sum_{\substack{d_+ \in \delta_+^\vee : 
\\ d_+ - e \not \in \delta_+^\vee \\ [d_+]=f_+}} 
    \sfy^{d_+} 
    \sum_{k=0}^\infty 
    \frac{e^{\frac{\sigma_+(\delta_+)}{2 \pi \tti}} 
(\sfy^{e})^k } 
    {\prod_{j =1}^m 
\Gamma\big(1 + \frac{u_j(\delta_+)}{2 \pi \tti} + D_j \cdot d_+ 
+ k D_j \cdot e\big)} 
  \end{equation}
  Consider the second sum here.  This is: 
  \begin{multline}
    \label{eq:inner_sum}
    \sum_{k=0}^\infty
    {e^{\frac{\sigma_+(\delta_+)}{2 \pi \tti} } (\sfy^e)^k  }
    \prod_{j : D_j \cdot e < 0} 
\frac{(-1)^{k D_j \cdot e}  
\sin  \pi\big(-\frac{u_j(\delta_+)}{2 \pi \tti} - D_j \cdot d_+\big)}{\pi} 
\\ 
\times
    \frac{\prod_{j : D_j \cdot e < 0} 
\Gamma\big({-\frac{u_j(\delta_+)}{2 \pi \tti}} - D_j \cdot d_+ - k D_j \cdot e\big)}
    {\prod_{j: D_j \cdot e \geq 0} 
\Gamma\big(1 + \frac{u_j(\delta_+)}{2 \pi \tti} 
+ D_j \cdot d_+ + k D_j \cdot e\big)}
  \end{multline}
  where we used $\Gamma(y) \Gamma(1-y) = \pi/(\sin \pi y)$.  
Thus \eqref{eq:inner_sum} is:
  \begin{multline}
    \label{eq:inner_sum_as_residue_sum}
    \sum_{k=0}^\infty
    e^{\frac{\sigma_+(\delta_+)}{2 \pi \tti} }
    \Res_{s=k}  \Gamma(s) \Gamma(1-s) e^{\pi\tti s} (\sfy^e)^s
    \prod_{j : D_j \cdot e < 0} 
\frac{e^{\pi\tti s (D_j \cdot e)} 
        \sin  \pi\big(-\frac{u_j(\delta_+)}{2 \pi \tti} - D_j \cdot d_+\big)}{\pi} \\ 
\times
    \frac{\prod_{j : D_j \cdot e < 0} 
\Gamma\big({-\frac{u_j(\delta_+)}{2 \pi \tti}} - D_j \cdot d_+ - s D_j \cdot e\big)}
    {\prod_{j: D_j \cdot e \geq 0} 
\Gamma\big(1 + \frac{u_j(\delta_+)}{2 \pi \tti} 
+ D_j \cdot d_+ + s D_j \cdot e\big)} \, ds. 
\end{multline}

Consider now the contour integral
\begin{equation}
    \label{eq:contour_integral}
e^{\frac{\sigma_+(\delta_+)}{2\pi\tti}}    \int_C
    \Gamma(s) \Gamma(1-s) 
    \frac{\prod_{j : D_j \cdot e < 0} 
\Gamma\big({-\frac{u_j(\delta_+)}{2 \pi \tti}} 
- D_j \cdot d_+ - s D_j \cdot e\big) }
    {\prod_{j: D_j \cdot e \geq 0} 
\Gamma\big(1 + \frac{u_j(\delta_+)}{2 \pi \tti} 
+ D_j \cdot d_+ + s D_j \cdot e\big)} 
   \left(e^{-\pi\tti w}\sfy^e\right)^s \, ds
  \end{equation}
  where the contour $C$, shown in Figure~\ref{fig:contour}, 
is chosen such that the poles at $s=n$ are on the right of $C$ 
and the poles at $s = {-1}-n$ and at
  \begin{align}
    \label{eq:interesting_poles}
    s = \textstyle \frac{1}{{-D_{j_-}} \cdot e} 
\Big(  \frac{u_{j_-}(\delta_+)}{2 \pi \tti} + D_{j_-} \cdot d_+  - n \Big)
    && \text{for $j_-$ such that $D_{j_-} \cdot e < 0$}
  \end{align}
  are on the left of $C$; here $n$ is a non-negative integer.
  \begin{figure}
    \centering
    \includegraphics[width=0.5\textwidth]{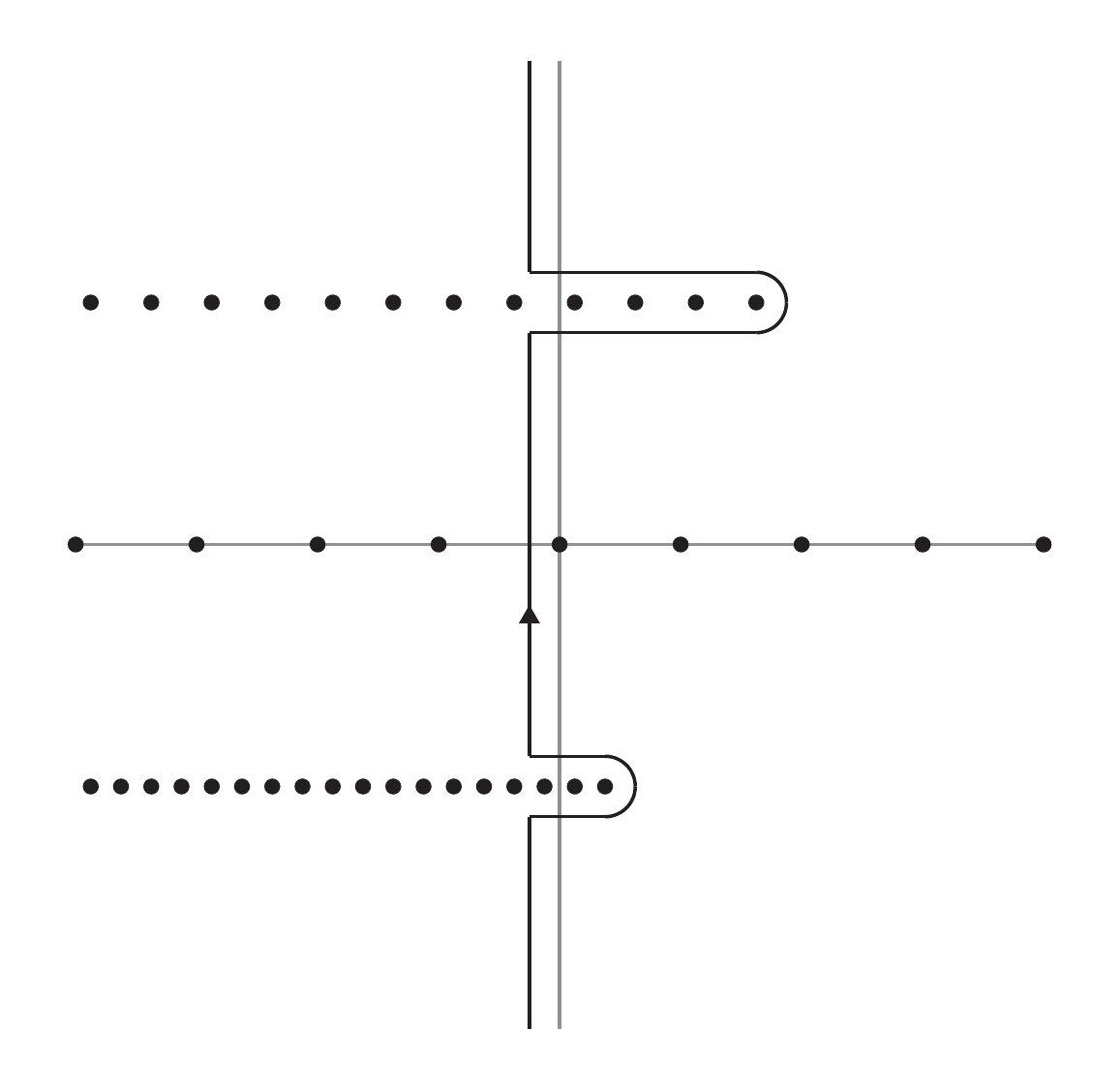}
    \caption{The contour $C$}
    \label{fig:contour}
  \end{figure}
Note that all poles of the integrand are simple.  
By assumption we have that $\sum_{j=1}^m D_j \in W$, 
and hence that $\sum_{j=1}^m D_j \cdot e = 0$.  
Let $\frc\in \CC$ be the conifold point \eqref{eq:conifold_point}. 
Lemma~A.6 in~\cite{Borisov--Horja:FM} implies that:
  \begin{itemize}
  \item the contour integral \eqref{eq:contour_integral} 
is convergent and analytic as a function of $\sfy^e$ in the domain 
$\{\sfy^e : |\arg(\sfy^e) - w \pi|< \pi \}$;
  \item for $|\sfy^e|<|\frc|$, the integral is equal to 
the sum of residues on the right of $C$; and
  \item for $|\sfy^e|>|\frc|$, the integral is equal to 
minus the sum of residues on the left of $C$.
  \end{itemize}
  The residues at $s = {-1}-n$ vanish, 
where $n$ is a non-negative integer: each such residue contains a factor
  \[
  \prod_{j \in \delta_+} \Gamma\big(1 + D_j \cdot\big(d_+ - (n+1) e\big) \big)^{-1}
  \]
  and $d_+ - (n+1) e \not \in \delta_+^\vee$, so at least 
one of the $\Gamma$-functions is evaluated at a negative integer.  
After analytic continuation in $x$, therefore, 
\eqref{eq:inner_sum_as_residue_sum} becomes 
minus the sum of residues at the poles \eqref{eq:interesting_poles}.  
The residue at the pole
  \[
  p = \textstyle \frac{1}{{-D_{j_-}} \cdot e} 
\Big(  \frac{u_{j_-}(\delta_+)}{2 \pi \tti} + D_{j_-} \cdot d_+  - n \Big)
  \]
is: 
\begin{multline}
    \label{eq:simplify_this_residue}
-e^{ \frac{\sigma_+(\delta_+)}{2\pi\tti} }
    (\sfy^e)^p
    e^{\pi\tti p(1 + D_{j_-} \cdot e)} 
    \frac{\sin  \pi\Big(\frac{u_{j_-}(\delta_+)}{2 \pi \tti} 
+ D_{j_-} \cdot d_+\Big)}{\sin \pi p}
    \frac{1}{({-D_{j_-}} \cdot e)} \frac{(-1)^n}{n!} \\
\prod_{\substack{j : D_j \cdot e < 0 \\ j \ne j_-}}
\frac{e^{\pi\tti p(D_j \cdot e)}  
          \sin \pi\Big( \frac{u_j(\delta_+)}{2 \pi \tti} + D_j \cdot d_+\Big)}
        {\sin  \pi\Big(\frac{u_j(\delta_+)}{2 \pi \tti}  + D_j \cdot d_+  
          + p (D_j \cdot e)\Big)}
\prod_{j : j \ne j_-}
\frac{1}{\Gamma\Big(1+\frac{u_j(\delta_+)}{2 \pi \tti} 
                + D_j \cdot d_+  + p (D_j \cdot e)\Big)} 
\end{multline}
  This simplifies dramatically.  
Set $n = k ({-D_{j_-}} \cdot e) + l$ with 
$0 \leq l < ({-D_{j_-}} \cdot e)$,
\[
d_- = d_+ + \frac{D_{j-} \cdot d_+ - l}{{-D_{j_-}} \cdot e}e
\]
and $\delta_- = \{j_1,\ldots,j_{r-1},j_-\}$, 
where $\delta_+ = \{j_1,\ldots,j_{r-1},j_+\}$ 
with $D_{j_1}\cdot e = \cdots = D_{j_{r-1}} \cdot e = 0$.  
Note that $D_{j_-} \cdot d_- =l \in \ZZ_{\ge 0}$ 
but $D_{j_-} \cdot (d_- + e) < 0$, 
and therefore $d_- \in \delta_-^\vee$ but $d_- + e \not \in\delta_-^\vee$.  
Lemma~\ref{lem:weights} implies that:
  \[
  \textstyle
  \frac{u_j(\delta_+)}{2 \pi \tti} + D_j \cdot d_+  + p (D_j \cdot e)
  = 
  \frac{u_j(\delta_-)}{2 \pi \tti} + D_j \cdot d_-  -k (D_j \cdot e)
  \]
and thus the residue \eqref{eq:simplify_this_residue} is: 
  \begin{multline}
    \label{eq:simplified_residue}
    {- e^{\frac{\sigma_-(\delta_-)}{2 \pi \tti} }}
    \sfy^{d_- - d_+ - k e} 
    e^{\frac{\pi \tti w}{D_{j_-}\cdot e} \big(\frac{u_{j_-}(\delta_+)}{2 \pi \tti} 
+ D_{j_-} \cdot (d_+-d_-)\big)}
    \\
\times   
\frac{\sin  \pi\Big(\frac{u_{j_-}(\delta_+)}{2 \pi \tti} + 
D_{j_-} \cdot (d_+-d_-)\Big)  
}
    {({-D_{j_-}} \cdot e) \sin \frac{\pi}{{-D_{j_-}}\cdot e} 
    \Big(\frac{u_{j_-}(\delta_+)}{2 \pi \tti} + D_{j_-} \cdot (d_+-d_-)\Big)}
    \\
\times  
\prod_{\substack{j : D_j \cdot e < 0 \\ j \ne j_-}}
    \frac{\sin  \pi\Big(\frac{u_j(\delta_+)}{2 \pi \tti} + D_j \cdot d_+\Big)}
    {\sin  \pi\Big(\frac{u_j(\delta_-)}{2 \pi \tti} + D_j \cdot d_-\Big)}
    \prod_{j=1}^m
    \frac{1}{\Gamma\Big(1+\frac{u_j(\delta_-)}{2 \pi \tti} 
                  + D_j \cdot d_-  - k (D_j \cdot e)\Big)} 
  \end{multline}
where we used 
$p = \frac{1}{- D_{j_-} \cdot e}
\left( \frac{u_{j_-}(\delta_+)}{2\pi\tti} 
+ D_{j_-} \cdot (d_+- d_-) \right) - k$ and Corollary \ref{cor:weights}. 

Let $f_-$ denote the equivalence class of $d_-$ in $\KK_-/\LL$, 
noting that $(\delta_+,f_+) | (\delta_-,f_-)$ and that 
  \[
  d_+ = f_+ - f_- + N e + d_-
  \]
for some integer $N$.  (Here we used Notation~\ref{not:lift}.)  
The dependence of \eqref{eq:simplified_residue} on $N$ cancels, giving:
  \[
  {- e^{\frac{\sigma_-(\delta_-)}{2 \pi \tti}}}
  \sfy^{d_- - d_+  - ke} 
  \frac{1}{\prod_{j=1}^m \Gamma\Big(1+\frac{u_j(\delta_-)}{2 \pi \tti} 
     + D_j \cdot d_-  - k (D_j \cdot e)\Big)}
  \,
  C_{\delta_+,f_+}^{\delta_-,f_-}
  \]
and minus the sum of these residues gives the analytic continuation 
of \eqref{eq:inner_sum_as_residue_sum}.  
After analytic continuation in $\sfy^e = \sfy_r^{\sfp^+_r \cdot e}$, 
therefore, we have that: 
  \begin{multline*}
    i_{(\delta_+,f_+)}^\star H_+ = 
    \sum_{\substack{(\delta_-,f_-) : \\ (\delta_+,f_+)|(\delta_-,f_-)}}
    \sum_{\substack{d_- \in \delta_-^\vee: \\ d_- + e \not \in \delta_-^\vee \\ [d_-]=f_-}}
    \sfy^{d_-} 
    \sum_{k=0}^\infty
    \frac{e^{\frac{\sigma_-(\delta_-)}{2\pi\tti}} (\sfy^e)^{-k} }
    {\prod_{j =1}^m\Gamma\big(1 + \frac{u_j(\delta_-)}{2 \pi \tti} 
      + D_j \cdot d_- - k D_j \cdot e\big)}
    \,
    C_{\delta_+,f_+}^{\delta_-,f_-}
  \end{multline*}
  Comparing with \eqref{eq:sum_over_d_+} gives the result.
\end{proof}

\subsubsection{Analytic Continuation of the $I$-Function and the Symplectic Transformation $\UU$} 
\label{sec:U} 

Set $\hR_T = H^{\bullet \bullet}_T({\rm pt})$ 
and let $\hS_T$ be the completion of $S_T$ in \S \ref{sec:equiv_int_str}. 
Define an $\hS_T$-linear transformation 
$\UU_H \colon H^{\bullet\bullet}_{T}(IX_-)\otimes_{\hR_T} \hS_T 
\to H^{\bullet\bullet}_{T}(IX_+) \otimes_{\hR_T} \hS_T$
by  
\begin{align}
\label{eq:UH} 
\begin{split}  
\UU_H (\alpha) = 
& \sum_{\substack{(\delta, f) : \\ \delta \in \cA_+ \cap \cA_-}} 
(i_{(\delta,f)}^\star \alpha) 
\cdot 
\frac{\fun_{\delta,f}}{e_{T}(N_{\delta,f})}  \\
& +  \sum_{\substack{(\delta_+,f_+): \\ \delta_+ \in \cA_+ \setminus \cA_-}} 
\sum_{\substack{(\delta_-,f_-): \\ (\delta,f_-) | (\delta_+,f_+)}}
C_{\delta_-,f_-}^{\delta_+,f_-} \cdot (i_{(\delta_-,f_-)}^\star \alpha) 
\cdot 
\frac{\fun_{\delta_+,f_+}}{e_{T}(N_{\delta_+,f_+})} 
\end{split} 
\end{align} 
where $(\delta,f)$ and $(\delta_+,f_+)$ index $T$-fixed points 
in $I X_+$, $\fun_{\delta,f} = i_{(\delta,f)\star} \fun$ and 
$N_{\delta,f} := T_{x_{(\delta,f)}} X^f_+$. 
Then Theorem \ref{thm:analytic_continuation} can be restated as:
\[
H_+ = \UU_H H_-
\]
Define the linear transformation $\UU$ so 
that the following diagram commutes: 
\begin{equation} 
\label{eq:UH_U} 
\begin{aligned}
  \xymatrix{
    H^{\bullet\bullet}_{T}(IX_-)\otimes_{\hR_T} \hS_T \ar[r]^{\UU_H} \ar[d]_{\tPsi'_-} &
    H^{\bullet\bullet}_{T}(IX_+) \otimes_{\hR_T} \hS_T \ar[d]^{\tPsi'_+} \\ 
    H^\bullet_{\CR,T}(X_-) \otimes_{R_T} S_T[\log z](\!(z^{-1/k})\!) \ar[r]^{\UU} & 
    H^\bullet_{\CR,T}(X_-) \otimes_{R_T} S_T[\log z](\!(z^{-1/k})\!) 
  }
\end{aligned}
\end{equation} 
where the vertical maps are defined by  $\tPsi'_\pm(\alpha)= 
z^{-\mu^\pm}z^{\rho^\pm} 
(\hGamma_{X_\pm} \cup (2\pi\tti)^{\frac{\deg_0}{2}}
\inv^* \alpha)$ and 
$k\in \N$ is as in Theorem \ref{thm:U}. 
The relationship \eqref{eq:IH_relation} between the $H$-function 
and the $I$-function implies part (1) of Theorem \ref{thm:U}: 
\begin{equation} 
\label{eq:UI} 
I_+ = \UU I_-.  
\end{equation} 
Since the $I$-function contains neither $\log z$ nor 
non-integral powers of $z$, it follows that $\UU$ is in fact 
a linear transformation:
\[
\UU \colon 
H_{\CR,T}^\bullet(X_-) \otimes_{R_T} S_T(\!(z^{-1})\!) 
\to 
H_{\CR,T}^\bullet(X_+) \otimes_{R_T} S_T(\!(z^{-1})\!)
\] 
Diagram \eqref{eq:UH_U} gives that $\UU$ is automatically degree-preserving. 
We show that $\UU$ satisfies part (2) of 
Theorem \ref{thm:U}. 
Noting that $\sfp_i ^+ = \sfp_i^-$, $i=1,\dots,r-1$ are 
on the wall $W$, it suffices to show that 
$\theta_+(\sfp_i^+) \circ \UU = \UU \circ \theta_-(\sfp_i^-)$ 
for $1\le i\le r-1$. 
This follows from equation \eqref{eq:UI} and the monodromy properties 
of the $I$-functions:
\begin{align*} 
I_{+} \big|_{\sfy_j \mapsto e^{2\pi\tti} \sfy_j} 
& = e^{2\pi\tti \theta_+(\sfp^+_j)/z} I_{+}  \\ 
I_{-} \big|_{\tsfy_j \mapsto e^{2\pi\tti} \tsfy_j} 
& = e^{2\pi\tti \theta_-(\sfp^-_j)/z} I_{-}
\end{align*} 
for $1\le j\le r-1$. 
Note that $\sfy_j \to e^{2\pi\tti} \sfy_j$ corresponds to 
$\tsfy_j \to e^{2\pi\tti} \tsfy_j$ under the change 
\eqref{eq:change_of_variables} of variables.  
It remains to show that: 
\begin{itemize} 
\item $\UU$ is symplectic; 
\item $\UU$ is defined over $R_T(\!(z^{-1})\!)$, 
i.e.~that $\UU$ admits a non-equivariant limit. 
\end{itemize}
These properties follow from the identification of $\UU_H$ with 
the Fourier--Mukai transformation defined in the next section. 
We will discuss these points in \S \ref{sec:end_of_the_proof} below. 

\subsection{The Fourier--Mukai Transform}
\label{sec:FM} 

We now construct a diagram \eqref{eq:common_blowup} canonically associated to the toric birational transformation 
$\varphi \colon X_+ \dashrightarrow X_-$, 
where $\tX$ is a toric Deligne--Mumford stack and 
$f_+$,~$f_-$ are toric blow-ups, and compute the 
Fourier--Mukai transformation:
\begin{align*}
  \FM \colon K^0_T(X_-) \to K^0_T(X_+) && \FM := (f_+)_\star (f_-)^\star
\end{align*}
In~\S\ref{sec:FM_match} below we will see that this transformation coincides, via the equivariant integral structure in 
Definition \ref{def:K_framing}, 
with the transformation $\UU$ from \S\ref{sec:U} 
given by analytic continuation. 

\subsubsection{The Common Blow-Up of $X_+$ and $X_-$}

Recall from \S\ref{sec:stacky_fan} that $X_+$ and $X_-$ are defined 
in terms of an exact sequence:
\[
\xymatrix{
  0 \ar[r] &
  \LL \ar[r] &
  \ZZ^m \ar[r]^\beta & 
  \bN \ar[r] & 
  0 }
\]
where the map $\LL \to \ZZ^m$ is given by $(D_1,\ldots,D_m)$.  
This sequence defines an action of $K = (\Cstar)^r$ on $\CC^m$, 
and $X_\pm = \big[ U_{\omega_\pm} \big/ K\big]$ for appropriate 
stability conditions $\omega_+$,~$\omega_- \in \LL^\vee \otimes \RR$. 
Let $b_1,\ldots,b_m$ denote the images 
of the standard basis elements for $\ZZ^m$ under the map $\beta$.  
Consider now the action of $K \times \Cstar$ on $\CC^{m+1}$ 
defined by the exact sequence:
\[
\xymatrix{
  0 \ar[r] &
  \LL \oplus \ZZ \ar[r] &
  \ZZ^m \oplus \ZZ \ar[r]^-{\tilde{\beta}} & 
  \bN \ar[r] & 
  0 }
\]
where the map $\LL \oplus \ZZ \to \ZZ^m \oplus \ZZ$ 
is given by $(\tD_1,\ldots,\tD_{m+1})$,
\[
\tD_j = 
\begin{cases}
  D_j \oplus 0 & \text{if $j < m+1$ and $D_j \cdot e \leq 0$} \\
  D_j \oplus ({-D_j} \cdot e) & \text{if $j < m+1$ and $D_j \cdot e > 0$} \\
  0 \oplus 1 & \text{if $j=m+1$}
\end{cases}
\]
The map $\tilde{\beta}$ is the direct sum of $\beta$ 
with the map $\ZZ \to \bN$ defined by the element
\[
b_{m+1} = \sum_{j : D_j \cdot e > 0} (D_j \cdot e) b_j
\]
so the images of the standard basis elements for $\ZZ^m \oplus \ZZ$ 
under the map $\tilde{\beta}$ are $b_1,\ldots,b_{m+1}$.  
Consider the chambers $\widetilde{C}_+$,~$\widetilde{C}_-$, 
and~$\widetilde{C}$ in $(\LL \oplus \ZZ)^\vee \otimes \RR$ 
that contain, respectively, the stability conditions
\begin{align*}
  \tomega_+ = (\omega_+,1) &&
  \tomega_- = (\omega_-,1) && \text{and} && 
  \tomega = (\omega_0, - \varepsilon)
\end{align*}
where $\omega_0$ is a point in the relative interior of 
$W \cap \overline{C_+} = W \cap \overline{C_-}$ 
as in \S \ref{sec:birational_transformations}, 
and $\varepsilon$ is a very small positive real number.  
Let $\tX$ denote the toric Deligne--Mumford stack 
defined by the stability condition $\tomega$. 

\begin{lemma} 
\label{lem:Atilde} 
Recall the notation $\cA_\pm$, $\cA_0$, $\cA_0^{\rm thick}$, 
$\cA_0^{\rm thin}$, $M_0$, $M_\pm$ in Lemma \ref{lem:Apm}. 
The set of anticones for the stability conditions $\tomega_\pm$, 
$\tomega$ are given by 
\begin{align*} 
\cA_{\tomega_\pm}  & = 
\left\{ I \sqcup \{m+1\} : I \in \cA_\pm \right\} \\ 
\cA_{\tomega} & = 
\left\{ I \sqcup \{ m+ 1\} : I \in \cA_0^{\rm thick}
\right\} \sqcup 
\left\{I \in\cA_0^{\rm thick} : I \cap M_0 \in \cA_0^{\rm thin} 
\right\}. 
\end{align*} 
\end{lemma}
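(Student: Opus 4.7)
The plan is to directly characterize, for each of the stability conditions $\tomega_\pm$ and $\tomega$, when a subset $\tilde{I} \subset \{1,\ldots,m+1\}$ is an anticone, by analyzing the sign constraints on the coefficients in a positive expansion of the stability condition in the $\tD_j$'s. I will systematically split on whether $m+1 \in \tilde{I}$. A unifying observation is that the projection $(\LL \oplus \ZZ)^\vee \to \LL^\vee$ sends $\tD_j \mapsto D_j$ for $j \leq m$, $\tD_{m+1} \mapsto 0$, and $\tomega_\pm \mapsto \omega_\pm$, $\tomega \mapsto \omega_0$; so any $\tilde{I} \in \cA_{\tomega_\ast}$ satisfies $\tilde{I} \cap \{1,\ldots,m\} \in \cA_{\omega_\ast}$. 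All the work therefore amounts to checking the second coordinate.

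For $\tomega_\pm = (\omega_\pm, 1)$, the second-coordinate contribution of $\tD_j$ ($j \leq m$) is $0$ if $D_j \cdot e \leq 0$ and $-D_j \cdot e < 0$ if $j \in M_+$; so a positive combination of these alone cannot equal $+1$. Hence $m+1 \in \tilde{I}$, and writing $\tilde{I} = I \sqcup \{m+1\}$, the first-coordinate equation $\omega_\pm = \sum_{j\in I} c_j D_j$ with $c_j > 0$ is precisely $I \in \cA_\pm$; then $c_{m+1} = 1 + \sum_{j \in I \cap M_+} c_j(D_j\cdot e) > 0$ automatically, giving $\cA_{\tomega_\pm}=\{I \sqcup \{m+1\} : I \in \cA_\pm\}$.

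For $\tomega = (\omega_0, -\varepsilon)$ I would treat the two sub-cases in turn. If $\tilde{I} = I \sqcup \{m+1\}$, positivity of $c_{m+1} = -\varepsilon + \sum_{j \in I \cap M_+} c_j(D_j \cdot e)$ forces $I \cap M_+ \neq \emptyset$; pairing $\omega_0 = \sum_{j \in I} c_j D_j$ with $e \in W^\perp$ (the circuit identity) shows $\sum_{j\in I\cap M_+} c_j (D_j\cdot e) = \sum_{j \in I\cap M_-} c_j(-D_j\cdot e)$, so $I \cap M_- \neq \emptyset$ as well, i.e.\ $I \in \cA_0^{\rm thick}$. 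Conversely, for any $I \in \cA_0^{\rm thick}$ the open polytope $P_I = \{(c_j)_{j\in I} : c_j>0,\ \sum c_j D_j = \omega_0\}$ is non-empty and the linear function $f_I(c) = \sum_{j \in I \cap M_+} c_j(D_j \cdot e)$ is strictly positive on it; for $\varepsilon$ smaller than $\sup_{P_I} f_I$ (minimised over the finite set $\cA_0^{\rm thick}$) we find $c \in P_I$ with $f_I(c) > \varepsilon$, i.e.\ $c_{m+1} > 0$. This yields the first piece $\{I \sqcup \{m+1\} : I \in \cA_0^{\rm thick}\}$.

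The subtler sub-case is $m+1 \notin \tilde{I}$, where the condition becomes the \emph{equality} $f_{\tilde{I}}(c) = \varepsilon$ on $P_{\tilde{I}}$. The existence of such $c$ for arbitrarily small $\varepsilon > 0$, combined with $f_{\tilde{I}} > 0$ on $P_{\tilde{I}}$, is equivalent to $\inf_{\overline{P_{\tilde{I}}}} f_{\tilde{I}} = 0$. The infimum of a linear function on a polytope is attained at a vertex, which corresponds to a minimal $J \subset \tilde{I}$ with $\omega_0 \in \angle_J$; the vertex value vanishes precisely when $J \cap M_+ = \emptyset$, and then the circuit identity (pairing with $e$) forces $J \subset M_0$, so $J \in \cA_0^{\rm thin}$. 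By enlargement-closedness of $\cA_0$ (Assumption~\ref{assumption}), the existence of such a $J \subset \tilde{I} \cap M_0$ is equivalent to the single condition $\tilde{I} \cap M_0 \in \cA_0^{\rm thin}$. Together with $\tilde{I} \in \cA_0$ and $\tilde{I} \cap M_+ \neq \emptyset$ (forced by $\varepsilon > 0$), the circuit identity upgrades $\tilde{I}$ to $\cA_0^{\rm thick}$, yielding the second piece $\{I \in \cA_0^{\rm thick} : I \cap M_0 \in \cA_0^{\rm thin}\}$. The main obstacle is precisely this polytope/vertex analysis: one must identify the zero-infimum vertices with sub-anticones in $M_0$, then convert this existence statement into the clean intrinsic condition on $\tilde{I} \cap M_0$ via Assumption~\ref{assumption}. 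Everything else is direct linear-algebra bookkeeping on $(\LL \oplus \ZZ)^\vee$.
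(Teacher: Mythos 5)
Your proof is correct, and since the paper's own ``proof'' is the single word ``Straightforward,'' there is no detailed argument to compare against---you have supplied essentially the argument the authors intended the reader to reconstruct. The decomposition into the first coordinate (projection $(\LL\oplus\ZZ)^\vee\to\LL^\vee$, $\tD_j\mapsto D_j$, $\tD_{m+1}\mapsto 0$) plus the second-coordinate sign analysis is the natural route; the case $m+1\in\tilde I$ is clean in both sub-lemmas; and in the $m+1\notin\tilde I$ case for $\tomega$, your reduction to the condition $\inf_{\overline{P_{\tilde I}}}f_{\tilde I}=0$, followed by the vertex analysis and the appeal to enlargement-closedness to rewrite ``$\exists J\subset\tilde I\cap M_0$ with $J\in\cA_0^{\rm thin}$'' as the intrinsic condition $\tilde I\cap M_0\in\cA_0^{\rm thin}$, is exactly the right idea and is carried out correctly. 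The circuit identity (pairing $\omega_0=\sum_{j\in J}c_jD_j$ with $e$) to pass between $J\cap M_+$ and $J\cap M_-$ is used in just the right places.

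Two small points worth tightening, neither of which affects correctness. First, you invoke ``the infimum of a linear function on a polytope is attained at a vertex,'' but $\overline{P_{\tilde I}}$ is a priori an (unbounded) polyhedron, not a bounded polytope. The LP fact you need is: a polyhedron contained in the nonnegative orthant has no lines, hence has at least one vertex, and a linear function bounded below on it attains its infimum at a vertex; here $f_{\tilde I}\ge 0$ supplies the boundedness. Second, your bijection between vertices of $\overline{P_{\tilde I}}$ and minimal $J\subset\tilde I$ with $\omega_0\in\angle_J$ is correct (a vertex set $J$ has $\{D_j\}_{j\in J}$ independent, which forces minimality, and conversely), but one could bypass the vertex language entirely: $\inf_{\overline{P_{\tilde I}}}f_{\tilde I}=0$ iff there is a face of $\overline{P_{\tilde I}}$ on which all coordinates indexed by $M_+\cap\tilde I$ (equivalently, by the circuit identity, by $(M_+\cup M_-)\cap\tilde I$) vanish, i.e.\ iff $\omega_0\in\sum_{j\in\tilde I\cap M_0}\RR_{\ge 0}D_j$, which after passing to an open face gives exactly $\tilde I\cap M_0\in\cA_0^{\rm thin}$. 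Both the quantifier over $\tilde I$ and the ``$\varepsilon$ sufficiently small'' step range over finite sets, as you note, so the uniform choice of $\varepsilon$ is legitimate.
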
 

\begin{proof}
  Straightforward.
\end{proof}

\begin{lemma} 
\label{lem:toricblowup} 
We have the following statements. 
  \begin{enumerate}
  \item The toric Deligne--Mumford stack corresponding 
to the chamber $\widetilde{C}_+$ is $X_+$.
  \item The toric Deligne--Mumford stack corresponding 
to the chamber $\widetilde{C}_-$ is $X_-$.
  \item There is a commutative diagram as in \eqref{eq:common_blowup} , where:
    \begin{itemize}
    \item $f_+ \colon \tX \to X_+$ is a toric blow-up, 
arising from wall-crossing from the chamber $\widetilde{C}$ to $\widetilde{C}_+$; 
and
    \item $f_- \colon \tX \to X_-$ is a toric blow-up, 
arising from wall-crossing from the chamber $\widetilde{C}$ to $\widetilde{C}_-$.
    \end{itemize}
  \end{enumerate}
\end{lemma}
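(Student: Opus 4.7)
For parts (1) and (2), the plan is to apply Lemma \ref{lem:Atilde} together with the factorization \eqref{eq:Uomega_factorization}. Every anticone in $\cA_{\tomega_\pm}$ contains the index $m+1$, hence $m+1 \in S_{\tomega_\pm}$ in the notation of \S\ref{sec:GITdata}. Consequently $U_{\tomega_\pm} = U_{\omega_\pm} \times \Cstar$, where the extra $\Cstar$-factor corresponds to the coordinate $z_{m+1}$. Since $\tD_{m+1} = 0 \oplus 1$ generates a complementary $\Cstar$ inside $K \times \Cstar$ acting freely and transitively on this $\Cstar$-factor, the quotient $\bigl[U_{\tomega_\pm}/(K \times \Cstar)\bigr]$ collapses to $[U_{\omega_\pm}/K] = X_\pm$.

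For part (3), I would first observe directly from Lemma \ref{lem:Atilde} that $\cA_{\tomega} \subset \cA_{\tomega_\pm}$, equivalently $U_{\tomega} \subset U_{\tomega_\pm}$. These $(K \times \Cstar)$-equivariant open inclusions descend to toric morphisms $f_\pm \colon \tX \to X_\pm$ once we identify $X_\pm$ with the GIT quotients at $\tomega_\pm$ via parts (1) and (2). These morphisms arise from crossing the walls between $\widetilde{C}$ and $\widetilde{C}_\pm$ in $(\LL\oplus\ZZ)^\vee \otimes \RR$: the primitive wall vector for $\widetilde{C}$ versus $\widetilde{C}_+$ may be taken to be $\tilde{e}_+ = (0,1) \in \LL \oplus \ZZ$, with associated circuit $\{b_j : j \in M_+\} \cup \{b_{m+1}\}$ coming from the relation $b_{m+1} = \sum_{j \in M_+}(D_j \cdot e)\,b_j$ in $\bN$. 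On the $\widetilde{C}_+$ side the positive part of the circuit is the singleton $\{m+1\}$ and on the $\widetilde{C}$ side it is $M_+$; applying the classification of Proposition \ref{prop:Spm_classif}, $f_+$ is a toric divisorial contraction (a blow-up in the stacky sense, allowing root-stack or gerbe modifications when $|M_+|=1$). The analysis of $f_-$ is entirely symmetric.

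Commutativity of the diagram \eqref{eq:common_blowup} is then immediate: the big torus $T/K$ embeds as a dense open substack into each of $X_+$, $X_-$, and $\tX$, and $\varphi$, $f_+$, $f_-$ all restrict to the identity on this common open. Therefore $f_- = \varphi \circ f_+$ on a dense open, and hence everywhere as rational maps; since both $f_\pm$ are everywhere defined, equality as morphisms of stacks follows.

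The main obstacle is the verification of the explicit signs and anticone decompositions underlying the wall-crossing in step two. Concretely, one must check that the chamber $\widetilde{C}$ containing $(\omega_0, -\varepsilon)$ is stable for all sufficiently small $\varepsilon > 0$ (so that no extraneous wall intervenes between $\tomega$ and $\tomega_\pm$), and confirm that the sign pattern of the pairings $\tilde{D}_j \cdot \tilde{e}_+$ matches precisely the anticone description in Lemma \ref{lem:Atilde}. Both reduce to elementary but careful bookkeeping with the definition of $\tilde{\beta}$ and the ray relation $b_{m+1} = \sum_{j \in M_+}(D_j \cdot e)\,b_j$; no genuinely new geometric input is required beyond what has already been established.
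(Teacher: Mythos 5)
Your argument for parts (1) and (2) is essentially the paper's argument: Lemma~\ref{lem:Atilde} shows every anticone in $\cA_{\tomega_\pm}$ contains $m+1$, so $U_{\tomega_\pm}\cong U_{\omega_\pm}\times\Cstar$, and the extra factor of $K\times\Cstar$ is sliced out. (One small imprecision: $\tD_{m+1}=0\oplus 1$ is a \emph{character}, not a cocharacter; the slicing cocharacter $0\oplus 1\in\LL\oplus\ZZ$ also acts with nonzero weight $-D_j\cdot e$ on those coordinates $z_j$ with $D_j\cdot e>0$, so the slice $z_{m+1}=1$ carries $z_j\mapsto z_j z_{m+1}^{\max(D_j\cdot e,0)}$ rather than a naive projection. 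This does not affect the conclusion.)

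Part (3) has a genuine gap. You assert that $\cA_\tomega\subset\cA_{\tomega_\pm}$, equivalently $U_\tomega\subset U_{\tomega_\pm}$, and use these open inclusions to produce $f_\pm$ by descent. This inclusion is \emph{false}. By Lemma~\ref{lem:Atilde},
\[
\cA_\tomega=\bigl\{\,I\sqcup\{m+1\}: I\in\cA_0^{\rm thick}\,\bigr\}\sqcup\bigl\{\,I\in\cA_0^{\rm thick}: I\cap M_0\in\cA_0^{\rm thin}\,\bigr\},
\]
and every anticone in the second set is a subset of $\{1,\dots,m\}$, so it does not contain $m+1$; but every anticone in $\cA_{\tomega_\pm}=\{I\sqcup\{m+1\}: I\in\cA_\pm\}$ does contain $m+1$. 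For instance in the conifold flop ($m=4$, $r=1$, $D_1=D_2=1$, $D_3=D_4=-1$) the set $\{1,3\}$ lies in $\cA_\tomega$ but not in $\cA_{\tomega_+}$. Hence $U_\tomega\not\subset U_{\tomega_\pm}$ and there is no open inclusion from which to descend.

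What actually happens is that the identity birational map $U_\tomega\dashrightarrow U_{\tomega_+}$ is undefined along $\{z_{m+1}=0\}$, but it extends to a genuine \emph{morphism} after moving each point to the slice $\{z_{m+1}=1\}$ via the $\Cstar$-subgroup corresponding to $e\oplus 1\in\LL\oplus\ZZ$. Concretely, $f_+$ is induced by
\[
U_\tomega\longrightarrow U_{\omega_+},\qquad (z_1,\dots,z_m,z_{m+1})\mapsto\bigl(z_1 z_{m+1}^{l_1},\dots,z_m z_{m+1}^{l_m}\bigr),\quad l_j:=\max(-D_j\cdot e,0),
\]
equivariant for the quotient homomorphism $K\times\Cstar\to K$, $(g,\lambda)\mapsto g\lambda^{-e}$; one then verifies using Lemma~\ref{lem:Atilde} that this map lands in $U_{\omega_+}$. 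For $f_-$ one uses the $\Cstar$-subgroup $0\oplus 1$ instead. Your appeal to Proposition~\ref{prop:Spm_classif} to classify the wall-crossing as a divisorial contraction is a useful consistency check on the \emph{shape} of the maps, but that proposition only produces a morphism to the coarse moduli space $|X_\pm|$; it does not by itself yield the stacky morphisms $f_\pm\colon\tX\to X_\pm$, so the explicit chart-level construction is essential.
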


\begin{proof}
In view of \S\ref{sec:GITdata}, the description of $\cA_{\tomega_\pm}$ 
in Lemma \ref{lem:Atilde} proves (1) and (2). 
The birational transformations $f_+ \colon \tX 
\dashrightarrow X_+$ and $f_- \colon \tX \dashrightarrow X_-$ 
determined by the toric wall-crossings are each morphisms 
which contract the toric divisor defined by 
the $(m+1)$-st homogeneous co-ordinate. 
Indeed, $f_+$ is induced by the identity 
birational map $U_{\tomega} \dashrightarrow U_{\tomega_+}$, 
and a point $(z_1,\dots,z_m, z_{m+1}) \in U_{\tomega_+}$ is equivalent 
to the point $(z_1 z_{m+1}^{l_1},\dots,z_m z_{m+1}^{l_m},1) 
\in U_{\omega_+} \times \{1\}$ 
under the action of the $\Cstar$-subgroup of $K\times \Cstar$ 
corresponding to $e \oplus 1 \in \LL \oplus \ZZ$, 
where we set $l_i := \max(-D_i \cdot e, 0)$ for $1\le i\le m$. 
Therefore $f_+$ is induced by a morphism
\begin{equation} 
\label{eq:morphism_f+}
U_{\tomega} \to U_{\omega_+} \qquad (z_1,\dots,z_m,z_{m+1}) 
\mapsto (z_1 z_{m+1}^{l_1},\dots, z_m z_{m+1}^{l_m})  
\end{equation} 
which is equivariant with respect to the group homomorphism 
(quotient by the $\Cstar$-subgroup given by $e\oplus 1$) 
\begin{equation}
\label{eq:grouphom_f+}
\phi_+ \colon 
K\times \Cstar \to K \qquad (g,\lambda) \mapsto g \cdot \lambda^{-e}. 
\end{equation} 
Using Lemma \ref{lem:Atilde}, one can easily check that the map 
\eqref{eq:morphism_f+} 
indeed sends $U_\tomega$ to $U_{\omega_+}$. 
We obtain a similar description for $f_-$ 
by considering the $\Cstar$-subgroup given by $0\oplus 1 \in \LL \oplus \ZZ$ 
instead of $e\oplus 1$. 
\end{proof}

\begin{remark} 
Torus fixed points on $\tX$ lying on the exceptional 
divisor $\{z_{m+1}=0\}$ of $f_\pm$ correspond to minimal 
anticones $\tdelta \in \cA_{\tomega}$ such 
that $\tdelta \in \cA_0^{\rm thick}$ and 
$\tdelta \cap M_0\in \cA_0^{\rm thin}$. 
These minimal anticones take the form  
\[
\tdelta = \{j_1,\dots,j_{r-1}, j_+, j_-\} 
\]
where $D_{j_1},\dots,D_{j_{r-1}} \in W$, 
$D_{j_+} \cdot e>0$ and $D_{j_-} \cdot e <0$. 
The birational morphism $f_\pm$ maps 
the corresponding torus fixed point $x_{\tdelta}\in \tX$ 
to the torus fixed point $x_{\delta_\pm}\in X_\pm$ with 
\[
\delta_+ = \{j_1,\dots,j_{r-1}, j_+\}\in \cA_+, \quad 
\delta_- = \{j_1,\dots,j_{r-1}, j_-\} \in \cA_-. 
\]
Torus fixed points on $\tX$ lying away from the exceptional 
divisor $\{z_{m+1}=0\}$ corresponds to minimal anticones 
$\tdelta\in \cA_\tomega$ of the form 
$\tdelta = \delta \cup \{m+1\}$, $\delta \in \cA_0^{\rm thick} 
= \cA_+ \cap \cA_-$.  
The morphisms $f_\pm$ are isomorphisms in neighbourhoods 
of these fixed points, and the torus fixed point $x_\tdelta$ maps to 
the fixed point $x_\delta$ in $X_+$ or in $X_-$. 
\end{remark} 

\begin{remark}
The stacky fan $\widetilde{\mathbf{\Sigma}}$ for $\tX$ is obtained 
from the stacky fans $\mathbf{\Sigma}_\pm$ for $X_\pm$ by adding 
the extra ray 
$
b_{m+1} = \sum_{j : D_j \cdot e > 0} (D_j \cdot e) b_j
$
where
\[
\sum_{j : D_j \cdot e > 0} (D_j \cdot e) b_j 
= \sum_{j : D_j \cdot e < 0} ({-D_j} \cdot e) b_j
\]
is a minimal linear relation (or circuit) in $\Sigma_\pm$, 
see Remark \ref{rem:circuit}. 
So our discussion here is a rephrasing in terms of GIT data 
of the material in~\cite[\S5]{Borisov--Horja:FM}.
\end{remark}

\subsubsection{A Basis for Localized $T$-Equivariant $K$-Theory} 
\label{sec:basis_K-theory}

Recall that $T=(\Cstar)^m$ acts on $X_\pm$ through 
the diagonal $T$-action on $\CC^m$. 
We consider the $T$-action on $\tX$ induced from the 
inclusion $T = T\times \{1\} \subset T \times \Cstar$ 
and the $(T\times \Cstar)$-action on $\CC^{m+1}$. 
Then all the maps in the diagram \eqref{eq:common_blowup} 
are $T$-equivariant. 
The $T$-equivariant $K$-groups $K^0_T(X_\pm)$, $K^0_T(\tX)$ 
are modules over $K^0_T({\rm pt})$, which is the ring 
$\ZZ[T]$ of regular functions (over $\ZZ$) on the algebraic torus $T$. 

The $T$-invariant divisor $\{z_i=0\}$ 
on $X_\omega$ defined in \eqref{eq:T-invariant_divisor} 
determines a $T$-equivariant line bundle $\cO(\{z_i=0\})$ 
on $X_\omega$, and we denote the class of this line bundle in 
$T$-equivariant $K$-theory by $R_i$. 
For the spaces $X_+$, $X_-$, and $\tX$
we write these classes as 
\begin{align*}
  & \text{$R^+_1,\ldots,R_m^+\in K^0_T(X_+)$} &
  & \text{$R^-_1,\ldots,R_m^- \in K^0_T(X_-)$} && \text{and} &
  & \text{$\tR_1,\ldots,\tR_{m+1}\in K^0_T(\tX)$.} 
  \intertext{Let us write:}
  & S_j^+ := (R_j^+)^{-1} &
  & S_j^- := (R_j^-)^{-1} && \text{and} &
  & \tS_j := \tR_j^{-1}
\end{align*}
An irreducible $K$-representation 
$p\in \Hom(K, \Cstar) = \LL^\vee$ defines a line bundle 
$L(p)\to X_\omega$: 
\[
L(p) = U_{\omega} \times \CC\big /(z,v) \sim (g \cdot z, p(g) v), \ g\in K
\]
This line bundle is equipped with the $T$-linearization 
$[z,v] \mapsto [t \cdot z, v]$, $t\in T$ and thus defines a class 
in $K_T^0(X_\omega)$. 
We write $L_+(p)$ for the corresponding line bundle on $X_+$ 
and $L_-(p)$ for the corresponding line bundle on $X_-$. 
We have 
\[
R_i^\pm = L_\pm(D_i) \otimes e^{\lambda_i} 
\]
where $e^{\lambda_i}\in \CC[T]$ stands for the irreducible 
$T$-representation given by the $i$th projection $T\to \Cstar$. 
In particular we have $c_1^T(L_\pm(p)) = \theta_\pm(p)$ for the map 
$\theta$ in \eqref{eq:theta}. 
Similarly a character $(p,n)\in \Hom(K\times \Cstar, \Cstar) = 
\LL^\vee \oplus \ZZ$ defines a $T$-equivariant line 
bundle $L(p,n)\to \tX$ and we have:
\begin{align*}
  & \tR_i = L(\tD_i)\otimes e^{\lambda_i} & 1\le i\le m \\
  & \tR_{m+1} = L(\tD_{m+1}) = L(0,1)
\end{align*}
The classes 
$L_\pm(p)$ (respectively the classes $L(p,n)$) generate the equivariant $K$-group 
$K^0_T(X_\pm)$ (respectively $K^0_T(\tX)$) over $\ZZ[T]$. 

Let $\delta_- \in \cA_-$ be a minimal anticone, $x_{\delta_-}$ 
be the corresponding $T$-fixed point on $X_-$, 
$i_{\delta_-} \colon x_{\delta_-} \to X_-$ be the inclusion 
of the fixed point, and $G_{\delta_-}$ be the isotropy group 
of $x_{\delta_-}$.  
We have that $x_{\delta_-} \cong B G_{\delta_-}$, 
and that $i_{\delta_-}^\star R_i = 1$ for $i \in \delta_-$.  
A basis for $K^0_T(X_-)$, after inverting non-zero elements 
of $\ZZ[T]$, is given by: 
\begin{equation}
  \label{eq:fixed_point_basis}
  \big\{(i_{\delta_-})_\star \varrho : 
  \text{$\varrho$ an irreducible representation 
  of $G_{\delta_-}$, $\delta_- \in \cA_-$}\big\}
\end{equation}
We need to specify a $T$-linearization on $(i_{\delta_-})_\star \varrho$.  
Choosing a lift $\hvarrho \in \Hom(K,\Cstar) = \LL^\vee$ 
of each $G_{\delta_-}$-representation $\varrho \colon G_{\delta_-} \to \Cstar$, 
we write any element in \eqref{eq:fixed_point_basis} in the form: 
\begin{equation*}
  \label{eq:fixed_point_basis_using_Koszul}
  e_{\delta_-,\varrho} := L_- (\hvarrho) 
  \prod_{i \not \in \delta_-} \big(1 - S_i^-\big)
\end{equation*}
Then $\{e_{\delta_-,\varrho}\}$ is a basis for the localized $T$-equivariant 
$K$-theory of $X_-$.  There is an entirely analogous basis 
$\{e_{\delta_+,\varrho}\}$ for the localized 
$T$-equivariant $K$-theory of $X_+$.  
We will describe the action of the Fourier--Mukai transform 
in terms of these bases.

\subsubsection{Computing the Fourier--Mukai Transform}  
Consider the diagram \eqref{eq:common_blowup}  and 
the associated Fourier--Mukai transform 
$\FM \colon K^0_T(X_-) \to K^0_T(X_+)$.  In this section we prove:

\begin{theorem} \label{thm:Fourier--Mukai}
If $\delta_- \in \cA_-$ is a minimal anticone such that $\delta_- \in \cA_+$ then
  \[
  \FM(e_{\delta_-,\varrho}) = e_{\delta_-,\varrho}
  \]
where on the left-hand side of the equality $\delta_-$ is regarded as 
a minimal anticone for $X_-$ and on the right-hand side $\delta_-$ 
is regarded as a minimal anticone for $X_+$.  
If $\delta_-$ is a minimal anticone in $\cA_-$ 
such that $\delta_- \not \in \cA_+$ then 
$\FM(e_{\delta_{-},\varrho})$ is equal to 
  \[ 
  \frac{1}{l} 
  \sum_{t\in \cT}
  \left(
    \frac{1-S_{j_{-}}^+}{1-t^{-1} }
    \cdot 
    L_+(\hvarrho) t^{\hvarrho \cdot e} 
    \cdot 
    \prod_{\substack{j\notin \delta_{-}\\ D_j \cdot e <0}} 
    (1-S_j^+)
    \cdot 
    \prod_{\substack{i\notin \delta_{-} \\ D_i\cdot e\ge 0}}  
   \big(1-t^{-D_i \cdot e}S_i^+\big)
   \right)
  \]
where $j_-$ is the unique element of $\delta_-$ such 
that $D_{j_-} \cdot e<0$, $l = -D_{j_-} \cdot e$ and 
  \[
  \cT =\left\{\zeta \cdot (R_{j_{-}}^+)^{1/l} :  
\zeta \in\bmu_l \right\}. 
  \]
\end{theorem}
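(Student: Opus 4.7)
The proof divides into two cases depending on whether $\delta_-\in\cA_+\cap\cA_-$ or $\delta_-\in\cA_-\setminus\cA_+$. The unifying technical input is the explicit description of $f_\pm$ from Lemma~\ref{lem:toricblowup} as induced by the group homomorphisms $(g,\lambda)\mapsto g$ and $(g,\lambda)\mapsto g\lambda^{-e}$ respectively. Together with $\tR_i = L(\tD_i)\otimes e^{\lambda_i}$ and the definitions of the characters $\tD_i$, these yield the uniform identities
\[
f_-^\star L_-(p) = f_+^\star L_+(p)\cdot \tR_{m+1}^{p\cdot e}, \qquad
f_-^\star R_i^- = f_+^\star R_i^+\cdot \tR_{m+1}^{D_i\cdot e}
\]
valid for every $p\in\LL^\vee$ and every $i\in\{1,\dots,m\}$; consequently $1-f_-^\star S_i^- = 1 - f_+^\star S_i^+ \cdot \tR_{m+1}^{-D_i\cdot e}$ for every $i$.

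For $\delta_-\in\cA_+\cap\cA_-$, Lemma~\ref{lem:Atilde} gives that $\tdelta:=\delta_-\cup\{m+1\}\in\cA_\tomega$, and this is the unique minimal anticone corresponding to a $T$-fixed point of $\tX$ above $x_{\delta_-}$; moreover it lies in the open substack $\{z_{m+1}\ne 0\}\subset\tX$ on which both $f_+$ and $f_-$ restrict to isomorphisms onto the corresponding open substacks of $X_\pm$. Since $e_{\delta_-,\varrho}$ is supported at $x_{\delta_-}$, functoriality of pullback and pushforward combined with the projection formula gives $\FM(e_{\delta_-,\varrho}) = e_{\delta_-,\varrho}$ directly.

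For $\delta_-\in\cA_-\setminus\cA_+$, expanding the Koszul expression $e_{\delta_-,\varrho} = L_-(\hvarrho)\prod_{i\notin\delta_-}(1-S_i^-)$ and applying the identities above yields
\[
f_-^\star e_{\delta_-,\varrho}
= f_+^\star L_+(\hvarrho)\cdot \tR_{m+1}^{\hvarrho\cdot e}\prod_{i\notin\delta_-}\bigl(1 - f_+^\star S_i^+\cdot \tR_{m+1}^{-D_i\cdot e}\bigr).
\]
The projection formula then reduces $\FM(e_{\delta_-,\varrho})$ to
\[
L_+(\hvarrho)\cdot(f_+)_\star\!\left[\tR_{m+1}^{\hvarrho\cdot e}\prod_{i\notin\delta_-}\bigl(1 - f_+^\star S_i^+\cdot \tR_{m+1}^{-D_i\cdot e}\bigr)\right],
\]
so the problem becomes one of evaluating the pushforward $(f_+)_\star g(\tR_{m+1})$ for Laurent polynomials $g$.

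The main obstacle is this final pushforward computation. The crucial input is the relation $f_+^\star R_{j_-}^+ = \tR_{j_-}\cdot\tR_{m+1}^{l}$, which exhibits $\tR_{m+1}$ as an ``$l$-th root'' of $R_{j_-}^+$ modulo $X_+$-classes; equivalently, the exceptional divisor $\{z_{m+1}=0\}\subset\tX$ carries the structure of a weighted projective stack of weight $l$ in the $j_-$-direction, and the resulting $\bmu_l$-gerbe produces an average over $\bmu_l$ in the pushforward. The pushforward formula itself can be derived either (i) by torus localization, summing contributions from the $T$-fixed points $x_\tdelta\in\tX$ with $\tdelta = \delta_+\cup\{j\}$ for $j\in M_-$ above each fixed point $x_{\delta_+}\in X_+$, and assembling the result into the symmetric expression $\frac{1}{l}\sum_{t\in\cT}(\cdots)/(1-t^{-1})$; or (ii) by a direct cohomological computation using the Koszul resolution of the exceptional divisor. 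Substituting the resulting formula into the expression for $\FM(e_{\delta_-,\varrho})$ and carefully splitting the product $\prod_{i\notin\delta_-}$ according to the sign of $D_i\cdot e$---so that the $i\in M_-\setminus\{j_-\}$ factors become $t$-free of the form $(1-S_i^+)$, the $i=j_-$ factor produces the denominator $(1-t^{-1})$, and the $D_i\cdot e\ge 0$ factors remain inside the $t$-sum as $(1-t^{-D_i\cdot e}S_i^+)$---yields precisely the formula in the theorem.
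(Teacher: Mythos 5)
Your setup is sound: the uniform identities $f_-^\star L_-(p) = f_+^\star L_+(p)\cdot\tR_{m+1}^{p\cdot e}$ and $f_-^\star R_i^- = f_+^\star R_i^+\cdot\tR_{m+1}^{D_i\cdot e}$ are correct consequences of the paper's Proposition~\ref{pro:pullback}, and they package that proposition nicely. The first case is handled essentially as in the paper (the paper simply says $\varphi$ is an isomorphism near $x_{\delta_-}$; your phrasing via $\{z_{m+1}\ne 0\}$ is equivalent). The reduction of the second case, via the projection formula, to evaluating a pushforward of a Laurent polynomial in $\tR_{m+1}$ with coefficients pulled back from $X_+$ is also a legitimate reorganization.

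The gap is that the proposal never actually establishes the pushforward formula, which is the entire computational content of the theorem. Proposition~\ref{pro:pushforward}(3) of the paper (the identity involving $\tfrac{1}{l}\sum_{t\in\cT}$) is proved at the level of the fixed-point gerbe $BG_{\tdelta}\to BG_{\delta_+}$, and the paper then uses the $K$-theoretic Localization Theorem twice: once to write $f_-^\star e_{\delta_-,\varrho}$ as a sum over fixed points $x_\tdelta$ (where a crucial cancellation of the factor $(1-\tS_{m+1})^{-1}$ in the Euler class against $(1-\tS_{m+1}^{k_{j_+}})$ in the numerator must be checked), and once again at the end to recover a global class in $K_T^0(X_+)$. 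You mention both ``torus localization'' and ``a direct Koszul computation'' as possible routes to the pushforward, but carry out neither. Asserting that the result ``yields precisely the formula in the theorem'' is not a proof.

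In particular your claim that ``carefully splitting the product $\prod_{i\notin\delta_-}$ according to the sign of $D_i\cdot e$ ... the $i\in M_-\setminus\{j_-\}$ factors become $t$-free of the form $(1-S_i^+)$'' is not immediate and needs justification. Under the naive substitution $\tR_{m+1}\mapsto t$ your factor $(1 - f_+^\star S_i^+\cdot\tR_{m+1}^{-D_i\cdot e})$ becomes $(1 - t^{|D_i\cdot e|}S_i^+)$ for $D_i\cdot e<0$, which is \emph{not} $t$-free. The $t$-free form in the theorem (equivalently the appearance of $t^{-k_j}$ rather than $t^{-D_j\cdot e}$, with $k_j=\max(D_j\cdot e,0)$) emerges only after the nontrivial algebraic manipulation that the paper performs between the two displayed sums in its proof, combined with the residual $\prod_{j\notin\delta_+}(1-S_j^+)$ denominator that is later absorbed by the final application of localization. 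Without that manipulation, or a replacement for it, the proof is incomplete.
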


\begin{remark} 
We have 
\[
\frac{1}{l} \sum_{t\in \cT} t^n =  
\begin{cases} 
(R_{j_-}^+)^{n/l} & \text{if $l$ divides $n$;} \\
0 & \text{otherwise.}  
\end{cases}
\]
Thus $\frac{1}{l} \sum_{t\in \cT} f(t)$ makes 
sense as an element $K_T^0(X_+)$ for a Laurent polynomial 
$f(t)$ in $t$. Note that each summand appearing 
in the formula for $\FM(e_{\delta_-,\varrho})$ is in fact a Laurent 
polynomial in $t$, since the factor $1-t^{-1}$ divides 
$1- S_{j_-}^+ = 1 - t^{-l}$. 
\end{remark} 

Borisov--Horja have computed how non-equivariant versions of 
the classes $R_i^-$ change under 
pullback~\cite[Proposition~8.1]{Borisov--Horja:K}.  
We have parallel results in the equivariant setting. 

\begin{proposition}
  \label{pro:pullback}
  For $p \in \LL$, we have:
  \begin{align*} 
  f_-^\star(L_-(p)) = L(p, 0)  
  && \text{and} && 
  f_+^\star(L_+(p)) = L(p,-p\cdot e) \\ 
  \intertext{Let 
$k_i := \max(D_i \cdot e,0)$ and $l_i := \max({-D_i \cdot e},0)$.  
Then:} 
    f_-^\star R_i^- = \tR_i \tR_{m+1}^{k_i}
    && \text{and} &&
    f_+^\star R_i^+ = \tR_i \tR_{m+1}^{l_i}. 
  \end{align*}
\end{proposition}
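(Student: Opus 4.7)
The plan is to reduce both claims to a character-level computation, by exploiting the explicit description of $f_\pm$ as $T$-equivariant quotient maps of GIT presentations that was already given in the proof of Lemma~\ref{lem:toricblowup}. First I will write down the analogue for $f_-$ of the formula \eqref{eq:morphism_f+}--\eqref{eq:grouphom_f+}: tracking the $\Cstar$-subgroup of $K\times\Cstar$ corresponding to $0\oplus 1\in\LL\oplus\ZZ$ instead of $e\oplus 1$, one finds that a point $(z_1,\dots,z_{m+1})\in U_{\tomega}$ is equivalent to $(z_{m+1}^{k_1}z_1,\dots,z_{m+1}^{k_m}z_m,1)\in U_{\omega_-}\times\{1\}$, so that $f_-$ is induced by the map $(z_1,\dots,z_{m+1})\mapsto(z_{m+1}^{k_1}z_1,\dots,z_{m+1}^{k_m}z_m)$, which is equivariant with respect to the projection $\phi_-\colon K\times\Cstar\to K$, $(g,\lambda)\mapsto g$.

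Next, for $p\in\LL^\vee$, the bundle $L_\pm(p)$ is by construction the descent of the trivial bundle on $U_{\omega_\pm}$ equipped with the $K$-linearization of weight $p$, with trivial $T$-linearization on the fiber. Pulling back along the $\phi_\pm$-equivariant map defining $f_\pm$ therefore gives the descent of the trivial bundle on $U_{\tomega}$ with $(K\times\Cstar)$-linearization of weight $p\circ\phi_\pm$. For $\phi_-$ this character is $(g,\lambda)\mapsto p(g)$, i.e.~$(p,0)\in(\LL\oplus\ZZ)^\vee$, which yields $f_-^\star L_-(p)=L(p,0)$. For $\phi_+$ it is $(g,\lambda)\mapsto p(g)\lambda^{-p\cdot e}$, i.e.~$(p,-p\cdot e)$, which yields $f_+^\star L_+(p)=L(p,-p\cdot e)$.

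Finally, I will deduce the formulas for $R_i^\pm$ from those for $L_\pm(p)$ using $R_i^\pm=L_\pm(D_i)\otimes e^{\lambda_i}$, $\tR_i=L(\tD_i)\otimes e^{\lambda_i}$ for $i\le m$, and $\tR_{m+1}=L(0,1)$. Concretely, $f_-^\star R_i^-=L(D_i,0)\otimes e^{\lambda_i}$ whereas $\tR_i\tR_{m+1}^{k_i}=L\bigl(\tD_i+(0,k_i)\bigr)\otimes e^{\lambda_i}$; the case split built into the definition of $\tD_i$ (namely $\tD_i=D_i\oplus 0$ when $D_i\cdot e\le 0$, so $k_i=0$, and $\tD_i=D_i\oplus(-D_i\cdot e)$ when $D_i\cdot e>0$, so $k_i=D_i\cdot e$) shows in each case that $\tD_i+(0,k_i)=(D_i,0)$, matching $f_-^\star R_i^-$. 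The identity $f_+^\star R_i^+=\tR_i\tR_{m+1}^{l_i}$ is verified in the same way using $l_i=\max(-D_i\cdot e,0)$, both sides equalling $L(D_i,-D_i\cdot e)\otimes e^{\lambda_i}$.

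There is no real obstacle here once the equivariant quotient description of the two blow-ups is in place; the only thing that requires care is keeping straight the sign conventions in the splitting defining $\tD_i$ and the weights of the $\Cstar$-subgroups used to eliminate the extra coordinate $z_{m+1}$. The computation is essentially the equivariant GIT-theoretic refinement of Borisov--Horja's non-equivariant statement~\cite[Proposition~8.1]{Borisov--Horja:K}.
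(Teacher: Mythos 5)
Your proof is correct and takes essentially the same approach as the paper: the paper's proof is a one-line reference back to the equivariant quotient description of $f_\pm$ (the maps~\eqref{eq:morphism_f+} and~\eqref{eq:grouphom_f+} from the proof of Lemma~\ref{lem:toricblowup}), and you have simply spelled out the character-level bookkeeping that this reference implicitly invokes — including correctly working out the analogue for $f_-$ and the case split built into the definition of $\tD_i$.
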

\begin{proof} 
These statements follows from the description of $f_\pm \colon 
\tX \to X_\pm$ in the proof of Lemma \ref{lem:toricblowup};
see \eqref{eq:morphism_f+} and \eqref{eq:grouphom_f+}. 
\end{proof} 

We now analyze the push-forward of classes 
supported on torus fixed points of $\tX$. 

\begin{proposition}
\label{pro:pushforward}
Consider minimal anticones
\begin{align*}
    \tdelta &=\{j_1,\ldots,j_{r-1},j_+,j_-\} \in 
\cA_\tomega && \text{for $\tX$} \\
    \delta_+ &= \{j_1,\ldots,j_{r-1},j_+\} \in \cA_+  && \text{for $X_+$}
\end{align*}
such that $\{j_1,\dots,j_{r-1},j_+,j_-\} \subset \{1,\dots,m\}$, 
$D_{j_1} \cdot e = \cdots = D_{j_{r-1}} \cdot e = 0$, 
$D_{j_-} \cdot e<0$ 
and $D_{j_+} \cdot e > 0$.  
Let $i_{\tdelta} \colon BG_{\tdelta} \to \tX$ and 
$i_{\delta_+} \colon BG_{\delta_+} \to X_+$ 
denote the inclusions of the corresponding $T$-fixed points 
and let $f_{+,\tdelta} \colon BG_{\tdelta} \to B G_{\delta_+}$ 
denote the map induced on the fixed points: 
\[
\xymatrix{
  x_\tdelta \ar@{=}[r]&  BG_{\tdelta} \ar[r]^{i_\tdelta} \ar[d]_{f_{+,\tdelta}} & \tX \ar[d]^{f_+} \\ 
  x_{\delta_+} \ar@{=}[r]&  BG_{\delta_+} \ar[r]^{i_{\delta_+}} & X_+ 
}
\]
\begin{enumerate}
\item The map $f_{+,\tdelta}$ 
exhibits 
$BG_{\tdelta}$ as a $\bmu_l$-gerbe over $BG_{\delta_+}$, 
where $l = {-D_{j_-}} \cdot e$.
\item We have:
  \[
  (f_{+,\tdelta})_\star (i_{\tdelta})^\star L(p,n) =
  \begin{cases}
    (i_{\delta_+})^\star L_+(p) (R_{j_{-}}^+)^{(p\cdot e + n)/l} 
&  \text{if $l$ divides $p\cdot e + n$;} \\
    0 & \text{otherwise.} 
  \end{cases}
  \]
\item   Let $g$ be a Laurent polynomial in $m+1$ variables.  Then:
  \[
  (f_{+,\tdelta})_\star (i_\tdelta)^\star 
L(p,n) 
g\big(\tR_1,\ldots,\tR_{m+1}\big)
  =
  (i_{\delta_+})^\star 
  \frac{1}{l}
  \sum_{t\in \cT} 
  L_+(p) t^{p\cdot e + n} 
  g\big(t^{-l_1} R_1^+,\ldots,t^{-l_m} R_m^+,t\big)
  \]
\end{enumerate}
\end{proposition}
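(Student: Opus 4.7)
The plan is to handle the three parts in order, with parts (2) and (3) building on the gerbe structure established in part (1).

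For part (1), I would describe the isotropy groups explicitly as kernels of characters. An element $(g,\lambda) \in K \times \Cstar$ lies in $G_\tdelta$ precisely when $\tD_i(g,\lambda) = 1$ for all $i \in \tdelta$. For $i = j_-$ (where $D_{j_-} \cdot e < 0$ so $\tD_{j_-} = D_{j_-} \oplus 0$), this forces $D_{j_-}(g) = 1$; for $i = j_+$ (where $D_{j_+} \cdot e > 0$ so $\tD_{j_+} = D_{j_+} \oplus (-D_{j_+} \cdot e)$), it forces $D_{j_+}(g) \lambda^{-D_{j_+} \cdot e} = 1$; for the wall indices $j_1,\ldots,j_{r-1}$, it forces $D_{j_k}(g) = 1$. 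Combining these, one checks directly that $\phi_+(g,\lambda) = g\lambda^{-e}$ lies in $G_{\delta_+}$. For surjectivity, given $g_0 \in G_{\delta_+}$, the equation $g = g_0\lambda^e$ together with $D_{j_-}(g) = 1$ forces $\lambda^l = D_{j_-}(g_0)$, which has exactly $l$ solutions. The kernel $\{(\lambda^e,\lambda) : \lambda^l = 1\}$ is isomorphic to $\bmu_l$ via $\lambda \mapsto \lambda$, proving the $\bmu_l$-gerbe structure.

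For part (2), I would use the pushforward formula for a $\bmu_l$-gerbe: for a character $\chi$ of $G_\tdelta$, the pushforward $\pi_\star \chi$ vanishes if $\chi|_{\bmu_l}$ is non-trivial, and otherwise equals the character of $G_{\delta_+}$ to which $\chi$ descends (equivalently, one averages $\chi$ over the fiber). The restriction $(i_\tdelta)^\star L(p,n)$ corresponds to the character $(g,\lambda) \mapsto p(g)\lambda^n$ of $G_\tdelta$, whose restriction to $\bmu_l$ is $\lambda \mapsto \lambda^{p \cdot e + n}$; this is trivial precisely when $l \mid p \cdot e + n$. When this holds, lifting $g_0 \in G_{\delta_+}$ to $(g_0 \lambda^e, \lambda) \in G_\tdelta$ with $\lambda^l = D_{j_-}(g_0)$, the character value is $p(g_0) \lambda^{p \cdot e + n} = p(g_0) \cdot D_{j_-}(g_0)^{(p \cdot e + n)/l}$. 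Matching this with the right-hand side requires recognizing that $i_{\delta_+}^\star L_+(p) = p|_{G_{\delta_+}}$ and $i_{\delta_+}^\star R_{j_-}^+ = D_{j_-}|_{G_{\delta_+}} \cdot e^{\lambda_{j_-}}$, together with a careful tracking of $T$-weights at the fixed point (which enter through the splitting $\theta_+$).

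For part (3), use Proposition \ref{pro:pullback} to rewrite $\tR_i = f_+^\star R_i^+ \cdot \tR_{m+1}^{-l_i}$ for $1 \le i \le m$. Then $g(\tR_1,\ldots,\tR_{m+1})$ becomes a Laurent polynomial in $\tR_{m+1}$ with coefficients in $f_+^\star K_T^0(X_+)$, and the projection formula reduces the computation to $(f_{+,\tdelta})_\star[\chi_{p,n} \cdot \tR_{m+1}^k]$, which is the case handled by part (2). The key combinatorial identity that converts the resulting sum of $\delta$-function contributions into the compact averaging formula is $\frac{1}{l}\sum_{\zeta \in \bmu_l} \zeta^k$, which equals $1$ when $l \mid k$ and $0$ otherwise. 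Writing the averaging in terms of $t \in \cT$ (i.e., $l$-th roots of $R_{j_-}^+|_{x_{\delta_+}}$) matches the claimed formula term by term.

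The main obstacle will be the careful handling of $T$-equivariant structure at the fixed point $x_\tdelta$. The $T$-action on $X_\pm$ and $\tX$ comes through the quotient $T \to \cQ$, and at a $T$-fixed point the weights on fibers of $K$-equivariant line bundles involve the fractional lift of the $T$-action to $K \times \Cstar$; this is why the $l$-th root $(R_{j_-}^+)^{1/l}$ makes sense only in a suitable localization of equivariant $K$-theory. The main calculational care needed is to verify that the $T$-weight $e^{\lambda_{j_-}/l}$ implicit in $(R_{j_-}^+)^{1/l}$ is correctly absorbed when matching the character-theoretic averaging in part (2) with the sum over $\cT$ in the stated formula.
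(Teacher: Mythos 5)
Your parts (1) and (3) take essentially the same route as the paper: for (1) you describe $G_\tdelta$ and $G_{\delta_+}$ as intersections of character kernels and check directly that $\phi_+(g,\lambda)=g\lambda^{-e}$ gives a surjection with kernel $\{(\lambda^e,\lambda):\lambda\in\bmu_l\}$; for (3) you reduce to a monomial $g$, substitute $\tR_i = f_+^\star R_i^+ \cdot \tR_{m+1}^{-l_i}$ via Proposition~\ref{pro:pullback}, and invoke the projection formula together with the $\bmu_l$-averaging identity --- this is exactly how the paper proceeds.

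For part (2), however, there is a genuine gap. Your direct character computation correctly identifies the $\bmu_l$-character $\lambda\mapsto\lambda^{p\cdot e+n}$, correctly detects the vanishing condition $l\nmid p\cdot e+n$, and correctly computes the descended $G_{\delta_+}$-character $g_0\mapsto p(g_0)\,D_{j_-}(g_0)^{(p\cdot e+n)/l}$. But this only identifies the $\Rep(G_{\delta_+})$-component of the answer: the claimed equality is one in $T$-equivariant $K$-theory, $K^0_T(BG_{\delta_+})\cong\ZZ[T]\otimes\Rep(G_{\delta_+})$, and you have not verified that the $T$-weight of $(i_\tdelta)^\star L(p,n)$ agrees with that of $(i_{\delta_+})^\star L_+(p)(R_{j_-}^+)^{(p\cdot e+n)/l}$. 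Your statement $i_{\delta_+}^\star L_+(p)=p|_{G_{\delta_+}}$ elides a nontrivial $\ZZ[T]$-factor coming from the lift of the $T$-action through $\theta_+$, and you explicitly defer the reconciliation (``a careful tracking of $T$-weights\dots'') without carrying it out. This is precisely where the argument is most likely to go wrong: the $T$-weights at $x_\tdelta$ and $x_{\delta_+}$ are determined by different lifts (through $K\times\Cstar$ versus $K$), and matching them is not a routine bookkeeping exercise.

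The paper closes this gap without any explicit weight computation by working entirely inside $T$-equivariant $K$-theory: using Proposition~\ref{pro:pullback}, one has $f_+^\star L_+(p)=L(p,-p\cdot e)$ and $f_+^\star R_{j_-}^+=\tR_{j_-}\tR_{m+1}^l$ as equalities of $T$-equivariant classes, hence
\[
(f_{+,\tdelta})^\star (i_{\delta_+})^\star L_+(p)(R_{j_-}^+)^{(p\cdot e+n)/l}
=(i_\tdelta)^\star L(p,-p\cdot e)\,\tR_{j_-}^{(p\cdot e+n)/l}\,\tR_{m+1}^{p\cdot e+n}
=(i_\tdelta)^\star L(p,n),
\]
where the last step uses $(i_\tdelta)^\star\tR_{j_-}=1$ since $j_-\in\tdelta$. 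The projection formula, combined with the $\bmu_l$-invariance statement $(f_{+,\tdelta})_\star 1=1$ and the vanishing you already established, then gives part (2) with the $T$-equivariant structure handled automatically. If you want to keep your more hands-on computation, you must supplement it with an explicit verification that the $T$-weight of $L(p,n)$ at $x_\tdelta$ equals that of $L_+(p)(R_{j_-}^+)^{(p\cdot e+n)/l}$ at $x_{\delta_+}$ --- which amounts to reproving the fixed-point restriction of Proposition~\ref{pro:pullback} by hand.
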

\begin{proof} 
The stabilizers $G_\tdelta$ and $G_{\delta_+}$ 
are given, as subgroups of $K\times \Cstar$ and $K$, 
by 
\begin{align*} 
G_\tdelta & = \{(g,\lambda) \in K\times \CC^\times : 
\text{$D_j(g) \lambda^{-D_j \cdot e} = 1$ for all $j\in \delta_+$, 
$D_{j_-}(g) = 1$} \} \\ 
G_{\delta_+} & = \{ h \in K : \text{$D_j(h) = 1$ for all $j \in \delta_+$}\} 
\end{align*} 
where we regard $D_j$ as a character of $K$. 
The homomorphism $G_\tdelta \to G_{\delta_+}$ is 
induced by $\phi_+ \colon (g,\lambda) \mapsto h= g \cdot \lambda^{-e}$ 
in \eqref{eq:grouphom_f+}. 
The kernel of the homomorphism is 
$\{(\lambda^{e},\lambda): \lambda\in \bmu_l\}$ 
and we obtain an exact sequence:
\[
\xymatrix{
  1 \ar[r] & \bmu_l \ar[r] & G_{\tdelta} \ar[r] & G_{\delta_+} \ar[r] &1
}
\]
Therefore $f_{+,\tdelta}$ exhibits $BG_{\tdelta}$ as a 
$\bmu_l$-gerbe over $BG_{\delta_+}$. 

For part (2), notice that $(f_{+,\tdelta})_\star$ maps 
a $G_{\tdelta}$-representation to its $\bmu_l$-invariant part. 
The character $(p,n)\in \Hom(K\times \Cstar,\Cstar)$ induces 
a $\bmu_l$-character $\lambda \mapsto \lambda^{p\cdot e + n}$ 
via the inclusion 
$\bmu_l \subset G_{\tdelta} \subset K\times \Cstar$.  
Therefore $(f_{+,\tdelta})_\star (\iota_{\tdelta})^\star L(p,n)$ 
vanishes if $l$ does not divide $p \cdot e + n$. 
On the other hand, Proposition \ref{pro:pullback} 
gives $(f_+)^\star R_{j_-}^+ = \tR_{j_-} \tR_{m+1}^l$ 
and hence, if $l$ divides $p\cdot e +n$, 
\begin{align*} 
(f_{+,\tdelta})^\star (i_{\delta_+})^\star 
L_+(p) (R_{j_-}^+)^{(p\cdot e+n)/l} 
& = (i_{\tdelta})^\star L(p, -p\cdot e) (\tR_{j_-})^{(p\cdot e+n)/l} 
(\tR_{m+1})^{p\cdot e + n} \\ 
& = (i_{\tdelta})^\star L(p,-p \cdot e) (\tR_{m+1})^{p\cdot e + n} 
= (i_{\tdelta})^\star L(p,n). 
\end{align*} 
Therefore the Projection Formula gives 
$(f_{+,\tdelta})_\star  (i_{\tdelta})^\star L(p,n) 
= (i_{\delta_+})^\star 
L_+(p) (R_{j_-}^+)^{(p\cdot e+n)/l}$. 
This proves (2). 

For part (3) it suffices to take $g$ to be a monomial: 
$g(\tR_1,\dots,\tR_{m+1}) = \prod_{i=1}^{m+1} \tR_i^{n_i}$. 
In this case: 
\begin{equation} 
\label{eq:Lpn_g}
L(p,n) g(\tR_1,\dots,\tR_{m+1}) = 
L(p+ \textstyle\sum_{i=1}^{m} n_i D_i, 
n + n_{m+1} - \sum_{i=1}^m n_i k_i ) \otimes e^{\sum_{i=1}^m n_i \lambda_i}
\end{equation} 
Part (2) can be restated as:
\[
(f_{+,\tdelta})_\star  (i_{\tdelta})^\star L(p,n) 
=(i_{\delta_+})^\star \frac{1}{l} \sum_{t\in \cT} 
 L_+(p) t^{p\cdot e +n} 
\]
Combining this with \eqref{eq:Lpn_g} yields (3). 
\end{proof}

\begin{proof}[Proof of Theorem~\ref{thm:Fourier--Mukai}]
  Suppose first that $\delta_- \in \cA_+ \cap \cA_-$.  
Then, as discussed, $\varphi$ gives an isomorphism 
between neighbourhoods of the fixed points corresponding 
to $\delta_-$.  Thus $\FM(e_{\delta_-,\varrho}) = e_{\delta_-,\varrho}$.

Suppose now that $\delta_- \in \cA_-$ but $\delta_- \not \in \cA_+$, 
so that $\delta_- = \{j_1,\ldots,j_{r-1},j_-\}$ with 
$D_{j_1} \cdot e = \cdots = D_{j_{r-1}} \cdot e = 0$ 
and $D_{j_-} \cdot e < 0$. Proposition~\ref{pro:pullback} gives:
\[
(f_-)^\star e_{\delta_-,\varrho} 
L(\hvarrho, 0) 
\prod_{i \not \in \delta_-} \big(1 - \tS_{m+1}^{k_i} \tS_i\big)
\]
where the index $i$ in the product satisfies $i\le m$.  
This restricts to zero at a fixed point $x_{\tdelta} \in \tX$ 
unless $x_{\tdelta}$ is in $f_+^{-1}(x_{\delta_-})$, that is, 
unless $\tdelta$ has the form $\delta_- \cup \{j_+\}$ 
with $D_{j_+} \cdot e > 0$.  
The Localization Theorem in $T$-equivariant $K$-theory~\cite{Coates--Iritani--Jiang--Segal} gives:
\begin{equation}
  \label{eq:actually_a_polynomial}
  (f_-)^\star e_{\delta_-,\varrho} = 
  \sum_{\tdelta}
  (i_{\tdelta})_\star (i_{\tdelta})^\star 
  \left[
    \frac{
    L(\hvarrho,0) 
     \prod_{i \not \in \delta_-} \big(1 - \tS_{m+1}^{k_i} \tS_i\big)
    }
{(1-\tS_{m+1}) 
\prod_{j \not \in \delta_-, j \ne j_+} (1-\tS_j) 
 }
\right]
\end{equation}
where $i$,~$j\le m$ and the sum runs over $\tdelta = \delta_- \cup \{j_+\}$ 
such that $D_{j_+} \cdot e > 0$.  
Restricted to such a $T$-fixed point, 
$\tS_{j_+}$ becomes trivial, so the numerator 
in \eqref{eq:actually_a_polynomial} contains a factor 
$(i_{\tdelta})^\star (1-\tS_{m+1}^{k_{j_+}})$ 
that is divisible by $(i_{\tdelta})^\star (1-\tS_{m+1})$.  
Thus \eqref{eq:actually_a_polynomial} depends polynomially 
on $\tS_{m+1}$. Now: 
\begin{align*}
  (f_+)_\star (f_-)^\star e_{\delta_-,\varrho}
   & =
   \sum_{\delta_+ : \delta_+ | \delta_-}
   (i_{\delta_+})_\star 
   (f_{+,\tdelta})_\star 
   (i_\tdelta)^\star 
   \left[
     \frac{
     L(\hvarrho, 0) 
     \prod_{i \not \in \delta_-} \big(1 - \tS_{m+1}^{k_i} \tS_i\big)
     }{
       (1-\tS_{m+1}) 
\prod_{j \not \in \delta_-, j \ne j_+} (1-\tS_j) 
     }
   \right] \\
   &=
   \sum_{\delta_+ : \delta_+ | \delta_-}
   (i_{\delta_+})_\star (i_{\delta_+})^\star 
    \left[ \frac{1}{l} \sum_{t \in \cT}
     \frac{
L_+(\hvarrho) t^{\hvarrho\cdot e} 
\prod_{i \not \in \delta_-} \big(1 - t^{l_i-k_i} S_i^+\big)
     }
{(1-t^{-1}) \prod_{
       j \not \in \delta_-, j \ne j_+} (1-t^{l_j} S_j^+) }
   \right] 
\end{align*}
where we used part (3) of Proposition~\ref{pro:pushforward}.  
This is:
\[
\sum_{\delta_+ : \delta_+ | \delta_-}
(i_{\delta_+})_\star (i_{\delta_+})^\star 
\left[
\frac{ \frac{1}{l} \sum_{t \in \cT} 
\frac{1-S_{j_{-}}^+}{1-t^{-1}} 
\cdot 
L_+(\hvarrho) t^{\hvarrho\cdot e} 
\cdot 
\prod_{j \not \in \delta_- } 
\big(1 - t^{-k_j} S_j^+\big)
} 
{
\prod_{j \not \in \delta_+} (1 - S_j^+)
}
\right].  
\]
Applying the Localization Theorem again gives the result. 
Here we need to check that the restriction of 
\[
\frac{1-S_{j_{-}}^+}{1-t^{-1}} 
\cdot 
\prod_{j \not \in \delta_-} 
\big(1 - t^{-k_j} S_j^+\big)
\]
to the fixed point corresponding to $\delta \in \cA_+ \cap \cA_-$ 
vanishes. If there exists $j\in \delta$ with $j\notin \delta_-$ 
and $D_j \cdot e \le 0$ then  the restriction vanishes since 
$i_\delta^\star S_j^+= 1$. 
Otherwise one has $\delta \setminus \delta_- \subset M_+$. 
In this case $j_-\in \delta$ and there exists $j_0\in \delta 
\cap M_+$. Thus the restriction contains the factor 
\[
i_\delta^\star \left[ \frac{1-S_{j_-}^+}{1-t^{-1}} 
(1 - t^{-D_{j_0} \cdot e} S_{j_0}^+) \right] 
=  i_\delta^\star 
\left[ (1-S_{j_-}^+) \frac{1-t^{-D_{j_0} \cdot e}}{1-t^{-1}} \right] 
\]
which vanishes.
\end{proof}

\subsection{The Fourier--Mukai Transform Matches With Analytic Continuation} 
\label{sec:FM_match}
We now show that the analytic continuation formula 
in Theorem~\ref{thm:analytic_continuation} matches 
with the Fourier--Mukai transform in Theorem~\ref{thm:Fourier--Mukai}. 
More precisely we show: 
\begin{theorem} 
\label{thm:UH-FM} 
Let $\UU_H$ be the linear transformation in \S \ref{sec:U} 
given by the analytic continuation of $H$-functions. 
Then $\UU_H$ induces a map $\UU_H \colon 
H^{\bullet\bullet}_{T}(IX_-) \to H^{\bullet\bullet}_{T}(IX_+)$ 
and the following diagram commutes: 
\begin{equation} 
\label{eq:FM_UH} 
\begin{aligned}
  \xymatrix{
    K_T^0(X_-) \ar[r]^{\FM} \ar[d]_{\tch} & K_T^0(X_+) \ar[d]_{\tch} \\ 
    H^{\bullet\bullet}_{T}(IX_-) \ar[r]^{\UU_H} & H^{\bullet\bullet}_{T}(IX_+)
  }
\end{aligned}
\end{equation}  
\end{theorem}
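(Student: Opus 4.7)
The plan is to verify the commutativity of \eqref{eq:FM_UH} by evaluating both compositions on the localized fixed-point basis $\{e_{\delta_-,\varrho}\}$ of $K_T^0(X_-)$ introduced in \S\ref{sec:basis_K-theory}, and matching the results in the fixed-point basis of $H^{\bullet\bullet}_T(IX_+)$. First I would record the restriction of the orbifold Chern character to each $T$-fixed point $(\delta_-,f_-)$ of $IX_-$: from the Koszul form of $e_{\delta_-,\varrho}$ and the definition of $\tch$, one gets a simple closed expression of the shape
\[
i_{(\delta_-,f_-)}^\star \tch(e_{\delta_-,\varrho}) = e^{2\pi\tti \hvarrho \cdot f_-}e^{\theta_-(\hvarrho)(\delta_-)} \prod_{j \notin \delta_-}\bigl(1 - e^{2\pi\tti D_j \cdot f_-}e^{-u_j(\delta_-)}\bigr),
\]
and non-zero restrictions occur only for $(\delta_-,f_-)$ with $D_j \cdot f_- \in \ZZ$ for $j \in \delta_-$. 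An analogous formula holds at $(\delta_+,f_+)$ on the $X_+$-side. Since both $\UU_H$ (by \eqref{eq:UH}) and $\tch\circ\FM$ are determined by their restrictions to inertia fixed points, it suffices to compare these restrictions case by case.

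In the trivial case $\delta_- \in \cA_+\cap \cA_-$, Theorem \ref{thm:Fourier--Mukai} gives $\FM(e_{\delta_-,\varrho}) = e_{\delta_-,\varrho}$ and the first sum in \eqref{eq:UH} reproduces exactly the identity restriction, so the two sides agree. The heart of the argument is the case $\delta_- = \{j_1,\dots,j_{r-1},j_-\} \notin \cA_+$, with $D_{j_-}\cdot e = -l < 0$. Here the key structural observation is that the sum $\frac{1}{l}\sum_{t\in \cT}$ from Theorem \ref{thm:Fourier--Mukai}, after applying $\tch$ and restricting to $(\delta_+,f_+)$, becomes $\frac{1}{l}\sum_{\zeta \in \bmu_l} f(\zeta e^{2\pi\tti \alpha/l})$ with $\alpha := D_{j_-}\cdot f_+ + u_{j_-}(\delta_+)/(2\pi\tti)$, because $\tch$ sends $(R_{j_-}^+)^{n/l}$ (when $l \mid n$) at $(\delta_+,f_+)$ to $\exp\bigl(\tfrac{n}{l}(u_{j_-}(\delta_+)+2\pi\tti D_{j_-}\cdot f_+)\bigr)$. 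Each $\zeta \in \bmu_l$ corresponds bijectively, via its effect on the fractional part of $D_{j_-}\cdot f_-$, to a unique sector $f_- \in \KK_-/\LL$ such that $(\delta_+,f_+)|(\delta_-,f_-)$ in the sense of Definition \ref{def:next_to_IX}, so the set $\cT$ parameterizes exactly the summation range in $\UU_H$.

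The main obstacle — and the bulk of the remaining work — is to verify that the contribution of each $\zeta$, namely the evaluation of
\[
\frac{1-S_{j_-}^+}{1-t^{-1}} \cdot L_+(\hvarrho)t^{\hvarrho\cdot e}\cdot \prod_{\substack{j\notin \delta_-\\ D_j\cdot e < 0}}(1-S_j^+)\cdot \prod_{\substack{i\notin \delta_-\\ D_i\cdot e \ge 0}}(1-t^{-D_i\cdot e}S_i^+)
\]
at the corresponding $\zeta$, divided by the Euler class $e_T(N_{(\delta_+,f_+)})$, coincides with $C_{\delta_+,f_+}^{\delta_-,f_-}\cdot i_{(\delta_-,f_-)}^\star \tch(e_{\delta_-,\varrho})$. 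This is a direct but delicate manipulation: one uses the identity $1 - e^{2\pi\tti x} = -2\tti\sin(\pi x)\,e^{\pi\tti x}$ to turn each algebraic factor into a sine, Lemma \ref{lem:weights} and Corollary \ref{cor:weights} to transport $u_j(\delta_+) \leftrightarrow u_j(\delta_-)$ and to match $\sigma_+(\delta_+)$ and $\sigma_-(\delta_-)$ in the exponential prefactors, and careful bookkeeping of the $e^{\pi\tti w \cdot(\ldots)}$ phase coming from the path $\gamma$ of analytic continuation in Figure \ref{fig:path_ancont}. Once the sine ratios line up, the Euler-class denominators on both sides cancel against the Koszul factors $\prod_{j \notin \delta_\pm}(1-\cdots)$, giving the identity. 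Finally, the resulting equality shows in particular that $\UU_H$ preserves the integral lattice structure in the fixed-point basis, i.e.\ restricts to a map $H^{\bullet\bullet}_T(IX_-)\to H^{\bullet\bullet}_T(IX_+)$ as claimed.
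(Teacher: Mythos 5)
Your proposal is correct and follows essentially the same strategy as the paper's proof: evaluate both compositions on the fixed-point basis $\{e_{\delta_-,\varrho}\}$ and compare restrictions at each $T$-fixed point of $IX_+$, handle the trivial case $\delta_-\in\cA_+\cap\cA_-$ first, then in the main case use the bijection between $t\in\cT$ and the sectors $f_-$ with $(\delta_+,f_+)|(\delta_-,f_-)$, and match coefficients via the sine identity and Lemma~\ref{lem:weights}. The paper packages exactly this comparison as Lemma~\ref{lem:chernchar_matching} (equations~\eqref{eq:t_chern}, \eqref{eq:ch_e_deltavarrho}, \eqref{eq:coefficient_match}), which is what your "direct but delicate manipulation" paragraph describes.

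Two small corrections. First, there is a sign error in your restriction formula: since $\tch(S_j^-)|_{X_-^{f_-}} = e^{-2\pi\tti D_j\cdot f_-}e^{-u_j}$, the Koszul factor should read $\prod_{j\notin\delta_-}\bigl(1-e^{-2\pi\tti D_j\cdot f_-}e^{-u_j(\delta_-)}\bigr)$. Second, the Euler classes $e_T(N_{\delta_+,f_+})$ appearing in the definition \eqref{eq:UH} of $\UU_H$ do not cancel against the Koszul factors: they are there only as localization bookkeeping, so that the restriction of $\UU_H(\alpha)$ to a fixed point $(\delta_+,f_+)$ is precisely the coefficient of $\fun_{\delta_+,f_+}/e_T(N_{\delta_+,f_+})$. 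The substantive matching is purely between the two Koszul products, the exponential phases, and the sine prefactors, and that is exactly what \eqref{eq:ch_e_deltavarrho} and \eqref{eq:coefficient_match} encode.
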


We start by computing the Chern characters
of certain line bundles. It is easy to see that: 
\begin{align*} 
\tch(L_\pm(\hvarrho)) & = \bigoplus_{f\in \KK_\pm/\LL} 
e^{2\pi\tti\hvarrho\cdot f} e^{\theta_\pm(\hvarrho)} \fun_f\\
\tch(S_j^\pm) & = \bigoplus_{f\in \KK_\pm/\LL} 
e^{-2\pi\tti D_j \cdot f} e^{-u_j} \fun_f
\end{align*} 
In view of this, we define 
\[
\tch(t) := \bigoplus_{f\in \KK_+/\LL} 
\zeta e^{2\pi\tti D_{j_-} \cdot f/l} e^{u_{j_-}/l}\fun_f 
\]
for $t= \zeta (R_{j_-}^+)^{1/l}\in \cT$ appearing 
in Theorem \ref{thm:Fourier--Mukai}. 
Here we fix lifts $\KK_+/\LL \to \KK_+$, 
$\KK_-/\LL \to \KK_-$ as in Notation \ref{not:lift} 
and identify $f \in \KK_+/\LL$ with its lift 
in $\KK_+$.

\begin{lemma} 
\label{lem:chernchar_matching} 
Suppose that $(\delta_+,f_+)$ indexes a $T$-fixed point on $X_+$, 
that $(\delta_-,f_-)$ indexes a $T$-fixed point on $X_-$,
and that $(\delta_+,f_+)|(\delta_-,f_-)$. 
Let $j_-\in \delta_-$ be the unique index such that $D_{j_-} \cdot e<0$ 
and write $l = {-D_{j_-}} \cdot e$. 
Setting $t = e^{- 2\pi\tti D_{j_-} \cdot f_-/l} 
(R_{j_-}^+)^{1/l}$, we have:
\begin{align}
\label{eq:t_chern}
\begin{split}  
i_{(\delta_+,f_+)}^\star 
\tch\left(L_+(\hvarrho) t^{\hvarrho\cdot e}\right) & = 
i_{(\delta_-,f_-)}^\star \tch\left(L_-(\hvarrho)\right)
\\ 
i_{(\delta_+,f_+)}^\star
\tch \left(S_j^+ t^{-D_j \cdot e} \right) 
& =  i_{(\delta_-,f_-)}^\star 
\tch(S_j^-)
\end{split} 
\end{align} 
We also have:  
\begin{align} 
\label{eq:ch_e_deltavarrho}
i_{(\delta_+,f_+)}^\star
\tch\left[ 
L_+(\hvarrho) t^{\hvarrho\cdot e} 
\prod_{j\notin \delta_-} (1- t^{-D_j \cdot e} S_j^+) 
\right] 
& = 
i_{(\delta_-,f_-)}^\star 
\tch(e_{\delta_-,\varrho}) \\
\intertext{and:}
\label{eq:coefficient_match}
 i_{(\delta_+,f_+)}^\star \orbich \left[ \frac{1-S_{j_{-}}^+}{l (1-t^{-1})}
    \cdot \prod_{\substack{j\notin \delta_{-}\\
        D_j \cdot e <0}}\frac{1-S_j^+}{1-S_j^+t^{{-D_j} \cdot e}}
  \right]
& =  C_{(\delta_+,f_+)}^{(\delta_-,f_-)}  
\end{align} 
where $C_{(\delta_+,f_+)}^{(\delta_-,f_-)}$ 
are the coefficients appearing in Theorem \ref{thm:analytic_continuation}. 
\end{lemma}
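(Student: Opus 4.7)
The plan is to prove all three statements by restricting both sides to an arbitrary $T$-fixed point $x_{(\delta_+,f_+)}$ of the inertia stack $IX_+$ and comparing by direct computation. The two main tools are the explicit restriction formulas
\[
 \tch(L_+(\hvarrho))\big|_{f_+} = e^{2\pi\tti\, \hvarrho\cdot f_+}\, e^{\theta_+(\hvarrho)}, \qquad
 \tch(R_j^+)\big|_{f_+} = e^{2\pi\tti\, D_j\cdot f_+}\, e^{u_j},
\]
together with Lemma~\ref{lem:weights} and equation~\eqref{eq:weights_theta}, which convert restrictions at $x_{\delta_+}$ into restrictions at $x_{\delta_-}$. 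The single identity driving everything is that for $t = e^{-2\pi\tti\, D_{j_-}\cdot f_-/l}(R_{j_-}^+)^{1/l}$ one has $\tch(t)|_{f_+} = e^{2\pi\tti\, D_{j_-}\cdot(f_+-f_-)/l}\, e^{u_{j_-}/l}$.

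First I would treat \eqref{eq:t_chern}. Multiplying the formulas for $\tch(L_+(\hvarrho))$ and $\tch(t^{\hvarrho\cdot e})$, restricting to $x_{\delta_+}$, and applying \eqref{eq:weights_theta} with $p=\hvarrho$, one sees that the term $(\hvarrho\cdot e)\, u_{j_-}(\delta_+)/l$ from $\tch(t^{\hvarrho\cdot e})$ exactly cancels the correction $\frac{\hvarrho\cdot e}{D_{j_-}\cdot e}\, u_{j_-}(\delta_+)$ supplied by \eqref{eq:weights_theta}, because $l = -D_{j_-}\cdot e$; what remains is $e^{i_{\delta_-}^\star \theta_-(\hvarrho)}$. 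Writing $f_+-f_- = \alpha e$ using Definition~\ref{def:next_to_IX}, the phase $2\pi\tti\, \hvarrho\cdot f_+ + 2\pi\tti\, (\hvarrho\cdot e)\, D_{j_-}\cdot(f_+-f_-)/l$ collapses to $2\pi\tti\, \hvarrho\cdot f_-$, yielding the first line of \eqref{eq:t_chern}. The second line follows by the same pattern applied to $\tch(S_j^+ t^{-D_j\cdot e})$; Lemma~\ref{lem:weights} then converts $u_j(\delta_+)$ into $u_j(\delta_-)$. Equation \eqref{eq:ch_e_deltavarrho} is now immediate from multiplicativity of $\tch$ applied to the factorisation $e_{\delta_-,\varrho} = L_-(\hvarrho) \prod_{i\notin\delta_-}(1-S_i^-)$, one factor at a time.

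The main obstacle is \eqref{eq:coefficient_match}, where the chief difficulty is accurate bookkeeping of phase factors. I would set $\omega_j := u_j(\delta_+)/(2\pi\tti) + D_j\cdot f_+$, $\omega_j' := u_j(\delta_-)/(2\pi\tti) + D_j\cdot f_-$, and $\tilde\omega := u_{j_-}(\delta_+)/(2\pi\tti) + D_{j_-}\cdot(f_+-f_-)$, and apply $1-e^{-2\pi\tti x} = 2\tti\sin(\pi x)\,e^{-\pi\tti x}$ to each factor $\tch(1 - X)$ at the fixed point. Lemma~\ref{lem:weights} gives $\omega_j' - \omega_j = (D_j\cdot e)\tilde\omega/l$ for $j\ne j_-$, producing exactly the sine quotients $\sin\pi\omega_j/\sin\pi\omega_j'$ appearing in $C_{\delta_+,f_+}^{\delta_-,f_-}$, while the $j_-$ factor gives $\sin\pi\omega_{j_-}/(l\sin(\pi\tilde\omega/l))$. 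Since $\omega_{j_-}-\tilde\omega = D_{j_-}\cdot f_- \in \ZZ$, one can replace $\sin\pi\omega_{j_-}$ by $(-1)^{D_{j_-}\cdot f_-}\sin\pi\tilde\omega$. Collecting the accumulated $e^{-\pi\tti x}$ exponentials and using $\sum_{j:\,D_j\cdot e<0,\,j\ne j_-} D_j\cdot e = -1-w+l$ (from the definition of $w$ together with the Calabi--Yau condition $\sum_j D_j\cdot e = 0$), the total phase simplifies to $-\pi\tti\, D_{j_-}\cdot f_- + \pi\tti\, w\tilde\omega/(D_{j_-}\cdot e)$. Since $e^{-\pi\tti\, D_{j_-}\cdot f_-} = (-1)^{D_{j_-}\cdot f_-}$ cancels the sign from the sine identity, one recovers precisely $C_{\delta_+,f_+}^{\delta_-,f_-}$, completing the verification.
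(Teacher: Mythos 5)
Your proposal is correct and follows essentially the same route as the paper's proof: both establish \eqref{eq:t_chern} by direct restriction using Lemma~\ref{lem:weights} and \eqref{eq:weights_theta}, deduce \eqref{eq:ch_e_deltavarrho} from multiplicativity of $\tch$, and verify \eqref{eq:coefficient_match} by writing each factor $1-e^{-2\pi\tti x}$ as $2\tti\sin(\pi x)e^{-\pi\tti x}$ and collecting the sine quotients and phase exponentials. Your bookkeeping of the phase — including the identity $\omega_j'-\omega_j = (D_j\cdot e)\tilde\omega/l$, the sum $\sum_{j\ne j_-,\, D_j\cdot e<0}D_j\cdot e = -1-w+l$, and the cancellation of $(-1)^{D_{j_-}\cdot f_-}$ against the sign from replacing $\sin\pi\omega_{j_-}$ by $\sin\pi\tilde\omega$ — reproduces exactly the calculation in the paper.
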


\begin{proof}
This is just a calculation. 
Recall from Notation \ref{not:lift} that 
$f_- = f_+ + \alpha e$ for some $\alpha \in \QQ$. 
Then $D_j \cdot (f_+ - f_-) = {- \alpha} D_j \cdot e$ 
and $D_{j_-} \cdot (f_+ - f_-) = l \alpha$.  
The formulae \eqref{eq:t_chern} easily follow from 
Lemma \ref{lem:weights} and \eqref{eq:weights_theta}. 
The formula \eqref{eq:ch_e_deltavarrho} is an easy consequence of 
\eqref{eq:t_chern}. 
To see \eqref{eq:coefficient_match}, we calculate, using \eqref{eq:t_chern}, 
\begin{align*} 
\text{LHS} & = \frac{1}{l} 
\frac{1- e^{-u_{j_-}(\delta_+) - 2\pi \tti(D_{j_-} \cdot f_+)}}
{1- e^{- \frac{1}{l}( u_{j_-}(\delta_+) + 2\pi\tti D_{j_-} \cdot (f_+-f_-))}} 
\prod_{\substack{j\notin \delta_-\\ D_j \cdot e<0}}
\frac{1- e^{-u_j(\delta_+) - 2\pi\tti D_j\cdot f_+}}
{1- e^{-u_j(\delta_-) - 2\pi\tti D_j \cdot f_-}} \\ 
& = \frac{1}{l} 
\frac{\sin \pi 
        \left( \frac{u_{j_-}(\delta_+)}{2\pi\tti} + D_{j_-} \cdot (f_+-f_-)
        \right)}
{\sin \frac{\pi}{l} \left( \frac{u_{j_-}(\delta_+)}{2\pi\tti} 
+ D_{j_-}\cdot (f_+ - f_-) \right) }
\prod_{\substack{j\notin \delta_- \\ D_j \cdot e<0}}
\frac{\sin \pi \left( \frac{u_j(\delta_+)}{2\pi\tti} + D_j \cdot f_+\right) }
{\sin \pi \left ( \frac{u_j(\delta_-)}{2\pi\tti} + D_j \cdot f_- \right)} \\ 
& \quad \times e^{
- \frac{1}{2}(1-\frac{1}{l}) (u_{j_-}(\delta_+ ) + 2\pi\tti 
D_{j_-}\cdot (f_+-f_-) ) +  
\sum_{j\notin \delta_-, D_j\cdot e<0} 
\left( 
\frac{1}{2}(u_j(\delta_-) - u_j(\delta_+)) + \pi\tti  D_j \cdot (f_- - f_+)
\right)}
\end{align*} 
where we used the fact that $D_{j_-} \cdot f_- \in \ZZ$. 
Using Lemma \ref{lem:weights} again to calculate the 
exponential factor, we arrive at the expression for 
$C_{(\delta_+,f_+)}^{(\delta_-,f_-)}$ 
in Theorem \ref{thm:analytic_continuation}. 
\end{proof} 

\begin{proof}[Proof of Theorem \ref{thm:UH-FM}] 
We first show that the commutative diagram holds over $\hS_T$. 
Then it follows that $\UU_H$ has a non-equivariant 
limit, as $\FM$ does. 
Consider the element $e_{\delta,\varrho} 
\in K_T^0(X_-)$ with $\delta \in \cA_+ \cap \cA_-$. 
Theorem \ref{thm:Fourier--Mukai} and the definition 
\eqref{eq:UH} of $\UU_H$ show that 
\[
\tch( \FM ( e_{\delta,\varrho}) ) = \tch(e_{\delta,\varrho}) 
= \UU_H ( \tch(e_{\delta,\varrho})). 
\]
Consider now $e_{\delta_-,\varrho}\in K_T^0(X_-)$ for 
$\delta_-\in \cA_-\setminus \cA_+$. 
It is clear that $\tch(\FM(e_{\delta_-,\varrho}))$ is supported only 
on fixed points $x_{(\delta_+,f_+)}\in IX_+$ such that 
$\delta_+| \delta_-$. 
By the definition \eqref{eq:UH} of $\UU_H$, it suffices 
to show that:
\begin{equation} 
\label{eq:i_ch_FM}
i_{(\delta_+,f_+)}^\star \tch\left(\FM(e_{\delta_-,\varrho})\right) 
= \sum_{\substack{f_-\in \KK_-/\LL: \\ 
(\delta_+,f_+) | (\delta_-,f_-)}} 
C_{\delta_+,f_+}^{\delta_-,f_-} \cdot 
i_{(\delta_-,f_-)}^\star \tch(e_{\delta_-,\varrho})
\end{equation} 
We may rewrite the result in 
Theorem \ref{thm:Fourier--Mukai} as 
\begin{equation} 
\label{eq:FM_restated} 
\FM(e_{\delta_-,\varrho}) = 
\frac{1}{l} 
\sum_{t\in \cT} 
\left( 
\frac{1-S_{j_-}^+}{1-t^{-1}} 
\prod_{\substack{j\notin \delta_- \\ D_j \cdot e<0}} 
\frac{1-S_j^+}{1-t^{-D_j\cdot e} S_j^+} 
\cdot 
L_+(\hvarrho) t^{\hvarrho\cdot e} 
\prod_{i\notin \delta_-} 
(1- t^{-D_i \cdot e} S_i^+)
\right) 
\end{equation} 
We have a one-to-one correspondence between the 
 index of summation $f_-$ in \eqref{eq:i_ch_FM} 
and the index of summation $t\in \cT$ in \eqref{eq:FM_restated} 
given by 
\[
f_- \ \longleftrightarrow \ t = e^{-2\pi\tti D_{j_-}\cdot f_-/l} (R_{j_-}^+)^{1/l}
\]
where $j_-\in \delta_-$ is the unique element satisfying 
$D_{j_-} \cdot e<0$ and $l = - D_{j_-}\cdot e$. 
Therefore \eqref{eq:i_ch_FM} follows from \eqref{eq:FM_restated}, 
\eqref{eq:ch_e_deltavarrho} and \eqref{eq:coefficient_match}. 
The Theorem is proved.
\end{proof}

\subsection{Completing the Proof of Theorem \ref{thm:U}} 
\label{sec:end_of_the_proof} 
Combining the commutative diagrams \eqref{eq:UH_U} 
and \eqref{eq:FM_UH}, we obtain the commutative diagram 
\eqref{eq:FM_U} in Theorem \ref{thm:U}. 
Since the Fourier--Mukai transformation $\FM$ can be defined 
non-equivariantly, $\UU$ also admits a non-equivariant limit. 
Finally we show that $\UU$ is symplectic, i.e.~that $(\UU(-z) \alpha, \UU(z)\beta) 
= (\alpha,\beta)$ for all $\alpha$,~$\beta$. 
Since $\FM$ is induced by an equivalence of derived categories~\cite{Coates--Iritani--Jiang--Segal}, 
it preserves the Euler pairing $\chi(E,F)$ 
given in \eqref{eq:Euler_pairing}. 
The proof of Proposition \ref{prop:K_framing_pairing} 
shows that the vertical maps $\tPsi_\pm$ in \eqref{eq:FM_U} 
preserve the pairing in the sense that: 
\[
\left(\tPsi_\pm(E)|_{z\to e^{-\pi\tti} z}, \tPsi_\pm(F) \right) 
=  \chi_z(E,F).  
\]
The commutative diagram \eqref{eq:FM_U} now shows that 
$\UU$ is symplectic.  This completes the proof of Theorem~\ref{thm:U}.

\begin{remark}
  The reader who would prefer to prove that the transformation $\UU$ is symplectic without using the machinery of derived categories can argue as follows.  It suffices to show 
that the transformation $\FM$ preserves the Euler pairing on the equivariant $K$-groups. 
Using Grothendieck duality, one finds that the adjoint of $\FM$ 
with respect to the Euler pairing is given by 
\[
\FM^*(\alpha) =  f_{-\star} ( (f_-^\star K_-^{-1})\tK \otimes f_+^\star \alpha) 
\]
(see, for example,~\cite[Lemma 1.2]{Bondal--Orlov:semiorth})
where $K_-$ and $\tK$ are respectively the canonical line bundles
of $X_-$ and $\tX$. 
It suffices to see that $\FM^*$ corresponds to 
the analytic continuation along the path inverse 
to $\gamma$ in Figure \ref{fig:path_ancont}. 
Consider the Fourier--Mukai transformation in the 
reverse direction: 
\[
\FM' = f_{-\star} f_+^\star \colon K_T^0(X_+) \to K_T^0(X_-)
\]
Exchanging the roles of $X_+$ and $X_-$ in Theorem \ref{thm:U}, 
we find that $\FM'$ corresponds to the analytic continuation 
along the path $\gamma'$ in Figure \ref{fig:path_rev_ancont}. 
We claim that the difference between the paths $\gamma^{-1}$ 
and $\gamma'$ exactly matches with the difference between 
the Fourier--Mukai transformations $\FM^*$ and $\FM'$. 
Using $\tK = \tS_1 \cdots \tS_{m+1}$, $K_- = S_1^- \cdots S_m^-$ 
and Proposition \ref{pro:pullback}, we have 
\[
(f_-^\star K_-^{-1})\tK  = L(0,w) 
= (f_+^\star L_+(-w p)) \otimes (f_-^\star L_-(wp)) 
\] 
for $p\in \LL$ with $p\cdot e = 1$. Therefore 
\[
\FM^* = L_-(wp) \circ \FM' \circ L_+(-wp).  
\]
On the other hand, one can check that the monodromy of the $K$-theoretic flat 
section $\frs(E)$ with respect to the shift $T_{-2\pi\tti w p} \colon 
\log \sfy^d \mapsto \log\sfy^d - 2\pi\tti w (p\cdot d)$ 
corresponds to $E \mapsto L_\pm(wp)\otimes E$ 
(cf.~the Galois action in \cite[Proposition 2.10 and equation~61]{Iritani}), 
and we have the equality
\[
\gamma^{-1} = T_{-2\pi\tti w p} \circ \gamma' \circ T_{2\pi\tti w p}
\]
in the fundamental groupoid of $\{\sfy^e \in \Cstar : \sfy^e \neq \frc\}$.  This shows that $\FM^*$ corresponds to the analytic continuation along $\gamma^{-1}$.

\begin{figure}[htbp]  
\centering
\includegraphics[bb=131 545 483 729]{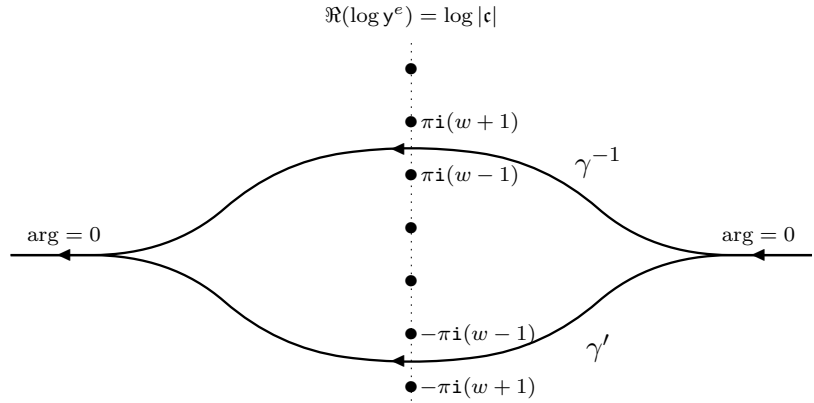} 
\caption{The paths $\gamma'$ and $\gamma^{-1}$}
\label{fig:path_rev_ancont}
\end{figure}
\end{remark}

\section{Toric Complete Intersections}
\label{sec:CRC_ci}

We now turn to the Crepant Transformation Conjecture for toric complete intersections.  Consider toric Deligne--Mumford stacks $X_\pm$ of the form $\big[\CC^m /\!\!/_\omega K \big]$, where $K = (\Cstar)^r$ is a complex torus, and consider a $K$-equivalence $\varphi \colon X_+ \dashrightarrow X_-$ determined by a wall-crossing in the space of stability conditions~$\omega$ as in~\S\ref{sec:wall-crossing}.   We use notation as there, so that $\LL = \Hom(\Cstar,K)$ is the lattice of cocharacters of $K$; the space of stability conditions is $\LL^\vee \otimes \RR$; and the birational map $\varphi$ is induced by the wall-crossing from a chamber $C_+ \subset \LL^\vee \otimes \RR$ to a chamber $C_- \subset \LL^\vee \otimes \RR$, where $C_+$ and $C_-$ are separated by a wall $W$.  Consider characters $E_1,\ldots,E_k$ of $K$ such that:
\begin{equation}
  \label{eq:conditions_on_line_bundles}
  \parbox{0.92\textwidth}{
    \begin{itemize}
    \item each $E_i$ lies in $W \cap \overline{C_+} = W \cap \overline{C_-}$;
    \item for each $i$, the line bundle $L_{X_+}(E_i) \to X_+$ corresponding to $E_i$ is a pull-back from the coarse moduli space $|X_+|$;
    \item for each $i$, the line bundle $L_{X_-}(E_i) \to X_-$ corresponding to $E_i$ is a pull-back from the coarse moduli space $|X_-|$;
    \end{itemize}
  }
\end{equation}
where $L_{X_\pm}(E_i)$ are the line bundles on $X_\pm$ associated to the character $E_i$ in~\S\ref{sec:basis_K-theory}.  Let:
\begin{align*}
  E_+ := \bigoplus_{i=1}^k L_{X_+}(E_i) &&
  E_- := \bigoplus_{i=1}^k L_{X_-}(E_i)
\end{align*}
Let $s_+$,~$s_-$ be regular sections of, respectively, the vector bundles $E_+ \to X_+$ and $E_- \to X_-$ such that:
\begin{itemize}
\item  $s_+$ and $s_-$ are compatible via $\varphi \colon X_+ \dashrightarrow X_-$;
\item the zero loci of $s_\pm$ intersect the flopping locus of $\varphi$ transversely;
\end{itemize}
and let $Y_+ \subset X_+$, $Y_- \subset X_-$ be the complete intersection substacks defined by $s_+$,~$s_-$.  The birational transformation $\varphi$ then induces a $K$-equivalence $\varphi \colon Y_+ \dashrightarrow Y_-$.  In this section we establish the Crepant Transformation Conjecture for $\varphi \colon Y_+ \dashrightarrow Y_-$.

\subsection{The Ambient Part of Quantum Cohomology}
\label{sec:ambient_part}
Under our standing hypotheses on the ambient toric stacks $X_\pm$, the complete intersections $Y_\pm$ automatically have semi-projective coarse moduli spaces, and so the (non-equivariant) quantum products on $H_{\CR}^\bullet(Y_\pm)$ are well-defined.  Thus we have a well-defined quantum connection
\begin{equation} 
\label{eq:qconn_non_equivariant} 
\nabla = d + z^{-1} \sum_{i=0}^N (\phi_i \star_\tau) d\tau^i
\end{equation}
where $\star_\tau$ is the non-equivariant big quantum product, defined exactly as in~\eqref{eq:qprod_pushforward}.  This is a pencil $\nabla$ of flat connections on the trivial $H_{\CR}^\bullet(Y_\pm)$-bundle over an open set in $H_{\CR}^\bullet(Y_\pm)$; here, as in the equivariant case, $z \in \Cstar$ is the pencil variable, $\tau \in H_{\CR}^\bullet(Y_\pm)$ is the co-ordinate on the base of the bundle, $\phi_0,\dots,\phi_N$ are a basis for $H_{\CR}^\bullet(Y_\pm)$, and $\tau^0,\dots, \tau^N$ are the corresponding co-ordinates of $\tau \in H_{\CR}^\bullet(Y_\pm)$, so that $\tau = \sum_{i=0}^N \tau^i \phi_i$.  We consider now a similar structure on the \emph{ambient part} of $H_{\CR}^\bullet(Y_\pm)$, that is, on:
\[
H_{\amb}^\bullet(Y_\pm) := \im \iota^\star_\pm \subset H_{\CR}^\bullet(Y_\pm)
\]
where $\iota_\pm \colon Y_\pm \to X_\pm$ are the inclusion maps.  If $\tau \in H_{\amb}^\bullet(Y_\pm)$ then the big quantum product $\star_\tau$ preserves $H_{\amb}^\bullet(Y_\pm)$~\cite[Corollary~2.5]{Iritani:periods}, and so \eqref{eq:qconn_non_equivariant} restricts to give a well-defined quantum connection on the ambient part of $H_{\CR}^\bullet(Y_\pm)$.  The restriction of the fundamental solution $L_\pm(\tau,z)$ for \eqref{eq:qconn_non_equivariant}, defined exactly as in \eqref{eq:L_descendant}, gives a fundamental solution $L^\amb_\pm(\tau,z)$ for the quantum connection on the ambient part.

There is also an ambient part of $K^0(Y_\pm)$, given by $K^0_\amb(Y_\pm) := \im \iota_\pm^\star$, and an ambient $K$-group framing (cf.~Definition~\ref{def:K_framing})
\[
\frs \colon K_\amb^0(Y_\pm) \to 
H^\bullet_{\amb}(Y_\pm)\otimes \CC[\log z](\!(z^{-1/k})\!)[\![Q,\tau]\!]
\]
given by
\[
\frs(E)(\tau,z) = \frac{1}{(2\pi)^{\dim {Y_\pm}/2}} 
L^\amb_\pm(\tau,z) z^{-\mu} z^\rho \left( \hGamma_{Y_\pm} \cup 
(2\pi\tti)^{\frac{\deg_0}{2}} \inv^* \tch(E) \right) 
\]
where $\mu$ and $\rho$ are the grading operator and first Chern class for $Y_\pm$, $k \in \N$ is such that the eigenvalues of $k \mu$ are integers, and $\hGamma_{Y_\pm}$ is the non-equivariant $\hGamma$-class of $Y_\pm$.  As in~\S\ref{sec:integral_structure}, the image of $\frs$ is contained in the space of flat sections for the quantum connection on the ambient part of $H_{\CR}^\bullet(Y_\pm)$ which are homogeneous of degree zero.

\subsection{$I$-Functions for Toric Complete Intersections}

Recall from~\S\ref{sec:I_function} that the GIT data for $X_+$ determine a cohomology-valued hypergeometric function $I_+$.  The $I$-function $I_{X_+} := I_+$ is a multi-valued function of $\sfy_1,\ldots,\sfy_r$, depending analytically on $\sfy_r$ and formally on $\sfy_1,\ldots\sfy_{r-1}$, defined near the large-radius limit point $(\sfy_1,\ldots,\sfy_r) = (0,\ldots,0)$ in $\hcM_{\rm reg}$.  The GIT data for the total space of $E_+^\vee$ (regarded as a non-compact toric stack) is obtained from the GIT data for $X_+$ by adding extra toric divisors ${-E_1},\ldots,{-E_k}$.  It is easy to see that the corresponding $I$-function $I_{E_+^\vee}$ is also a multi-valued function of $\sfy_1,\ldots,\sfy_r$, depending analytically on $\sfy_r$ and formally on $\sfy_1,\ldots\sfy_{r-1}$, which is defined near the same large-radius limit point $(\sfy_1,\ldots,\sfy_r) = (0,\ldots,0)$ in $\hcM_{\rm reg}$.  The global quantum connections for $X_+$ and $E_+^\vee$ were constructed, in \S\ref{sec:global_qconn}, using the $I$-functions $I_{X_+}$ and $I_{E_+^\vee}$.  We now introduce a closely-related $I$-function, defined in terms of GIT data for $X_+$ and the characters $E_1,\ldots,E_k$, that will allow us to globalize the quantum connection on the ambient part of $H^\bullet_\CR(Y_\pm)$.

With notation as in~\S\ref{sec:I_function}, except with $u_i$ now denoting the non-equivariant class Poincar\'e-dual to the $i$th toric divisor \eqref{eq:T-invariant_divisor} and with $v_j \in H^2(X_+)$, $1 \leq j \leq k$, given by the non-equivariant first Chern class of the line bundle corresponding to the character $E_j$, define a $H_{\CR}^\bullet(X_+)$-valued hypergeometric series 
$I^{\rm temp}_{X_+,Y_+}(\sigma,x,z) 
\in H^\bullet_{\CR}(X_+) \otimes\CC(\!(z^{-1})\!)[\![Q,\sigma,x]\!]$ by:
\begin{multline*} 
 I^{\rm temp}_{X_+,Y_+}(\sigma,x,z)  = z e^{\sigma/z} 
\sum_{d\in \KK} e^{\sigma\cdot \overline{d}} 
Q^{\overline{d}} \prod_{j\in S} x_j^{D_j\cdot d}
\left( 
  \prod_{j=1}^{m}
  \frac{\prod_{a : \<a\>=\< D_j\cdot d \>, a \leq 0}(u_j+a z)}
  {\prod_{a : \<a \>=\< D_j \cdot d \>, a\leq D_j \cdot d} (u_j+a z)} 
\right) \\
\times 
\left(
  \prod_{j=1}^{k}
  \prod_{a=1}^{E_j \cdot d} (v_j+a z)
\right) 
\fun_{[{-d}]}
\end{multline*}
Note that for each $d \in \KK$ and each $j \in \{1,2,\ldots,k\}$, $E_j \cdot d$ is a non-negative integer.  (The subscript `temp' here again reflects the fact that  this notation for the $I$-function is only temporary: we are just about to change notation, by specializing certain parameters.)  Under our hypotheses \eqref{eq:conditions_on_line_bundles} on the line bundles $L_{X_+}(E_j)$, we have a Mirror Theorem for the toric complete intersection $Y_+$:

\begin{theorem}[\!\!\cite{CCIT:applications}] 
\label{thm:ci_mirror_thm} 
$\iota_+^\star I^{\rm temp}_{X_+,Y_+}(\sigma,x,-z)$ is an 
$\CC[\![Q,\sigma,x]\!]$-valued point on $\cL_{Y_+}$. 
\end{theorem}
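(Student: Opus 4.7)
The plan is to deduce Theorem~\ref{thm:ci_mirror_thm} from the toric Mirror Theorem~\ref{thm:mirror_thm} by the Coates--Givental quantum Lefschetz principle, in the orbifold formulation of \cite{CCIT:applications}. The extra factor $\prod_{j=1}^k \prod_{a=1}^{E_j\cdot d}(v_j + az)$ that distinguishes $I^{\rm temp}_{X_+,Y_+}$ from the ambient $I$-function $I^{\rm temp}_{+}$ should be read as the effect on Givental's cone of an Euler twist by the convex vector bundle $E_+ \to X_+$; this twist is then converted, via a non-equivariant limit together with the transverse section $s_+$, into the hyperplane cut producing $Y_+$.

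First I would set up the $E_+$-twisted Gromov--Witten theory of $X_+$ with a fiberwise $\Cstar$-action on $E_+$ of weight $\chi$. On each moduli space $(X_+)_{0,n,d}$, cap the virtual class with the $\Cstar$-equivariant Euler class of $R\pi_\star \ev^\star E_+$, where $\pi$ is the universal curve and $\ev$ the universal evaluation map, and assemble these into a twisted Lagrangian cone $\cL^{\rm tw}_+ \subset \cH(X_+)\otimes \CC[\chi,\chi^{-1}]$. The hypothesis \eqref{eq:conditions_on_line_bundles} that each $L_{X_+}(E_j)$ descends to the coarse moduli space of $X_+$ means $E_j \cdot f \in \ZZ$ for every $f \in \KK_+$, so that pulling $E_+$ back to an orbifold stable map yields a bundle with trivial monodromy at each marked point and node. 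This is the precise convexity property that makes the orbifold quantum Lefschetz theorem applicable and forces the hypergeometric modification to take the clean polynomial form above, rather than a more complicated product involving fractional ages of $E_+$ along the inertia components.

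Second I would invoke the orbifold quantum Lefschetz theorem of \cite{CCIT:applications}: the multiplicative operator implementing the Euler twist sends $I^{\rm temp}_{+}(-z)$, which lies on $\cL_{X_+}$ by Theorem~\ref{thm:mirror_thm}, to $I^{\rm temp}_{X_+,Y_+}(-z)$, so the latter defines a point on $\cL^{\rm tw}_+$ in the $\chi$-equivariant setting. Then I would pass to the non-equivariant limit $\chi \to 0$: because $s_+$ is transverse, the non-equivariant Euler class of $E_+$ on $(X_+)_{0,n,d}$ coincides with the image under $\iota_{+\star}$ of $[(Y_+)_{0,n,d}]^{\rm vir}$ on ambient classes, and a virtual pullback argument then gives $\iota_+^\star \big(\cL^{\rm tw}_+\bigr|_{\chi=0}\big) \subset \cL_{Y_+}$. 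Applying this to $I^{\rm temp}_{X_+,Y_+}$ yields the desired conclusion.

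The principal obstacle will be the subtle interaction between twisted sectors and the Euler twist in the orbifold setting. In general, twisting by a line bundle on a Deligne--Mumford stack modifies the Givental cone by a sectorwise operator whose exponent at an inertia component involves the age of the line bundle along that component, and only when the age vanishes does one obtain the clean polynomial modification appearing in $I^{\rm temp}_{X_+,Y_+}$. The assumption that each $L_{X_+}(E_j)$ is a pullback from $|X_+|$ forces the age to vanish on every component of $IX_+$, so that the orbifold Lefschetz formula specializes to the classical Coates--Givental statement; once this vanishing is secured, the remainder of the argument is a direct reduction to Theorem~\ref{thm:mirror_thm} together with the standard transverse section analysis of the non-equivariant limit.
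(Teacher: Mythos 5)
This theorem is cited from \cite{CCIT:applications} rather than proved in the paper --- the citation bracket in the theorem header signals that the result is being imported as a black box, and no internal proof is given here (the authors go on in \S\ref{sec:ci_analytic_continuation} to relate $I_{X_+,Y_+}$ to $I_{E_+^\vee}$ by an entirely separate specialization argument, which is what they actually use, but that computation does not reprove Theorem~\ref{thm:ci_mirror_thm}).

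Your sketch reproduces the strategy of the cited reference correctly. Twisting the genus-zero GW theory of $X_+$ by the $\Cstar$-equivariant Euler class of $E_+$, invoking the orbifold quantum Lefschetz theorem to show the twisted $I$-function lies on $\cL^{\rm tw}_+$, and passing to the non-equivariant limit is exactly how that mirror theorem for toric complete intersections is obtained. You also put your finger on the two places where the hypotheses \eqref{eq:conditions_on_line_bundles} are used: the pullback-from-coarse-moduli condition forces the age of each $L_{X_+}(E_j)$ to vanish on every component of $IX_+$, which collapses the sectorwise hypergeometric modification to the clean polynomial factor $\prod_{j}\prod_{a=1}^{E_j\cdot d}(v_j+az)$; and the nef condition $E_j\in W\cap\overline{C_\pm}$ is what gives convexity of $E_+$ along genus-zero stable maps (so that $R^1\pi_\star\ev^\star E_+$ vanishes and the non-equivariant limit of the twisted cone exists). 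Two minor imprecisions worth tightening if you were to write this out: the ``multiplicative operator'' is really the loop-group element $\Delta$ of Coates--Givental/Tseng acting on the entire cone, and the statement you want is that this operator carries $\cL_{X_+}$ to $\cL^{\rm tw}_+$ and carries the untwisted $I$-function to the twisted one; and the $\chi=0$ comparison $e(E_{0,n,d})\cap[(X_+)_{0,n,d}]^{\rm vir}=\sum_{d'}\iota_{+\star}[(Y_+)_{0,n,d'}]^{\rm vir}$ (summing over curve classes $d'$ pushing forward to $d$) is the orbifold extension of the Kim--Kresch--Pantev functoriality, which is the precise form of your ``virtual pullback argument.'' With those wordings fixed your outline is a faithful account of the proof in \cite{CCIT:applications}.
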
 

We define the $I$-function $I_{X_+,Y_+}$ to be the function obtained from $I^{\rm temp}_{X_+,Y_+}$ by the specialization $Q=1$, $\sigma = \sigma_+ := 
\theta_+( \sum_{i=1}^r \sfp^+_i \log \sfy_i)$ where $\theta_+$ is as in \eqref{eq:theta}.  Thus:
\[
I_{X_+,Y_+}(\sfy,z):=
z e^{\sigma_+/z} 
\sum_{d \in \KK_+} 
\sfy^d 
\left(
  \prod_{j=1}^{m}
  \frac{\prod_{a : \<a\>=\< D_j\cdot d \>, a \leq 0}(u_j+a z)}
  {\prod_{a : \<a \>=\< D_j \cdot d \>, a\leq D_j \cdot d} (u_j+a z)} 
\right) 
\left(
  \prod_{j=1}^{k}
  \prod_{a=1}^{E_j \cdot d} (v_j+a z)
\right)  
\fun_{[{-d}]}
\]
where $(\sfy_1,\dots,\sfy_r)$ are as in~\S\ref{sec:I_function}.  Repeating the analysis in Lemma~\ref{lem:I_analyticity} shows that $I_{X_+,Y_+}$, just like $I_{X_+}$ and $I_{E_+^\vee}$, is a multi-valued function of $\sfy_1,\ldots,\sfy_r$ that depends analytically on $\sfy_r$ and formally on $\sfy_1,\ldots\sfy_{r-1}$, defined near the large-radius limit point $(\sfy_1,\ldots,\sfy_r) = (0,\ldots,0)$ in $\hcM_{\rm reg}$.  The arguments in~\S\ref{sec:global_qconn} can now be applied verbatim to $I_{Y_+} := \iota_+^\star I_{X_+,Y_+}$, and thus we construct a global version of the quantum connection on the ambient part $H^\bullet_\amb(Y_+)$, defined over the base $\tcM_+^\circ$.  The analog of Theorem~\ref{thm:global_qconn} holds, with the same proof:

\begin{theorem} 
\label{thm:global_qconn_ci} 
There exist the following data: 
\begin{itemize} 
\item an open subset $\cU_+^\circ\subset \cU_+$ such that 
$P_+ \in \cU_+^\circ$ and that the complement 
$\cU_+ \setminus \cU_+^\circ$ is a discrete set; 
we write $\tcM_+^\circ = \tcM_+|_{\cU_+^\circ}$;  

\item a trivial $H_{\amb}^\bullet(Y_+)$-bundle 
$\bF^+$ over $\tcM_+^\circ(\CC[z])$: 
\[
\bF^+ = H_{\amb}^\bullet(Y_+)\otimes
\cO_{\cU^\circ_+}[z][\![\sfy_1,\dots,\sfy_{r-1}]\!];  
\]
\item a flat connection 
$\bnabla^+ = d + z^{-1} \bA^+(\sfy)$ 
on $\bF^+$ of the form: 
\[
\bA^+(\sfy) = \sum_{i=1}^{\ell_+} B_i(\sfy) \frac{dy_i}{y_i} 
+ \sum_{j\in S_+} C_j(\sfy) dx_j 
\]
with $B_i(\sfy), C_j(\sfy) 
\in 
\End(H_{\amb}^\bullet(Y_+)) \otimes
\cO_{\cU^\circ_+}[\![\sfy_1,\dots,\sfy_{r-1}]\!]$; 
\item a vector field $\bE^+$ on $\tcM_+$, called the Euler vector field, defined by:
\[
\bE^+ = \sum_{i=1}^r \frac{1}{2}(\deg \sfy_i) \sfy_i 
\parfrac{}{\sfy_i}; 
\]
\item a mirror map $\tau_+ \colon \tcM_+ \to H_{\amb}^\bullet(Y_+)$ 
of the form:
\begin{align*}
  \tau_+ = \iota_+^\star \sigma_+ + \ttau_+ &&&
  \ttau_+\in H^\bullet_{\amb}(Y_+)\otimes\cO_{\cU^\circ_+}[\![\sfy_1,\dots,\sfy_{r-1}]\!] \\
  &&& \ttau_+|_{\sfy_1=\cdots = \sfy_r=0}= 0
\end{align*}
\end{itemize} 
such that $\bnabla^+$ equals the pull-back $\tau_+^*\nabla^+$ of the 
(non-equivariant) quantum connection $\nabla^+$ on the ambient part of $H^\bullet_{\CR}(Y_+)$ by $\tau_+$, 
that is:
\begin{align*} 
B_i(\sfy) & = \sum_{k=0}^N \parfrac{\tau_+^k(\sfy)}
{\log y_i} (\phi_k \star_{\tau_+(\sfy)}) && 
1\le i\le \ell_+ \\
C_j(\sfy) & = 
\sum_{k=0}^N \parfrac{\ttau_+^k(\sfy)} 
{x_j} (\phi_k\star_{\tau_+(\sfy)}) &&
j\in S_+ 
\end{align*} 
and that the push-forward of $\bE^+$ by $\tau_+$ 
is the (non-equivariant) Euler vector field $\cE^+$  on the ambient part $H^\bullet_{\amb}(Y_+)$.
Moreover, there exists a global section $\Upsilon^+_0(\sfy,z)$ of 
$\bF^+$ such that 
\[
I_{Y_+}(\sfy,z) = z L_+^\amb(\tau_+(\sfy),z)^{-1} \Upsilon^+_0(\sfy,z) 
\]
where $L_+^\amb(\tau,z)$ is the ambient fundamental solution from \S\ref{sec:ambient_part}
\end{theorem}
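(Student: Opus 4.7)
The strategy is to follow the proof of Theorem~\ref{thm:global_qconn} essentially verbatim, replacing the equivariant $I$-function $I_+$ with $I_{Y_+} := \iota_+^\star I_{X_+,Y_+}$ and replacing $H^\bullet_{\CR,T}(X_+)$ with the ambient part $H^\bullet_{\amb}(Y_+)$. The starting point is the Mirror Theorem for toric complete intersections (Theorem~\ref{thm:ci_mirror_thm}), which asserts that $\iota_+^\star I^{\rm temp}_{X_+,Y_+}(\sigma,x,-z)$ lies on the Givental cone $\cL_{Y_+}$. After the specialization $Q=1$, $\sigma = \sigma_+$, this cone-theoretic input provides the analog of the pointwise mirror-theorem information used in \S\ref{sec:global_qconn}, and Givental's ruling property $\cL_{Y_+} = \bigcup_\tau z L_+^\amb(\tau,-z)^{-1}\cH_+^\amb$ gives the Birkhoff-style relation that will ultimately produce the mirror map and the connection form.

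Next I would check the analytic properties of $I_{Y_+}$, namely the analog of Lemma~\ref{lem:I_analyticity}: the coefficient functions in $\sfy_r$ should be convergent power series analytically continuable to $\cU_+$. The only new ingredient over the proof for $I_+$ is the extra hypergeometric factor $\prod_{j=1}^k \prod_{a=1}^{E_j \cdot d}(v_j + az)$. The crucial point is that $E_j \in W$ by hypothesis~\eqref{eq:conditions_on_line_bundles}, so $E_j \cdot e = 0$; consequently the exponents $E_j \cdot d$ depend on $d \in \KK_+$ only modulo $\QQ e$, the new factors are constant along the direction of summation indexed by $e$, and the GKZ-type ODE satisfied by the restricted series (after reduction as in the proof of Lemma~\ref{lem:I_analyticity}) continues to have singularities only at $\sx = 0,\frc,\infty$. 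Since the setting is non-equivariant, there are no issues with equivariant parameters.

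I would then construct differential operators $P^+_{f,i}$ whose action on $I_{Y_+}$ produces, at $\sfy = 0$, an $\cO$-basis of $H^\bullet_\amb(Y_+)$. Since $H^\bullet_\amb(Y_+) = \iota_+^\star H^\bullet_{\CR}(X_+)$ is generated by pullbacks of classes of the form $F_{f,i}(\theta_+(p_1^+),\ldots,\theta_+(p_{\ell_+}^+))\fun_f$, one can take the operators $\iota_+^\star \circ P^+_{f,i}$ built as in Lemma~\ref{lem:I_derivatives}, but now indexed only by pairs $(f,i)$ selecting a basis of $H^\bullet_\amb(Y_+)$. The same direct computation as in Lemma~\ref{lem:I_derivatives} yields $P^+_{f,i}I_{Y_+}(\sfy,z) = z e^{\iota_+^\star \sigma_+/z}(\iota_+^\star \phi_{f,i} + O(\sfy))$, the extra polynomial factor being regular in $z$ and contributing only to higher-order terms in~$\sfy$.

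Finally, the matrix of derivatives $[z^{-1} P^+_{f,i} I_{Y_+}]$ equals $e^{\iota_+^\star\sigma_+/z}\II_+^Y(\sfy,z)$ with $\II_+^Y = \id + O(\sfy)$; the block LU argument applied to the degree-zero piece $\gamma(\sfy_r,z) := \II_+^Y|_{\sfy_1 = \cdots = \sfy_{r-1}=0}$ produces a Birkhoff factorization $\II_+^Y = \bL_+^{-1}\Upsilon^+$ away from a discrete set in $\cU_+$, yielding the subset $\cU_+^\circ$. From this one reads off the mirror map $\tau_+ = \iota_+^\star\sigma_+ + \ttau_+$, the connection form $\bA^+$, and the global section $\Upsilon_0^+$ exactly as in~\S\ref{sec:global_qconn}, and the identification of $\bnabla^+$ with $\tau_+^\star\nabla^+$, together with the Euler-field compatibility, follows from homogeneity. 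I expect the main (mild) technical point to be confirming that the operators $P^+_{f,i}$ can be chosen to yield a basis of the \emph{ambient} cohomology rather than of all of $H^\bullet_\CR(Y_+)$; this is not quite automatic, since the restriction $\iota_+^\star$ need not be surjective and the natural generators coming from Assumption~\ref{assump:generation} for $X_+$ restrict to spanning sets in the ambient part by the definition $H^\bullet_\amb(Y_+) = \im\iota_+^\star$, so the argument goes through after selecting a maximal linearly independent subset of the restricted generators.
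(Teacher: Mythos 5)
Your proposal follows exactly the route the paper itself takes: the paper simply states (just above the theorem) that ``the arguments in \S\ref{sec:global_qconn} can now be applied verbatim to $I_{Y_+} := \iota_+^\star I_{X_+,Y_+}$,'' and your write-up fills in those steps. The analyticity argument, and in particular your observation that $E_j\cdot e = 0$ forces the hypergeometric factor $\prod_{j}\prod_{a=1}^{E_j\cdot d}(v_j+az)$ to be constant along the $e$-direction so the GKZ-type ODE in $\sx = \sfy^e$ is unchanged, is correct and is indeed the key new check.

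There is, however, one claim that is not quite right and deserves attention. You write that $P^+_{f,i}I_{Y_+}(\sfy,z) = z e^{\iota_+^\star \sigma_+/z}(\iota_+^\star \phi_{f,i} + O(\sfy))$, with the extra factor ``contributing only to higher-order terms in~$\sfy$.'' Tracing the proof of Lemma~\ref{lem:I_derivatives}, the $d = d_f$ term of $e^{-\sigma_+/z}\Delta_f I_{X_+,Y_+}$ is $\fun_f \cdot \prod_{l=1}^{k}\prod_{a=1}^{E_l\cdot d_f}(v_l + az)$, and $E_l \cdot d_f$ need not vanish for a twisted sector $f \neq 0$: for example $X_+ = \PP(1,1,5)$, $E = \cO(5)$, $f = \tfrac{1}{5}$ gives $E\cdot d_f = 4$. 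So $\II_+^Y(\sfy,z)$ is \emph{not} $\id + O(\sfy)$; at $\sfy = 0$ it already involves positive powers of $z$. This does not break the argument, but it does require the explanation to be adjusted: what one has at $\sfy = 0$ is a block upper-triangular (in cohomological degree) matrix whose diagonal entry in the $(f,i)$ slot is the nonzero constant $\prod_l (E_l\cdot d_f)!$ (the $v_l$'s being nilpotent of degree $2$ can only raise degree). Consequently the condition $H_\amb = (\gamma(0,1)H^{\le p}_\amb)\oplus H^{>p}_\amb$ required for the block LU decomposition still holds at $\sfy_r = 0$, the Birkhoff factorization still exists near $P_+$, and Lemma~\ref{lem:formal_Birkhoff} and the rest of the argument go through unchanged. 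In short: the structure of your proof is correct and matches the paper, but the ``$\id + O(\sfy)$'' step should be replaced by ``invertible block upper-triangular, hence admitting a block LU decomposition, at $\sfy = 0$.''
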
 

\begin{remark}
  Here, as in Theorem~\ref{thm:global_qconn}, the Novikov variable $Q$ has been specialized to $1$.
\end{remark}

\begin{remark}
  Entirely parallel results hold for $Y_-$.
\end{remark}
\subsection{Analytic Continuation of $I$-Functions}
\label{sec:ci_analytic_continuation}

To prove the Crepant Transformation Conjecture in this context, we need to establish the analog of Theorem~\ref{thm:U}.  To do this, we will compare the analytic continuation of the $I$-functions $I_{X_\pm,Y_\pm}$ with the analytic continuation of $I_{E_\pm^\vee}$.  Let $T = (\Cstar)^m$ denote the torus acting on $X_\pm$, and $\widetilde{T} = (\Cstar)^{m+k}$ denote the torus acting on $E_\pm^\vee$.  The splitting $\widetilde{T} = T \times (\Cstar)^k$ gives $R_{\widetilde{T}} = R_T [\kappa_1,\ldots,\kappa_k]$ where $\kappa_j$, $1 \leq j \leq k$, is the character of $(\Cstar)^k$ given by projection to the $j$th factor of the product $(\Cstar)^k$.  We regard $\widetilde{T}$ as acting on $X_\pm$ via the given action of $T \subset \widetilde{T}$ and the trivial action of $(\Cstar)^k \subset \widetilde{T}$, so that:
\begin{align*}
  \ZZ[\widetilde{T}] = \ZZ[T] [e^{\pm\kappa_1},\ldots,e^{\pm\kappa_k}] 
&& \text{and} &&
  K^0_{\widetilde{T}}(X_\pm) 
= K^0_T(X_\pm) \otimes_{\ZZ[T]} \ZZ[\widetilde{T}]
\end{align*}

\begin{lemma}
  \label{lem:FM_E_X}
  The Fourier--Mukai transformations
  \begin{align*}
    \FM \colon K^0(X_-) \to K^0(X_+) &&
    \FM \colon K^0(E_-^\vee) \to K^0(E_+^\vee) 
  \end{align*}
  coincide under the natural identification of $K^0(X_\pm)$ with $K^0(E_\pm^\vee)$.  The same statement holds equivariantly.
\end{lemma}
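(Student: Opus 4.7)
The plan is to realize both Fourier--Mukai transforms as arising from the same Cartesian diagram of toric stacks, with the two copies of the transform related by flat base change along vector-bundle projections. First I would identify the toric common blow-up of the pair $(E_-^\vee, E_+^\vee)$ explicitly. The total space $E_\pm^\vee$ is the toric Deligne--Mumford stack with GIT data $(K; \LL; D_1,\dots,D_m, -E_1,\dots,-E_k; \omega_\pm)$, where the additional characters $-E_j$ encode the line bundle summands of $E_\pm^\vee$. Because each $E_j$ lies on the wall $W$ by hypothesis~\eqref{eq:conditions_on_line_bundles}, we have $E_j \cdot e = 0$, so the wall-crossing from $C_+$ to $C_-$ for the enlarged GIT data is governed by the same element $e \in \LL$ and the same partition of indices into $M_+$, $M_-$ and $M_0$ (with the extra $k$ indices all lying in $M_0$), and the crepancy condition still holds. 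Applying the construction of~\S\ref{sec:FM} to this enlarged GIT data with the same augmenting character, the resulting common blow-up $\widehat{V}$ is naturally identified with the total space of the vector bundle $\widetilde{E}^\vee := f_\pm^\star E_\pm^\vee$ on $\tX$ (the two pullbacks agreeing by~\eqref{eq:Y_and_X}).

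Second, I would observe that the bundle projections $\pi_\pm \colon E_\pm^\vee \to X_\pm$ and $\tilde{\pi} \colon \widehat{V} \to \tX$ fit into Cartesian squares
\[
\xymatrix{
\widehat{V} \ar[r]^{\tilde{f}_\pm} \ar[d]_{\tilde{\pi}} & E_\pm^\vee \ar[d]^{\pi_\pm} \\
\tX \ar[r]^{f_\pm} & X_\pm
}
\]
in which the vertical arrows are flat vector-bundle projections and the horizontal arrows are projective birational morphisms of smooth Deligne--Mumford stacks. Flat base change for $\pi_+$ together with functoriality of pullback for $\pi_-$ then yield
\[
\pi_+^\star (f_+)_\star = (\tilde{f}_+)_\star \tilde{\pi}^\star, \qquad \tilde{\pi}^\star (f_-)^\star = (\tilde{f}_-)^\star \pi_-^\star.
\]

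Third, since $\pi_\pm^\star \colon K^0(X_\pm) \to K^0(E_\pm^\vee)$ is by definition the natural identification used in the lemma, chaining the two base-change identities gives
\[
\FM^{E^\vee} \circ \pi_-^\star = (\tilde{f}_+)_\star (\tilde{f}_-)^\star \pi_-^\star = (\tilde{f}_+)_\star \tilde{\pi}^\star (f_-)^\star = \pi_+^\star (f_+)_\star (f_-)^\star = \pi_+^\star \circ \FM^X,
\]
which is exactly the claimed equality. For the equivariant version, one extends the $T$-action on $X_\pm$ to a $\widetilde{T} = T \times (\Cstar)^k$-action on $E_\pm^\vee$ by letting the extra factors scale the fibers, and similarly on $\widehat{V}$; then all four arrows of the Cartesian square are $\widetilde{T}$-equivariant and the same chain of equalities holds in $K^0_{\widetilde{T}}$.

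The main obstacle will be Step~1, namely verifying carefully that the GIT common blow-up construction of~\S\ref{sec:FM}, applied to the enlarged GIT data for $E_\pm^\vee$, really produces the total space of $\widetilde{E}^\vee$ over $\tX$ as a $\widetilde{T}$-equivariant toric stack, and that the resulting diagrams are Cartesian in the $\widetilde{T}$-equivariant sense. Once that geometric identification is in hand, Steps~2 and~3 are standard base-change manipulations in (equivariant) $K$-theory.
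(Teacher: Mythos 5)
Your proof is correct and follows essentially the same route as the paper: the paper's proof consists precisely of drawing the fiber diagram in which the top triangle is the common blow-up diagram for $E_\pm^\vee$ and the vertical arrows are the bundle projections, and then invoking flat base change. You have merely spelled out the GIT verification that the top triangle really is the common blow-up for the enlarged GIT data (using $E_j \cdot e = 0$) and written the base-change identities explicitly, which the paper leaves implicit.
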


\begin{proof}
  Consider the fiber diagram:
  \[
  \xymatrix{
     & \widetilde{E}^\vee \ar[ld] \ar[rd] \ar[d] \\
      E_-^\vee\ar[d]  &\widetilde{X} \ar[ld]^-{f_-} \ar[rd]_-{f_+} & E_+^\vee \ar[d]\\
      X_-  && X_+
  }
  \]
  where the bottom triangle is \eqref{eq:common_blowup}  and the top triangle is the analog of \eqref{eq:common_blowup}  for $E_\pm^\vee$, 
and apply the flat base change theorem.
\end{proof}

Let $\UU_{E^\vee}$  be the symplectic transformation from Theorem~\ref{thm:U} applied to $E_\pm^\vee$.
Combining Lemma~\ref{lem:FM_E_X} with Theorem~\ref{thm:U} gives a commutative diagram:
\begin{equation}
  \label{eq:U_is_independent_of_kappa_1}
  \begin{aligned}
    \xymatrix{
      K^0_{\widetilde{T}}(X_-) \ar@{=}[d] \ar[r]^{\FM} & K^0_{\widetilde{T}}(X_+) \ar@{=}[d] \\
      K^0_{\widetilde{T}}(E_-^\vee) \ar[d]_{z^{-\mu_-} z^{\rho_-} \hGamma_{E_-^\vee} \cup (2\pi\tti)^{\frac{\deg_0}{2}} \inv^* \tch({-})} \ar[r]^{\FM} & 
      K^0_{\widetilde{T}}(E_+^\vee) \ar[d]^{z^{-\mu_+} z^{\rho_+} \hGamma_{E_+^\vee} \cup (2\pi\tti)^{\frac{\deg_0}{2}} \inv^* \tch({-})} \\
      \tcH(E_-^\vee) \ar[r]^{\UU_{E^\vee}} & \tcH(E_+^\vee)
    }
  \end{aligned}
\end{equation}
where $\rho_\pm \in H^2_{\widetilde{T}}(E_\pm^\vee)$ is the $\widetilde{T}$-equivariant first Chern class of $E_\pm^\vee$ and $\mu_\pm$ are the $\widetilde{T}$-equivariant grading operators.  Recall that
\begin{align*}
  \Gamma_{E_\pm^\vee} = \Gamma_{X_\pm} \Gamma(E_\pm^\vee)
  && \rho_\pm = \rho_{X_\pm} + c_1^{\widetilde{T}} (E_\pm^\vee)
\end{align*}
and that the Chern roots of $E_\pm^\vee$ are pulled back from the common blow-down $\overline{X}_0$ of $X_\pm$.  Part~(2) of Theorem~\ref{thm:U} thus implies that we can factor out the contributions of  $\Gamma(E_\pm^\vee)$ and $c_1^{\widetilde{T}} (E_\pm^\vee)$ from the vertical maps in \eqref{eq:U_is_independent_of_kappa_1}, replacing the vertical arrows by:
\[
z^{-\mu_{X_\pm}} z^{\rho_{X_\pm}} \hGamma_{X_\pm} \cup (2\pi\tti)^{\frac{\deg_0}{2}} \inv^* \tch({-})
\]
This proves:
\begin{lemma}
  The transformations $\UU_X \colon \cH(X_-) \to \cH(X_+)$ and $\UU_{E^\vee} \colon \cH(E_-^\vee) \to \cH(E_+^\vee)$ coincide under the natural identifications of $\cH(X_\pm)$ with $\cH(E_\pm^\vee)$.  In particular, $\UU_{E^\vee}$ is independent of $\kappa_1,\ldots,\kappa_k$.
\end{lemma}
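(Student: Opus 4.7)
The plan is to reduce the statement about $\UU_{E^\vee}$ to the corresponding statement about $\UU_X$ by factoring the vertical framing maps in \eqref{eq:U_is_independent_of_kappa_1}. The essential idea is that the difference between the $\hGamma$-framings on $E_\pm^\vee$ and on $X_\pm$ consists of multiplication by characteristic classes of $E_\pm^\vee$, and these characteristic classes, by the hypothesis \eqref{eq:conditions_on_line_bundles}, are pulled back from the common blow-down $\overline{X}_0$. Operators of multiplication by such pulled-back classes commute with $\UU_X$ by Theorem~\ref{thm:U}(2).

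First I would apply Lemma~\ref{lem:FM_E_X} to identify the two Fourier--Mukai transforms in \eqref{eq:U_is_independent_of_kappa_1}. Next, using the factorizations $\hGamma_{E_\pm^\vee} = \hGamma_{X_\pm} \cup \hGamma(E_\pm^\vee)$ and $z^{\rho_\pm} = z^{\rho_{X_\pm}} \cdot z^{c_1^{\widetilde{T}}(E_\pm^\vee)}$, I would rewrite each vertical arrow in \eqref{eq:U_is_independent_of_kappa_1} as $\tPsi^{X_\pm} \circ M_\pm$, where $\tPsi^{X_\pm}$ is the $\hGamma$-framing for $X_\pm$ (extended by scalars to $R_{\widetilde{T}}$) and $M_\pm$ is the operator that multiplies by the appropriate characteristic class involving $\hGamma(E_\pm^\vee)$ and $z^{c_1^{\widetilde{T}}(E_\pm^\vee)}$; the remaining factors $(2\pi\tti)^{\frac{\deg_0}{2}} \inv^\star \tch({-})$ commute with these manipulations at the level of $K$-theory.

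The condition that each $E_j$ lies on the wall $W$ and that $L_{X_\pm}(E_j)$ descends to $|X_\pm|$ implies that the Chern roots of $E_\pm^\vee$ are pulled back under $g_\pm \colon X_\pm \to \overline{X}_0$ from classes in $H^2_{\widetilde{T}}(\overline{X}_0)$. By Theorem~\ref{thm:U}(2), multiplication by $g_\pm^\star v$ for any $v \in H^2_{\widetilde{T}}(\overline{X}_0)$ intertwines with $\UU_X$; extending multiplicatively (both $\hGamma(E_\pm^\vee)$ and $z^{c_1^{\widetilde{T}}(E_\pm^\vee)}$ are power series in such classes), the operators $M_\pm$ likewise intertwine with $\UU_X$. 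Cancelling $M_\pm$ from both sides of the resulting diagram therefore produces a commutative square relating $\UU_{E^\vee}$ and $\UU_X$ through the $\hGamma$-framings on $X_\pm$. Since $\UU_X$ is an $R_T(\!(z^{-1})\!)$-linear transformation, its scalar extension over $R_{\widetilde{T}}(\!(z^{-1})\!) = R_T(\!(z^{-1})\!)[\kappa_1,\ldots,\kappa_k]$ is trivially independent of $\kappa_1,\ldots,\kappa_k$, giving both the equality $\UU_{E^\vee} = \UU_X$ (under the natural identifications) and the independence claim.

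The main obstacle I expect is the final uniqueness step: the commutative square determines $\UU_{E^\vee}$ only on the image of the framing $\tPsi^{E^\vee}_-$. To conclude that this pins down $\UU_{E^\vee}$ uniquely, one must check that after inverting equivariant parameters the image spans $\tcH(E_-^\vee)$ over $R_{\widetilde{T}}(\!(z^{-1})\!)$, which follows from equivariant localization together with the surjectivity of the Chern character map $\tch \colon K^0_{\widetilde{T}}(E_-^\vee) \otimes \hS_{\widetilde{T}} \to H^{\bullet\bullet}_{\widetilde{T}}(I E_-^\vee) \otimes \hS_{\widetilde{T}}$. Alternatively, one can bypass this step entirely by running exactly the argument of \S\ref{sec:U}: the analytic continuation of $I_{E_\pm^\vee}$ is computed fixed-point-by-fixed-point, and the computation is identical to that for $I_{X_\pm}$ except for extra factors involving the characters $E_j$ of the wall, which are manifestly independent of the continuation contour and of $\kappa_1,\ldots,\kappa_k$.
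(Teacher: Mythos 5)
Your argument is correct and follows the same route as the paper: factor the $\hGamma$-framing for $E_\pm^\vee$ into the framing for $X_\pm$ composed with multiplication by a characteristic class of $E_\pm^\vee$, use the wall hypothesis from \eqref{eq:conditions_on_line_bundles} together with Theorem~\ref{thm:U}(2) to commute that class past the symplectic transformation, and compare. The only cosmetic difference is that the paper applies the intertwining property from Theorem~\ref{thm:U}(2) to $\UU_{E^\vee}$ (cancelling the extra factor directly from the $E_\pm^\vee$-diagram), while you apply it to $\UU_X$ extended by scalars over $R_{\widetilde{T}}$; both routes close, and your explicit treatment of the localization/density step makes precise a point the paper leaves implicit.
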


The $I$-functions $I_{X_+,Y_+}$ and $I_{E_+^\vee}$ are related\footnote{An analogous relationship holds between $I_{X_-,Y_-}$ and $I_{E_-^\vee}$.} by:
\[
 I_{E_+^\vee}(\sfy)\Big|_{\lambda=0,\kappa={-z}} = 
e^{\pi \tti c_1(E_+^\vee)/z} I_{X_+,Y_+}(\pm\sfy) 
\]
where the subscript on the left-hand side denotes the specialization:
\begin{equation}
  \label{eq:specialization}
  \begin{cases}
    \lambda_i=0 & 1 \leq i \leq m \\
    \kappa_j = {-z} & 1 \leq j \leq k
  \end{cases}
\end{equation}
and the $\pm$ on the right-hand side denotes the change of variables:
\begin{align}
  \label{eq:pm}
  &\log \sfy_i \mapsto \log \sfy_i - \pi \tti \sum_{j=1}^k l_{ij} && 1 \leq i \leq r    
  && \text{with} && E_j =\sum_{i=1}^r l_{ij} \sfp_i
\end{align}
The specialization \eqref{eq:specialization} is given by a shift $\bbS \colon \kappa_j \mapsto \kappa_j - z$ in the equivariant parameters followed by passing to the non-equivariant limit.  Note that the change of variables \eqref{eq:pm} maps $\sfy^d$ to $(-1)^{{-c_1}(E_+^\vee) \cdot d} \sfy^d$.

Recall from Theorem~\ref{thm:U} that, after analytic continuation, we have $I_{E_+^\vee} = \UU_{E^\vee} I_{E_-^\vee}$.  Since $\UU_{E^\vee}$ is independent of $\kappa_j$,~$1 \leq j \leq k$, it follows that $\UU_{E^\vee}$ commutes with the shift $\bbS$.  Since the Chern roots of $E^\vee$ are pulled back from the common blow-down $\overline{X}_0$ of $X_\pm$, it follows that{
\[
\UU_{E^\vee} \, e^{\pi \tti c_1(E_-^\vee)/z} = e^{\pi \tti c_1(E_+^\vee)/z} \, \UU_{E^\vee}
\]
Setting $\lambda = 0$ and $\kappa_j = {-z}$ in the equality $I_{E_+^\vee} = \UU_{E^\vee} I_{E_-^\vee}$, and replacing $\cH(E_\pm^\vee)$ and $\UU_{E^\vee}$ with their non-equivariant limits
\begin{align*}
  \cH(E_\pm^\vee) := H^\bullet_\CR(E_\pm^\vee) \otimes \CC(\!(z^{-1})\!) && \text{and} &&
  \UU_{E^\vee} \colon \cH(E_-^\vee) \to \cH(E_+^\vee)
\end{align*}
we find that
\[
 I_{X_+,Y_+} = \UU_{E^\vee} I_{X_-,Y_-}
\]
after analytic continuation.  Thus:
\[
 I_{X_+,Y_+} = \UU_{X} I_{X_-,Y_-}
\]
after analytic continuation.

\subsection{Compatibility of Fourier--Mukai Transformations}
\label{sec:FM_compatibility}

For the analogue of part~(3) of Theorem~\ref{thm:U}, we need to compare the Fourier--Mukai transformation associated to $X_+ \dashrightarrow X_-$ with the Fourier--Mukai transformation associated to $Y_+ \dashrightarrow Y_-$.  This is a base change question (cf.~Lemma~\ref{lem:FM_E_X}), but this time we do not have flatness.  By assumption, we have:
\begin{equation}
  \label{eq:FM_Y_setup}
  \begin{aligned}
    \xymatrix{
      & \widetilde{Y} \ar[ld]_-{F_-} \ar[rd]^-{F_+} \ar[d]_-{\tilde{\iota}} \\
      Y_-\ar[d]_{\iota_-}  &\widetilde{X} \ar[ld]^-{f_-} \ar[rd]_-{f_+} & Y_+ \ar[d]^{\iota_+}\\
      X_-  && X_+
    }
  \end{aligned}
\end{equation}
where the vertical maps are inclusions, the bottom triangle is \eqref{eq:common_blowup}  and the top triangle is the analog of \eqref{eq:common_blowup}  for $Y_\pm$.  The substacks $\tilde{Y}$ is defined by the vanishing of a section $\tilde{s} \colon \widetilde{X} \to \widetilde{E}$, where $\widetilde{E} \to \widetilde{X}$ is the direct sum of line bundles
\[
\widetilde{E} := \bigoplus_{i=1}^k L_{\widetilde{X}}(E_i)
\]
The line bundles $E_-$,~$\widetilde{E}$, and~$E_+$ are all canonically identified via $f_-^\star$ and $f_+^\star$, since they are all pulled back from the common blow-down $\overline{X}_0$ of $X_\pm$.  The section $\widetilde{s}$ coincides both with the pullback of the section $s_+$ via $f_+$ and with the pullback of the section $s_-$ via $f_-$.  Since the zero loci of $s_\pm$ are assumed to intersect the flopping locus transversely, $\tilde{s}$ is a regular section of $\widetilde{E}$ and the substack $\widetilde{Y} \subset \widetilde{X}$ is smooth.

\begin{lemma}
  \label{lem:FM_Y_X}
  The following diagram commutes:
  \begin{equation}
    \label{eq:FM_Y_commutes}
    \begin{aligned}
      \xymatrix{
        K^0(X_-) \ar[r]^{\FM} \ar[d]_{\iota_-^\star} & K^0(X_+) \ar[d]^{\iota_+^\star} \\
        K^0(Y_-)_\amb \ar[r]^{\FM} & K^0(Y_+)_\amb
      }
    \end{aligned}
  \end{equation}
  where the top horizontal arrow is the Fourier--Mukai transformation $(f_+)_\star (f_-)^\star$ from~\eqref{eq:FM_Y_setup}, and the bottom horizontal arrow is the Fourier--Mukai transformation $(F_+)_\star (F_-)^\star$ from~\eqref{eq:FM_Y_setup}.
\end{lemma}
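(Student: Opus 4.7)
The plan is to deduce Lemma~\ref{lem:FM_Y_X} from a $K$-theoretic base-change identity for the right-hand Cartesian square of~\eqref{eq:FM_Y_setup}, namely
\[
\iota_+^\star \circ (f_+)_\star = (F_+)_\star \circ \tilde{\iota}^\star \colon K^0(\widetilde{X}) \to K^0(Y_+),
\]
combined with the tautological pullback identity $\tilde{\iota}^\star \circ (f_-)^\star = (F_-)^\star \circ \iota_-^\star$, which is immediate from contravariant functoriality applied to the commutative diagram (no base-change needed since both sides are pullbacks). Granting the base-change identity, for $E \in K^0(X_-)$ we have
\[
\iota_+^\star (f_+)_\star (f_-)^\star E = (F_+)_\star \tilde{\iota}^\star (f_-)^\star E = (F_+)_\star (F_-)^\star \iota_-^\star E,
\]
which is exactly the commutativity of~\eqref{eq:FM_Y_commutes}. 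As a byproduct, this shows that the Fourier--Mukai transformation $(F_+)_\star (F_-)^\star$ applied to a class in $K^0_\amb(Y_-) = \im \iota_-^\star$ lies in $K^0_\amb(Y_+) = \im \iota_+^\star$, so the bottom arrow is well-defined.

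To establish the base-change identity I would verify Tor-independence of the right Cartesian square, i.e.\ that $\Tor_i^{\cO_{X_+}}(\cO_{Y_+}, \cO_{\widetilde{X}}) = 0$ for $i > 0$. The regular section $s_+$ of $E_+$ provides a Koszul resolution
\[
0 \to \Lambda^k E_+^\vee \to \cdots \to \Lambda^1 E_+^\vee \to \cO_{X_+} \to \iota_{+\star}\cO_{Y_+} \to 0
\]
on $X_+$. Applying $f_+^\star$ (which is exact on the locally free terms) produces the Koszul complex on $\widetilde{X}$ associated to the pulled-back section $\tilde{s} = f_+^\star s_+$ of $\widetilde{E} = f_+^\star E_+$. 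Since $\tilde{s}$ is itself a regular section---this is precisely the transversality hypothesis on the zero locus of $s_\pm$ with the flopping locus---this Koszul complex is a resolution of $\tilde{\iota}_\star \cO_{\widetilde{Y}}$, which gives the desired Tor-vanishing. Standard derived base change (or a direct calculation comparing $\iota_+^\star (f_+)_\star E$ with $(F_+)_\star \tilde{\iota}^\star E$ via the Koszul resolution and the projection formula) then yields the base-change identity.

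The main obstacle is verifying regularity of $\tilde{s}$ on the exceptional locus of $f_+$. Away from the flopping locus, $f_+$ is an isomorphism and regularity transfers trivially from $s_+$. On the flopping locus (and its preimage in $\widetilde{X}$), the hypothesis that the zero locus of $s_+$ meets the flopping locus transversely is exactly what forces $\tilde{s}$ to cut out a smooth substack $\widetilde{Y} \subset \widetilde{X}$ of the expected codimension $k$, equivalently, it forces the Koszul complex of $\tilde{s}$ to be exact in positive degrees there. Once this is in hand, the remainder of the argument is essentially formal. I would also remark that the same argument works $T$-equivariantly if needed, since all of the morphisms and sections involved are $T$-equivariant and the Koszul resolution is $T$-equivariant.
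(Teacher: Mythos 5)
Your argument is correct and coincides with the paper's proof: both establish the base-change identity $\iota_+^\star \circ (f_+)_\star = (F_+)_\star \circ \tilde{\iota}^\star$ for the right-hand square via Tor-independence, which in turn is verified by observing that the pullback of the Koszul resolution of $\cO_{Y_+}$ along $f_+$ remains a Koszul resolution of $\cO_{\widetilde{Y}}$ (using the transversality hypothesis to ensure $\tilde{s}$ is regular), and then chain this with the tautological pullback compatibility for the left square. The extra remarks you include — the explicit check that the bottom arrow lands in the ambient part, and the note on equivariance — are correct but not part of the paper's proof, which leaves them implicit.
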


\begin{proof}
  The pullback along $f_+$ of the Koszul resolution of $\cO_{Y_+}$ in $X_+$ gives the Koszul resolution of $\cO_{\widetilde{Y}}$ in $\widetilde{X}$.  This implies that, in the right-hand square in \eqref{eq:FM_Y_setup}, $\widetilde{X}$ and $Y_+$ are Tor-independent over $X_+$~\cite[Tag 08IA]{stacks-project}.  Tor-independent base-change~\cite[Tag 08IB]{stacks-project} now implies that:
\[
(F_+)_\star \circ \tilde{\iota}^\star = \iota_+^\star \circ (f_+)_\star
\]
Since $F_-^\star \circ \iota_-^\star = \tilde{\iota}^\star \circ f_-^\star$, it follows that
\[
(\iota_+)^\star (f_+)_\star (f_-)^\star = (F_+)_\star (F_-)^\star (\iota_-)_\star
\]
which is the result.
\end{proof}

\begin{remark}
  This argument in fact proves that the analog of diagram~\eqref{eq:FM_Y_commutes} for derived categories is commutative, but we only need the statement at the level of $K$-theory.
\end{remark}

\subsection{Completing the Proof}  Denote by $\UU_X$ the transformation from the non-equivariant version of Theorem~\ref{thm:U} applied to $X_\pm$.  This is a map $\UU_X \colon \cH(X_-) \to \cH(X_+)$
between the non-equivariant Givental spaces for $X_\pm$:
\[
\cH(X_\pm) := H^\bullet_\CR(X_\pm) \otimes \CC(\!(z^{-1})\!)
\]
Let us remark again that the Chern roots of $E_\pm$ are pulled back from the common blow-down $\overline{X}_0$ of $X_\pm$; the second part of Theorem~\ref{thm:U} therefore gives:
\begin{equation}
  \label{eq:intertwiner_U_X}
  \UU_X \hGamma(E_-)  = \hGamma(E_+) \UU_X
\end{equation}
The results from \S\ref{sec:ci_analytic_continuation} and \S\ref{sec:FM_compatibility} combine to give a commutative diagram:
\[
\xymatrix{
  K^0(X_-) \ar[rr]^{\FM} \ar[rdd] \ar[ddd]_{\iota_-^\star} && K^0(X_+) \ar[rdd] \ar'[dd]_-{\iota_+^\star}[ddd]\\
  \\
  & \tcH(X_-) \ar[rr]^<<<<<<<<<{\UU_X} \ar[ddd]_{\iota_-^\star} && \tcH(X_+) \ar[ddd]_{\iota_+^\star}\\
  K^0(Y_-)_\amb \ar'[r]^-{\FM}[rr] \ar[rdd] && K^0(Y_+)_\amb \ar[rdd] \\
  \\
  & \tcH(Y_-)_\amb \ar@{..>}[rr] && \tcH(Y_+)_\amb \\
}
\]
where $\tcH(Y_\pm)_\amb$ is the ambient part of the multi-valued Givental space:
\begin{equation}
  \label{eq:multi-valued_ambient}
  \tcH(Y_\pm)_\amb := H_{\amb}^\bullet(Y_\pm) \otimes \CC[\log z](\!(z^{-1/k})\!)
\end{equation}
with $k$ as in the statement of Theorem~\ref{thm:U}, and:
\begin{itemize}
\item the top diagonal maps are the $K$-theory framing maps from Definition~\ref{def:K_framing} but with $\hGamma_{X_\pm}$ replaced by $\hGamma_{X_\pm,Y_\pm} := \hGamma_{X_\pm} \hGamma(E_\pm)^{-1}$;
\item the bottom diagonal maps are the ambient $K$-group framing maps from \S\ref{sec:ambient_part}.
\end{itemize}
Here:
\begin{itemize}
\item the top face is commutative, by Theorem~\ref{thm:U} and \eqref{eq:intertwiner_U_X};
\item the back face is commutative, by Lemma~\ref{lem:FM_Y_X};
\item the sides are commutative, by the definition of the framing maps;
\end{itemize}
and we want to define the dotted arrow so that all faces commute.  Define $\UU_Y \colon \tcH(Y_-)_\amb \to \tcH(Y_+)_\amb$ to be the unique map such that the bottom face commutes.  Chasing diagrams shows that the front face commutes also.  Since $I_{X_+,Y_+} = \UU_X I_{X_-,Y_-}$ after analytic continuation and since $I_{Y_\pm} := \iota_\pm^\star I_{X_\pm,Y_\pm}$, we conclude that $I_{Y_+} = \UU_Y I_{Y_-}$ after analytic continuation.

\begin{theorem} 
\label{thm:U_ci} 
Consider the ambient part 
of the (non-equivariant) Givental space for $Y_\pm$ with the Novikov variable $Q$ specialized to~$1$:
\[
\cH(Y_\pm)_\amb = H^\bullet_{\amb}(Y_\pm) \otimes \CC(\!(z^{-1})\!)
\]
Regard $\cH(Y_\pm)_\amb$ as a graded vector space, where we use the age-shifted grading on $H_{\amb}^\bullet(Y_\pm)$ and set  $\deg z=2$.
There exists a degree-preserving
$\CC(\!(z^{-1})\!)$-linear transformation 
\[
\UU_Y \colon \cH(Y_-)_\amb \to \cH(Y_+)_\amb
\]
such that:
\begin{enumerate}
\item $I_{Y_+}(\sfy,z) = \UU_Y I_{Y_-}(\sfy,z)$ after analytic continuation in $\sfy^e$
along the path $\gamma$ in Figure {\rm\ref{fig:path_ancont}}; 
\item $\UU_Y \circ (g_-^\star v\cup)= (g_+^\star v\cup) \circ \UU_Y$ for all 
$v\in H^2(\overline{X}_0)$, where  $\overline{X}_0$ is the common blow-down of $X_\pm$ and $g_\pm \colon Y_\pm \to 
\overline{X}_0$ is the composition of the inclusion $\iota_\pm \colon Y_\pm \to X_\pm$ with the blow-down $X_\pm \to \overline{X}_0$;
\item there is a commutative diagram
\[
\xymatrix{
  K^0(Y_-)_\amb \ar[r]^{\FM} \ar[d] & K^0(Y_+)_\amb \ar[d]\\ 
  \tcH(Y_-)_\amb \ar[r]^{\UU_Y} & \tcH(Y_+)_\amb
}
\]
where $\FM$ is the Fourier--Mukai transformation given by the top triangle in \eqref{eq:FM_Y_setup} and the vertical arrows are the ambient $K$-group framing defined in \S\ref{sec:ambient_part}.
\end{enumerate} 
If $Y_\pm$ is compact then $\UU_Y$ intertwines the (possibly-degenerate) symplectic pairings on $\cH(Y_\pm)_\amb$. 
\end{theorem}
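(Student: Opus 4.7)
My strategy mirrors the argument at the end of Section~\ref{sec:end_of_the_proof}, adapted to the non-equivariant complete intersection setting. The commutative diagram in part~(3) expresses $\UU_Y$ as the composition $\Psi_+^{\amb} \circ \FM \circ (\Psi_-^{\amb})^{-1}$ on the image of the ambient framing, so it suffices to verify two ingredients and then propagate: (A) the ambient framing intertwines the symplectic pairing on $\tcH(Y_\pm)_\amb$ with the non-equivariant Euler pairing $\chi$ on $K^0_\amb(Y_\pm)$; (B) the Fourier--Mukai transformation $\FM \colon K^0_\amb(Y_-) \to K^0_\amb(Y_+)$ preserves $\chi$.

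For ingredient (A), compactness of $Y_\pm$ gives the classical non-equivariant HRR formula
$\chi(E,F) = \int_{IY_\pm}\tch(E^\vee \otimes F) \cup \tTd_{Y_\pm}$.
The computation in Proposition~\ref{prop:K_framing_pairing} then goes through verbatim (with equivariant parameters set to zero and no need to pass to $\hS_T$), giving
\[
\bigl(\frs(E)(\tau,e^{-\pi\tti}z),\frs(F)(\tau,z)\bigr)_{Y_\pm} = \chi(E,F)
\]
for all $E,F \in K^0_\amb(Y_\pm)$. For ingredient (B), I will follow the Grothendieck--Verdier argument in the Remark at the end of \S\ref{sec:end_of_the_proof}, applied to $F_\pm \colon \widetilde{Y} \to Y_\pm$ in diagram \eqref{eq:FM_Y_setup}. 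The key point is that $\widetilde{Y}$ gives a $K$-equivalence of $Y_\pm$: indeed $F_\pm^\star K_{Y_\pm}$ both agree with $\tilde{\iota}^\star(f_\pm^\star K_{X_\pm})$ tensored with the appropriate power of the excess normal bundle, and this is symmetric in $\pm$ because $f_+^\star K_{X_+} = f_-^\star K_{X_-}$ and the line bundles defining $Y_\pm$ are all pulled back from $\overline{X}_0$ by assumption \eqref{eq:conditions_on_line_bundles}. Combining ingredients (A) and (B) with the diagram gives
\[
\bigl(\UU_Y \frs_-(E)(e^{-\pi\tti}z), \UU_Y \frs_-(F)(z)\bigr)_{Y_+} = \chi(\FM E, \FM F) = \chi(E,F) = \bigl(\frs_-(E)(e^{-\pi\tti}z), \frs_-(F)(z)\bigr)_{Y_-}
\]
for all $E,F \in K^0_\amb(Y_-)$.

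To upgrade pairing-preservation from the image of $\frs_-$ to all of $\cH(Y_-)_\amb$, I will use that $\UU_Y$ is $\CC(\!(z^{-1})\!)$-linear and the symplectic form is $\CC(\!(z^{-1})\!)$-bilinear, so it is enough to check on a subset whose $\CC(\!(z^{-1})\!)$-span is $\cH(Y_-)_\amb$. The ambient framing factors as $\frs_-(E) = \tfrac{1}{(2\pi)^{\dim Y_-/2}} L_-^\amb(\tau,z) \cdot z^{-\mu}z^{\rho}\hGamma_{Y_-} \cup (2\pi\tti)^{\deg_0/2}\inv^\star \tch(E)$, where $L_-^\amb$ and $z^{-\mu}z^\rho \hGamma_{Y_-}\cup(2\pi\tti)^{\deg_0/2}$ are $\CC(\!(z^{-1})\!)[z^{1/k},\log z]$-linear isomorphisms at the level of underlying cohomology. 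Since the ambient orbifold Chern character $\tch \colon K^0_\amb(Y_-)\otimes\CC \to H^\bullet_\amb(Y_-)$ is surjective (as $H^\bullet_\amb(Y_-) = \iota_-^\star H^\bullet_\CR(X_-)$ and $\tch\otimes\CC$ is surjective for toric Deligne--Mumford stacks), the images $\frs_-(E)$ span a $\CC(\!(z^{-1})\!)$-submodule whose reduction modulo the single-valued part of the twisting operators equals $H^\bullet_\amb(Y_-)\otimes_\CC \CC(\!(z^{-1})\!) = \cH(Y_-)_\amb$. Hence the identity holds on all of $\cH(Y_-)_\amb$.

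The main obstacle I anticipate is ingredient (B): verifying that $\widetilde{Y}\to Y_\pm$ gives a $K$-equivalence in the stacky sense, and organizing the Grothendieck duality calculation so that the ``extra'' line bundle arising (analogous to $L(0,w)$ in the remark in \S\ref{sec:end_of_the_proof}) pulls back to $f_\pm^\star$ of matching classes on both sides, allowing the analog of the monodromy argument to conclude that $\FM^\star$ corresponds to analytic continuation along $\gamma^{-1}$. The density step in the last paragraph is essentially formal but requires care with the multi-valued nature of $\tcH(Y_\pm)_\amb$ versus the single-valued target $\cH(Y_\pm)_\amb$ in which $\UU_Y$ is defined; I expect this to work because the pairing extends canonically from $\cH$ to $\tcH$ via $\log z$- and $z^{1/k}$-coefficient-wise bilinearity.
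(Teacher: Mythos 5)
Your proposal takes a genuinely different and substantially longer route than the paper's. The paper proves the pairing statement in a few lines using only the projection formula: since the $E_j$ are pulled back from $\overline{X}_0$, one has $\iota_{\pm\star}\iota_\pm^\star(\cdot) = e(E_\pm)\cup(\cdot)$ on Chen--Ruan cohomology, and combining this with the defining relation $\UU_Y\circ\iota_-^\star = \iota_+^\star\circ\UU_X$, with part~(2) of Theorem~\ref{thm:U} (which gives $\UU_X\circ e(E_-)\cup = e(E_+)\cup\circ\UU_X$ because the $E_j$ come from $\overline{X}_0$), and with the already-established fact that $\UU_X$ is symplectic, yields directly
\[
\bigl(\UU_Y(-z)\iota_-^\star\alpha,\UU_Y(z)\iota_-^\star\beta\bigr)_{Y_+} = \bigl(\UU_X(-z)\alpha, e(E_+)\UU_X(z)\beta\bigr)_{X_+} = \bigl(\alpha,e(E_-)\beta\bigr)_{X_-} = \bigl(\iota_-^\star\alpha,\iota_-^\star\beta\bigr)_{Y_-}.
\]
No Euler pairings, no Hirzebruch--Riemann--Roch for $Y_\pm$, and no Grothendieck duality are needed; the symplectic property is simply inherited from the $X$-level statement.

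Your plan attempts to replicate the $X$-level proof (framing map intertwines pairings, $\FM$ preserves Euler pairing) on $Y_\pm$, and ingredient~(B) contains a genuine gap. For $X_\pm$, the paper invokes the fact that $\FM$ is a derived equivalence (\cite{Coates--Iritani--Jiang--Segal}); for $Y_\pm$, the map $(F_+)_\star(F_-)^\star$ between the \emph{ambient} parts of $K^0(Y_\pm)$ is not established to be induced by a derived equivalence. The fallback you cite --- the Grothendieck--Verdier plus monodromy argument in the Remark after \S\ref{sec:end_of_the_proof} --- would require, in the $Y$ setting, rerunning the whole of Theorem~\ref{thm:U_ci} with the roles of $+$ and $-$ reversed, formulating and proving the analogue of the Galois/monodromy identity $\gamma^{-1} = T_{-2\pi\tti w p}\circ\gamma'\circ T_{2\pi\tti w p}$ for the flat sections of the ambient quantum connection of $Y_\pm$ (which the paper never does), and then carrying out Grothendieck duality on the non-toric stacks $\widetilde{Y}\to Y_\pm$. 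Each of these is plausible but none is available off the shelf, so (B) is not a minor technicality but the bulk of a new proof. Your verification that $\widetilde{Y}$ realizes a $K$-equivalence of $Y_\pm$ (by adjunction, using $f_+^\star K_{X_+} = f_-^\star K_{X_-}$ and the fact that $\det E_\pm$ is pulled back from $\overline{X}_0$) is correct, and ingredient~(A) is sound because the non-equivariant HRR for compact Deligne--Mumford stacks is a theorem; but the overall plan misses the short derivation available once $\UU_X$ is known to be symplectic and to commute with cup product by classes from $\overline{X}_0$.
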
 

\begin{proof}
  Everything has been proved except the statement that, if $Y_\pm$ is compact, then $\UU_Y$ intertwines the pairings on $\cH(Y_\pm)_\amb$.   But:
\begin{align*}
  \Big( \UU_Y(-z) \iota_-^\star \alpha, \UU_Y(z) \iota_-^\star \beta \Big)_{Y_+}
  &= 
  \Big( \iota_+^\star \UU_X(-z)  \alpha, \iota_+^\star \UU_X(z) \beta \Big)_{Y_+} \\
  &= \Big( \UU_X(-z)  \alpha, e(E_+) \UU_X(z) \beta \Big)_{X_+} \\
  &= \Big( \UU_X(-z)  \alpha, \UU_X(z) e(E_-) \beta \Big)_{X_+} && \text{by Theorem~\ref{thm:U}(2)}\\ 
  &= \Big( \alpha, e(E_-) \beta \Big)_{X_-} && \text{since $\UU_X$ is pairing-preserving} \\ 
  &= \Big( \iota_-^\star \alpha, \iota_-^\star \beta \Big)_{Y_-}
\end{align*}
\end{proof}
\begin{remark}
  \label{rem:degeneracy}
  If $Y_\pm$ is compact then the Givental space for $Y_\pm$ has a well-defined symplectic pairing, but the restriction of this pairing to the ambient part is non-degenerate if and only if $(\iota_\pm)_\star \colon H^\bullet_{\amb}(Y_\pm) \to H^\bullet_{\CR}(X_\pm)$ is injective.  This would hold by the Lefschetz Theorem if $E_\pm$ were a direct sum of ample line bundles, but in our situation the line bundles are always  semi-ample and the question is more subtle.  Injectivity holds when $Y_\pm$ is a regular semiample hypersurface by a result of Mavlyutov~\cite[Theorem~5.1]{Mavlyutov}.
\end{remark}

Theorem~\ref{thm:U_ci} is the analog, for toric complete intersections, of Theorem~\ref{thm:U}.  The analog of Theorem~\ref{thm:CTC_qconn} also holds: 

\begin{theorem} 
\label{thm:CTC_qconn_ci}
Let $(\bF^\pm, \bnabla^\pm,\bE^\pm)$ be the global 
quantum connections for the ambient parts $H^\bullet_{\amb}(Y_\pm)$ over $\tcM_\pm^\circ(\CC[z])$ 
from Theorem \ref{thm:global_qconn_ci}. 
We have that $\bE^+ = \bE^-$ on $\tcM$. 
There exists a gauge transformation 
\[
\Theta_Y \in \Hom\big(H^\bullet_{\amb}(Y_-), H^\bullet_{\amb}(Y_+)\big) 
\otimes \cO_{\cU^\circ}[z][\![\sfy_1,\dots,\sfy_{r-1}]\!] 
\]
over $\tcM^\circ(\CC[z])$ such that:
\begin{itemize} 
\item $\bnabla^-$ and $\bnabla^+$ are gauge-equivalent 
via $\Theta_Y$, i.e.~$
\bnabla^+ \circ \Theta_Y = \Theta_Y \circ \bnabla^-$; 

\item $\Theta_Y$ is homogeneous of degree zero, i.e.~$
\bGr^+ \circ \Theta_Y = \Theta_Y \circ \bGr^-$ with 
$\bGr^\pm := z\parfrac{}{z} + \bE^\pm + \mu^\pm$; 

\item if $Y_\pm$ are compact then $\Theta_Y$ preserves the (possibly-degenerate) orbifold Poincar\'{e} pairing 
on $H^\bullet_{\amb}(Y_\pm)$, i.e.~$(\Theta_Y(\sfy,-z) \alpha, \Theta_Y(\sfy,z) \beta) = (\alpha,\beta)$.
\end{itemize} 
Moreover, the analytic continuation of flat sections coincides, via the ambient $K$-group framing defined in~\S\ref{sec:ambient_part}, with the Fourier--Mukai transformation: 
\begin{align*}
  \Theta_Y\Bigl( \frs(E)(\tau_-(\sfy),z) \Bigr) = \frs(\FM(E))(\tau_+(\sfy),z) &&
  \text{for all $E \in K^0(Y_-)_\amb$}
\end{align*}
where $\tau_\pm$ are the mirror maps from Theorem~\ref{thm:global_qconn_ci}.
\end{theorem}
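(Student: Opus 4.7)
The strategy is to mimic, in the ambient setting, the derivation of Theorem~\ref{thm:CTC_qconn} from Theorem~\ref{thm:U}, now using Theorem~\ref{thm:U_ci} (which provides $\UU_Y$) and Theorem~\ref{thm:global_qconn_ci} (which provides the global connection on the ambient part). First, the equality $\bE^+ = \bE^-$ on $\tcM$ is checked directly from the change of variables \eqref{eq:change_of_variables}, exactly as in the toric case, since the degree assignment \eqref{eq:deg_variable} is preserved.

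Next, I would produce differential operators $P^+_{f,i}$ adapted to $H^\bullet_{\amb}(Y_+)$. Because $H^\bullet_{\amb}(Y_+) = \iota_+^\star H^\bullet_{\CR,T}(X_+)$ at $\lambda=0$, the operators from Lemma~\ref{lem:I_derivatives} (applied to the $I$-function $I_{X_+,Y_+}$ and then restricted via $\iota_+^\star$, with $\lambda=0$) give polynomial-in-$z$ operators such that $z^{-1} P^+_{f,i} I_{Y_+}|_{\sfy=0}$ runs through an $\cO_{\cU^\circ_+}$-basis of $H^\bullet_{\amb}(Y_+)$. Applying these operators to $I_{Y_+}$ and then forming the Birkhoff factorization as in \S\ref{sec:global_qconn} reproduces the fundamental solution $e^{\sigma_+/z}\bL_+(\sfy,z)^{-1}$ of Theorem~\ref{thm:global_qconn_ci}, with a companion matrix $\Upsilon^+$ whose first column is $\Upsilon_0^+$. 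Now apply the \emph{same} operators $P^+_{f,i}$ to $I_{Y_-}$: using $I_{Y_+} = \UU_Y I_{Y_-}$ from Theorem~\ref{thm:U_ci}(1), together with the factorization $I_{Y_-} = z\, e^{\sigma_-/z}\bL_-(\sfy,z)^{-1}\Upsilon_0^-$, one obtains
\[
e^{\sigma_+/z}\bL_+(\sfy,z)^{-1}\,\Upsilon^+ \;=\; \UU_Y\, e^{\sigma_-/z}\bL_-(\sfy,z)^{-1}\,\tUpsilon^-,
\]
where $\tUpsilon^-$ is the matrix with columns $P^+_{f,i}(z\bnabla^-)\Upsilon_0^-$. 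Setting $\Theta_Y := \Upsilon^+(\tUpsilon^-)^{-1}$ defines the desired gauge transformation over $\tcM^\circ(\CC[z])$, with $\Theta_Y(\Upsilon_0^-) = \Upsilon_0^+$, and yields the fundamental relation
\[
\bigl(e^{\sigma_+/z}\bL_+(\sfy,z)^{-1}\bigr)\Theta_Y(\sfy,z) \;=\; \UU_Y\bigl(e^{\sigma_-/z}\bL_-(\sfy,z)^{-1}\bigr),
\]
from which flatness $\bnabla^+\circ \Theta_Y = \Theta_Y\circ \bnabla^-$ is immediate.

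The homogeneity of $\Theta_Y$ follows by the same manipulation as in the toric case: using Theorem~\ref{thm:U_ci}(2), which gives $\UU_Y\circ \iota_-^\star\theta_-(\sfp^-_i) = \iota_+^\star\theta_+(\sfp^+_i)\circ \UU_Y$ for $1\le i\le r-1$, and the fact that $\sum_{i=1}^r \sfp^+_i \log\sfy_i = \sum_{i=1}^r \sfp^-_i \log\tsfy_i$, one conjugates the displayed equation by $e^{\theta_+(\sfp_r^+)\log\sfy_r/z}$ to show every factor other than $\Theta_Y$ is homogeneous; since $\deg\sfy_r=0$, so is $\Theta_Y$. For the pairing statement when $Y_\pm$ is compact, observe that the fundamental solutions $L_\pm^\amb(\tau_\pm(\sfy),z)$ preserve the restriction of the orbifold Poincar\'e pairing to the ambient subspace (this is the ambient part of Proposition~\ref{prop:fundsol}, inherited from $X_\pm$ via the section classes $\iota_\pm^\star\alpha$), and $\UU_Y$ intertwines the pairings by the last clause of Theorem~\ref{thm:U_ci}; combining these via the displayed relation gives $(\Theta_Y(\sfy,-z)\alpha,\Theta_Y(\sfy,z)\beta) = (\alpha,\beta)$. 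Finally, the $K$-theoretic flat section $\frs(E)(\tau_-(\sfy),z) = (2\pi)^{-\dim Y_-/2} \bL_-(\sfy,z)e^{-\sigma_-/z}\tPsi_-^\amb(E)$ is analytically continued by the same formula, and the commutative diagram in Theorem~\ref{thm:U_ci}(3) identifies $\UU_Y\tPsi_-^\amb(E) = \tPsi_+^\amb(\FM(E))$, yielding $\Theta_Y(\frs(E)) = \frs(\FM(E))$.

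The step I expect to require the most care is the construction of the operators $P^+_{f,i}$ in the ambient setting, and in particular the verification that applying them to $I_{Y_+}$ at $\sfy=0$ yields a basis of $H^\bullet_{\amb}(Y_+)$: this is where the analog of Assumption~\ref{assump:generation} and the fact that ambient classes are generated by $\iota_+^\star$-pullbacks of divisor classes have to be combined, possibly after enlarging the extension set $S$ as in Remark~\ref{rem:removing_assumption}. The subtlety in the pairing statement flagged in Remark~\ref{rem:degeneracy} (that the restricted pairing may be degenerate) is not an obstruction, since we only claim preservation of the pairing, not non-degeneracy.
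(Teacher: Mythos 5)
Your proposal follows essentially the same route the paper intends: the paper states that Theorem~\ref{thm:CTC_qconn_ci} follows from Theorem~\ref{thm:U_ci} exactly as Theorem~\ref{thm:CTC_qconn} follows from Theorem~\ref{thm:U}, and you have carried out precisely that adaptation, including the correct use of the ambient Birkhoff factorization from Theorem~\ref{thm:global_qconn_ci}, the intertwining relation from Theorem~\ref{thm:U_ci}(2), and the identification of analytic continuation with $\FM$ via Theorem~\ref{thm:U_ci}(3). Your flagged subtlety about choosing operators $P^+_{f,i}$ whose images under $\iota_+^\star$ at $\sfy=0$ give a basis of $H^\bullet_{\amb}(Y_+)$ (rather than merely a spanning set) is real, but it is already absorbed into the construction of the global connection in Theorem~\ref{thm:global_qconn_ci}, as you correctly invoke.
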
 

Theorem~\ref{thm:CTC_qconn_ci} follows from Theorem~\ref{thm:U_ci} exactly as Theorem~\ref{thm:CTC_qconn} follows from Theorem~\ref{thm:U}.
The  transformation $\UU_Y$ in Theorem~\ref{thm:U_ci} and the gauge transformation 
$\Theta_Y$ in Theorem~\ref{thm:CTC_qconn_ci} are related by 
\[
L_+^\amb(\tau_+(\sfy),z)^{-1} \circ \Theta_Y = 
\UU \circ L_-^\amb(\tau_-(\sfy),z)^{-1} 
\]
where $L_\pm$ is the ambient fundamental solution from~\S\ref{sec:ambient_part}.
The gauge transformation $\Theta_Y$ sends 
the section $\Upsilon_0^-\in \bF^-$ to the section 
$\Upsilon_0^+\in \bF^+$, 
where $\Upsilon_0^\pm$ are as 
in Theorem \ref{thm:global_qconn_ci}.

\section*{Acknowledgements}
We thank Alessio Corti, Alexander Kasprzyk, Yuan-Pin Lee, Nathan Priddis and Mark Shoemaker for a number 
of useful conversations. 
This research was supported by a Royal Society University Research Fellowship; the Leverhulme Trust; ERC Starting Investigator Grant number~240123; EPSRC Mathematics Platform grant EP/I019111/1; JSPS Kakenhi Grant Number 25400069; NFGRF, University of Kansas; and Simons Foundation Collaboration Grant 311837.

\bibliographystyle{plain}
\bibliography{bibliography}

\end{document}